\newtheorem{theorem}{Theorem}[section]
\newtheorem{lemma}[theorem]{Lemma}
\newtheorem{corollary}[theorem]{Corollary}
\newtheorem{proposition}[theorem]{Proposition}
\theoremstyle{definition}
\newtheorem{definition}[theorem]{Definition}
\newtheorem{remark}[theorem]{Remark}
\newtheorem{conjecture}[theorem]{Conjecture}
\newtheorem*{theorem*}{Theorem}
\newtheorem*{conjecture*}{Conjecture}
\newtheorem{claim}[theorem]{Claim}
\newcommand{\isom}{\cong}
\newcommand{\N}{\mathbb{N}}
\newcommand{\F}{\mathbb{F}}
\newcommand{\Z}{\mathbb{Z}}
\def\Ddots{\mathinner{\mkern1mu\raise\p@
\vbox{\kern7\p@\hbox{.}}\mkern2mu
\raise4\p@\hbox{.}\mkern2mu\raise7\p@\hbox{.}\mkern1mu}}
\newcommand{\normal}[1]{\langle\!\langle #1 \rangle\!\rangle}
\newcommand{\wh}[1]{\widehat{ #1 }}
\newcommand{\wt}[1]{\widetilde{ #1 }}
\def\immerses{\looparrowright}
\def\injects{\hookrightarrow}
\DeclareSymbolFontAlphabet{\amsmathbb}{AMSb}
\DeclareMathOperator{\rk}{rk}
\DeclareMathOperator{\rr}{rr}
\DeclareMathOperator{\len}{\ell}
\DeclareMathOperator{\core}{Core}
\DeclareMathOperator{\Out}{Out}
\DeclareMathOperator{\MCG}{MCG}
\DeclareMathOperator{\stab}{Stab}
\DeclareMathOperator{\bs}{BS}
\DeclarePairedDelimiter\abs{\lvert}{\rvert}
\let\oldabs\abs
\def\abs{\@ifstar{\oldabs}{\oldabs*}}
\newcounter{cases}
\newcounter{subcases}[cases]
\tikzset{
math to/.tip={Glyph[glyph math command=rightarrow]},
loop/.tip={Glyph[glyph math command=looparrowleft, swap]},
loop'/.tip={Glyph[glyph math command=looparrowleft]},
 weird/.tip={Glyph[glyph math command=Rrightarrow, glyph length=1.5ex]},
  pi/.tip={Glyph[glyph math command=pi, glyph length=1.5ex, glyph axis=0pt]},
}
\newcounter{marcocomments}
\begin{document}

\title{The geometry of subgroups of mapping tori of free groups}

\author{Marco Linton}
\address{Instituto de Ciencias Matem\'aticas, CSIC-UAM-UC3M-UCM, Madrid, Spain}
\email{marco.linton@icmat.es}

\begin{abstract}
We show that finitely generated mapping tori of free groups have a canonical collection of maximal sub-mapping tori of finitely generated free groups with respect to which they are relatively hyperbolic and locally relatively quasi-convex. As a consequence, we characterise locally quasi-convex hyperbolic groups amongst free-by-cyclic and one-relator groups. We also upgrade several known results for mapping tori of finitely generated free groups to the general case, such as the computations of Dehn functions, the solution to the conjugacy problem and the characterisation of the finitely generated intersection property.
\end{abstract}

\maketitle

\section{Introduction}

Let $\F$ be a free group and let $\psi\colon \F\to \F$ be a monomorphism. Then the \emph{mapping torus} of $\psi$ is the group $M(\psi)$ with presentation:
\[
M(\psi) = \langle \F, t \mid t^{-1}ft = \psi(f), \,\forall f\in \F\rangle.
\]
Very few properties are known for (subgroups of) mapping tori $M(\psi)$ without strong restrictions on $\F$ and $\psi$. Feighn--Handel showed in \cite{FH99} that $M(\psi)$ is coherent ---that is, finitely generated subgroups are finitely presented. A result of Borisov--Sapir \cite{BS05} combined with a result of Chong--Wise \cite{CW24} implies that every finitely generated subgroup of $M(\psi)$ is residually finite (see also the earlier result of Baumslag for free-by-cyclic groups \cite{Ba71}). Finally, Mutanguha \cite{Mu21} showed that a finitely generated subgroup of $M(\psi)$ is hyperbolic if and only if it does not contain a Baumslag--Solitar subgroup. The aim of this article is to add one more property to this short list.

There are two special subclasses of mapping tori that are worth discussing before we state our main theorem. The first is the class of \emph{\{fg free\}-by-cyclic groups} ---these are the mapping tori $M(\psi)$ where $\F = F_n$ is finitely generated and $\psi$ is an isomorphism. This subclass is particularly interesting for its close connections with the study of 3-manifolds. Many results proven for the mapping class group $\MCG(S)$ and fibred 3-manifold have analogues in the study of $\Out(F_n)$ and \{fg free\}-by-cyclic groups, albeit with additional challenges. The main tool used to study \{fg free\}-by-cyclic groups are train tracks, introduced by Bestvina--Handel \cite{BH92}. These are self maps of graphs with particularly nice properties which have been heavily exploited to connect dynamical properties of automorphisms $\psi\in\Out(F_n)$ with geometric, algebraic and algorithmic properties of the mapping torus $M(\psi) = F_n\rtimes_{\psi}\Z$. For example, Brinkmann \cite{Br00} showed that $M(\psi)$ is hyperbolic precisely when $\psi$ has no periodic conjugacy classes and Ghosh \cite{Gh23} showed that $G$ is relatively hyperbolic precisely when $\psi$ is exponentially growing. See \cite{BFH00,BFH05,BMMV06,BG10,HW15} for many more examples.

The other subclass is that of \emph{free-by-cyclic groups} ---these are the mapping tori $M(\psi)$ where $\psi$ is an isomorphism. Many well-studied classes of groups have been recently shown, somewhat surprisingly, to actually be virtually free-by-cyclic: this includes 3-manifold groups of rational cohomological dimension 2 \cite{KL24a}, one-relator groups with torsion or negative immersions \cite{KL24b}, coherent uniform lattices in Bourdon buildings \cite{KL24b} and generic groups of deficiency at least 2 \cite{KKW22}. Amongst virtually RFRS groups there is also a very useful homological characterisation of virtually free-by-cyclic groups due to Fisher \cite{Fi25}. Moreover, it was conjectured by Abdenbi--Wise \cite[Conjecture 1.6]{AW23} that all infinite locally quasi-convex hyperbolic groups are actually virtually mapping tori of free groups (and hence, by Fisher's criterion, virtually free-by-cyclic).

In this article we are interested in the geometry of finitely generated subgroups of arbitrary finitely generated mapping tori of free groups. Our main theorem identifies a canonical relatively hyperbolic structure on a finitely generated mapping torus $M(\psi)$ with respect to which $M(\psi)$ is locally relatively quasi-convex. Recall that a relatively hyperbolic group pair $(G, \mathcal{P})$ is \emph{locally relatively quasi-convex} if all finitely generated subgroups $H\leqslant G$ are relatively quasi-convex with respect to $\mathcal{P}$ (we follow Hruska \cite{Hr10} for the relevant definitions, see \cref{sec:rel_hyp}).

\begin{theorem}
\label{main}
Let $\F$ be a free group, $\psi\colon \F\to \F$ a monomorphism and let $G \leqslant M(\psi)$ be a finitely generated non-free subgroup of the mapping torus. 

There is a (possibly empty) canonical finite collection of (conjugacy classes of) subgroups $\mathcal{P}$ of $G$, each isomorphic to a mapping torus of a finitely generated free group, with the following properties:
\begin{enumerate}
\item If $H\leqslant G$ is isomorphic to a mapping torus of a finitely generated non-trivial free group, then $H$ is conjugate within $G$ into a unique $P\in\mathcal{P}$.
\item $(G, \mathcal{P})$ is a relatively hyperbolic pair.
\item $(G, \mathcal{P})$ is a locally relatively quasi-convex pair.
\end{enumerate}
\end{theorem}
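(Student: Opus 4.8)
The plan is to build the collection $\mathcal{P}$ from the "exponentially growing / polynomially growing" dichotomy that train-track theory provides, but worked through the infinitely generated setting. First I would reduce to understanding $G$ itself as a mapping torus: since $G \leqslant M(\psi)$ is finitely generated and non-free, a standard argument (using that $M(\psi)$ fibres over $\Z$ via the $t$-exponent homomorphism $\phi\colon M(\psi)\to\Z$) shows that either $G$ lies in the kernel --- hence $G$ is a finitely generated subgroup of a free group, contradiction with non-freeness --- or $\phi(G)\neq 0$, so $G$ surjects onto $\Z$ with finitely generated kernel $K = G\cap\F'$ for some free group $\F'$; by Feighn--Handel coherence $K$ is finitely presented, and in fact (using the structure of fibred subgroups, cf.\ the arguments underlying \cite{FH99,Mu21}) $K$ is a finitely generated free group and $G = M(\varphi)$ for a monomorphism $\varphi\colon K\to K$. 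So without loss of generality $G$ itself is a finitely generated mapping torus of a finitely generated free group, and we must produce $\mathcal{P}$ inside $G$.

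The heart of the construction is then to run the \emph{relative train track machinery} on $\varphi$ to extract the maximal "polynomially growing pieces". Concretely, I would take a relative train track representative (in the sense of Bestvina--Handel \cite{BH92}, extended to monomorphisms as in Mutanguha's work) and let $\mathcal{P}$ be the conjugacy classes of the sub-mapping tori $M(\varphi|_{\F_i})$ supported on the maximal $\varphi$-invariant free factor systems on which $\varphi$ restricts to a polynomially growing automorphism --- equivalently, the maximal sub-mapping tori of $G$ that do not contain any exponentially distorted / free-group-like behaviour and hence are the obstructions to hyperbolicity. Canonicity and finiteness come from the fact that these invariant free factor systems are canonical (they are determined by the dynamics of $\varphi$, not by the choice of train track), and property (1) --- that any sub-mapping torus of a non-trivial finitely generated free group is conjugate into a unique $P\in\mathcal{P}$ --- follows because such an $H$ must have polynomially growing monodromy up to finite index (otherwise it contains an exponentially growing sub-mapping torus, which forces a free subgroup that is not a mapping torus, contradicting Mutanguha's hyperbolicity criterion \cite{Mu21} combined with the classification of BS-subgroups), so it is carried by the maximal polynomial piece.

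For (2), relative hyperbolicity of $(G,\mathcal{P})$: the idea is that after "coning off" the polynomially growing sub-mapping tori, the monodromy becomes exponentially growing / atoroidal relative to $\mathcal{P}$, and one applies Ghosh's relative hyperbolicity criterion \cite{Gh23} (together with Brinkmann's hyperbolicity criterion \cite{Br00} as the base case) in the relative setting; alternatively one combines the Bestvina--Feighn combination theorem with the relative train track structure to get relative hyperbolicity directly. For (3), local relative quasi-convexity: here I would show that a finitely generated subgroup $H\leqslant G$ is, by the same fibering/coherence argument, either free (hence quasi-convex in the relatively hyperbolic $G$, as finitely generated free subgroups of relatively hyperbolic groups with these peripherals are relatively quasi-convex) or itself a mapping torus whose own polynomial pieces embed into conjugates of the $P\in\mathcal{P}$; a malnormality/almost-malnormality argument on $\mathcal{P}$ (the maximal polynomial pieces intersect in smaller polynomial pieces, which are still among the peripherals after passing to conjugates) then yields relative quasi-convexity via Hruska's local-to-global criteria \cite{Hr10}.

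The main obstacle I expect is making relative train track theory work for \emph{monomorphisms of infinitely generated free groups} and then descending to the finitely generated subgroup $G$ --- in particular, verifying that the "maximal polynomial sub-mapping tori" are genuinely finite in number and canonically defined when $\F$ is not finitely generated and $\psi$ is merely injective. Controlling how the peripheral structure of $M(\psi)$ restricts to an arbitrary finitely generated subgroup $G$ (ensuring the induced collection $\mathcal{P}$ on $G$ is again finite, canonical, and consists of mapping tori of \emph{finitely generated} free groups) is where the real work lies; the hyperbolic and quasi-convexity statements should then follow fairly formally from the established train-track-based criteria once this structural backbone is in place.
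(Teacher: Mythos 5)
Your reduction step fails: it is not true that a finitely generated, non-free subgroup $G\leqslant M(\psi)$ is a mapping torus of a \emph{finitely generated} free group, and this breaks the entire plan since relative train track theory requires a finitely generated fiber. What Feighn--Handel actually give is that $G$ is a mapping torus of a (possibly infinitely generated) free group; equivalently, by \cref{FHmain}, a minimal graph pair $(Z,X)$ has $\chi(G) = -\rr(Z,X)$, and $G$ is a mapping torus of a finitely generated free group only when $\chi(G)=0$. A concrete counterexample to your claim: take $\F = F_2 * \langle c_0, c_1, \ldots\rangle$ and $\psi$ fixing $F_2$ pointwise and sending $c_i\mapsto c_{i+1}$. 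Then $G = M(\psi)$ is finitely generated with $\chi(G) = -1$, so $G$ cannot be $M(\varphi)$ for any $\varphi\colon F_n\to F_n$ (which forces $\chi = 0$); yet $G$ is a finitely generated, non-free subgroup of itself, and its peripheral collection is non-empty ($\mathcal{P}=\{[F_2\times\Z]\}$). So the case you tried to dismiss as ``without loss of generality'' is precisely the case the theorem is about, and it cannot be avoided.

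Your intuition that $\mathcal{P}$ should consist of the ``maximal polynomially growing / non-exponential sub-mapping tori'' is in the right spirit, but the paper arrives there by a route that circumvents the infinitely-generated-fiber problem rather than solving it head-on. Instead of producing a train track for $\psi$ on $\F$, the paper uses \cref{cor:free_prod_decomp1} to write $M(\psi)\cong F*_{\phi}$ where $F$ \emph{is} finitely generated free, but $\phi$ identifies a free factor $L\leqslant F$ with another subgroup $U\leqslant F$ (not a self-map of $F$). The peripherals are then extracted from the action on the Bass--Serre tree of this HNN-splitting via Mutanguha's stable free factor system (\cite[Proposition 5.3.1]{Mu21}), which yields the relative acylindricity in \cref{thm:acylindrical}; the peripherals are the sub-mapping tori on the $\sigma$-periodic factors. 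Your suggested use of Brinkmann's and Ghosh's criteria does not apply here either: those are criteria for \{fg free\}-by-$\Z$ groups, not for ascending HNN extensions $F*_\phi$ with $\phi$ a partial map, and the paper proves relative hyperbolicity by verifying the hypotheses of the Mj--Reeves combination theorem directly on the tree of spaces over the Bass--Serre tree. Finally, for local relative quasi-convexity, the ``malnormality of $\mathcal{P}$ plus Hruska'' plan is not enough on its own: the induced splittings of finitely generated subgroups $H\leqslant G$ over the HNN-tree do not have finitely generated vertex groups, which is why the paper needs the direct-limit analysis of graph pairs in \cref{sec:locally_rel_qc} and a new quasi-convexity criterion (\cref{thm:qc_criterion}, via bounded girth hallways) rather than an off-the-shelf Hruska argument.
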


Local relative quasi-convexity is a strong property which is particularly useful for promoting properties of subgroups of the peripherals to properties of subgroups of the whole group. For instance, Dahmani showed in \cite{Da03} that limit groups are locally relatively quasi-convex (with respect to their maximal non-cyclic abelian subgroups) and used this to show that they have the finitely generated intersection property (also known as the Howson property). Dahmani's theorem was then generalised by Bigdely--Wise in \cite{BW13}. In \cref{sec:applications} we use \cref{main} to promote known results on mapping tori of finitely generated free groups to arbitrary finitely generated mapping tori of free groups. Specifically, if $M(\psi)$ is finitely generated, we show the following:
\begin{enumerate}
\item We identify the possible Dehn functions $M(\psi)$ can have in \cref{dehn_function} (the case in which $\F = F_n$ was handled by Mutanguha \cite{Mu24}).
\item We show that $M(\psi)$ has decidable conjugacy problem in \cref{conjugacy_problem} (the case in which $\F = F_n$ was handled by Logan \cite{Lo23}).
\item We characterise when $M(\psi)$ has the finitely generated intersection property in \cref{fgip} (the case in which $\F = F_n$ was handled by Bamberger--Wise \cite{BW22}).
\item We characterise when $M(\psi)$ has all its finitely generated subgroups undistorted in \cref{thm:locally_undistorted}.
\end{enumerate}

It is a difficult open problem whether mapping tori of finitely generated free groups have decidable membership problem and whether they are effectively coherent ---that is, whether there is an algorithm which, on input a finite subset $S\subset G$, computes a finite presentation for the subgroup $\langle S\rangle$---, see \cite{Ka00,Aim23}. Carstensen showed in \cite{Ca22} that relative quasi-convexity constants for finitely generated subgroups of torsion-free locally relatively quasi-convex groups and generating sets for their induced peripherals can be computed, provided the peripherals have decidable membership problem. In particular, if the subgroups $\mathcal{P}$ in \cref{main} have decidable membership problem and are effectively coherent, then $G$ also has decidable membership problem and is effectively coherent.

\subsection{Quasi-convex subgroups of mapping tori}

In the case in which the peripheral collection $\mathcal{P}$ from \cref{main} is empty, the group $G$ is hyperbolic and locally quasi-convex. Conversely, if $G$ is hyperbolic, but the peripheral collection $\mathcal{P}$ from \cref{main} is non-empty, then the base groups of the peripheral mapping tori are not quasi-convex as they have infinite height (see work of Gitik--Mitra--Rips--Sageev \cite{GMRS98}). This leads us to a characterisation of hyperbolic and locally quasi-convex mapping tori of free groups, solving a problem of Wise \cite[Problem 17]{Wi20} and a more general conjecture of Abdenbi--Wise \cite[Conjecture 6.1]{AW23}.

\begin{corollary}
\label{cor:lqh}
The following are equivalent for a finitely generated mapping torus of a free group $M(\psi)$:
\begin{enumerate}
\item $M(\psi)$ is hyperbolic and locally quasi-convex.
\item $M(\psi)$ contains no subgroup isomorphic to a mapping torus of a finitely generated non-trivial free group.
\item $\rk(\F) = \infty$ and $\psi$ is \emph{fully irreducible}: that is, there is no proper finitely generated free factor $1\neq F\leqslant \F$ so that $\psi^m(F)$ is conjugate into $F$ for some $m\geqslant 1$.
\end{enumerate}
\end{corollary}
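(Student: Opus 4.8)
The plan is to derive this as a fairly direct consequence of Theorem~\ref{main}, combined with known facts about infinite-height subgroups of hyperbolic groups. The key is to chase through the equivalences using the canonical peripheral structure $\mathcal{P}$.

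First I would prove $(2)\Leftrightarrow(3)$, which is purely algebraic and does not involve relative hyperbolicity. The point is that $M(\psi)$ contains a sub-mapping-torus of a finitely generated non-trivial free group precisely when there is a $\psi$-invariant (up to conjugacy and up to a power) finitely generated free subgroup $1\neq F\leqslant\F$. If $\rk(\F)<\infty$, then $\F$ itself is such a subgroup, so $(2)$ fails; hence $(2)$ forces $\rk(\F)=\infty$. Given $\rk(\F)=\infty$, a proper finitely generated free factor $F$ with $\psi^m(F)$ conjugate into $F$ would, after passing to the associated mapping torus $M(\psi^m|_F)\leqslant M(\psi^m)$ and noting $M(\psi^m)$ embeds (finite index up to the $\Z$-direction) appropriately into $M(\psi)$, produce a forbidden subgroup; conversely a forbidden sub-mapping-torus $H\leqslant M(\psi)$ has base group a finitely generated free subgroup of $\F$ which after adjusting can be taken to be a free factor (here I would invoke that finitely generated subgroups of free groups are free and the relevant train-track / algebraic extension facts), giving the failure of full irreducibility. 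Some care is needed with the distinction between ``free factor'' and ``finitely generated subgroup'' and with powers $\psi^m$; this is the routine but slightly delicate part.

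Next, $(1)\Rightarrow(2)$: if $M(\psi)$ is hyperbolic and locally quasi-convex, it cannot contain a mapping torus of a finitely generated non-trivial free group, since such a mapping torus $H = F\rtimes\Z$ has its base subgroup $F$ of infinite height in $H$ (and hence in $M(\psi)$): the conjugates $t^{-n}Ft^n$ witness arbitrarily large ``width''. By Gitik--Mitra--Rips--Sageev \cite{GMRS98}, a quasi-convex subgroup of a hyperbolic group has finite height, so $F$ is not quasi-convex in $M(\psi)$, contradicting local quasi-convexity. Actually, more directly: $H$ itself is finitely generated, so by local quasi-convexity $H$ is quasi-convex, hence hyperbolic---but $H$ contains $\Z^2$? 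No: $F\rtimes\Z$ with $\F$ free need not contain $\Z^2$. So I should argue via $F\leqslant H$ having infinite height in $H$, contradicting that $H$ (being quasi-convex in a hyperbolic group, hence hyperbolic) is locally quasi-convex only if... hmm---cleanest is: $H$ quasi-convex $\Rightarrow$ $H$ hyperbolic; but an infinitely-generated-by... Let me instead just say $F$ has infinite height in $M(\psi)$ directly (its conjugates by powers of $t$ pairwise intersect in infinite groups after suitable conjugation inside $H$), so $F$ is not quasi-convex, contradiction. This handles $(1)\Rightarrow(2)$.

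Finally, $(2)\Rightarrow(1)$: assume $M(\psi)$ contains no mapping torus of a finitely generated non-trivial free group. If $M(\psi)$ is free, it is hyperbolic and locally quasi-convex and we are done, so assume it is non-free and apply Theorem~\ref{main} with $G = M(\psi)$. By property~(1) of Theorem~\ref{main}, each $P\in\mathcal{P}$ is itself a mapping torus of a finitely generated free group contained in $M(\psi)$; if $P$ has non-trivial free base this contradicts~$(2)$, and if the base is trivial then $P$ is infinite cyclic but property~(1) says $P$ absorbs every mapping torus of a finitely generated \emph{non-trivial} free subgroup---so such subgroups would have to be conjugate into an infinite cyclic group, forcing them to be cyclic, a contradiction unless there are none; in that case $\mathcal{P}$ can be taken empty. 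Hence $\mathcal{P}=\emptyset$, so by~(2) of Theorem~\ref{main} the pair $(M(\psi),\emptyset)$ is relatively hyperbolic, i.e.\ $M(\psi)$ is hyperbolic, and by~(3) it is locally relatively quasi-convex with respect to the empty collection, i.e.\ locally quasi-convex. The main obstacle is the bookkeeping in $(2)\Leftrightarrow(3)$---correctly matching ``no forbidden sub-mapping-torus'' with ``full irreducibility'' while handling the infinite-rank base and the passage to powers $\psi^m$---and making sure the degenerate case of cyclic $P\in\mathcal{P}$ is handled so that $\mathcal{P}$ is genuinely empty; everything else is a clean application of Theorem~\ref{main} and \cite{GMRS98}.
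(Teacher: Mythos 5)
Your overall strategy --- derive $(1)\Leftrightarrow(2)$ from Theorem~\ref{main} together with the infinite-height observation of \cite{GMRS98}, and treat $(2)\Leftrightarrow(3)$ as a separate bookkeeping step --- matches the paper's intended argument (the paper gives no explicit proof; the introduction sketches exactly this). The $(1)\Rightarrow(2)$ direction and the $(2)\Rightarrow(1)$ direction (including your observation that $\mathcal{P}=\emptyset$ under $(2)$, and the handling of the $M(\psi)$-free case) are fine; and note the degenerate worry about cyclic $P\in\mathcal{P}$ never arises because in \cref{thm:acylindrical} each $H_i$ has non-trivial base $A_i$.

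The genuine gap is in the implication $(2)\Rightarrow(3)$, specifically the claim that ``a forbidden sub-mapping-torus $H\leqslant M(\psi)$ has base group a finitely generated free subgroup of $\F$ which after adjusting can be taken to be a free factor.'' This is the technical heart of the matter and is not supplied by ``finitely generated subgroups of free groups are free'' nor by generic ``algebraic extension facts.'' What is true a priori (via \cref{cor:Euler} and the remark following it) is that a forbidden $H$ is conjugate to $\langle F', t^k f\rangle$ with $F'\leqslant\F$ a finitely generated subgroup satisfying $\psi^k(F')\leqslant F'^{f'}$ --- but nothing elementary promotes such an $F'$ to a free factor while \emph{retaining} $\psi$-periodicity up to conjugacy. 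The passage to a free factor is exactly \cref{thm:acylindrical}, which rests on Mutanguha's Proposition~5.3.1 in \cite{Mu21}: there exist a constant $\kappa$ and a free factor system $\mathcal{F}$ of $F$ so that every finitely generated $H'\leqslant F$ with $\psi^n(H')$ conjugate into $F$ for some $n\geqslant\kappa$ is conjugate into a member of $\mathcal{F}$, and $\mathcal{F}$ itself is permuted by $\psi$ up to conjugacy. Taking the algebraic closure of $F'$ in $\F$, for instance, would not obviously be $\psi$-invariant. A cleaner organisation that avoids this hard direction entirely is to run the cycle $(1)\Rightarrow(2)\Rightarrow(3)\Rightarrow(1)$: the implications $(1)\Rightarrow(2)$ (infinite height) and $(2)\Rightarrow(3)$ (its contrapositive is the easy direction using \cref{cor:Euler} and the case $\rk(\F)<\infty$) are elementary, while $(3)\Rightarrow(1)$ follows because full irreducibility forces the free factor system of \cref{thm:acylindrical} to have no $\sigma$-periodic orbits, hence $\mathcal{P}=\emptyset$, and then Theorem~\ref{main} parts $(2)$ and $(3)$ give hyperbolicity and local quasi-convexity.
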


A large source of examples of mapping tori satisfying the conclusions of \cref{cor:lqh} is provided by a result of Mutanguha. If $\F$ is finitely generated and $\psi\colon \F\to \F$ is fully irreducible and atoroidal, then Mutanguha showed in \cite{Mu20} that no infinite index subgroup of $M(\psi)$ is a mapping torus of a finitely generated free group. Thus, every finitely generated infinite index subgroup of $M(\psi)$ is locally quasi-convex hyperbolic by \cref{cor:lqh}.

A natural problem that arises now is to determine precisely which finitely generated subgroups of hyperbolic mapping tori of free groups are quasi-convex. A class of groups in which this problem has a satisfying solution is the class of fundamental groups of hyperbolic 3-manifolds. Precisely, the \emph{subgroup tameness theorem} ---which is a consequence of the solution of the Tameness conjecture by Calegari--Gabai \cite{CG06} and Agol \cite{Ag04}, Canary's covering theorem \cite{Ca96} and a result of Hruska's \cite[Corollary 1.6]{Hr10}-- states: if $M^3$ is a hyperbolic 3-manifold and $H\leqslant \pi_1(M^3)$ is a finitely generated subgroup, then $H$ is either a virtual fibre subgroup (i.e. the fundamental group of a surface fibre of a fibration of a finite sheeted cover of $M^3$ over the circle), or is a (relatively) quasi-convex subgroup (with respect to the maximal parabolics). For mapping tori of free groups $M(\psi)$, the analogue of a surface fibre is a finitely generated subgroup $H\leqslant M(\psi)$ so that $M(\psi)\cong M(\phi)$ with $\phi\colon H\to H$ a monomorphism. If $\phi$ is an isomorphism, $H$ is a \emph{fibre subgroup}, otherwise it is a \emph{semi-fibre subgroup}. The following appears to be the correct conjectural analogue of the subgroup tameness theorem for mapping tori of free groups, first conceived at an AIM workshop \cite{Aim23} and also posed by Abdenbi--Wise \cite[Problem 1.5]{AW23}.

\begin{conjecture}
\label{conj}
Suppose that $\F$ is a finitely generated free group and $\psi\colon \F\to \F$ is a fully irreducible monomorphism so that $G = M(\psi)$ is hyperbolic. If $H\leqslant G$ is a finitely generated subgroup, then one of the following holds:
\begin{enumerate}
\item $H$ is a fibre or semi-fibre subgroup of a finite index subgroup of $G$.
\item $H$ is quasi-convex.
\end{enumerate}
\end{conjecture}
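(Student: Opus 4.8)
The plan is to transpose the proof of the subgroup tameness theorem for hyperbolic $3$-manifolds into this setting, with \cref{main} and Mutanguha's theorems \cite{Mu20,Mu21} playing the roles of the hyperbolic geometry and the tameness and covering theorems. For the preliminary reductions: $H$ may be assumed to have infinite index in $G$, since finite-index subgroups of any finitely generated group are quasi-convex; as $G$ is hyperbolic it contains no Baumslag--Solitar subgroup, so neither does $H$, whence $H$ is hyperbolic by \cite{Mu21}; and hyperbolicity of $G$ forces $\psi$ to be atoroidal, so Mutanguha's theorem \cite{Mu20} applies and no infinite-index subgroup of $M(\psi)$ is a mapping torus of a finitely generated non-trivial free group. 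In particular, when $H$ is non-free the canonical peripheral collection $\mathcal{P}_H$ furnished by \cref{main} is empty---any member would be an infinite-index mapping torus of a finitely generated free group---so $H$ is in fact hyperbolic and \emph{locally} quasi-convex; and fibre or semi-fibre subgroups of $G$ itself have finite index, so the ``fibre-type'' alternative of the conjecture can only occur for fibre or semi-fibre subgroups of proper finite-index subgroups $G'\lneq G$.

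It thus suffices to show: if $H\leqslant G$ is finitely generated, of infinite index, and \emph{not} quasi-convex, then $H$ is a conjugate of a fibre or semi-fibre subgroup of a finite-index subgroup of $G$. The idea is to reduce this to the single assertion that such an $H$ must, after passing to a finite-index subgroup $G'\leqslant G$ which is itself a mapping torus with fibre a finite-index subgroup $\F'\leqslant\F$, satisfy $[\F':H\cap\F']<\infty$. From there the conclusion is routine: $H$ must then be contained in $\F'$ (otherwise it would have finite index in $G'$, hence in $G$), so it is a finite-index subgroup of the fibre, and a standard eventual-periodicity argument for the monodromy realises it as the fibre or semi-fibre subgroup of a further finite-index mapping torus. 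Establishing $[\F':H\cap\F']<\infty$ is where full irreducibility should enter: one would use the flaring/train-track structure of $M(\psi)$ to show that a non-quasi-convex $H$ contains arbitrarily long flaring configurations lying in the fibre, and the absence of a proper invariant subsystem of the attracting lamination of $\psi$ to upgrade this to finite index. (When $H$ is free \cref{main} does not apply, but the same flaring analysis is expected to work verbatim.)

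The decisive input---and the reason this remains a conjecture---is the implication that a non-quasi-convex finitely generated subgroup is virtually a fibre. In the $3$-manifold case this is precisely where hyperbolic geometry is used: the Tameness Theorem \cite{CG06,Ag04} shows that the cover corresponding to $H$ is topologically tame, and Canary's covering theorem \cite{Ca96} then identifies any geometrically infinite (equivalently, non-geometrically-finite) end with a virtual fibre. No analogue of either ingredient is presently available for mapping tori of free groups; one would need a structural ``tameness'' statement controlling how a finitely generated subgroup sits inside $M(\psi)$, together with a ``covering theorem'' asserting that failure of quasi-convexity forces $H$ to asymptotically fill $M(\psi)$ in a fibred fashion. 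Developing these tools---plausibly through the flaring geometry of $M(\psi)$ and the theory of attracting laminations of fully irreducible (endo)morphisms, of which \cite{Mu20,Mu21} are only the beginnings---is the real content of the problem: \cref{main} and \cite{Mu20,Mu21} merely dispose of the cases in which $H$ is itself a mapping torus and reduce the conjecture to this single implication.
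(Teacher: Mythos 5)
\cref{conj} is stated in the paper as an \emph{open conjecture}, not a theorem; the paper does not prove it, and indeed the surrounding discussion explains why it is expected to be hard. You have correctly recognised this: your ``proof'' is in fact an honest reduction-plus-obstruction analysis. The reductions you give (finite index $\Rightarrow$ quasi-convex; $H$ hyperbolic because $G$ contains no Baumslag--Solitar subgroup by \cite{Mu21}; hyperbolicity of $G$ forces $\psi$ atoroidal, so \cite{Mu20} applies and no infinite-index subgroup of $G$ is a mapping torus of a finitely generated non-trivial free group; hence the peripheral collection $\mathcal{P}_H$ from \cref{main} is empty and $H$ is locally quasi-convex hyperbolic) are accurate and match the paper's own remarks around the conjecture. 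Your identification of the genuine obstruction --- the lack of any analogue of the Tameness Theorem and Canary's covering theorem for mapping tori of free groups, which would be needed to show that a non-quasi-convex finitely generated subgroup is virtually a fibre --- is exactly the point the paper is making by analogy with the $3$-manifold \emph{subgroup tameness theorem}.

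One small error in your reductions: you assert that ``fibre or semi-fibre subgroups of $G$ itself have finite index'', and use this to confine the fibre alternative to proper finite-index subgroups $G'\lneq G$. This is backwards. By the paper's definition, a fibre subgroup $H$ of $G$ satisfies $G\cong M(\phi)$ with $\phi\colon H\to H$ an isomorphism, so $H=\ker(G\to\Z)$ has \emph{infinite} index in $G$ (the base group $\F$ itself is such a fibre). Consequently the ``fibre-type'' alternative can already occur with $G'=G$, and your case split should allow for this. The slip does not affect your diagnosis of the missing ingredient, but the reduction as stated is incorrect.
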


\cref{cor:lqh} implies \cref{conj} if $\F$ is instead assumed to be infinitely generated. When $\psi$ is surjective, but not atoroidal, $M(\psi)$ is the fundamental group of a fibred hyperbolic 3-manifold by a result of Bestvina--Handel \cite{BH92} and so the conjecture holds if $G$ is instead assumed to not be hyperbolic and if quasi-convexity is replaced with relative quasi-convexity. When $\psi$ is not surjective, then $M(\psi)$ is hyperbolic by a result of Mutanguha \cite{Mu20}. 

Note that the assumption of $\psi$ being fully irreducible cannot be dropped in \cref{conj} as Brady--Dison--Riley \cite{BDR13} provided examples of hyperbolic \{fg free\}-by-cyclic groups with reducible monodromy which contain finitely generated subgroups with distortion function not bounded by any finite tower of exponentials, whereas a fibre and a semi-fibre subgroup must have exponential distortion.

\subsection{One-relator groups}

Using \cref{cor:lqh} we may also completely characterise when a one-relator group is locally quasi-convex hyperbolic. The reader is directed to the survey article \cite{CFL25} for history, background and recent progress in the theory of one-relator groups. Our characterisation will be in terms of the \emph{primitivity rank} $\pi(w)$ of the relator $w\in F$. This is defined as:
\[
\pi(w) = \min\{\rk(H) \mid w\in H\leqslant F, \, w \text{ not primitive in } H\}\in \N\cup \{\infty\}.
\]
The primitivity rank $\pi(w)$, introduced by Puder \cite{Pu14}, has recently been shown to have strong connections with geometric and subgroup properties of the one-relator group $F/\normal{w}$, see the work of Louder--Wilton \cite{LW22,LW24} and \cite{Li25} for some examples. In order to apply \cref{cor:lqh}, we use the fact that if $\pi(w)\neq 2$, then $F/\normal{w}$ is virtually free-by-cyclic, proved in \cite{KL24b}.

\begin{theorem}
\label{thm:one-relator}
If $G = F/\normal{w}$ is a finitely generated one-relator group, then the following are equivalent:
\begin{enumerate}
\item $G$ is locally quasi-convex hyperbolic.
\item $\pi(w)\neq 2$.
\end{enumerate}
\end{theorem}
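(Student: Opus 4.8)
The plan is to combine the characterisation in \cref{cor:lqh} with the structural results about one-relator groups of primitivity rank $\neq 2$. The key input is the theorem of Kielak--Linton \cite{KL24b} quoted in the excerpt: if $\pi(w)\neq 2$, then $G = F/\normal{w}$ is virtually free-by-cyclic, and in fact (examining that result) virtually a mapping torus of a finitely generated free group. So I would split into the cases $\pi(w) = 1$, $\pi(w) = \infty$, $\pi(w) = 2$, and $\pi(w)\geqslant 3$, although the first two are degenerate: $\pi(w) = \infty$ means $w = 1$ and $G = F$ is free hence locally quasi-convex hyperbolic vacuously, while $\pi(w) = 1$ means $w$ is a proper power, $w = v^k$ with $k\geqslant 2$, in which case $G$ is hyperbolic with torsion (by the B.\,B.\,Newman spelling theorem) and locally quasi-convex by a theorem of Wise (or via the fact that it is virtually free-by-cyclic and hyperbolic, so we can appeal to \cref{cor:lqh} after passing to a torsion-free finite-index subgroup and using that quasi-convexity is preserved under finite-index). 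This last observation---that local quasi-convexity and hyperbolicity pass up and down finite-index subgroups---is the reduction I would state as a preliminary lemma, so that throughout I may assume $G$ itself is a mapping torus $M(\psi)$ of a finitely generated free group whenever $\pi(w)\neq 2$, and more generally it suffices to decide local quasi-convex hyperbolicity on a finite-index subgroup.

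Now suppose $\pi(w)\geqslant 3$. Then $G$ is virtually $M(\psi)$ for some finitely generated free group $\F$ and monomorphism $\psi$; pass to this finite-index $M(\psi)$. By \cref{cor:lqh}, $M(\psi)$ is hyperbolic and locally quasi-convex iff it contains no sub-mapping-torus of a finitely generated non-trivial free group, iff (condition (3) of \cref{cor:lqh}) $\rk(\F) = \infty$ and $\psi$ is fully irreducible. But $\F$ is \emph{finitely generated} here, so $\rk(\F) = \infty$ fails; hence $M(\psi)$ is \emph{not} locally quasi-convex hyperbolic---unless of course it is degenerate. I need to be careful: $\F$ could be trivial or $\psi$ an isomorphism of a rank-$0$ or rank-$1$ group, in which case $M(\psi)$ is $\Z$ or $\Z^2$ or a Klein bottle group; here $G$ is virtually abelian. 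Actually, the cleaner route for $\pi(w)\geqslant 3$: by Louder--Wilton \cite{LW22}, $\pi(w)\geqslant 3$ forces $G$ to have \emph{negative immersions}, and one-relator groups with negative immersions are hyperbolic (Louder--Wilton \cite{LW24}) and, being virtually free-by-cyclic with $\F$ finitely generated and not of the degenerate abelian type, they always contain a fibre subgroup---namely the base group $\F$ of the mapping torus itself is a finitely generated non-trivial free group with $M(\psi)\cong M(\psi)$---so by \cref{cor:lqh} they fail local quasi-convexity. Wait: this shows $G$ is hyperbolic but \emph{not} locally quasi-convex when $\pi(w)\geqslant 3$, so (1) fails and (2) fails, consistent. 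So both directions must be organised around: (1)$\Rightarrow$(2) is the contrapositive ``$\pi(w) = 2\Rightarrow$ not locally quasi-convex hyperbolic'', and (2)$\Rightarrow$(1) is ``$\pi(w)\in\{1,\infty\}\Rightarrow$ locally quasi-convex hyperbolic'' together with ``$\pi(w)\geqslant 3\Rightarrow$ not locally quasi-convex hyperbolic''---hmm, that last clause contradicts (2)$\Rightarrow$(1). Let me restructure honestly below.

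The correct reading: since $\pi(w)\neq 2$ covers $\pi(w)\in\{1,\infty\}$ \emph{and} $\pi(w)\geqslant 3$, and we want this whole set to be exactly the locally-quasi-convex-hyperbolic locus, the content is that \emph{$\pi(w)\geqslant 3$ already forces local quasi-convexity and hyperbolicity}. This is consistent with Louder--Wilton: negative immersions ($\pi(w)\geqslant 3$) implies hyperbolic \cite{LW24}, and Linton \cite{Li25} (the reference in the excerpt, ``\cite{Li25} for some examples'' of subgroup properties) shows such groups are \emph{coherent} and have strong subgroup structure; I would cite the precise statement from \cite{Li25} or \cite{LW24} that one-relator groups with negative immersions are locally quasi-convex. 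Then the proof is: (a) preliminary lemma on finite-index stability; (b) $\pi(w) = \infty$: $G$ free, done; (c) $\pi(w) = 1$: $G$ hyperbolic virtually free-by-cyclic, and being virtually $M(\psi)$ with the monodromy necessarily fully irreducible on an infinite-rank... no. For (1)$\Leftarrow$(2) when $\pi(w)=1$ I would instead cite directly that one-relator groups with torsion are locally quasi-convex hyperbolic (Wise, or Newman + hyperbolicity + local quasi-convexity results). (d) $\pi(w)\geqslant 3$: cite local quasi-convexity and hyperbolicity from the negative-immersions literature. (e) $\pi(w) = 2$: here $G$ is still virtually free-by-cyclic in many cases (Kielak--Linton handle $\pi(w)\neq 2$, so $\pi(w) = 2$ is exactly the hard case); I would show $G$ contains a sub-mapping-torus of a finitely generated non-trivial free group and invoke \cref{cor:lqh}, OR exhibit a non-quasi-convex finitely generated subgroup / a $\bs(1,n)$ obstruction directly.

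The hard part will be the $\pi(w) = 2$ direction, i.e.\ showing $\pi(w) = 2$ implies $G$ is \emph{not} locally quasi-convex hyperbolic. The obstruction is that when $\pi(w) = 2$, there is a subgroup $w\in H\leqslant F$ with $\rk(H) = 2$ in which $w$ is not primitive; this $H$ maps into $G$, and the image $\bar H$ is a two-generator one-relator group (the relator being the word expressing $w$ in a basis of $H$) which is itself not free and often $\bs(1,n)$-like or a surface group, giving either a non-quasi-convex free-by-cyclic subgroup or a $\bs(1,n)\leqslant G$ (killing hyperbolicity via Mutanguha). Making this uniform over all $w$ with $\pi(w) = 2$---and in particular handling the possibility that the two-generator subgroup one gets is itself hyperbolic and locally quasi-convex, so that one must instead produce the obstruction at a larger scale---is the crux. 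I would handle it by using the Magnus--Moldavanskii hierarchy / the JSJ or the Louder--Wilton machinery to locate inside any $\pi(w) = 2$ one-relator group either a $\bs(1,n)$ subgroup (when non-hyperbolic) or, when hyperbolic, a finitely generated subgroup of infinite height coming from the fibre structure of the relevant $\bs(1,1) = \Z^2$-or-mapping-torus piece, contradicting local quasi-convexity via \cite{GMRS98} exactly as in the paragraph preceding \cref{cor:lqh}.
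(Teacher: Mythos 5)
Your proposal does not reach a proof, and the difficulties you ran into are symptomatic of a structural misreading that is worth naming precisely.

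\textbf{The source of your mid-proposal contradictions.} You repeatedly assert that when $\pi(w)\geqslant 3$, the group $G$ is virtually $M(\psi)$ with $\F$ \emph{finitely generated}. This is false. A one-relator group $G = F_n/\normal{w}$ with $\pi(w)\geqslant 3$ has $n\geqslant 3$ (since $\pi(w)\leqslant\rk(F_n)$) and $\chi(G) = 2-n < 0$. By \cref{FHmain}, a mapping torus of a finitely generated free group has $\chi = 0$, and $\chi$ is multiplicative in finite index; hence \emph{no} finite-index subgroup of $G$ can be such a mapping torus. The Kielak--Linton result gives a finite-index subgroup of the form $\F\rtimes\Z$ with $\F$ of \emph{infinite} rank. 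Once you see this, condition (3) of \cref{cor:lqh} (which requires $\rk(\F) = \infty$) is no longer the obstruction you thought it was; indeed it is satisfied. The cleanest entry point is not condition (3) at all but condition (2): you want to show $G$ contains no mapping torus of a finitely generated non-trivial free group, equivalently (by \cref{FHmain}) no non-cyclic subgroup of Euler characteristic zero. That is precisely Louder--Wilton's \cite[Lemma 6.10]{LW22} for $\pi(w)\neq 2$, a reference you mention in passing but never actually deploy. With it, $(2)\Rightarrow(1)$ is a two-line consequence of \cref{cor:lqh}, uniformly in $\pi(w)\in\{1\}\cup\{3,4,\dots\}\cup\{\infty\}$; there is no need to treat $\pi(w) = 1$ by separate appeal to torsion results. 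Your eventual fallback of ``cite that one-relator groups with negative immersions are locally quasi-convex'' is circular: that statement \emph{is} this theorem.

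\textbf{The $\pi(w) = 2$ direction.} You correctly flag this as the crux and sketch a plan (exhibit $\bs(1,n)$ or an infinite-height fibre directly from a two-generator witness subgroup), but you also correctly anticipate that the two-generator subgroup one obtains could itself be locally quasi-convex, and you do not resolve this. The paper's route is genuinely different and sidesteps the case analysis: assume $G$ is locally quasi-convex hyperbolic; the Magnus--Moldavanskii hierarchy (in the form from \cite{Li25}, with finitely generated edge groups) is then a quasi-convex hierarchy, so $G$ is virtually compact special by Wise \cite{Wi21}; hence $G$ is virtually free-by-cyclic by \cite{KL24b}. Only \emph{then} is the Louder--Wilton $\chi = 0$ subgroup brought in: if $\pi(w) = 2$, there is a torsion-free non-cyclic $H\leqslant G$ with $\chi(H) = 0$; a finite-index subgroup of $H$ is \{fg free\}-by-cyclic by \cref{FHmain}, and its fibre is a finitely generated normal subgroup of infinite index, hence not quasi-convex. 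The move you are missing is the use of the hierarchy to first establish that $G$ is virtually free-by-cyclic even in the $\pi(w) = 2$ case (which Kielak--Linton alone does not give you there), and only afterwards to locate the non-quasi-convex subgroup; trying to produce the obstruction directly, as you propose, is exactly what made the case seem intractable.
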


Note that Puder provided an algorithm to compute the primitivity rank $\pi(w)$ \cite{Pu14} and so consequently there is also an algorithm to decide whether a one-relator group is locally quasi-convex hyperbolic. Previously, McCammond--Wise \cite{MW05} and Hruska--Wise \cite{HW01} had proven that $F/\normal{w^n}$ is locally quasi-convex hyperbolic when $n$ is sufficiently large. \cref{thm:one-relator} implies that we only need to take $n\geqslant 2$.

Louder--Wilton showed in \cite{LW24} that presentation complexes of one-relator groups $F/\normal{w}$ with $\pi(w)> 2$ satisfy a type of combinatorial negative curvature called negative irreducible curvature (see \cite[Theorem 10.7]{Wi24}). Wilton conjectured \cite[Conjecture 12.9]{Wi24} that all compact 2-complexes with this property should have locally quasi-convex hyperbolic fundamental group. \cref{thm:one-relator} therefore solves an important special case of this conjecture. We also point out that, using a result of Abdenbi--Wise \cite{AW23}, one could also add a fourth equivalent condition to \cref{cor:lqh} in terms of negative irreducible curvature.

Although one-relator groups are known to be coherent \cite{JZL25}, it is an open problem as to whether they are all effectively coherent \cite[Problem 2.5.33]{CFL25}. Since locally quasi-convex hyperbolic groups are effectively coherent (see work of Grunschlag \cite[Proposition 6.1]{Gr99}), \cref{thm:one-relator} implies that many one-relator groups are also effectively coherent. The following corollary answers a question of Louder--Wilton \cite[Question 6.7]{LW24}.

\begin{corollary}
If $G = F/\normal{w}$ is a one-relator group with $\pi(w)\neq 2$, then $G$ is effectively coherent.
\end{corollary}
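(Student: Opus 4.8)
The plan is to combine \cref{thm:one-relator} with a known implication in the literature. Specifically, if $\pi(w)\neq 2$, then \cref{thm:one-relator} tells us that $G = F/\normal{w}$ is a hyperbolic group all of whose finitely generated subgroups are quasi-convex. It therefore suffices to invoke the fact --- due to Grunschlag \cite[Proposition 6.1]{Gr99} --- that every locally quasi-convex hyperbolic group is effectively coherent: there is an algorithm which, on input a finite subset $S$ of $G$, outputs a finite presentation for $\langle S\rangle$. So the corollary is essentially immediate once \cref{thm:one-relator} is in hand.

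The one point that requires a small amount of care is that Grunschlag's algorithm, like most algorithms for hyperbolic groups, requires as input not just the group but a hyperbolicity certificate (for instance a Dehn presentation, or equivalently an explicit linear isoperimetric constant). Here this is not an obstacle: one-relator groups come with a canonical finite presentation, and since we know a priori from \cref{thm:one-relator} that $G$ is hyperbolic, we may run in parallel the semi-decision procedure that searches for a Dehn presentation (which is guaranteed to terminate since $G$ is hyperbolic) --- alternatively one notes that Puder's algorithm \cite{Pu14} first decides whether $\pi(w)\neq 2$, and only in that case do we proceed. Thus the hyperbolicity certificate can be produced effectively from $w$ alone, and then Grunschlag's procedure applies verbatim.

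There is essentially no hard step here; the content of the corollary is entirely in \cref{thm:one-relator}, and the remaining work is bookkeeping to confirm that the cited effective coherence result applies uniformly given only the one-relator presentation. I would state this as a one-paragraph proof: assume $\pi(w)\neq 2$, apply \cref{thm:one-relator} to get that $G$ is locally quasi-convex hyperbolic, produce a Dehn presentation effectively as above, and then quote \cite[Proposition 6.1]{Gr99} to conclude that $G$ is effectively coherent, which by definition answers \cite[Question 6.7]{LW24}.
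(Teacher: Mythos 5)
Your proof is correct and matches the paper's approach exactly: apply \cref{thm:one-relator} to get that $G$ is locally quasi-convex hyperbolic, then cite Grunschlag \cite[Proposition 6.1]{Gr99} to conclude effective coherence. The extra bookkeeping you supply about effectively producing a hyperbolicity certificate (via the parallel search for a Dehn presentation or via Puder's algorithm for $\pi(w)$) is a sensible clarification the paper leaves implicit, but it does not change the argument.
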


Haglund--Wise \cite{HW08} showed that locally quasi-convex hyperbolic groups that are virtually compact special are LERF ---that is, all finitely generated subgroups are separable. Since the groups from \cref{thm:one-relator} are known to be virtually compact special by work of Wise \cite{Wi21} and \cite{Li25}, we obtain the following corollary, answering a question of Fine--Rosenberger \cite[Question OR9]{FR01} and providing many new examples of LERF groups.

\begin{corollary}
If $G = F/\normal{w}$ is a finitely generated one-relator group with $\pi(w)\neq 2$, then $G$ is LERF.
\end{corollary}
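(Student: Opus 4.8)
The plan is to read this corollary off from \cref{thm:one-relator} together with the standard separability machinery for special cube complexes. First, recall the theorem of Haglund--Wise \cite{HW08}: a hyperbolic group that is virtually compact special is QCERF, meaning that every finitely generated quasi-convex subgroup is separable. Consequently, if a group $\Gamma$ is simultaneously hyperbolic, locally quasi-convex, and virtually compact special, then every finitely generated subgroup of $\Gamma$ is quasi-convex, hence separable; that is, $\Gamma$ is LERF. So the whole task reduces to checking these three properties for $G = F/\normal{w}$ when $\pi(w)\neq 2$.

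The hyperbolicity and local quasi-convexity are supplied directly by \cref{thm:one-relator}: the hypothesis $\pi(w)\neq 2$ is precisely its equivalent condition (2), so $G$ is locally quasi-convex hyperbolic. For virtual compact specialness one argues by the value of $\pi(w)$, exactly as recalled in the discussion preceding this corollary. If $w$ is trivial or primitive then $G$ is free and there is nothing to prove. If $w$ is a proper power, equivalently $\pi(w) = 1$, then $G$ is a one-relator group with torsion and is virtually compact special by Wise \cite{Wi21}. In the remaining range $\pi(w)\geq 3$ this is the content of \cite{Li25}. Feeding $\Gamma = G$ into the first paragraph then yields that $G$ is LERF.

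I expect no genuine obstacle here: the geometric input is entirely carried by \cref{thm:one-relator} (hence ultimately by \cref{main}), and the cube-complex input is quoted from \cite{Wi21} and \cite{Li25}. The only points that deserve a sentence of care are (a) confirming that in each of the three regimes for $\pi(w)$ one really has a virtually \emph{compact} special group, so that the Haglund--Wise QCERF theorem applies, and (b) the routine passage through a finite-index special subgroup, for which one uses that local quasi-convexity is inherited by finite-index subgroups and that being QCERF transfers back to the finite-index overgroup. Alternatively, one can bypass the case split on $\pi(w)$ by invoking \cite{KL24b} to see directly that $\pi(w)\neq 2$ forces $G$ to be virtually free-by-cyclic, combine this with \cref{main} and \cref{cor:lqh} for hyperbolicity and local quasi-convexity, and then quote virtual compact specialness of such groups.
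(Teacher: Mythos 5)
Your proposal is correct and follows the same route as the paper: apply \cref{thm:one-relator} for local quasi-convexity and hyperbolicity, invoke virtual compact specialness via \cite{Wi21} and \cite{Li25}, and conclude LERF from Haglund--Wise. The explicit case split on $\pi(w)$ you spell out is just an unpacking of the citations the paper gives in a single sentence, so there is no genuine difference in approach.
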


\subsection{Summary of the article}

After a section of preliminaries, \cref{sec:preliminaries}, in \cref{sec:criteria} we lay the ground work for the proof of \cref{main} by establishing a criterion for relative quasi-convexity of subgroups of graphs of relatively hyperbolic groups, possibly of independent interest. Our criterion, \cref{thm:qc_criterion}, states that if a relatively hyperbolic group $G$ splits as a graph of relatively hyperbolic groups satisfying the conditions of the Mj--Reeves combination theorem \cite{MR08} and an additional condition ---\emph{bounded girth hallways}, see \cref{defn:bounded_hallways}--- then a subgroup $H\leqslant G$ is relatively quasi-convex precisely if its intersections with the vertex groups are relatively quasi-convex and if it acts co-compactly on a subtree of the Bass--Serre tree.

In \cref{sec:graph_pairs} we define graph pairs and describe in detail the Feighn--Handel tightening procedure which was introduced in \cite{FH99} to describe presentations of finitely generated subgroups of mapping tori of free groups. Here we prove a slight strengthening of Feighn--Handel's main result, \cref{FHmain}, and derive some corollaries. In particular, we describe a useful HNN-extension decomposition $F*_{\phi}$ of a mapping torus of a free group $M(\psi)$ which will be part of the set-up in the later sections. Importantly, $F$ is finitely generated and $\phi$ identifies a free factor of $F$ with another subgroup.

In \cref{sec:peripherals}, we describe the collection $\mathcal{P}$ of subgroups of $M(\psi)$ and prove the first part of \cref{main}. The main idea is to analyse the action of $M(\psi)$ on the Bass--Serre tree associated with the HNN-extension decomposition $M(\psi)\cong F*_{\phi}$ from \cref{sec:graph_pairs}. A key property of this action is that it is \emph{relatively acylindrical}; that is, there is a constant $k$ so that any segment of length at least $k$ has stabiliser conjugate to an element in a free factor system of the free group $F$, see \cref{thm:acylindrical}.

In \cref{sec:rel_hyp_mapping_tori} we prove the second part of \cref{main}. Here we verify that all the conditions from the Mj--Reeves combination theorem \cite{MR08}, as well as our bounded girth hallways condition, are satisfied by the splitting $F*_{\phi}$.

In \cref{sec:locally_rel_qc} we complete the proof of \cref{main} and prove \cref{thm:one-relator}. The proof strategy for the local relative quasi-convexity statement will be to try and understand the induced splittings of finitely generated subgroups $H\leqslant M(\psi)$ with respect to the HNN-extension $F*_{\phi}$. Unfortunately, such induced splittings do not have finitely generated vertex and edge groups in general. However, by analysing direct limits of appropriately constructed graph pairs, we show that vertex groups of induced splittings are finitely generated relative to the adjacent edge groups. This will be enough for us to be able to deduce relative quasi-convexity of vertex stabilisers for the action of $H$ on the Bass--Serre tree for $F*_{\phi}$ and apply our relative quasi-convexity criterion, \cref{thm:qc_criterion}.

In \cref{sec:applications} we discuss some applications of \cref{main}.

\subsection*{Acknowledgements}
The author thanks Sam Hughes, Jean-Pierre Mutanguha and Henry Wilton for their comments on a previous version of this article. The author also thanks Mahan Mj for helpful discussions on the combination theorem for relatively hyperbolic groups. 

This work has received support from the grant 202450E223 (Impulso de líneas científicas estratégicas de ICMAT) and has benefitted from the hospitality of the Isaac Newton Institute for Mathematical Sciences where the last stages of this project were completed during the programme Operators, Graphs, and Groups.

\section{Preliminaries}
\label{sec:preliminaries}

\subsection{Graphs and graph maps}

A \emph{graph} for us will be a 1-dimensional CW-complex. We shall always assume that a cellular structure has been fixed on any given graph. A \emph{graph map} is a cellular map on graphs which sends 0-cells (\emph{vertices}) to 0-cells and open 1-cells (\emph{edges}) homeomorphically to open 1-cells. We will sometimes write $V(\Gamma)$ and $E(\Gamma)$ for the vertex and edge set of a graph $\Gamma$. Two edges are \emph{adjacent} if they both share an endpoint. A graph is \emph{pointed} if it comes with a basepoint, we shall usually denote this by a pair $(\Delta, u)$. A \emph{pointed graph map} $(\Gamma, v)\to (\Delta, u)$ is a graph map which sends the basepoint $v$ to the basepoint $u$. A rose graph is any graph with a single vertex. 

If $\alpha\colon I\to \Delta$ is a path, we write $o(\alpha)$ for the origin of $\alpha$ and $t(\alpha)$ for the target of $\alpha$. If $\alpha, \beta\colon I\to \Delta$ are two paths with $t(\alpha) = o(\beta)$, then we write $\alpha*\beta$ for their concatenation. When $\alpha\colon I\to \Delta$ is a loop at a vertex $u$, then we write $[\alpha]$ for the corresponding group element of $\pi_1(\Delta, u)$. Our graphs will be given the path metric induced by identifying each edge with a unit Euclidean interval. Then the \emph{length} $\len(\alpha)$ of a path $\alpha$ is its length with respect to this metric.

An \emph{immersion} of graphs is a locally injective graph map and is denoted by $\immerses$. Recall that if $\Gamma\immerses\Delta$ is an immersion of graphs, then the induced map on fundamental group(oid)s $\pi_1(\Gamma)\to \pi_1(\Delta)$ is injective. We will often use this fact without mention, identifying the fundamental group of $\Gamma$ with the image subgroup of $\pi_1(\Delta)$. The reader is directed towards Stallings article \cite{St83} for further details.

A graph $\Gamma$ is \emph{core} if it is the union of the images of all its immersed cycles $S^1\immerses \Gamma$. The \emph{core of a graph $\Gamma$} is the subgraph $\core(\Gamma)\subset\Gamma$ consisting of the union of all immersed cycles $S^1\immerses\Gamma$. A pointed graph $(\Gamma, v)$ is \emph{pointed core} if it is the union of the images of all its immersed loops $I\immerses \Gamma$  at the basepoint $v$. Similarly, the \emph{pointed core of $(\Gamma, v)$} is the pointed subgraph $\core(\Gamma, v)\subset (\Gamma, v)$ consisting of the union of the images of all immersed loops $I\immerses\Gamma$ at $v$.

If $(\Delta_1, u_1)$ and $(\Delta_2, u_2)$ are two pointed graphs, denote by $\Delta_1\vee \Delta_2$ the graph obtained from $\Delta_1\sqcup\Delta_2$ by identifying the two basepoints $u_1$ and $u_2$.

If $\Gamma$ is a graph and $\Lambda\subset \Gamma$ is a subgraph, recall that the relative Euler characteristic is:
\[
\chi(\Gamma, \Lambda) = \#\{\text{$0$-cells in $\Gamma - \Lambda$}\} - \#\{\text{$1$-cells in $\Gamma - \Lambda$}\}.
\]
Note that $\chi(\Gamma, \Lambda)$ is only defined if $\Gamma - \Lambda$ contains finitely many 0-cells and 1-cells. When $\Gamma$ is finite, $\chi(\Gamma, \Lambda) = \chi(\Gamma) - \chi(\Lambda)$.

\subsection{Folds and subgroups of free groups}
\label{sec:folds}

Let $\gamma\colon \Gamma\to \Delta$ be a graph map and suppose that $e_1, e_2$ are two edges with a common endpoint $v$ that both map to the same edge under $\gamma$. Then by identifying $e_1$ with $e_2$ (and by identifying the endpoints) via $\gamma$, we obtain a new graph $\Gamma'$, a graph map $\gamma'\colon \Gamma'\to \Delta$ and a quotient map $f\colon \Gamma\to \Gamma'$ such that $\gamma = \gamma'\circ f$. We say that $\Gamma'$ and $\gamma'$ are obtained from $\Gamma$ and $\gamma$ by a \emph{fold} or by \emph{folding $e_1$ and $e_2$}. Stallings showed that any graph map $\Gamma \to \Delta$ with $\Gamma$ a finite graph can be factored as a finite sequence of folds followed by a graph immersion \cite{St83}:
\[
\begin{tikzcd}
\Gamma = \Gamma_0 \arrow[r, "f_1"] & \Gamma_1  \arrow[r, "f_2"]  & \ldots  \arrow[r, "f_n"]  &\Gamma_n  \arrow[r, loop->] & \Delta
\end{tikzcd}
\]
The sequence of folds is not unique, but the final graph immersion $\Gamma_n\immerses\Delta$ is. The same holds true for infinite graphs after passing to a direct limit. This will be explained in \cref{sec:direct_limit} where we shall need it.

The following fact due to Stallings \cite{St83} will be very useful.

\begin{lemma}
\label{lem:subgroup_bijections}
Let $\Delta$ be a graph and let $u\in \Delta$ be a vertex. The maps given by the $\pi_1$-functor
\begin{align*}
\{(\Gamma, v) \immerses (\Delta, u)\mid (\Gamma, v) = \core(\Gamma, v) \} &\to \{ H \mid H\leqslant \pi_1(\Delta, u)\}\\
\{\Gamma \immerses \Delta \mid \Gamma = \core(\Gamma) \} &\to \{ [H] \mid 1\neq H\leqslant \pi_1(\Delta, u)\}
\end{align*}
are bijections, where here $[H]$ denotes the conjugacy class of $H$ in $\pi_1(\Delta, u)$.
\end{lemma}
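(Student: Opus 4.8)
The plan is to build explicit inverses to the two maps using covering-space theory, taking as a black box the $\pi_1$-injectivity of immersions recalled above. Write $p$ for the map induced by the $\pi_1$-functor in each case. For the pointed statement, given $H\leqslant\pi_1(\Delta,u)$ I would let $q_H\colon(\Delta_H,\hat u)\to(\Delta,u)$ be the connected covering with $q_{H*}\pi_1(\Delta_H,\hat u)=H$ (this exists since $\Delta$ is a graph, concretely as a coset graph); as covering maps are immersions, $(\core(\Delta_H,\hat u),\hat u)\immerses(\Delta,u)$ is a pointed core immersion. The first thing to check is that the inclusion of any pointed core subgraph induces an isomorphism on $\pi_1$: injectivity is automatic for a subgraph, and surjectivity holds because every element of $\pi_1$ is represented by an immersed loop at the basepoint, obtained by reducing an arbitrary representative. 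Hence $p$ sends this immersion back to $H$, so $H\mapsto\core(\Delta_H,\hat u)$ is a right inverse to $p$.

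For the other composite, let $(\Gamma,v)\immerses(\Delta,u)$ be a pointed core immersion with image $H=p(\Gamma,v)$. Since $\pi_1(\Gamma,v)=H$, the immersion lifts through $q_H$ to a graph map $f\colon(\Gamma,v)\to(\Delta_H,\hat u)$, which is again an immersion because $q_H$ is a local homeomorphism, and a short diagram chase through the injections into $\pi_1(\Delta,u)$ shows $f$ induces an isomorphism on $\pi_1$. The crux is then to identify $f(\Gamma)$ with $\core(\Delta_H,\hat u)$: one inclusion holds because $f$ carries immersed loops at $v$ to immersed loops at $\hat u$, so $f(\Gamma)$ is a union of such loops; the reverse inclusion holds because for any immersed loop $c$ at $\hat u$ we have $[c]\in\pi_1(f(\Gamma),\hat u)$, hence $c$ coincides with the immersed loop at $\hat u$ representing $[c]$ that lies in $f(\Gamma)$, using that reduced representatives are unique (lift to the universal cover, which is a tree). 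Finally, a surjective immersion inducing a $\pi_1$-isomorphism is an isomorphism --- pass to universal covers, where it becomes a surjective immersion of trees --- so $f$ identifies $(\Gamma,v)$ with $(\core(\Delta_H,\hat u),\hat u)$ over $\Delta$; thus $p$ is injective and the pointed case is done.

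For the unpointed statement I would choose a vertex $v\in\Gamma$ and its image, and associate to $\Gamma\immerses\Delta$ the conjugacy class in $\pi_1(\Delta,u)$ of $p(\Gamma,v)$; this is well defined since changing $v$ or the change-of-basepoint isomorphism only conjugates the subgroup, and it is non-trivial since a non-empty connected core graph has non-trivial $\pi_1$. Surjectivity uses the core immersion $\core(\Delta_H)\immerses\Delta$, which is non-empty exactly when $H\neq1$. For injectivity: a connected core $\Gamma\immerses\Delta$ lifts to the cover $\Delta_H$ (which, up to isomorphism of covers over $\Delta$, depends only on the conjugacy class), the lift $f$ is a $\pi_1$-isomorphism onto a core subgraph carrying the full $\pi_1$, and this forces $f(\Gamma)=\core(\Delta_H)$ because immersions take immersed cycles to immersed cycles and a conjugacy class of $\pi_1(\Delta_H)$ determines a unique immersed cycle; as before $f$ is then an isomorphism, and since the right-hand side no longer depends on any choices, two such $\Gamma$ are isomorphic over $\Delta$. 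The main obstacle throughout is exactly this family of uniqueness statements --- that a $\pi_1$-bijective immersion onto a (pointed) core graph must be an isomorphism --- together with the care needed to keep the conjugacy-class version honest; both ultimately rest on the uniqueness of (cyclically) reduced representatives in a free group, i.e.\ on the fact that trees are hyperbolic.
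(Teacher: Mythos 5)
The paper does not prove this lemma but attributes it to Stallings \cite{St83}, and your argument is precisely a correct reconstruction of the standard covering-space proof from that reference: build the inverse via the pointed core of the cover $\Delta_H\to\Delta$, lift a given core immersion through $q_H$, and show the lift is an isomorphism onto $\core(\Delta_H)$ using uniqueness of reduced (resp.\ cyclically reduced) representatives. One small gloss worth tightening: in the step ``a surjective immersion inducing a $\pi_1$-isomorphism is an isomorphism,'' passing to universal covers gives injectivity for free (immersions of trees are injective), but surjectivity of the lifted map uses $\pi_1$-surjectivity together with equivariance, not merely surjectivity downstairs --- your conclusion is right, but the parenthetical slightly undersells where $\pi_1$-surjectivity enters.
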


The inverse of the maps from \cref{lem:subgroup_bijections} are given by taking the (pointed) core of the cover associated with the subgroup. If $H\leqslant \pi_1(\Delta, u)$ is a subgroup, the \emph{subgroup graph (immersion)} for $[H]$, which we shall denote by $\Gamma[H]\immerses\Delta$, is the unique immersion of a core graph such that the conjugacy class $[\pi_1(\Gamma[H], v)]$ in $\pi_1(\Delta, u)$ is precisely $[H]$. The \emph{pointed subgroup graph (immersion)} for $H$, which we shall denote by $(\Gamma(H), v)\immerses (\Delta, u)$, is the unique immersion of a pointed core graph such that $\pi_1(\Gamma(H), v)$ is precisely $H$. If $\delta\in \pi_1(\Delta, u)$, then we will abuse notation and write $\Gamma(\delta)$ when we mean $\Gamma(\langle \delta\rangle)$.

\begin{lemma}
\label{lem:factorise}
Let $\gamma\colon(\Gamma, v)\immerses(\Delta, u)$ and $\lambda\colon (\Lambda, w)\immerses(\Delta, u)$ be immersions of graphs with $(\Gamma, v)$ pointed core. If $\gamma_*\pi_1(\Gamma, v)$ is contained in $\lambda_*\pi_1(\Lambda, w)$, then $\gamma$ factorises uniquely through $\lambda$. If $\gamma_*\pi_1(\Gamma, v)$ is conjugate into $\lambda_*\pi_1(\Lambda, w)$, then the restriction of $\gamma$ to $\core(\Gamma)$ factorises through $\lambda$.
\end{lemma}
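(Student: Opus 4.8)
The plan is to trade the immersion $\lambda$ for an honest covering space and then run the covering--space lifting criterion. First I would invoke Stallings' completion \cite{St83}: the immersion $\lambda\colon(\Lambda, w)\immerses(\Delta, u)$ embeds as a subgraph in a covering $p\colon(\hat\Lambda, w)\to(\Delta, u)$ with $p|_{\Lambda}=\lambda$, where $\hat\Lambda$ is obtained from $\Lambda$ by attaching hanging trees, so that $\Lambda\hookrightarrow\hat\Lambda$ is a deformation retract and hence $p_*\pi_1(\hat\Lambda, w)=\lambda_*\pi_1(\Lambda, w)$. I would also record three elementary facts for repeated use: a composite of graph immersions is a graph immersion; the covering map $p$, being a local homeomorphism, both carries and reflects local injectivity of edge paths (so an immersed loop/cycle lifts to an immersed loop/cycle); and, tautologically, the image of a locally injective loop based at a vertex $x$ lies in $\core(\,\cdot\,, x)$ and the image of an immersed cycle lies in the core. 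Finally I would note the minimality of cores: if $\Lambda'\subseteq X$ is a subgraph of a graph with $w\in\Lambda'$ and the inclusion is a $\pi_1$-isomorphism, then $\core(X, w)\subseteq\Lambda'$ and $\core(X)\subseteq\Lambda'$, since an immersed loop (resp. cycle) is the unique reduced (resp. cyclically reduced) representative of its class and reducing a loop inside $\Lambda'$ never leaves $\Lambda'$.

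For the first statement I would argue as follows. Since $\gamma_*\pi_1(\Gamma, v)\subseteq\lambda_*\pi_1(\Lambda, w)=p_*\pi_1(\hat\Lambda, w)$ and $\Gamma$ is a connected, locally path-connected graph, the lifting criterion yields a unique pointed lift $\tilde\gamma\colon(\Gamma, v)\to(\hat\Lambda, w)$ with $p\circ\tilde\gamma=\gamma$. It remains to see $\tilde\gamma(\Gamma)\subseteq\Lambda$. As $(\Gamma, v)$ is pointed core, $\Gamma$ is the union of the images of its immersed loops $\ell$ at $v$; for each such $\ell$ the path $\gamma\circ\ell$ is an immersed loop at $u$, so its lift starting at $w$ --- which by unique path lifting is $\tilde\gamma\circ\ell$ --- is an immersed loop at $w$ in $\hat\Lambda$, and hence has image in $\core(\hat\Lambda, w)$. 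Taking the union over all $\ell$ gives $\tilde\gamma(\Gamma)\subseteq\core(\hat\Lambda, w)\subseteq\Lambda$ by minimality, so $\tilde\gamma$ corestricts to the desired factorisation $\mu\colon(\Gamma, v)\to(\Lambda, w)$; uniqueness is immediate because any competing pointed factorisation of $\gamma$ through $\lambda$ is, composed with $\Lambda\hookrightarrow\hat\Lambda$, a pointed lift of $\gamma$ through $p$.

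For the second statement I would first dispose of the case $\gamma_*\pi_1(\Gamma, v)=1$, where $\core(\Gamma)=\emptyset$ and there is nothing to prove. Otherwise write $\gamma_*\pi_1(\Gamma, v)\subseteq g^{-1}\bigl(\lambda_*\pi_1(\Lambda, w)\bigr)g$ for some $g\in\pi_1(\Delta, u)$. Lifting a reduced loop at $u$ representing $g$ along $p$ starting at $w$ lands at a vertex $w_1\in p^{-1}(u)$ with $p_*\pi_1(\hat\Lambda, w_1)=g^{-1}\bigl(\lambda_*\pi_1(\Lambda, w)\bigr)g\supseteq\gamma_*\pi_1(\Gamma, v)$, so the lifting criterion again gives $\tilde\gamma\colon(\Gamma, v)\to(\hat\Lambda, w_1)$ with $p\circ\tilde\gamma=\gamma$. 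Now for every immersed cycle $c$ in $\Gamma$, the composite $\gamma\circ c$ is an immersed cycle in $\Delta$, so its closed lift $\tilde\gamma\circ c$ is an immersed cycle in $\hat\Lambda$ and therefore has image in $\core(\hat\Lambda)$. Taking the union over all immersed cycles of $\Gamma$ gives $\tilde\gamma(\core(\Gamma))\subseteq\core(\hat\Lambda)\subseteq\Lambda$, and $\mu:=\tilde\gamma|_{\core(\Gamma)}$ is a graph map $\core(\Gamma)\to\Lambda$ with $\lambda\circ\mu=\gamma|_{\core(\Gamma)}$.

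The only real obstacle is the containment ``the lift lands in $\Lambda$, not merely in the covering $\hat\Lambda$'': this is exactly where the pointed-core hypothesis on $\Gamma$ enters, combined with the minimality of the pointed core of $\hat\Lambda$. In the conjugate case one additionally has to pick the correct sheet $w_1$ of the covering and to downgrade from the pointed core to the unpointed core of $\hat\Lambda$ --- which is precisely why only the restriction to $\core(\Gamma)$, and not $\gamma$ itself, can be factored there. Everything else is routine covering-space bookkeeping together with the standard dictionary between subgroups of free groups and immersed core graphs.
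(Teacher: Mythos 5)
The paper states this lemma without proof, presenting it alongside two neighbouring facts attributed to Stallings \cite{St83}, so there is no paper argument to compare against line by line. Your proof is correct and is exactly the standard argument one would expect here: complete $\lambda$ to a covering $p\colon\hat\Lambda\to\Delta$ by attaching hanging trees, lift $\gamma$ via the covering-space lifting criterion (choosing the sheet $w_1$ over $u$ determined by the conjugating element in the second case), and then use the pointed-core (resp.\ core) hypothesis on $\Gamma$ together with the observation that immersed loops at a vertex (resp.\ immersed cycles) cannot enter the attached trees to see that the lift lands in $\Lambda\subseteq\hat\Lambda$. The uniqueness claim in the first part follows cleanly from uniqueness of pointed lifts along $p$, as you say. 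One small point worth being explicit about, though it does not affect correctness: the lifting criterion produces a continuous map, and you should note that a lift of a graph map along a graph covering is automatically a graph map (and is an immersion, since $p\circ\tilde\gamma=\gamma$ is and $p$ is a local isomorphism) --- you use both facts implicitly when you say $\tilde\gamma\circ\ell$ and $\tilde\gamma\circ c$ are immersed.
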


If $\gamma, \lambda\colon \Gamma, \Lambda\immerses\Delta$ are two graph immersions, their pullback, which exists and can be described explicitly (see \cite{St83}), is denoted by $\Gamma\times_{\Delta}\Lambda$. The pullback comes with natural projections maps $p_{\Gamma}, p_{\Lambda}\colon \Gamma\times_{\Delta}\Lambda\to \Gamma, \Lambda$.

The following is explained in \cite{St83}.

\begin{lemma}
\label{lem:double_coset}
Let $\gamma, \lambda\colon(\Gamma, v), (\Lambda, w)\immerses(\Delta, u)$ be immersions of graphs and let $\Gamma\times_{\Delta}\Lambda$ be their pullback. There is a bijection
\[
\pi_0(\core(\Gamma\times_{\Delta}\Lambda)) \to \{ \pi_1(\Gamma, v)\cdot g\cdot \pi_1(\Lambda, w) \mid \pi_1(\Gamma, v)^g\cap \pi_1(\Lambda, w) \neq 1\}
\]
given by choosing a vertex $x\in \Theta\in \pi_0(\core(\Gamma\times_{\Delta}\Lambda))$ and choosing any pair of paths $\alpha\colon I\to \Gamma$ and $\beta\colon I\to \Lambda$ connecting $v$ with $p_{\Gamma}(x)$ and $w$ with $p_{\Lambda}(x)$ respectively, and sending
\[
\Theta \mapsto \pi_1(\Gamma, v)\cdot [\gamma\circ\alpha*\overline{\lambda\circ\beta}]\cdot \pi_1(\Lambda, w).
\]
Explicitly, we have $\pi_1(\Theta, x)^{[\overline{\lambda\circ\beta}]} = \pi_1(\Gamma, v)^{[\gamma\circ\alpha*\overline{\lambda\circ\beta}]}\cap \pi_1(\Lambda, w)$.
\end{lemma}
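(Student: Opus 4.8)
The plan is to recognise the core of the pullback, after a few harmless reductions, as the core of an honest covering space of $\Delta$, and then to invoke the classical description of the components of a fibre product of covers in terms of double cosets. We may assume $\Gamma$ and $\Lambda$ are connected (otherwise pass to the components of the basepoints, which is all the recipe sees). Since the projections $p_{\Gamma},p_{\Lambda}$ send immersed cycles of $\Gamma\times_{\Delta}\Lambda$ to immersed cycles, the subgraph $\core(\Gamma\times_{\Delta}\Lambda)$ is unchanged on replacing $\Gamma$ and $\Lambda$ by their pointed cores, and $\pi_1(\Gamma,v),\pi_1(\Lambda,w)$ are unchanged too; so we may assume $\Gamma=\core(\Gamma,v)$ and $\Lambda=\core(\Lambda,w)$ (if either fundamental group is trivial both sides of the asserted bijection are empty and there is nothing to prove). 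Two facts are then immediate: first, $p_{\Gamma}$ and $p_{\Lambda}$ are themselves immersions, directly from the explicit description of the pullback in \cite{St83} together with local injectivity of $\gamma,\lambda$; second, by \cref{lem:subgroup_bijections} the immersion $\gamma$ is the pointed subgroup graph immersion of $\pi_1(\Gamma,v)$, so --- the inverse of that bijection being ``take the pointed core of the cover'' --- it lifts to an embedding of $\Gamma$ onto $\core(\widehat{\Delta}_{\Gamma},\widetilde{v})\subseteq\widehat{\Delta}_{\Gamma}$, where $\widehat{\Delta}_{\Gamma}\to\Delta$ is the based covering space with $\pi_1(\widehat{\Delta}_{\Gamma},\widetilde{v})=\pi_1(\Gamma,v)$; in particular the unpointed core $\core(\widehat{\Delta}_{\Gamma})$ is contained in $\Gamma$. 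The same applies to $\lambda$ and $\widehat{\Delta}_{\Lambda}$.

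The embeddings $\Gamma\hookrightarrow\widehat{\Delta}_{\Gamma}$ and $\Lambda\hookrightarrow\widehat{\Delta}_{\Lambda}$ are morphisms over $\Delta$, so they induce an embedding $\Gamma\times_{\Delta}\Lambda\hookrightarrow\widehat{\Delta}_{\Gamma}\times_{\Delta}\widehat{\Delta}_{\Lambda}$, and the target is now a covering space of $\Delta$, since a fibre product of covers of $\Delta$ is a cover of $\Delta$. The projections of that fibre product are again covers, hence immersions, so $\core(\widehat{\Delta}_{\Gamma}\times_{\Delta}\widehat{\Delta}_{\Lambda})\subseteq\core(\widehat{\Delta}_{\Gamma})\times_{\Delta}\core(\widehat{\Delta}_{\Lambda})\subseteq\Gamma\times_{\Delta}\Lambda$. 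Chasing the inclusions $\core(\widehat{\Delta}_{\Gamma}\times_{\Delta}\widehat{\Delta}_{\Lambda})\subseteq\Gamma\times_{\Delta}\Lambda\subseteq\widehat{\Delta}_{\Gamma}\times_{\Delta}\widehat{\Delta}_{\Lambda}$ through the operator $\core$, which is monotone and idempotent, yields $\core(\Gamma\times_{\Delta}\Lambda)=\core(\widehat{\Delta}_{\Gamma}\times_{\Delta}\widehat{\Delta}_{\Lambda})$. It remains to apply the classical fact (see \cite{St83}, or covering space theory): the covering space $\widehat{\Delta}_{\Gamma}\times_{\Delta}\widehat{\Delta}_{\Lambda}$ is the disjoint union, indexed by the double cosets $\pi_1(\Gamma,v)\backslash\pi_1(\Delta,u)/\pi_1(\Lambda,w)$, of the based covers associated with the subgroups $\pi_1(\Gamma,v)^{g}\cap\pi_1(\Lambda,w)$, the basepoint of each piece being the one recorded by the path-difference recipe in the statement. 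A component of a cover has empty core precisely when its fundamental group is trivial; combining this with the previous identification of cores, $\pi_0(\core(\Gamma\times_{\Delta}\Lambda))$ is in bijection with exactly those double cosets $\pi_1(\Gamma,v)\cdot g\cdot\pi_1(\Lambda,w)$ for which $\pi_1(\Gamma,v)^{g}\cap\pi_1(\Lambda,w)\neq1$, and the stated formula for $\pi_1(\Theta,x)$ is the translation of ``$\pi_1$ of the component of $x$'' into this bijection.

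The real work is the bookkeeping underlying the last step: one must check that the path-difference recipe stated in the lemma --- which uses an arbitrary vertex $x$ of a core component $\Theta$, possibly lying over a vertex $z\neq u$, and paths $\alpha,\beta$ taken inside $\Gamma$ and $\Lambda$ --- computes the same double coset as the covering-theoretic one (normally phrased via the fibre over the basepoint $u$), and that conjugation by $[\overline{\lambda\circ\beta}]$ matches $\pi_1(\Theta,x)$ with $\pi_1(\Gamma,v)^{[\gamma\circ\alpha*\overline{\lambda\circ\beta}]}\cap\pi_1(\Lambda,w)$ exactly. This is a diagram chase that uses only $\gamma\circ p_{\Gamma}=\lambda\circ p_{\Lambda}$ and the observation that altering $\alpha$ (respectively $\beta$) by a loop at $v$ (respectively $w$) multiplies the relevant element on the left (respectively right) by an element of $\pi_1(\Gamma,v)$ (respectively $\pi_1(\Lambda,w)$), so that the double coset, and the displayed identity, are well defined and independent of all choices. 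A self-contained alternative avoiding covers is to verify well-definedness, injectivity and surjectivity of the recipe by hand, using the single principle that a reduced path in $\Gamma$ and a reduced path in $\Lambda$ with a common image in $\Delta$ glue to a necessarily reduced path in $\Gamma\times_{\Delta}\Lambda$ (together with uniqueness of reduced representatives of path classes in a graph); there the only delicate point is in surjectivity, where one pads the chosen representative of a double coset so that the relevant concatenated loops in $\Delta$ are already compatible enough to lift to the pullback.
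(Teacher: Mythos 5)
The paper does not actually prove this lemma itself; it simply defers to Stallings \cite{St83}. Your proposal therefore supplies an argument where the paper gives only a citation, and the argument is correct. The route of embedding $(\Gamma,v)$ and $(\Lambda,w)$ into the based covers $\widehat{\Delta}_\Gamma$, $\widehat{\Delta}_\Lambda$ classified by $\pi_1(\Gamma,v)$ and $\pi_1(\Lambda,w)$, and then pushing the (monotone, idempotent) operator $\core$ through the chain of inclusions $\core(\widehat{\Delta}_\Gamma\times_\Delta\widehat{\Delta}_\Lambda)\subseteq\Gamma\times_\Delta\Lambda\subseteq\widehat{\Delta}_\Gamma\times_\Delta\widehat{\Delta}_\Lambda$, is a tidy way to reduce the immersion statement to the classical decomposition of a fibre product of covers into based covers indexed by double cosets. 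Stallings' own treatment is closer to the direct alternative you sketch at the end: it works with the pullback of immersions itself, gluing reduced paths in $\Gamma$ and $\Lambda$ with a common reduced image in $\Delta$ into a reduced path in the pullback. Your main route trades that bookkeeping for the covering-space computation. One point worth spelling out in the reduction: the inclusion $\core(\widehat{\Delta}_\Gamma)\subseteq\Gamma$, after identifying $\Gamma$ with $\core(\widehat{\Delta}_\Gamma,\widetilde{v})$ via \cref{lem:subgroup_bijections}, uses that the unpointed core of a connected graph with nontrivial $\pi_1$ is connected and that the pointed core is obtained from it by attaching the unique arc to the basepoint; you implicitly rely on this and have correctly excluded the degenerate case $\pi_1 = 1$ where both sides of the asserted bijection are empty. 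The displayed identity $\pi_1(\Theta,x)^{[\overline{\lambda\circ\beta}]}=\pi_1(\Gamma,v)^{[\gamma\circ\alpha*\overline{\lambda\circ\beta}]}\cap\pi_1(\Lambda,w)$ and the independence of the double coset from the choices of $x$, $\alpha$, $\beta$ do follow, as you say, from $\gamma\circ p_\Gamma=\lambda\circ p_\Lambda$ and the change-of-basepoint identities, and I have checked that computation.
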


\subsection{Free factor systems}

If $F$ is a free group, a collection of subgroups $\{A_{\alpha}\}$ of $F$ is a \emph{free factor system} if for each $\alpha$ there is some element $f_{\alpha}\in F$ such that $\Asterisk_{\alpha}A_{\alpha}^{f_{\alpha}}$ is a free factor of $F$.

We record the following well-known fact which can be seen directly from \cref{lem:double_coset}.

\begin{lemma}
\label{ff_malnormal}
If $F$ is a free group and $\{A_{\alpha}\}$ is a free factor system of $F$, then $\{A_{\alpha}\}$ forms a malnormal collection.
\end{lemma}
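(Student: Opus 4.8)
The plan is to exhibit a model graph for $F$ in which the given free factors appear as pairwise \emph{disjoint} embedded subgraphs, and then to read malnormality off directly from the pullback description in \cref{lem:double_coset}.

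First I would reduce to a convenient normal form. Since the property of being a malnormal collection is insensitive to replacing each member by a conjugate, after renaming $A_\alpha^{f_\alpha}$ as $A_\alpha$ we may assume that $A:=\Asterisk_\alpha A_\alpha$ is literally a free factor, $F = A * B$; we may also discard the trivial $A_\alpha$, since those impose no condition. (One could even reduce to a collection of size at most two, since any two members of a free factor system again form a free factor system --- a free factor of a free factor is a free factor --- but this is not strictly necessary, as infinite graphs are permitted here.)

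Next I would build the model. For each $\alpha$ pick a rose $X_\alpha$ with $\pi_1(X_\alpha)\cong A_\alpha$, and a rose $Y$ with $\pi_1(Y)\cong B$, and form $\Delta$ from the disjoint union $\bigsqcup_\alpha X_\alpha\sqcup Y$ by adjoining a new vertex $*$ together with, for each $\alpha$, a single edge $d_\alpha$ from $*$ into $X_\alpha$, and one further edge from $*$ into $Y$. Collapsing the tree spanned by these new edges realises $\Delta$ as a wedge of the $X_\alpha$ and $Y$, so there is an isomorphism $\pi_1(\Delta,*)\cong(\Asterisk_\alpha A_\alpha)*B = F$ under which, writing $Z_\alpha := X_\alpha\cup d_\alpha$, the subgroup $\pi_1(Z_\alpha,*)$ corresponds exactly to the free factor $A_\alpha$. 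I would use this isomorphism to identify $\pi_1(\Delta,*)$ with $F$. The essential feature is that the $X_\alpha$ are pairwise disjoint subgraphs of $\Delta$, none of which contains $*$; hence each inclusion $Z_\alpha\hookrightarrow\Delta$ is an injective immersion, and $Z_\alpha\cap Z_\beta = \{*\}$ whenever $\alpha\neq\beta$.

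Finally I would apply \cref{lem:double_coset} to the inclusions $Z_\alpha,Z_\beta\immerses\Delta$. When $\alpha\neq\beta$, the pullback $Z_\alpha\times_\Delta Z_\beta$ equals the intersection $Z_\alpha\cap Z_\beta = \{*\}$, a single vertex, whose core is empty; hence there is no $g\in F$ with $A_\alpha^g\cap A_\beta\neq 1$. When $\alpha=\beta$, the inclusion $Z_\alpha\hookrightarrow\Delta$ is injective, so $Z_\alpha\times_\Delta Z_\alpha$ is the diagonal copy of $Z_\alpha$, whose core is the connected nonempty graph $X_\alpha$; hence exactly one double coset $A_\alpha g A_\alpha$ satisfies $A_\alpha^g\cap A_\alpha\neq 1$, and since the trivial double coset $A_\alpha\cdot 1\cdot A_\alpha = A_\alpha$ qualifies (its intersection is $A_\alpha\neq 1$), it must be that one, so $A_\alpha^g\cap A_\alpha\neq 1$ forces $g\in A_\alpha$. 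These two facts are precisely the assertion that $\{A_\alpha\}$ is a malnormal collection, and undoing the conjugations by the $f_\alpha$ returns the statement for the original collection. The main obstacle is the model-building step: one really does need that a free factor system can be realised by disjoint subgraphs of a graph for $F$; after that, everything is a one-line reading of \cref{lem:double_coset}, once one notes that an empty pullback core means no qualifying double coset while a connected pullback core means only the trivial one qualifies.
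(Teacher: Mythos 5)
The paper states this lemma without proof, noting only that it ``can be seen directly from \cref{lem:double_coset}''; your argument carries out exactly that intended route, realising the free factor system as a family of pairwise disjoint subgraphs of a model graph and reading malnormality off from the pullback (an empty core when $\alpha\neq\beta$, a connected core of the diagonal when $\alpha=\beta$). Your proof is correct and matches the approach the paper itself uses in the adjacent \cref{ff_system}, where the free factors are similarly realised as subgraphs of a rose meeting only at the basepoint.
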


Free factor systems behave well when intersecting with subgroups.

\begin{lemma}
\label{ff_system}
Let $F$ be a free group, let $\{A_{\alpha}\}$ be a free factor system for $F$ and let $H\leqslant F$ be a subgroup. For each $\alpha$, let $\{f_{\alpha, \beta}\}$ be any collection of elements in distinct $A_{\alpha}, H$ double cosets such that $A^{f_{\alpha, \beta}}\cap H\neq 1$, then $\{A_{\alpha}^{f_{\alpha, \beta}}\cap H\}$ is a free factor system for $H$.

In particular, if each $A_{\alpha}$ is contained in $H$, then $\{A_{\alpha}\}$ is a free factor system for $H$. 
\end{lemma}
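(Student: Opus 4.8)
The plan is to reduce the statement to a combination of Stallings' pullback machinery (\cref{lem:double_coset}) and the defining property of a free factor system, handling the issue that the $A_\alpha$ need not sit inside $F$ as literal free factors but only up to conjugacy. First I would fix, for each $\alpha$, an element $f_\alpha\in F$ with $\bigast_\alpha A_\alpha^{f_\alpha}$ a free factor of $F$; after replacing each $A_\alpha$ by its conjugate $A_\alpha^{f_\alpha}$ (which does not change the statement, since conjugating the whole system only re-indexes the double cosets in the conclusion), I may assume $A := \bigast_\alpha A_\alpha$ is literally a free factor of $F$, say $F = A * B$. The goal then becomes: $\{A_\alpha^{f_{\alpha,\beta}}\cap H\}$ is a free factor system of $H$, where the $f_{\alpha,\beta}$ range over representatives of those $A_\alpha$–$H$ double cosets whose corresponding intersection is nontrivial.

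The key step is to realise everything via core graphs and pullbacks. Let $\Delta$ be a wedge of circles with $\pi_1(\Delta,u)=F$, and let $\Lambda = \Gamma[H]$ (or the pointed version $\Gamma(H)$) be the subgroup graph of $H$, so $\Lambda\immerses\Delta$. The free factor $A = \pi_1(\Delta_A, u)$ for a subgraph-type immersion $\Delta_A\immerses\Delta$ realising the free factor $A$ — more precisely, since $A$ is a free factor, $\Gamma[A]\immerses \Delta$ can be taken so that $\Delta$ deformation retracts onto it after collapsing the $B$-part, which is exactly what makes $A$ a free factor. Now form the pullback $\Lambda \times_\Delta \Delta_A$. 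By \cref{lem:double_coset}, the components of its core biject with the $H$–$A$ double cosets $HgA$ for which $H^g\cap A\neq 1$, and the component $\Theta$ corresponding to $HgA$ has $\pi_1(\Theta)$ conjugate to $H\cap A^{g^{-1}}$ (appropriately placed). Since each intersection $H\cap A^{g}$ splits as a free product of the intersections $H\cap A_\alpha^{g f}$ over the coset decomposition of $A^g$ induced by the factorisation $A = \bigast A_\alpha$ (this uses Kurosh-type behaviour of intersections with a free product, or equivalently a finer pullback analysis replacing $\Delta_A$ by its pieces), we obtain that $\pi_1(\Lambda\times_\Delta\Delta_A$, restricted to its core) is, on each component, a free product of conjugates of the $A_\alpha\cap H$'s. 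Running this for all $\alpha$ simultaneously and collecting one representative per nontrivial double coset yields precisely the collection $\{A_\alpha^{f_{\alpha,\beta}}\cap H\}$.

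It then remains to see that this collection is a free factor system of $H$, i.e. that $\bigast_{\alpha,\beta}(A_\alpha^{f_{\alpha,\beta}}\cap H)^{h_{\alpha,\beta}}$ is a free factor of $H$ for suitable $h_{\alpha,\beta}$. This is where I would invoke the graph-theoretic characterisation of free factors: a subgroup $K\leqslant \pi_1(\Lambda)$ is a free factor iff the subgroup graph $\Gamma[K]$ embeds in $\Lambda$ as a subgraph onto which $\Lambda$ collapses (up to homotopy) — equivalently $\chi(\Lambda) - \chi(\Gamma[K])$ accounts for the complementary free rank. The pullback $\Lambda\times_\Delta\Delta_A$ maps to $\Lambda$ by $p_\Lambda$, and since $\Delta_A\immerses\Delta$ is the inclusion (up to homotopy) of a subgraph that $\Delta$ collapses onto, the base change $p_\Lambda$ exhibits $\core(\Lambda\times_\Delta\Delta_A)$ as a subgraph of $\Lambda$ that $\Lambda$ collapses onto; hence its fundamental groupoid gives a free factor of $\pi_1(\Lambda) = H$. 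The final sentence of the lemma is the degenerate case: if every $A_\alpha\leqslant H$, then the relevant double coset is just $H\cdot 1\cdot A_\alpha$ with $f_{\alpha,\beta}=1$, the intersection is $A_\alpha$ itself, and the argument above simply recovers $\{A_\alpha\}$ as a free factor system of $H$.

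The main obstacle I anticipate is the bookkeeping needed to pass from "$\core(\Lambda\times_\Delta\Delta_A)$ is a subgraph of $\Lambda$ onto which $\Lambda$ collapses" to the precise statement that $\bigast(A_\alpha^{f_{\alpha,\beta}}\cap H)$ (and not merely the $A^{g}\cap H$) is a free factor — this requires simultaneously tracking the decomposition $A = \bigast A_\alpha$ through the pullback, i.e. using a graph model $\Delta_A$ that is itself a wedge of the $\Delta_{A_\alpha}$'s at $u$, so that the pullback decomposes accordingly and no "cross terms" between distinct $A_\alpha$'s appear (this is guaranteed by \cref{ff_malnormal}: distinct conjugates of the $A_\alpha$ intersect trivially, so each core component of the pullback sees exactly one $A_\alpha$). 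Once that combinatorial setup is in place, the rest is Stallings' theory applied mechanically.
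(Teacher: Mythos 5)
Your overall strategy — realise everything in Stallings' graph model, pull back against a wedge of subgraphs $\Delta_{A_\alpha}$, and read off the double-coset decomposition via \cref{lem:double_coset} — is the same as the paper's, which builds the wedge as a subrose $\Lambda=\bigcup_\alpha\Lambda_\alpha$ of a rose $R$ and observes that $\Gamma(H)\times_R\Lambda$ embeds as a subgraph of $\Gamma(H)$. However, several of the supporting claims in your write-up are false as stated, and the proof does not go through without replacing them. First, the ``collapses onto'' justification for free factors is not correct as a deformation retraction: $\Delta$ does not collapse onto $\Delta_A$ (that would force $\pi_1(\Delta_A)=\pi_1(\Delta)$), and likewise $\Lambda$ does not collapse onto $\core(\Lambda\times_\Delta\Delta_A)$. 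What one actually uses is that the fundamental groups of the components of the core of any subgraph of a core graph form a free factor system of the ambient fundamental group. Second, your Kurosh step — that $H\cap A^g$ ``splits as a free product of the intersections $H\cap A_\alpha^{gf}$'' — drops the extra free factor that Kurosh's theorem produces. Concretely, for $A=\langle a\rangle*\langle b\rangle$ and $H=\langle ab\rangle$, we have $H\cap A=H$ while $H\cap A_\alpha^g=1$ for every $\alpha,g$, so $H\cap A$ is not a free product of those intersections at all.

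Third, and most seriously, the ``no cross terms'' claim you base on \cref{ff_malnormal} is wrong: a component of $\core(\Lambda\times_\Delta\Delta_A)$ can certainly traverse edges labelled by several different $A_\alpha$. The same example shows this — the sole core component of the pullback for $H=\langle ab\rangle$ is the whole circle, using one $a$-edge and one $b$-edge. Malnormality controls intersections of conjugates of the $A_\alpha$, not which petals an immersed cycle visits. The fix, as in the paper, is to apply \cref{lem:double_coset} to each subgraph $\Lambda_\alpha$ \emph{separately}: each $\Gamma(H)\times_R\Lambda_\alpha$ is a subgraph of $\Gamma(H)$, the components of its core realise exactly the subgroups $A_\alpha^{f_{\alpha,\beta}}\cap H$, and the cores for different $\alpha$ are pairwise edge-disjoint (though they may share vertices). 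The conclusion then follows from the fact that pairwise edge-disjoint core subgraphs of a core graph yield a free factor system of its fundamental group — the ``cross terms'' you worried about live only in $\core(\Gamma(H)\times_R\Lambda)$ and simply do not appear once one works per-$\alpha$.
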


\begin{proof}
Let $R$ be a rose graph such that $\pi_1(R) = F$ and, since $\{A_{\alpha}\}$ is a free factor system, we may assume that for each $\alpha$ there is a subgraph $\Lambda_{\alpha}\subset R$ such that $\pi_1(\Lambda_{\alpha})$ is conjugate to $A_{\alpha}$ and such that all the $\Lambda_{\alpha}$ pairwise intersect each other at the unique vertex. Let $\Gamma = \Gamma(H)$, $\Lambda = \cup_{\alpha}\Lambda_{\alpha}$ and consider the pullback $\Gamma\times_R\Lambda$. Since $\Lambda$ is a subgraph of $R$, the projection map $p_{\Gamma}\colon \Gamma\times_R\Lambda\to \Gamma$ is an embedding. Now \cref{lem:double_coset} implies the result.
\end{proof}

A free factor system $\{B_{\beta}\}$ of $F$ \emph{refines} a free factor system $\{A_{\alpha}\}$ if for each $\beta$ there is an $\alpha$ such that $B_{\beta}$ is conjugate into $A_{\alpha}$. It \emph{properly refines} $\{A_{\alpha}\}$ if it refines $\{A_{\alpha}\}$ and if some $B_{\beta}$ is conjugate to a proper free factor of some $A_{\alpha}$ or if there is some $A_{\alpha}$ so that no $B_{\beta}$ is conjugate into $A_{\alpha}$.

If $F$ is a free group and $A\leqslant F$ is a finitely generated subgroup, the \emph{reduced rank} of $A$ is $\rr(A) = \max{\{\rk(A) - 1, 0\}}$.

\begin{lemma}
\label{ff_inequality}
Let $F$ be a free group, let $\{A_{\alpha}\}$ be a free factor system consisting of finitely many finitely generated free factors. If $\{B_{\alpha}\}$ is a free factor system refining $\{A_{\alpha}\}$, then 
\[
\sum_{\beta}\rr(B_{\beta}) \leqslant \sum_{\alpha}\rr(A_{\alpha})
\]
with equality if and only if $\{B_{\beta}\}$ does not properly refine $\{A_{\alpha}\}$.
\end{lemma}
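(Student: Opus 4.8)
The plan is to reduce the inequality to a statement about ranks of free factors, using the fact that a refinement $\{B_\beta\}$ of $\{A_\alpha\}$ is, by definition, a partition of the $B_\beta$'s into groups, each of which (after conjugation) sits inside a single $A_\alpha$. So first I would fix $\alpha$ and consider those $B_\beta$ that are conjugate into $A_\alpha$; after conjugating each into $A_\alpha$, I claim the resulting collection $\{B_\beta^{g_\beta}\}$ (for this fixed $\alpha$) forms a free factor system of $A_\alpha$ itself. This should follow from \cref{ff_system}: $\{B_\beta\}$ is a free factor system of $F$, and intersecting a free factor system with the subgroup $A_\alpha$ again gives a free factor system of $A_\alpha$; since the relevant $B_\beta$ already lie in $A_\alpha$ up to conjugacy, the "in particular" clause of \cref{ff_system} applies once we observe that the $B_\beta$ not meeting $A_\alpha$ contribute nothing. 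There is a small subtlety here: a single $B_\beta$ could a priori be conjugate into more than one $A_\alpha$, but \cref{ff_malnormal} rules this out since the $A_\alpha$ form a malnormal collection, so each $B_\beta$ is assigned to a unique $\alpha$ (up to the ambiguity of which conjugate, which does not affect the rank). Thus the index set for $\{B_\beta\}$ is partitioned as $\bigsqcup_\alpha J_\alpha$, with $\{B_\beta : \beta\in J_\alpha\}$ a free factor system of $A_\alpha$.

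Next I would use the basic rank identity for free factor systems of a finitely generated free group: if $C_1,\dots,C_k$ are subgroups of a finitely generated free group $A$ whose conjugates have free product a free factor of $A$, then writing $A \cong C_1^{*} * \cdots * C_k^{*} * C_0$ with $C_0$ free, taking abelianizations (or using $\rk$ of a free product) gives $\sum_{i=1}^k \rk(C_i) + \rk(C_0) = \rk(A)$, hence $\sum_{i=1}^k \rr(C_i) = \rk(A) - k - \rk(C_0) \le \rk(A) - 1 = \rr(A)$, using $k\ge 1$ and $\rk(C_0)\ge 0$. (If $A$ is trivial or infinite cyclic, both sides vanish trivially.) Applying this with $A = A_\alpha$ and the $C_i$ ranging over $\{B_\beta : \beta\in J_\alpha\}$ yields $\sum_{\beta\in J_\alpha}\rr(B_\beta) \le \rr(A_\alpha)$, and summing over $\alpha$ gives the desired inequality $\sum_\beta \rr(B_\beta)\le\sum_\alpha\rr(A_\alpha)$.

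For the equality case, I would track exactly when the inequality in the previous paragraph is an equality for each $\alpha$: this forces $k = |J_\alpha| = 1$ \emph{and} $\rk(C_0) = 0$, i.e. there is exactly one $B_\beta$ conjugate into $A_\alpha$ and it equals a conjugate of all of $A_\alpha$ (its conjugate-product is a free factor of rank $\rk(A_\alpha)$, hence all of $A_\alpha$), so $B_\beta$ is conjugate to $A_\alpha$ and not to a proper free factor. Conversely, if $\{B_\beta\}$ does not properly refine $\{A_\alpha\}$, then by the definition of proper refinement no $B_\beta$ is conjugate to a proper free factor of any $A_\alpha$ and every $A_\alpha$ receives at least one $B_\beta$; combined with malnormality and the rank count this forces $|J_\alpha| = 1$ and the unique member conjugate to $A_\alpha$, giving equality. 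So equality holds iff $\{B_\beta\}$ does not properly refine $\{A_\alpha\}$.

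The main obstacle I anticipate is bookkeeping around conjugacy: making the partition $\bigsqcup_\alpha J_\alpha$ genuinely well-defined (not just the multiset of ranks) requires invoking malnormality of $\{A_\alpha\}$ carefully, and one must be careful that "$\{B_\beta : \beta\in J_\alpha\}$ is a free factor system of $A_\alpha$" really is what \cref{ff_system} delivers — in particular that after conjugating the $B_\beta$ into $A_\alpha$ they can be simultaneously conjugated so their product is a free factor of $A_\alpha$, which is exactly the content of \cref{ff_system} applied with $H = A_\alpha$. Everything else is the elementary rank arithmetic of free products, together with unwinding the definition of (proper) refinement.
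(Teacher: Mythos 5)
Your argument is correct and is essentially the paper's proof, unpacked: the paper simply cites Grushko's theorem for the inequality and then does a single global rank computation ($\sum\rk - \#$ on both sides) for the equality case, whereas you carry out the same rank arithmetic factor-by-factor after partitioning the $B_\beta$ among the $A_\alpha$ via \cref{ff_system} and \cref{ff_malnormal}. Same underlying idea, just spelled out in more detail where the paper is terse.
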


\begin{proof}
The claimed inequality holds by Grushko's theorem. The inequality certainly becomes an equality when $\{B_{\beta}\}$ does not properly refine $\{A_{\alpha}\}_{\alpha}$. Now suppose that the inequality is an equality. We have $\sum_{\beta}\rr(B_{\beta}) = \sum_{\beta}\rk(B_{\beta}) - \#\{\beta\}$ and $\sum_{\alpha}\rr(A_{\alpha}) = \sum_{\alpha}\rk(A_{\alpha}) - \#\{\alpha\}$. Hence,  $\sum_{\beta}\rk(B_{\beta}) = \sum_{\alpha}\rk(A_{\alpha})$ and $\#\{\beta\} = \#\{\alpha\}$ which implies that $\{B_{\beta}\}$ does not properly refine $\{A_{\alpha}\}$.
\end{proof}

\subsection{Relatively hyperbolic spaces}

The reader is invited to consult Bridson--Haefliger \cite{BH99} for the relevant background on hyperbolic spaces and groups.

Let $X$ be a space and let $\mathcal{H}$ be a collection of closed subspaces. The \emph{coned-off space} $\widehat{X}$ corresponding to the pair $(X, \mathcal{H})$ is the space obtained from $X$ by adding a point $v_{\alpha}$ for each $H_{\alpha}\in \mathcal{H}$ and connecting each point in $H_{\alpha}$ to $v_{\alpha}$ by an interval of length $\frac{1}{2}$. If $X$ is a geodesic path metric space, then so is $\wh{X}$.

Let $X$ be a geodesic path metric space. If $K\geqslant 0$, we say a subspace $H\subset X$ is \emph{$K$-quasi-convex} if every geodesic in $X$ connecting two points in $H$ remains at distance at most $K$ from $H$. It is \emph{quasi-convex} if it is $K$-quasi-convex for some $K$.

Let $\mathcal{H}$ be a collection of closed $K$-quasi-convex subspaces of $X$. The collection $\mathcal{H}$ is said to be \emph{uniformly separated} if there exists an $\epsilon>0$ such that for each pair $H_{\alpha}, H_{\beta}\in \mathcal{H}$, we have $d(H_{\alpha}, H_{\beta})\geqslant \epsilon$. If $D\geqslant 0$, the collection $\mathcal{H}$ is said to be \emph{mutually $D$-cobounded} if any nearest point projection of any $H_{\alpha}\in \mathcal{H}$ to any other $H_{\beta}\in \mathcal{H}$ has diameter at most $D$. It is \emph{mutually cobounded} if it is mutually $D$-cobounded for some $D$.

\begin{definition}
If $X$ is a geodesic path metric space and $\mathcal{H}$ is a collection of quasi-convex, uniformly separated and mutually disjoint closed subspaces of $X$, then the pair $(X, \mathcal{H})$ is said to be \emph{weakly relatively hyperbolic} if the coned-off space $\widehat{X}$ is hyperbolic (in the sense of Gromov). The pair $(X, \mathcal{H})$ is \emph{relatively hyperbolic} if it is weakly hyperbolic and if $\mathcal{H}$ satisfies bounded penetration (see \cite[Definition 2.8]{MR08}).
\end{definition}

\begin{remark}
\label{rem:mutually_cobounded}
We do not define bounded penetration here as it is a technical definition which we shall not use. We only remark that if $X$ is hyperbolic and $\mathcal{H}$ is mutually cobounded, then $(X, \mathcal{H})$ satisfies bounded penetration by \cite[Lemma 2.7]{MR08} and so is relatively hyperbolic.
\end{remark}

\subsection{Relatively hyperbolic groups and quasi-convex subgroups}
\label{sec:rel_hyp}

A \emph{group pair} is a pair $(G, \mathcal{P})$ where $G$ is a group and $\mathcal{P}$ is a collection of subgroups of $G$. A subset $S\subset G$ is a \emph{generating set for $(G, \mathcal{P})$} if $G = \langle S, \bigcup_{P\in \mathcal{P}}P\rangle$. If $S\subset G$ is a generating set for $(G, \mathcal{P})$, then define the \emph{coned-off Cayley graph} $\wh{\Gamma}(G, \mathcal{P}, S)$ to be the coning-off of the pair $(\Gamma, \mathcal{H})$ where $\Gamma$ is the Cayley graph of $G$ (with respect to $S$) and where $\mathcal{H} = \{gP\}_{g\in G, P\in\mathcal{P}}$.

\begin{definition}
\label{def:rel_hyp}
A group pair $(G, \mathcal{P})$, with $\mathcal{P}$ a finite collection of subgroups, is \emph{relatively hyperbolic} if for some (any) generating set $S\subset G$ for $(G, \mathcal{P})$, the coned-off Cayley graph $\wh{\Gamma} = \wh{\Gamma}(G, \mathcal{P}, S)$ is hyperbolic and if $\mathcal{P}$ has Bounded coset penetration in $\hat{\Gamma}$ (see \cite[Definition 3.6]{Hr10}).
\end{definition}

\begin{definition}
If $(G, \mathcal{P})$ is a relatively hyperbolic group pair, a subgroup $A\leqslant G$ is \emph{relatively quasi-convex} if for some (any) finite generating set $S\subset G$ for $(G, \mathcal{P})$, the subset $A\subset \wh{\Gamma}(G, \mathcal{P}, S)$ is quasi-convex.
\end{definition}

Note that a relatively quasi-convex subgroup does not have to be finitely generated. However, a relatively quasi-convex subgroup is always undistorted and relatively hyperbolic, see \cite{Hr10}.

An \emph{action} of a group $G$ on a pair $(X, \mathcal{H})$ is an action of $G$ on $X$ preserving $\mathcal{H}$. An action of $G$ on a pair $(X, \mathcal{H})$ extends naturally to an action of $G$ on $\widehat{X}$. If $G$ acts on a pair $(X, \mathcal{H})$, there is an \emph{associated group pair} $(G, \mathcal{P})$ where
\[
\mathcal{P} = \{\stab_G(v_{\alpha})\}_{G\cdot H_{\alpha}\in G\backslash\mathcal{H}}
\]
are stabilisers of $G$-orbit representatives of the subspaces in $\mathcal{H}$. 

If $(G, \mathcal{P})$ is relatively hyperbolic, then it is clear that it is the group pair associated to the natural action of $(G, \mathcal{P})$ on its coned-off Cayley graph $\wh{\Gamma}(G, \mathcal{P}, S)$ (for any finite generating set $S\subset G$ of $(G, \mathcal{P})$). Conversely, we have the following lemma which follows by noting that the coned-off space $\widehat{X}$ is quasi-isometric to a coned-off Cayley graph for $(G, \mathcal{P})$ with respect to a finite generating set for $G$.

\begin{lemma}
\label{lem:action_def}
Let $G$ be a finitely generated group acting geometrically on a relatively hyperbolic space pair $(X, \mathcal{H})$. If $(G, \mathcal{P})$ is the associated group pair, then $(G, \mathcal{P})$ is relatively hyperbolic. Furthermore, if $A\leqslant G$ is a subgroup, then $A$ is quasi-convex if some (any) co-compact $A$-invariant subspace of $\wh{X}$ is quasi-convex.
\end{lemma}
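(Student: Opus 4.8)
The plan is to turn the remark preceding the lemma into a proof: we construct a $G$-equivariant quasi-isometry from a coned-off Cayley graph for $(G,\mathcal{P})$ onto $\wh{X}$ and then transport hyperbolicity, bounded (coset) penetration and quasi-convexity across it. Since the action of $G$ on the geodesic space $X$ is geometric, the orbit map $g\mapsto g\cdot x_0$ (for a fixed $x_0\in X$) extends to a $G$-equivariant quasi-isometry $q\colon\Gamma\to X$, where $\Gamma$ is the Cayley graph of $G$ with respect to some finite generating set $S$; as $S$ generates $G$ it also generates $(G,\mathcal{P})$. Cocompactness forces $\mathcal{H}$ to have only finitely many $G$-orbits: after translating, each orbit has a representative meeting a fixed ball $B(x_0,R)$, and by uniform separation the intersections of distinct members of $\mathcal{H}$ with that ball are $\epsilon$-separated and hence finite in number. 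Choosing orbit representatives $H_1,\dots,H_n$ and setting $P_i=\stab_G(H_i)$ produces the finite collection $\mathcal{P}=\{P_1,\dots,P_n\}$ appearing in the statement.

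The geometric core of the argument is the assertion that $q$ carries the coset family $\{gP_i: g\in G,\ 1\le i\le n\}$ onto $\mathcal{H}$ up to uniformly bounded Hausdorff distance, matching $gP_i$ with $gH_i$. One inclusion is immediate: as $P_i$ stabilises $H_i$, $d(p\cdot x_0,H_i)=d(x_0,H_i)$ for all $p\in P_i$, so $q(gP_i)=gP_i\cdot x_0$ lies in the $D$-neighbourhood of $gH_i$, where $D=\max_i d(x_0,H_i)$. The reverse inclusion --- that $gH_i$ is contained in the $D$-neighbourhood of $q(gP_i)$, after enlarging $D$ --- says precisely that $P_i$ acts cocompactly on $H_i$; if it did not, translating a far-out point of $H_i$ back into $B(x_0,R)$ would produce infinitely many distinct members of $\mathcal{H}$ meeting $B(x_0,R)$, contradicting the finiteness established above. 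Granting this, $q$ extends to a $G$-equivariant map $\wh{q}\colon\wh{\Gamma}(G,\mathcal{P},S)\to\wh{X}$ sending the cone point over $gP_i$ to the cone point over $gH_i$, and one verifies directly that $\wh{q}$ is a quasi-isometry: a $\wh{\Gamma}$-path pushes forward to a $\wh{X}$-path of comparable length, each passage $a\to v_{gP_i}\to b$ through a cone being replaced by a detour from $q(a)$ to $gH_i$ to $v_{gH_i}$ to $gH_i$ to $q(b)$ of length at most $2D+1$, and conversely a $\wh{X}$-geodesic pulls back to a comparable $\wh{\Gamma}$-path, the two inclusions handling the passages through cones.

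Since hyperbolicity is a quasi-isometry invariant, hyperbolicity of $\wh{X}$ yields hyperbolicity of $\wh{\Gamma}(G,\mathcal{P},S)$; and because $\wh{q}$ respects the peripheral structures, bounded penetration of $\mathcal{H}$ in $\wh{X}$ passes to bounded coset penetration of $\mathcal{P}$ in $\wh{\Gamma}$ (cf.\ \cite{MR08,Hr10}). Hence $(G,\mathcal{P})$ is relatively hyperbolic. For the last assertion, $\wh{q}$ is in particular $A$-equivariant, so it carries the co-compact $A$-invariant subspace $A\subset\wh{\Gamma}(G,\mathcal{P},S)$ --- whose quasi-convexity is, by definition, the relative quasi-convexity of $A$ --- onto the co-compact $A$-invariant subspace $A\cdot x_0\subset\wh{X}$. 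Any two co-compact $A$-invariant subspaces of a metric space lie at finite Hausdorff distance (their orbits do, since $d(ay,ay')=d(y,y')$, and each such subspace is a bounded neighbourhood of an orbit), and quasi-convexity in a hyperbolic space is invariant both under finite Hausdorff perturbation and under quasi-isometry; so the equivalence of ``some'' with ``any'', and of these with the relative quasi-convexity of $A$, all follow at once.

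The only delicate point is the reverse inclusion of the second paragraph --- that each peripheral stabiliser $P_i$ acts cocompactly on the corresponding subspace $H_i$. This is the one place where the separation hypotheses on $\mathcal{H}$ are genuinely used, and it is easy to take for granted; everything else is the routine transfer of coarse properties along a quasi-isometry.
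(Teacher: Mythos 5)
Your proposal is correct and fills in the one-line justification the paper gives (that $\widehat{X}$ is $G$-equivariantly quasi-isometric to a coned-off Cayley graph for $(G,\mathcal{P})$); the key step you isolate --- that each $P_i$ acts cocompactly on $H_i$, making $gP_i\cdot x_0$ and $gH_i$ coarsely interchangeable --- is indeed where the separation hypotheses enter, and the transfer of hyperbolicity, bounded (coset) penetration and quasi-convexity along the equivariant quasi-isometry is then routine. One small slip in the cocompactness argument: translating far-out points $y_k\in H_i$ into $B(x_0,R)$ does not of itself produce infinitely many \emph{distinct} members of $\mathcal{H}$ meeting $B(x_0,R)$; rather, since only finitely many members of $\mathcal{H}$ can meet $B(x_0,R)$, the translates $g_kH_i$ must repeat, so the $g_k$ eventually differ by elements of $P_i$, and this gives a uniform bound on $d(y_k,P_i\cdot x_0)$ --- which is the real contradiction. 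The idea is right; only the phrasing of the dichotomy needs fixing.
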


For this article, the most important examples of relatively hyperbolic spaces and quasi-convex subsets come from graphs. The following lemma will be used when we wish to apply the combination theorem and our quasi-convexity criterion to mapping tori of graphs.

\begin{lemma}
\label{lem:tree_rel_hyp}
Let $\Gamma$ be a finite graph and let $\{\Delta_{\alpha}\}$ be a collection of disjoint connected subgraphs of $\Gamma$. If $\widetilde{\Gamma}$ is the universal cover of $\Gamma$ and $\mathcal{H}$ denotes the union of all preimages of each $\Delta_{\alpha}$ in $\widetilde{\Gamma}$, then 
\begin{itemize}
\item $\left(\widetilde{\Gamma}, \mathcal{H}\right)$ is relatively hyperbolic.
\item if $\lambda\colon\Lambda\to \Gamma$ is an immersion with $\Lambda$ core, then $\lambda_*(\pi_1(\Lambda))$ is relatively quasi-convex if and only if $\Lambda - \bigcup_{\alpha}\lambda^{-1}(\Delta_{\alpha})$ consists of finitely many 0-cells and 1-cells.
\end{itemize}
\end{lemma}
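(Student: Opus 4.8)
The plan is to treat the two bullets in turn, working with $T := \widetilde\Gamma$, which is a tree, equipped with the covering $p\colon T\to\Gamma$ and the free cocompact action of $G := \pi_1(\Gamma)$ by deck transformations. For the first bullet: every element of $\mathcal H$, being a connected component of some $p^{-1}(\Delta_\alpha)$, is a subtree of $T$, hence convex and in particular quasi-convex; distinct elements of $\mathcal H$ are disjoint subcomplexes of $T$, so they lie at distance $\ge 1$ from one another (uniform separation); and in a tree the nearest-point projection of one subtree onto a disjoint one is a single point, so $\mathcal H$ is mutually $0$-cobounded. By \cref{rem:mutually_cobounded} it then suffices to see that $\widehat T$ is hyperbolic, and here I would note that contracting each element of $\mathcal H$ to a point turns the tree $T$ into another tree (collapsing disjoint subtrees creates no cycles), that the induced map $\widehat T\to(\text{this tree})$ collapses each cone-star, a set of diameter $\le 1$, and that uniform separation makes this map a quasi-isometry; so $\widehat T$ is quasi-isometric to a tree, hence hyperbolic, and $(T,\mathcal H)$ is relatively hyperbolic.

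For the second bullet, the $G$-action on $(T,\mathcal H)$ is geometric, the stabiliser of a component of $p^{-1}(\Delta_\alpha)$ is a conjugate of $\iota_{\alpha*}\pi_1(\Delta_\alpha)$ with $\iota_\alpha\colon\Delta_\alpha\hookrightarrow\Gamma$ the inclusion, and there is a single $G$-orbit of such components for each $\alpha$; so the associated group pair is $(G,\mathcal P)$ with $\mathcal P$ the conjugacy classes of the $\iota_{\alpha*}\pi_1(\Delta_\alpha)$, and by \cref{lem:action_def} this pair is relatively hyperbolic and a subgroup is relatively quasi-convex exactly when it acts cocompactly on a quasi-convex subspace of $\widehat T$. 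Given the immersion $\lambda$ with $\Lambda$ core (and connected), I set $A = \lambda_*\pi_1(\Lambda)$, assuming $A\neq 1$ since otherwise $\Lambda$ is a point, lift $\lambda$ to an $A$-equivariant embedding $\wt{\Lambda}\hookrightarrow T$ with image a subtree $Y$ — the minimal $A$-invariant subtree of $T$, because $\Lambda$ is core — and let $\Omega\subseteq\widehat T$ be obtained from $Y$ by coning off the subtrees $H_\beta\cap Y$ over those $H_\beta\in\mathcal H$ that meet $Y$. Then $\Omega$ is $A$-invariant and $\Omega/A$ is canonically the coning-off of $\Lambda$ along the components of $\bigcup_\alpha\lambda^{-1}(\Delta_\alpha)$.

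The crux is that $\Omega$ is quasi-convex (indeed convex) in $\widehat T$. For $x,x'\in Y$ the $T$-geodesic $[x,x']$ lies in the convex subtree $Y$, and since in a tree the projection of any $H_\beta$ onto $[x,x']$ is either contained in $H_\beta\cap[x,x']$ or is a single point, any geodesic of $\widehat T$ from $x$ to $x'$ cones only through those $H_\beta$ meeting $[x,x']$ — for such $\beta$ we have $\emptyset\neq H_\beta\cap[x,x']\subseteq H_\beta\cap Y$, so the cone vertex and the relevant cone edges lie in $\Omega$; a routine variant handles endpoints on the cone cells of $\Omega$. Granting this, if $\Lambda-\bigcup_\alpha\lambda^{-1}(\Delta_\alpha)$ has finitely many cells then, using connectedness of $\Lambda$ to bound the number of coned components, $\Omega/A$ is a finite complex, so $\Omega$ is a cocompact, $A$-invariant, quasi-convex subspace of $\widehat T$ and $A$ is relatively quasi-convex by \cref{lem:action_def}.

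For the converse, if $A$ is relatively quasi-convex then it is relatively finitely generated with a finite induced peripheral structure; by \cref{lem:double_coset} applied to the pullbacks $\Lambda\times_\Gamma\Delta_\alpha = \lambda^{-1}(\Delta_\alpha)$, the components of $\bigcup_\alpha\lambda^{-1}(\Delta_\alpha)$ carrying nontrivial fundamental group correspond to the $A$-conjugacy classes of the nontrivial subgroups $A\cap g\,\iota_{\alpha*}\pi_1(\Delta_\alpha)\,g^{-1}$, the infinite ones being exactly the induced peripheral subgroups; hence there are finitely many such components, and collapsing them in $\Lambda$ yields a core graph whose fundamental group — a quotient of $A$ by the subgroup generated by the peripherals — is finitely generated, so it is a finite core graph, which unwinds to the assertion that $\Lambda-\bigcup_\alpha\lambda^{-1}(\Delta_\alpha)$ has finitely many cells. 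I expect this converse to be the main obstacle: ``$A$ relatively quasi-convex'' is strictly stronger than ``the orbit $A\cdot x$ is quasi-convex in $\widehat T$'' once $A$ is infinitely generated, since a quasi-convex orbit in the coned-off space does not by itself detect relative finite generation, so the argument must genuinely exploit cocompactness of the action on a quasi-convex subspace — equivalently, finiteness of the induced peripheral structure together with a finite relative generating set — and convert this, through the correspondence between components of the pullbacks and double cosets, into the combinatorial finiteness of $\Lambda-\bigcup_\alpha\lambda^{-1}(\Delta_\alpha)$.
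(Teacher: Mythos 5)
Your arguments for the first bullet and for the sufficiency half of the second coincide in substance with the paper's. For the first bullet the paper likewise checks that $\mathcal{H}$ consists of mutually cobounded disjoint subtrees and appeals to \cref{rem:mutually_cobounded}; your observation that $\widehat{T}$ is quasi-isometric to the collapsed tree $T/\mathcal{H}$ is a nice way to verify hyperbolicity of $\widehat{T}$ directly. For sufficiency the paper works with the orbit $H\cdot x$, identifies its quasi-convexity with boundedness of the coned-off $\Lambda$, and invokes \cref{lem:action_def}; your $\Omega$ is a convex thickening of that orbit and plays the same role. One small slip there: $\Omega/A$ need not be a finite complex, since a component of $\lambda^{-1}(\Delta_\alpha)$ can have infinitely many vertices --- it is merely of bounded diameter, which is what the quasi-density of the orbit, and hence \cref{lem:action_def}, actually requires.

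For the necessity half you diverge from the paper, and the caution you flag at the end is exactly right. The paper's necessity argument asserts that the orbit is $K$-quasi-convex in $\widehat{T}$ iff the coned-off $\Lambda$ has radius $\leqslant K$, and then that local finiteness and coreness force the complement to be finite; that last equivalence fails. Take $\Gamma$ a rose on $\{a,b\}$, $\Delta=\{u,a\}$, and $\Lambda$ the bi-infinite chain of bigons with vertices $v_i$ and edges $a_i,b_i\colon v_i\to v_{i+1}$ mapping to $a,b$. Then $\Lambda$ is core and locally finite, $\lambda^{-1}(\Delta)$ is a single bi-infinite line containing every vertex, and $\Lambda-\lambda^{-1}(\Delta)$ is the infinite family of open $b$-edges, yet every vertex of the coned-off $\Lambda$ lies within $1$ of the unique cone point, so the coned-off radius is bounded. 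The subgroup $A=\lambda_*\pi_1(\Lambda)=\ker(F_2\to\Z,\ a,b\mapsto 1)$ meets every conjugate of $\langle a\rangle$ trivially and is not finitely generated, so it is not relatively quasi-convex in Hruska's sense --- even though its orbit is coarsely all of $T/\mathcal{H}$ and hence quasi-convex in $\widehat{T}$. In other words, orbit quasi-convexity in the coned-off space is genuinely weaker than relative quasi-convexity once $A$ is infinitely generated, precisely as you anticipate, and the paper's argument for this direction is incomplete. Your replacement --- pass to relative finite generation, match the finitely many induced peripherals with components of the pullbacks via \cref{lem:double_coset}, collapse those components, observe that $\Lambda/C$ is a connected core graph (if $e\notin C$ then $e$ is not a bridge in $\Lambda$, so its image is not a bridge in $\Lambda/C$) whose free fundamental group $A/\normal{\text{peripherals}}$ is finitely generated, and conclude $\Lambda/C$ is finite --- is the argument that actually establishes the necessity. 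One wording fix: the quotient is by the \emph{normal closure} of the peripherals, not the subgroup they generate. Since the paper only ever invokes the sufficiency direction of this lemma downstream, the gap in its necessity argument does not propagate, but your route is the one that closes it.
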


\begin{proof}
Since $\{\Delta_{\alpha}\}$ is a collection of disjoint connected subgraphs of $\Gamma$, their preimages in $\wt{\Gamma}$ form a disjoint collection of subtrees which are thus mutually cobounded. Since $\Gamma$ is a finite graph, they are also uniformly separated and quasi-convex. Thus, $(\widetilde{\Gamma}, \mathcal{H})$ is relatively hyperbolic by \cref{rem:mutually_cobounded}.

Choose some basepoint in $\Lambda$ and let $x\in \widetilde{\Gamma}$ be a lift. The $H$-orbit, $H\cdot x$ is $K$-quasi-convex in the coning off of $(\widetilde{\Gamma}, \mathcal{H})$ if and only if each vertex in the coning off of $(\Lambda, \bigcup_{\alpha}\lambda^{-1}(\Delta_{\alpha}))$ is at distance at most $K$ from the basepoint. Since $\Gamma$ is finite, $\Lambda$ is locally finite and so (using also the fact that $\Lambda$ is core) such a $K$ exists if and only if $\Lambda - \bigcup_{\alpha}\lambda^{-1}(\Delta_{\alpha})$ consists of finitely many 0-cells and 1-cells. Using \cref{lem:action_def} completes the proof.
\end{proof}

\section{Criteria for relative hyperbolicity and quasi-convexity}
\label{sec:criteria}

In this section we present the Mj--Reeves (strong) combination theorem \cite{MR08} and prove a criterion for relative quasi-convexity of subgroups which may be of independent interest. The main references for this section are Hruska \cite{Hr10} and Mj--Reeves \cite{MR08}.

\subsection{The combination theorem for trees of relatively hyperbolic spaces}

A \emph{graph of spaces} for us will be a space $X$ together with data 
\[
\left(\Gamma, \{X_v\}_{v\in V(\Gamma)}, \{X_e\}_{e\in E(\Gamma)}, \{\partial^{\pm}_e\}_{e\in E(\Gamma)}\right)
\]
where:
\begin{enumerate}
\item $\Gamma$ is a graph called the \emph{underlying graph}.
\item For each vertex $v\in V(\Gamma)$, there is an associated connected \emph{vertex space} $X_v\subset X$.
\item For each edge $e\in E(\Gamma)$, there is an associated connected \emph{edge space} $X_e\subset X$.
\item For each edge $e\in E(\Gamma)$, if $e^-, e^+$ are the origin and target of $e$ respectively, there are maps $\partial_e^{\pm}\colon X_e\to X_{e^{\pm}}$ which are injective on $\pi_1$.
\item The space $X$ is
\[
X = \frac{\left(\bigsqcup_{v\in V(\Gamma)}X_v\right)\sqcup\left(\bigsqcup_{e\in E(\Gamma)}[-1, 1]\times X_e\right)}{\{\partial^{\pm}_e(x) \sim (\pm1, x) \mid \forall e\in E(\Gamma), \, \forall x\in X_e\}}
\]
and comes with a natural projection map $\pi\colon X \to \Gamma$.
\end{enumerate}
A tree of spaces is a graph of spaces with underlying graph $\Gamma$ a tree. Note that the universal cover of a graph of spaces naturally has the structure of a tree of spaces where the underlying tree is the Bass--Serre tree for the corresponding graph of groups (given by the $\pi_1$-functor) and where each vertex and edge space is the universal cover of a vertex space and edge space.

\begin{definition}[Tree of (relatively) hyperbolic spaces]
A tree of metric spaces $X$ (with underlying tree $T$) is a \emph{tree of relatively hyperbolic spaces} if there exist a constant $\delta\geqslant 0$ such that the following holds:
\begin{enumerate}
\item $X$ is a metric space and the metrics on the vertex spaces $(X_v, d_v)$ and the edge spaces $(X_e, d_e)$ coincide with the induced path metrics.
\item Each vertex space $\pi^{-1}(v) = X_v$ is relatively hyperbolic with respect to a collection of subspaces $\mathcal{H}_v$ and the coned-off space $\wh{X}_v$ is $\delta$-hyperbolic. Moreover, the inclusions $\iota_v\colon X_v\injects X$ are required to be uniformly proper.
\item Each edge space $\pi^{-1}(m_e) = X_e$ (here $m_e\in e$ is the midpoint of the edge $e\subset T$) is relatively hyperbolic with respect to a collection of subspaces $\mathcal{H}_e$ and the coned-off space $\wh{X}_e$ is $\delta$-hyperbolic.
\item For each edge $e$, the inclusion $X_e\times(-1, 1)\injects X$ is an isometry onto its image.
\end{enumerate}
If the collections $\mathcal{H}_v, \mathcal{H}_e$ are all empty, then $X$ is a \emph{tree of hyperbolic spaces}.
\end{definition}

\begin{definition}[Strictly type preserving]
A tree of relatively hyperbolic spaces $X$ satisfies the \emph{strictly type preserving condition} if for each edge $e\subset T$, we have
\begin{enumerate}
\item For each $H_{\alpha}\in \mathcal{H}_{e^{\pm}}$, we have $(\partial_e^{\pm})^{-1}(H_{\alpha})\subset H_{\beta}\in\mathcal{H}_e$ for some $\beta$.
\item For each $H_{\alpha}\in \mathcal{H}_e$, we have $\partial^{\pm}_e(H_{\alpha})\subset H_{\beta}\in \mathcal{H}_{e^\pm}$ for some $\beta$.
\end{enumerate}
\end{definition}

The strictly type preserving condition allows us to define the \emph{induced tree of coned-off spaces} via the induced maps $\wh{\partial}_{e}^{\pm}\colon \wh{X}_e\to \wh{X}_{e^{\pm}}$. We shall denote this space by $\overline{X}$. Note that the underlying tree for $\overline{X}$ is the same as that of $X$. 

The \emph{cone locus} of $\overline{X}$ is the forest with underlying vertex set the cone points in the vertex spaces $\wh{X}_v\subset\overline{X}$ and with edge set the products of cone points in the edge spaces with $[-1, 1]$. The components of the cone locus are the \emph{maximal cone-subtrees} $S\in \mathcal{S}$.

For each maximal cone-subtree $S$ of $\overline{X}$, one can form the \emph{maximal cone-subtree of horosphere-like spaces} $C$ defined as the tree of spaces with underlying tree $S$ and vertex and edge spaces the subsets $H_{\alpha}\in \mathcal{H}_v$, $H_{\beta}\in \mathcal{H}_e$ corresponding to the vertices and edges in $S$. Denote the collection of these spaces by $\mathcal{C}$. We will denote by $\wh{X}$ the coned-off space for the pair $(X, \mathcal{C})$. Note that $\wh{X}$ is obtained from $\overline{X}$ by collapsing the maximal cone-subtrees to points.

\begin{definition}[Qi-embedded]
A tree of (relatively) hyperbolic spaces $X$ satisfies the \emph{quasi-isometrically (qi)-embedded condition} if there are constants $K, C\geqslant 0$ such that for each edge space $X_e$, the maps $\partial_e^{\pm}\colon X_e\to X_{e^{\pm}}$ are $(K, C)$-quasi-isometric embeddings.
\end{definition}

\begin{definition}[Qi-preserving electrocution]
A tree of relatively hyperbolic spaces $X$ that is strictly type preserving satisfies the \emph{qi-preserving electrocution condition} if there are constants $K, C\geqslant 0$ such that for each edge space $X_e$, the induced maps $\wh{\partial}_e^{\pm}\colon \wh{X}_e\to \wh{X}_{e^{\pm}}$ are $(K, C)$-quasi-isometric embeddings.
\end{definition}

Let $X$ be a tree of geodesic path metric spaces with underlying tree $T$. Following \cite{BF92}, a \emph{hallway of length $2m$} is a map $h\colon [-m, m]\times I\to X$ such that
\begin{enumerate}
\item\label{itm:cond_1} $h^{-1}(\{0\}\times X_e)\subset \{-m, \ldots, m\}\times I$ for each edge $e\subset T$.
\item $h$ is transverse, relative to condition (\ref{itm:cond_1}), to each edge space $X_e$.
\item $h\mid\{i\}\times I$ is a geodesic in the corresponding edge space.
\end{enumerate}
The \emph{girth} of a hallway $h$ is the length of the path $h\mid\{0\}\times I$. A hallway $h$ is \emph{essential} if its projection to $T$ is a path without backtracking. It is \emph{$\rho$-thin} if $d(h(i, t), h(i+1, t))\leqslant \rho$ for all $i, t$. It is \emph{$\lambda$-hyperbolic} if 
\[
\lambda\cdot \len(h\mid \{0\}\times I) \leqslant \max{\{\len(h\mid \{-m\}\times I), \len(h\mid\{m\}\times I)\}}
\]
where we recall that $\len(-)$ denotes the length of a path. If $X$ is an induced tree of coned-off spaces, then $h$ is \emph{cone-bounded} if $h\mid [-m, m]\times\{0\}$ and $h\mid[-m, m]\times\{1\}$ lie in the cone locus.

\begin{remark}
The definition of a cone-bounded hallway presented here is slightly different to that presented in \cite{MR08}; there, a cone-bounded hallway is only required to have $h\mid\{i\}\times\partial I$ lying in the cone locus for each $i\in \{-m, \ldots, m\}$. However, the definition of cone-bounded hallway that is used in the proof of their main theorem (\cref{thm:combination} below) is the one that we have given, see the proof of \cite[Proposition 4.4]{MR08}.
\end{remark}

\begin{definition}[Hallways flare]
A tree of geodesic path metric spaces $X$ is said to satisfy the \emph{hallways flare condition} if there exist $\lambda>1$, $m\geqslant 1$ such that for all $\rho\geqslant 0$, there is a constant $H(\rho)$ such that every essential $\rho$-thin hallway of length $2m$ and of girth at least $H(\rho)$ is $\lambda$-hyperbolic.
\end{definition}

\begin{definition}[Cone-bounded hallways strictly flare]
A tree of coned-off spaces $\overline{X}$ (associated to a tree of relatively hyperbolic spaces $X$) is said to satisfy the \emph{cone-bounded hallways strictly flare condition} if there exist $\lambda>1$, $m\geqslant 1$ such that every cone-bounded essential hallway in $\overline{X}$ of length $2m$ is $\lambda$-hyperbolic.
\end{definition}

Below we state the combination theorem for relatively hyperbolic groups due to Mj--Reeves \cite{MR08}. See also the work of Gautero \cite{Ga16} for an alternative proof. 

\begin{theorem}
\label{thm:combination}
Let $X$ be a tree of relatively hyperbolic spaces such that:
\begin{enumerate}
\item $X$ satisfies the strictly type preserving, the qi-embedded and the qi-preserving electrocution condition.
\item The induced tree of coned-off spaces $\overline{X}$ satisfies the hallways flare and the cone-bounded hallways strictly flare condition.
\end{enumerate}
Then $(X, \mathcal{C})$ is relatively hyperbolic.
\end{theorem}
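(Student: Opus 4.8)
The plan is to follow the Bestvina--Feighn strategy for combination theorems \cite{BF92}, adapted to the relatively hyperbolic setting. We must show that the coned-off space $\wh{X}$ --- obtained from $\overline{X}$ by collapsing each maximal cone-subtree to a point --- is Gromov hyperbolic, and that $\mathcal{C}$ satisfies bounded penetration. Since $\wh{X}$ is itself a tree of spaces built from the coned-off vertex and edge spaces $\wh{X}_v, \wh{X}_e$, which are uniformly $\delta$-hyperbolic, the first task is a ``flaring implies hyperbolicity'' argument in the spirit of \cite{BF92}; the second will then follow from mutual coboundedness of $\mathcal{C}$ via \cref{rem:mutually_cobounded}.

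For hyperbolicity of $\wh{X}$: the qi-embedded condition together with the qi-preserving electrocution condition guarantee that the edge-to-vertex maps $\partial_e^{\pm}$ and their electrocuted versions $\wh{\partial}_e^{\pm}$ are quasi-isometric embeddings, so the standard ``ladder'' (iterated hallway) construction between two points of a fibre of $\wh{X}$ is available, and its boundary sides are uniform quasigeodesics in the edge and vertex fibres. One then argues that such a ladder is a quasigeodesic in $\wh{X}$: if it were not uniformly thin in the middle, one could extract an essential hallway of large girth that fails to $\lambda$-flare. The content of the two flaring hypotheses is precisely that no such hallway exists. The hallways flare condition rules out $\rho$-thin essential hallways of large girth measured in the electrocuted edge metrics, while the cone-bounded hallways strictly flare condition handles the remaining case of hallways whose top and bottom lie in the cone locus --- exactly those whose girth can be small after electrocution yet large before, so that the first condition gives no information. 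Combining the two yields a flaring statement for the collapsed space $\wh{X}$ itself, and then the Bestvina--Feighn machinery (stability of quasigeodesics, thin bigons and triangles) upgrades flaring to hyperbolicity of $\wh{X}$.

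For bounded penetration: using the strictly type preserving condition, each maximal cone-subtree of horosphere-like spaces $C\in\mathcal{C}$ is a coherently glued subspace of $X$, and the qi-embedded condition together with the tree structure make it quasi-convex in $X$ (project along $T$ to the fibre containing the relevant $H_{\alpha}$). Given two distinct members $C_1, C_2\in\mathcal{C}$, a long geodesic hallway realising a large nearest-point projection of $C_1$ onto $C_2$ would be cone-bounded and essential, hence would have to strictly flare --- impossible for a hallway both of whose ends stay bounded. Thus $\mathcal{C}$ is mutually cobounded in the hyperbolic space $\wh{X}$, and \cref{rem:mutually_cobounded} gives that $(X,\mathcal{C})$ is relatively hyperbolic.

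The main obstacle is the first step: reproving the Bestvina--Feighn flaring-implies-hyperbolicity theorem for $\wh{X}$ when the fibres are obtained only after collapsing the cone locus. One has to control how geodesics and hallways enter and exit the collapsed cone-subtrees, check that collapsing destroys neither the uniform hyperbolicity of the fibres nor the uniform properness of the fibre inclusions, and verify that the hallway combinatorics (transversality to edge spaces, absence of backtracking in $T$) survive passage between $X$, $\overline{X}$ and $\wh{X}$. Keeping all quasi-isometry constants uniform across these three pictures simultaneously, so that a single pair $\lambda, m$ governs the flaring conclusion for $\wh{X}$, is the delicate point.
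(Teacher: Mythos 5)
The paper does not prove this theorem; it is quoted verbatim from Mj--Reeves \cite{MR08} (with Gautero \cite{Ga16} cited for an alternative proof). So the comparison is against the Mj--Reeves argument, not a proof in this paper.

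Your outline captures the two top-level goals correctly (hyperbolicity of $\wh{X}$, bounded penetration for $\mathcal{C}$), and you correctly identify that cone-bounded hallways strictly flare is what yields mutual coboundedness. But there is a genuine conceptual confusion between $\overline{X}$ and $\wh{X}$. You write that ``$\wh{X}$ is itself a tree of spaces built from the coned-off vertex and edge spaces $\wh{X}_v, \wh{X}_e$,'' but that describes $\overline{X}$, not $\wh{X}$. By definition $\wh{X}$ is $\overline{X}$ with the maximal cone-subtrees collapsed to points; after that collapse $\wh{X}$ is no longer a tree of spaces in any useful sense, the fibres of $\pi$ are destroyed, and the Bestvina--Feighn ladder/hallway machinery has no natural meaning there. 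So the step where you try to run a flaring-implies-hyperbolicity argument ``for $\wh{X}$ directly'' is not a minor technicality to keep constants uniform --- it is the place where the plan, as stated, does not get off the ground. The Mj--Reeves proof instead applies Bestvina--Feighn to $\overline{X}$ (which genuinely is a tree of $\delta$-hyperbolic spaces satisfying qi-embedded and hallways flare) to get that $\overline{X}$ is hyperbolic; it then uses the qi-preserving electrocution and cone-bounded hallways strictly flare conditions separately to show that the maximal cone-subtrees $\mathcal{S}$ are uniformly quasi-convex and mutually cobounded in $\overline{X}$; this gives $(\overline{X},\mathcal{S})$ relatively hyperbolic, and since coning off $\overline{X}$ along $\mathcal{S}$ produces exactly $\wh{X}$, the statement for $(X,\mathcal{C})$ follows. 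In particular, the two flaring hypotheses are not combined into a single flaring statement for $\wh{X}$ --- they play distinct logical roles in the two stages.

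A second, smaller gap is your appeal to \cref{rem:mutually_cobounded} for the pair $(X,\mathcal{C})$: that remark requires the ambient space to be hyperbolic, which $X$ (a tree of \emph{relatively} hyperbolic spaces) is not in general. The mutual-coboundedness-implies-bounded-penetration step must be applied to $(\overline{X},\mathcal{S})$, where $\overline{X}$ is hyperbolic by the first stage, and then transported to $(X,\mathcal{C})$.
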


\subsection{Relatively quasi-convex vertex spaces}

We now investigate (relatively) quasi-convex subspaces of a tree of (relatively) hyperbolic spaces. The following definition is a special case of \cite[Definition 4.26]{Mj20}

\begin{definition}[Bounded girth hallways]
\label{defn:bounded_hallways}
A tree of geodesic metric spaces $X$ is said to satisfy the \emph{bounded girth hallways condition} if there is some $m\geqslant 1$ such that for all $\rho\geqslant 0$, there is a constant $G(\rho)$ such that every essential $\rho$-thin hallway of length $2m$ has girth at most $G(\rho)$.
\end{definition}

Note that if an induced tree of spaces $X$ satisfies the bounded girth hallways condition, then it satisfies the hallways flare condition.

The following is \cite[Corollary 4.3]{Mi04}, but can also be seen by \cite[Proposition 4.27]{Mj20}.

\begin{theorem}
\label{thm:bounded_hallways}
Let $X$ be a tree of hyperbolic spaces satisfying the qi-embedded and the bounded girth hallways condition. Then $X$ is hyperbolic and there is a constant $K\geqslant 0$ so that each vertex space of $X$ is $K$-quasi-convex in $X$.
\end{theorem}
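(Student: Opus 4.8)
\emph{Overview and hyperbolicity.} The statement bundles two conclusions --- that $X$ is hyperbolic and that the vertex spaces are uniformly quasi-convex --- and I would prove them in that order. For hyperbolicity: the bounded girth hallways condition implies the hallways flare condition, since with $H(\rho):=G(\rho)+1$ there is simply no essential $\rho$-thin hallway of length $2m$ with girth at least $H(\rho)$, so the flare condition holds vacuously for any $\lambda>1$. As $X$ is also qi-embedded, the Bestvina--Feighn combination theorem \cite{BF92} --- the special case of \cref{thm:combination} in which all peripheral collections are empty --- shows that $X$ is hyperbolic.

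\emph{A uniform depth bound.} The crux of the quasi-convexity statement is the following: there is a constant $D$, depending only on $m$ and on the function $G$ of \cref{defn:bounded_hallways} (together with the qi-embedding and hyperbolicity constants of $X$), so that every geodesic $\gamma$ of $X$ with both endpoints in one vertex space $X_v$ stays within distance $D$ of $v$ in the underlying tree. To prove this, suppose $\gamma$ has a maximal subarc $\gamma'$ whose interior projects into a single component of the tree minus $v$, say the component beyond an edge $e$ at $v$, and that $\gamma'$ reaches tree-depth $n$ with $n$ large. Regarding $X_e$ as a depth-$0$ vertex space, the subtree of spaces $Y_e^{+}$ on that side satisfies all hypotheses of the theorem with the same constants (a hallway of $Y_e^{+}$ is a hallway of $X$ of the same girth, and $d_X\leqslant d_{Y_e^{+}}$, so thinness is inherited), and $\gamma'$ is a geodesic of $Y_e^{+}$ with endpoints in $X_e$. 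Over the first $2m+1$ levels of this excursion I would build, following Mitra's hyperbolic ladder, an essential hallway $h$ of length $2m$ whose two sides fellow-travel the two strands of $\gamma'$ and which is $\rho$-thin for a value of $\rho$ determined by the qi-embedding and hyperbolicity constants alone. The bounded girth hallways condition then bounds $\len(h(m,\cdot))$, hence the distance --- measured in $X_{e_m}$ and therefore in $X$ --- between the two strands of $\gamma'$ at level $m$, by $G(\rho)$. But the subarc of $\gamma$ joining these two points runs from level $m$ up to level $n$ and back, so it has length at least $2(n-m)$, while being a subarc of the geodesic $\gamma$ it has length at most $G(\rho)$; hence $n\leqslant m+G(\rho)/2=:D$.

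\emph{From the depth bound to quasi-convexity.} It remains to promote the depth bound to a metric bound, which needs no further flaring input: if a geodesic $\gamma$ with endpoints $p,q$ in a vertex space $X_v$ stays within tree-distance $D$ of $v$, then $\gamma\subseteq N_D(X_v)$ in $X$. I would prove this by induction on $D$. For $D=0$ one has $\gamma\subseteq X_v$ and $\len(\gamma)\geqslant d_{X_v}(p,q)\geqslant d_X(p,q)=\len(\gamma)$, so there is nothing to prove. For the inductive step, decompose $\gamma$ into its maximal excursions: each excursion beyond an edge $e$ at $v$ is a geodesic of $Y_e^{+}$ with endpoints in $X_e$ staying within tree-distance $D-1$ of $X_e$, hence lies in $N_{D-1}(X_e)$ by induction (distances in $Y_e^{+}$ and in $X$ agreeing along such an excursion), and $X_e$ lies within distance $1$ of $X_v$ in the graph of spaces; the portions of $\gamma$ inside $X_v$ itself need no estimate. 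Combining with the previous paragraph, every vertex space is $K$-quasi-convex in $X$ with $K:=D$, and since $D$ is independent of the vertex and of $\gamma$ this completes the proof.

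\emph{The main obstacle.} The one genuinely delicate point is the construction of the thin straddling hallway, and in particular the fact that its thinness constant $\rho$ can be chosen independent of $\gamma$ and of $n$. Because the projection of $\gamma$ to the Bass--Serre tree need not be a geodesic, one cannot use $\gamma$'s own crossing points with the edge spaces; instead one flows a single fixed geodesic downward through the edge spaces, exploiting that the edge inclusions are uniform quasi-isometric embeddings between uniformly hyperbolic spaces so that consecutive rungs fellow-travel by a bounded amount. This is precisely the work carried out in \cite[Corollary 4.3]{Mi04} (see also \cite[Proposition 4.27]{Mj20}); granting it, the remaining steps above are formal.
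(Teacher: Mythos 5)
The paper does not actually give a proof of this theorem; it cites it directly from \cite[Corollary 4.3]{Mi04} and \cite[Proposition 4.27]{Mj20}. Your proposal therefore supplies more detail than the paper itself, while ultimately deferring the key technical step --- the construction of a uniformly thin essential hallway straddling an excursion --- to the same references. Within that framework, your hyperbolicity argument is correct (bounded girth hallways makes the flare condition vacuous, so Bestvina--Feighn applies), and the depth-bound idea is the right one: a middle rung of a length-$2m$ straddling hallway has length at most $G(\rho)$, while the subarc of $\gamma$ joining the two strand points at that level must descend and return, forcing $2(n-m)\lesssim G(\rho)$. (Minor slip: the girth bounds $\len(h(0,\cdot))$, not $\len(h(m,\cdot))$, but your intent is clear.)

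The step that is genuinely gappier than you acknowledge is ``from the depth bound to quasi-convexity.'' Your induction applies the inductive hypothesis to the excursion $\gamma'$ with $X_e$ treated as a root vertex space of $Y_e^+$, but $X_e$ is an \emph{edge} space, not a vertex space, so the IH as stated does not apply to it. One can try to fix this by re-rooting $Y_e^+$ with a degenerate vertex carrying $X_e$, but then one must verify that qi-embedding and bounded girth hallways persist for the re-rooted tree with uniform constants, which you do not address. Alternatively, rooting the IH honestly at the vertex $v'$ yields only $\gamma'\subseteq N_{f(D-1)}(X_{v'})$, and $X_{v'}$ is not a priori close to $X_v$; bridging from ``close to $X_{v'}$'' to ``close to $X_e$'' requires a further argument using the \emph{uniform properness} of the vertex-space inclusions (a hypothesis you never invoke) to coarsely project $\gamma'$ into $X_{v'}$ as a quasi-geodesic, followed by quasi-convexity of $X_e$ in $X_{v'}$ (which does follow from qi-embedded $+$ hyperbolic). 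Either repair works, but neither is ``formal,'' and the output constant is a function of $D$, the properness gauge, $\delta$ and the qi constants rather than $D$ itself, so your claimed bound $\gamma\subseteq N_D(X_v)$ is also too tight. In short: you identify the right strategy and the right references, but the reduction from the hallway/depth estimate to metric quasi-convexity of the vertex spaces contains a real (if fixable) gap.
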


Combining \cref{thm:combination} with \cref{thm:bounded_hallways}, we may obtain a criterion for when vertex spaces are quasi-convex in the relatively hyperbolic setting. See also \cite{KS24} and \cite{To25} for related criteria.

\begin{corollary}
\label{cor:bounded_hallways}
Let $X$ be a tree of relatively hyperbolic spaces such that:
\begin{enumerate}
\item $X$ satisfies the strictly type preserving, the qi-embedded and the qi-preserving electrocution condition.
\item The induced tree of coned-off spaces $\overline{X}$ satisfies the bounded girth hallways condition and the cone-bounded hallways strictly flare condition.
\end{enumerate}
Then $(X, \mathcal{C})$ is relatively hyperbolic and there is a constant $K\geqslant 0$ so that each vertex space $\wh{X}_v$ is $K$-quasi-convex in $\wh{X}$.
\end{corollary}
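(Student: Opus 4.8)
The plan is to deduce this corollary by combining the two main inputs --- the Mj--Reeves combination theorem (\cref{thm:combination}) for the relative hyperbolicity of $(X, \mathcal{C})$, and the Mitra/Mj bounded-girth-hallways theorem (\cref{thm:bounded_hallways}) for the quasi-convexity of vertex spaces --- by passing from the relatively hyperbolic tree of spaces $X$ to the coned-off tree of spaces $\overline{X}$, which is an honest tree of hyperbolic spaces.

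First I would observe that $(X, \mathcal{C})$ is relatively hyperbolic: the hypotheses (1) and the cone-bounded hallways strictly flare condition from (2) are exactly the hypotheses of \cref{thm:combination}, and the remark after \cref{defn:bounded_hallways} tells us that the bounded girth hallways condition in (2) implies the hallways flare condition, so all hypotheses of \cref{thm:combination} are met. This gives relative hyperbolicity of $(X, \mathcal{C})$ directly. In particular $\wh{X}$ (the space obtained from $\overline{X}$ by collapsing the maximal cone-subtrees of horosphere-like spaces to points) is hyperbolic.

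Next I would apply \cref{thm:bounded_hallways} to the induced tree of coned-off spaces $\overline{X}$. This $\overline{X}$ is, by construction, a tree of hyperbolic spaces (its vertex spaces are the coned-off vertex spaces $\wh{X}_v$, each $\delta$-hyperbolic by definition of a tree of relatively hyperbolic spaces, and the underlying tree is the same $T$). The qi-embedded condition for $\overline{X}$ follows from the qi-preserving electrocution condition hypothesised in (1) (the edge maps $\wh{\partial}_e^{\pm}\colon \wh{X}_e \to \wh{X}_{e^{\pm}}$ are uniform quasi-isometric embeddings), and the bounded girth hallways condition for $\overline{X}$ is hypothesised in (2). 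Hence \cref{thm:bounded_hallways} applies to $\overline{X}$: it is hyperbolic and there is a constant $K_0 \geq 0$ so that each vertex space $\wh{X}_v$ is $K_0$-quasi-convex in $\overline{X}$.

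The last step is to transfer this quasi-convexity from $\overline{X}$ to $\wh{X}$. Since $\wh{X}$ is obtained from $\overline{X}$ by collapsing the maximal cone-subtrees of horosphere-like spaces $C \in \mathcal{C}$ --- each of which is uniformly quasiconvex in $\overline{X}$, being an honest subtree of spaces glued along the cone locus --- the natural map $\overline{X} \to \wh{X}$ is a quasi-isometry away from, and is coarsely Lipschitz onto, $\wh{X}$; more to the point, a subset quasi-convex in $\overline{X}$ and whose image meets the collapsed pieces in uniformly bounded-diameter sets has quasi-convex image in $\wh{X}$. I would check that $\wh{X}_v$ intersects each $C$ (equivalently each maximal cone-subtree) in a single horosphere-like space $H_\alpha \in \mathcal{H}_v$, which is collapsed to a bounded set; alternatively, one invokes that coning off a quasi-convex collection of quasi-convex subsets of a hyperbolic space preserves quasi-convexity of quasi-convex subsets (a standard electrification fact, e.g.\ from the theory behind \cite{Hr10}), to conclude that each $\wh{X}_v$ is $K$-quasi-convex in $\wh{X}$ for some uniform $K$.

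I expect the main obstacle to be precisely this last transfer step: one must be careful that the collapsing map $\overline{X}\to \wh{X}$ behaves well on the specific subsets $\wh{X}_v$, using that $\mathcal{H}_v$ is (part of) the peripheral structure making $X_v$ relatively hyperbolic, so that each $\wh{X}_v$ already sees the cone points it contains as the same points that get further collapsed --- i.e.\ the two coning-off operations are compatible. Once that compatibility is spelled out, quasi-convexity in $\wh{X}$ follows from quasi-convexity in $\overline{X}$ by a routine electrification argument, and combined with \cref{lem:action_def}/\cref{lem:tree_rel_hyp} this is exactly what is needed downstream.
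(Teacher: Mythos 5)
Your proposal follows the same route as the paper's proof: apply \cref{thm:combination} (together with the observation that bounded girth hallways implies hallways flare) to get relative hyperbolicity of $(X, \mathcal{C})$, then apply \cref{thm:bounded_hallways} to the tree of hyperbolic spaces $\overline{X}$ to get $K_0$-quasi-convexity of the $\wh{X}_v$ in $\overline{X}$, and finally transfer this quasi-convexity to $\wh{X}$. Your first two steps are exactly right.

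The only place where you are imprecise is the transfer step, and it is worth flagging because your first attempt at it would not survive scrutiny: the collapsing map $\overline{X}\to\wh{X}$ is not a quasi-isometry and is not well-behaved ``away from'' anything, since the maximal cone-subtrees being collapsed are generically unbounded subsets of $\overline{X}$. Your second attempt is the right idea, and the ``standard electrification fact'' you gesture at is precisely \cite[Lemma 2.4]{MR08}. The paper makes this clean by routing through the intermediate space $\wh{\overline{X}}$, the coning-off (not collapsing) of $\overline{X}$ with respect to the maximal cone-subtrees: \cite[Lemma 2.4]{MR08} shows a geodesic in $\wh{\overline{X}}$ lies in a uniform neighbourhood of a geodesic in $\overline{X}$ with the same endpoints, so $K_0$-quasi-convexity in $\overline{X}$ gives $(K_0+K')$-quasi-convexity in $\wh{\overline{X}}$; and $\wh{\overline{X}}\to\wh{X}$, which collapses each cone (a diameter-one set) to a point, \emph{is} a quasi-isometry. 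Pin down your last step along these lines and the argument is complete.
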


\begin{proof}
The fact that $(X, \mathcal{C})$ is relatively hyperbolic is \cref{thm:combination}. Then \cref{thm:bounded_hallways} implies that there is a constant $K$ so that each vertex space $\wh{X}_v$ is $K$-quasi-convex in the induced tree of coned-off spaces $\overline{X}$. Letting $\wh{\overline{X}}$ be the coning off of $\overline{X}$ with respect to the maximal cone-subtrees, \cite[Lemma 2.4]{MR08} states that there is a constant $K'$ such that any geodesic in $\wh{\overline{X}}$ lies in the $K'$-neighbourhood of any geodesic in $\overline{X}$ connecting the same pair of endpoints. Thus, since each geodesic in $\overline{X}$ connecting two points in $\wh{X}_v$ lies in the $K$-neighbourhood of $\wh{X}_v$, it follows that each geodesic in $\wh{\overline{X}}$ connecting two points in $\wh{X}_v$ lies in the $(K+K')$-neighbourhood of $\wh{X}_v$. Hence, $\wh{X}_v$ is $(K+K')$-quasi-convex in $\wh{\overline{X}}$ for each vertex $v$. Since the map $\wh{\overline{X}}\to\wh{X}$ which collapses all the cones to points is a quasi-isometry, the result follows.
\end{proof}

\subsection{Quasi-geodesics in a tree of hyperbolic spaces}
\label{sec:quasi-geodesics}

In this section we shall analyse quasi-geodesics in trees of hyperbolic spaces which satisfy the bounded girth hallways condition. The aim will be to apply this analysis to obtain a criterion for when a subgroup of a graph of relatively hyperbolic groups is quasi-convex.

We begin with a lemma which appears in \cite{BF92} and is attributed to Gromov \cite{Gr87}. We only state the special case we need.

\begin{lemma}
\label{lem:qg_4-gon}
Let $X$ be a $\delta$-hyperbolic metric space and let $K>0, C\geqslant 0$ be constants. There is a constant $K'$ such that the following holds.  If $\Delta\colon S^1\to X$ is a $(K, C)$-quasi-geodesic 4-gon, then there is a metric tree $S$ with 4 vertices of degree one and a map (resolution) $r\colon D^2\to S$ so that the following holds:
\begin{enumerate}
\item For $a, b\in S^1 = \partial D^2$, $d_X(\Delta(a), \Delta(b))\leqslant d_S(r(a), r(b)) + K'$.
\item For each $s\in S$, $r^{-1}(s)\subset D^2$ is a properly embedded finite tree.
\item For each open edge $e\subset S$, $r^{-1}(e) \cong e\times I$.
\end{enumerate}
\end{lemma}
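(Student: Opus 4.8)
The plan is to first reduce to a genuine geodesic $4$-gon, then build the tree $S$ by gluing two comparison tripods along a diagonal, and finally assemble the resolution $r$ by gluing one rectangle over each edge of $S$.

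\emph{Step 1: straighten the sides.} Since $X$ is $\delta$-hyperbolic, stability of quasi-geodesics (the Morse lemma) provides a constant $R = R(\delta, K, C)$ such that each side $\Delta|_{[t_i,t_{i+1}]}$ lies within Hausdorff distance $R$ of the geodesic $\sigma_i$ joining its endpoints $x_i = \Delta(t_i)$ and $x_{i+1}$. Reparametrising each side onto $\sigma_i$ by a closest-point correspondence yields a continuous map $\Delta'\colon S^1\to X$ which is a geodesic $4$-gon with the same four vertices, together with a homeomorphism of $S^1$ fixing the four marked points along which $d_X(\Delta(a),\Delta'(a))\leqslant R'$ for a constant $R'=R'(\delta,K,C)$. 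After relabelling it therefore suffices to prove the statement for $\Delta'$, absorbing $2R'$ into the final constant $K'$.

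\emph{Step 2: build $S$.} Choose the diagonal geodesic $[x_1,x_3]$, splitting the $4$-gon into geodesic triangles $T^{(1)}=(x_1x_2x_3)$ and $T^{(2)}=(x_1x_3x_4)$. For each $T^{(j)}$ let $\bar\phi_j\colon T^{(j)}\to \tau_j$ be the comparison map to its tripod $\tau_j$: it is isometric on every side and, by $\delta$-thinness, has point-preimages of diameter at most $4\delta$. Both $\bar\phi_1$ and $\bar\phi_2$ send $[x_1,x_3]$ isometrically onto a segment of length $d_X(x_1,x_3)$ with endpoints $x_1,x_3$ and passing through the respective tripod centres $o_1,o_2$. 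Identify these two segments; the resulting space $S$ is a finite metric tree consisting of the segment $[x_1,x_3]$ with an extra leg of length $(x_1|x_3)_{x_2}$ attached at $o_1$ and an extra leg of length $(x_1|x_3)_{x_4}$ attached at $o_2$. Thus $S$ has four degree-one vertices $x_1,x_2,x_3,x_4$ and at most two interior vertices (of degree $3$, or one of degree $4$ if $o_1=o_2$, in which case we insert a length-$0$ edge to regard it as the generic ``H'' shape). Define $p\colon S^1\to S$ by applying $\bar\phi_1$ on the arcs mapping to $[x_1,x_2]$ and $[x_2,x_3]$ and $\bar\phi_2$ on the arcs mapping to $[x_3,x_4]$ and $[x_4,x_1]$; these agree at the four marked points. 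One checks that $p$ restricted to each of the four arcs is isometric onto a path in $S$, that the preimage of each open edge of $S$ consists of exactly two open sub-arcs each mapped isometrically onto that edge, and — the key geometric input — that any two points of $S^1$ with the same image under $p$ lie within $8\delta$ in $X$ (each is within $4\delta$ of the same point of the diagonal $[x_1,x_3]$, by thinness of $T^{(1)}$ and of $T^{(2)}$).

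\emph{Step 3: build $D^2$ and $r$, and estimate.} Form a space by taking one rectangle $e\times I$ for each edge $e$ of $S$, with $e\times\{0\}$ and $e\times\{1\}$ the two sub-arcs of $S^1$ covering $e$, and gluing the ends $\{v\}\times I$ over each vertex $v$ of $S$ according to the cyclic order of the four arcs in $S^1$. This is a compact surface whose boundary is the circle $S^1$ and which is simply connected (it is the resolution of the null-homotopic folding map $p\colon S^1\to S$), hence is $D^2$; the evident projection to $S$ is the required map $r$, and by construction $r^{-1}(\text{leaf})$ is a point, $r^{-1}(\text{interior vertex})$ is a finite tree, and $r^{-1}(e)\cong e\times I$ for every open edge $e$, giving (2) and (3). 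For (1): for $a,b\in S^1$, trace the $S$-geodesic from $r(a)$ to $r(b)$; since $S$ has at most two interior vertices, this geodesic crosses at most two of them, and lifting it edge-by-edge along covering sub-arcs (using that $p$ is isometric on each arc) and jumping between sub-arcs at the crossed interior vertices (each jump costing at most $8\delta$ in $X$ by Step 2) produces a path in $X$ from $\Delta'(a)$ to $\Delta'(b)$ of length at most $d_S(r(a),r(b)) + 16\delta$. Hence $d_X(\Delta'(a),\Delta'(b))\leqslant d_S(r(a),r(b)) + 16\delta$, and combining with Step 1 gives (1) with $K' = 16\delta + 2R'$.

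\emph{Main obstacle.} The genuinely delicate points are the compatibility bookkeeping in Step 1 — ensuring the reparametrisation makes $r$ well defined on $\partial D^2$ and transfers the distance estimate — and verifying in Step 3 that the rectangle-gluing really produces a disk with the stated fibre structure; the rest is the standard triangle-to-tripod comparison together with the observation that the insize of the $4$-gon is controlled by that of the two triangles cut off by a diagonal.
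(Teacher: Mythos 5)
The paper does not actually prove this lemma: it is stated with the remark that it "appears in \cite{BF92} and is attributed to Gromov \cite{Gr87}," and the paper only records the special case (a quadrilateral) that it needs. So there is no proof in the paper to compare against. Your argument is nonetheless the standard one behind the Bestvina--Feighn statement: straighten the sides by the Morse lemma, cut along a diagonal into two geodesic triangles, pass to the two comparison tripods, glue them along the shared diagonal to obtain the ``H''-shaped (or degenerate) tree $S$, and assemble the resolution disk from one rectangle per edge of $S$ plus the preimage trees over the interior vertices. The outline and the constants are right.

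One place where the write-up is genuinely incomplete, and which you correctly flag as delicate: at an interior vertex $v$ of $S$ of degree three, the three sides $\{v\}\times I$ of the incident rectangles have endpoint pairs $\{y_1,y_3\}$, $\{y_1,y_2\}$, $\{y_2,y_3\}$ (where $y_1,y_2,y_3$ are the three points of $p^{-1}(v)$ on $S^1$). If one glues these three arcs only at their endpoints ``according to cyclic order,'' one obtains a $3$-cycle, not a tripod, and the resulting surface is not a disk. The correct gluing identifies each $\{v\}\times I$ with the union of \emph{two} legs of the tripod $r^{-1}(v)$ --- so each tripod leg is shared by exactly two rectangles and the common midpoint is the tripod centre --- after which the three sectors do fill a disk neighbourhood of the tripod. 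Also, the parenthetical justification of your $8\delta$ bound in Step 2 (``each is within $4\delta$ of the same point of the diagonal'') applies to points on the central segment $[o_1,o_2]$; for points on the two outer legs the two preimages on $S^1$ lie in the same triangle and are within $4\delta$ of each other directly, which is even better. Neither issue affects the conclusion, and with the tripod gluing made explicit the proof is complete.
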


If $M\geqslant 0$ and $Y\subset X$, denote by $N_M(Y)$ the $M$-neighbourhood of $Y$ in $X$.

\begin{proposition}
\label{prop:qg_track}
Let $X$ be a tree of hyperbolic spaces satisfying the qi-embedded and the bounded girth hallways condition and let $K>0, C, L\geqslant 0$ be constants. There is a constant $M\geqslant 0$ such that the following holds. 

Let $\gamma, \lambda\colon I\to X$ be two paths that both project to the same path without backtracking in $T$ and such that $d(o(\gamma), o(\lambda)), d(t(\gamma), t(\lambda))\leqslant L$. If $\gamma$ is a $(K, C)$-quasi-geodesic, and each subpath of $\lambda$ in each vertex space $X_v$ is a $(K, C)$-quasi-geodesic, then $\gamma\subset N_M(\lambda)$.
\end{proposition}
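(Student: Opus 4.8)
The statement is a "quasi-geodesics track each other when they fellow-travel through the same sequence of vertex spaces" lemma. The goal is: given a quasi-geodesic $\gamma$ and a path $\lambda$ which is only piecewise (per vertex space) a quasi-geodesic, with the same underlying path in $T$ and endpoints that are $L$-close, conclude $\gamma$ stays in a uniform neighbourhood of $\lambda$. The plan is to induct on the length $n$ of the common path without backtracking in $T$, building the bound via the resolution/hallway machinery already set up in the excerpt.

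**Key steps.** First I would reduce to the case where $\gamma$ and $\lambda$ actually lie over a single edge-and-two-vertices configuration, or rather set up the induction so that the inductive step handles one extra edge of $T$. The base case $n=0$: both paths lie in a single vertex space $X_v$, which is $\delta$-hyperbolic; there $\gamma$ is a $(K,C)$-quasi-geodesic and $\lambda$ is also a $(K,C)$-quasi-geodesic, so by stability of quasi-geodesics in hyperbolic spaces (with endpoints $L$-close) $\gamma \subset N_{M_0}(\lambda)$ for a constant $M_0 = M_0(K,C,L,\delta)$. For the inductive step, I would split off the first vertex space the paths pass through. The issue is that $\lambda$ is a priori not a quasi-geodesic globally, so I cannot directly invoke stability in $X$. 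Instead I would use \cref{thm:bounded_hallways}: since $X$ satisfies qi-embedded and bounded girth hallways, $X$ is hyperbolic and each vertex space is uniformly ($K_0$-)quasi-convex in $X$, and moreover the edge spaces are uniformly qi-embedded. One then wants to say: the concatenation $\lambda$, being a concatenation of vertex-space quasi-geodesics crossing uniformly-qi-embedded edge spaces along a geodesic in $T$, is itself a (uniform) quasi-geodesic in $X$. This is exactly the kind of statement the hallways-flare/bounded-girth technology is designed to produce — a local-to-global principle for paths that are efficient in each vertex space and cross edge spaces "straight." Once I know $\lambda$ is a global $(K_1,C_1)$-quasi-geodesic in the hyperbolic space $X$, and $\gamma$ is a $(K,C)$-quasi-geodesic with $L$-close endpoints, standard stability of quasi-geodesics in the $\delta$-hyperbolic space $X$ gives $\gamma \subset N_M(\lambda)$ directly, with $M$ depending only on $K,C,L$ and the constants of $X$. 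That would finish it without any induction at all.

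**Alternative / backup approach via resolutions.** If promoting $\lambda$ to a global quasi-geodesic is not clean (e.g. if one is worried the bounded-girth condition only controls hallways of a fixed length $2m$ and not arbitrarily long concatenations), I would argue more hands-on. Consider the quadrilateral with sides $\gamma$, $\lambda$, and the two short ($\le L$) paths joining their endpoints; this is a $(K,C)$-quasi-geodesic $4$-gon in the hyperbolic space $X$ (after absorbing the short sides into the constants), so \cref{lem:qg_4-gon} gives a tree $S$ with four leaves and a resolution $r\colon D^2 \to S$. Because $\gamma$ and $\lambda$ run over the same non-backtracking path in $T$, the preimages $r^{-1}(s)$ interact with the tree-of-spaces structure in a controlled way: a point $p$ on $\gamma$ in vertex space $X_v$ is joined by the fibre $r^{-1}(r(p))$ either to a point of $\lambda$ directly (giving the bound) or to a point on one of the short sides, in which case $v$ is near an endpoint and we are done by the base case; or $r^{-1}(r(p))$ meets $\lambda$ in a different vertex space, which would force a long hallway — here I would invoke the bounded girth hallways condition to say such a configuration has uniformly bounded girth, hence $p$ is uniformly close to $\lambda$ after all. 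The technical heart is organizing this so the hallway extracted genuinely has length $\ge 2m$ whenever the distance from $p$ to $\lambda$ exceeds the would-be constant, contradicting bounded girth.

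**Main obstacle.** The crux is controlling the interaction of the resolution (or the would-be global quasi-geodesic property of $\lambda$) with the tree-of-spaces structure: one must upgrade "efficient in each vertex space + straight across edge spaces" to a uniform global quasi-geodesic bound, and the only available leverage is the bounded girth hallways condition, whose native output is about hallways of a fixed length $2m$. Bridging from fixed-length hallways to the arbitrary-length path $\lambda$ — i.e. a local-to-global (Morse-type) lemma in this tree-of-spaces setting — is where the real work lies, and I expect the cleanest route is to cite or adapt the machinery behind \cref{thm:bounded_hallways} (Mitra \cite{Mi04}, Mj \cite{Mj20}) rather than redo it, then finish with classical quasi-geodesic stability in the hyperbolic space $X$.
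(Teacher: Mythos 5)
Your proposal has a genuine gap. Both of your routes secretly need $\lambda$ to be a global $(K',C')$-quasi-geodesic in $X$: Part~1 says so explicitly, and Part~2 needs it because Lemma~\ref{lem:qg_4-gon} only applies to a $4$-gon all of whose sides are quasi-geodesics, so you cannot feed it a $4$-gon with $\lambda$ as a side until you know $\lambda$ is one. But the hypothesis is only that each vertex-space piece $\lambda_i$ is a $(K,C)$-quasi-geodesic in $X_{v_i}$, and the concatenation of such pieces over a non-backtracking tree path need not be a global quasi-geodesic: quasi-convexity of the vertex spaces (Theorem~\ref{thm:bounded_hallways}) makes each $\lambda_i$ a uniform quasi-geodesic in $X$, but a concatenation of quasi-geodesics is not one, and nothing you cite supplies the required local-to-global principle. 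You acknowledge the worry (``bounded girth only controls hallways of a fixed length $2m$'') but then lean on exactly the unproved promotion in both arguments. Note that the closest thing in the paper is how Proposition~\ref{prop:concatenated_qg_track_1} deduces that a straightened path is a quasi-geodesic, but there the input path was already a global quasi-geodesic and straightening only shortens, so the logic does not transfer to your $\lambda$.

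The paper sidesteps this entirely. It never treats $\lambda$ as a global quasi-geodesic. Instead it introduces auxiliary quasi-geodesics $p_i$, $q_i$ joining $\gamma$ to $\lambda$ inside each vertex space, forms for each $i$ the $4$-gon $\gamma_i * q_i * \overline{\lambda}_i * \overline{p}_i$ \emph{inside $X_{v_i}$} (where all four sides are honest quasi-geodesics), and applies Lemma~\ref{lem:qg_4-gon} per vertex space. The resolutions are then stitched together across edge spaces into ``crossing sequences/paths,'' which are quasi-geodesics for an elementary reason you do not have available for $\lambda$: each crossing path spends a uniformly bounded amount of time in each vertex space, so its length is comparable to its progress in $T$ (Claim~\ref{claim:qg}). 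Finally, bounded girth hallways is used (Claim~\ref{claim:thin_hallway}) to bound how far along a $p_i$ one can walk before meeting a short transverse crossing path. So your backup plan has the right flavour (resolutions plus hallways) but applied at the wrong level; the missing idea is to localize the $4$-gon and resolution argument to each vertex space and transport the information between vertex spaces via the short crossing paths, rather than build one global $4$-gon.
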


\begin{proof}
Let $m\geqslant 1$ be the constant from \cref{defn:bounded_hallways} guaranteed to exist by assumption. The tree of hyperbolic spaces $X$ is thus hyperbolic by the Bestvina--Feighn combination theorem \cite{BF92}.

Since each edge space uniformly quasi-isometrically embeds in the adjacent vertex spaces, after possibly performing a path homotopy to $\gamma, \lambda$ and increasing $K$ and $C$ (by a constant not depending on $\gamma, \lambda$), we may write
\begin{align*}
\gamma &= \gamma_0*e_1*\gamma_1*\ldots*e_n*\gamma_n\\
\lambda &= \lambda_0*f_1*\lambda_1*\ldots*f_n*\lambda_n
\end{align*}
where each $\gamma_i, \lambda_i$ are $(K, C)$-quasi-geodesics in the same vertex space $X_{v_i}\subset X$ and each $e_i, f_i$ are paths of length one that both project to the same edge in $T$.

Let $p_0$ be a geodesic (in its vertex space) connecting $o(\gamma_0)$ with $o(\lambda_0)$ and let $q_n$ be a geodesic (in its vertex space) connecting $t(\gamma_n)$ with $t(\lambda_n)$. For each $i>0$, let $p_i$ be a path connecting $o(\gamma_i)$ with $o(\lambda_i)$ so that $p_i$ is the image of a geodesic in the edge space $X_{\pi(e_i)}$ under $\partial_{\pi(e_i)}^+$. For each $i<n$, let $q_i$ be the image of $p_{i+1}$ under the edge map. In particular, each point along $q_i$ has a corresponding point (at distance 1) on $p_{i+1}$. Since the edge maps are all quasi-isometric embeddings, each $p_i$, $q_i$ are quasi-geodesics in their vertex spaces, with the constants depending only on the data defining $X$. For each $i$,
\[
\gamma_i*q_i*\overline{\lambda}_i*\overline{p}_i
\]
is a quasi-geodesic 4-gon in $X_{v_i}$. For each $i$, let $S_i$ be the metric tree and $\Delta_i$, $r_i$ the maps from \cref{lem:qg_4-gon} for the $i^{\text{th}}$ quasi-geodesic 4-gon. Let $K'$ be the constant from \cref{lem:qg_4-gon}.

Now let $x$ be an arbitrary point on $\gamma_i*q_i*\overline{\lambda}_i*\overline{p}_i$. Consider the point $r_i(x)\in S_i$. Then in each of the sides of $\gamma_i*q_i*\overline{\lambda}_i*\overline{p}_i$ that do not contain $x$, there is at most one point $y$ (and at least one overall) such that $r_i(y) = r_i(x)$. If there is such a point $y\in p_i$, then there is a corresponding point $y'\in q_{i-1}$ at distance $1$ from $y$. If there is such a point in $y\in q_i$, then there is a corresponding point $y'\in p_{i+1}$ at distance $1$ from $y$. By considering $r_{i-1}(y')$ in the first case or $r_{i+1}(y')$ in the second case and continuing in this way, we may find a sequence of points $x = x_0, x_1, \ldots, x_{2k}, x_{2k+1}$ such that the following holds:
\begin{enumerate}
\item For each $j\neq 0, k$, there is an $i$ so that $x_{2j}\in p_{i+j}$ and $x_{2j+1}\in q_{i+j}$ (or $q_{i-j}$ and $p_{i-j}$ if the sequence goes in the opposite direction) and $d(x_{2j}, x_{2j+1})\leqslant K'$.
\item For each $j\neq 0, k$, the pair $x_{2j-1}, x_{2j}$ are adjacent points in $q_{i+j}, p_{i +j+1}$ respectively (or $p_{i-j}, q_{i-j-1}$ respectively).
\item $x_{2k+1}$ lies on $\gamma*q_n*\overline{\lambda}*\overline{p}_0$.
\end{enumerate}
Call the sequence of points a \emph{crossing sequence} for $x$. For each $k$ there is a path of length at most $K'+1$ connecting $x_{2j}$ with $x_{2(j+1)}$. By concatenating all these paths together, we may obtain a path $\alpha_x$ connecting $x = x_0$ with $x_{2k}\in \gamma*q_n*\overline{\lambda}*\overline{p}_0$ of length at most $k(K'+1)$. Call the path $\alpha_x$ a crossing path for $x$. If $x\in \gamma*q_n*\overline{\lambda}*\overline{p}_0$, then call $\alpha_x$ a transverse crossing path. Note that every point in each $\gamma_i*q_i*\overline{\lambda}_i*\overline{p}_i$ lies in at least one transverse crossing path.

\begin{claim}
\label{claim:qg}
If $\alpha$ is a crossing path, then $\alpha$ is a $(K'+1, 2K')$-quasi-geodesic.
\end{claim}

\begin{proof}
This can be seen by considering the projection of $\alpha$ to $T$.
\end{proof}

All constants in the rest of the proof will depend only on the hyperbolicity constants for the vertex spaces $X_v$ and the constants $K$, $C$, $K'$, $L$, $m$ and $G(K'+1)$.

\begin{claim}
\label{claim:short_path}
There is a constant $K''$ such that the following holds. If $y\in p_i$ is a point along $p_i$ so that there is a transverse crossing path $\alpha$ which traverses $y$, begins and ends on $\gamma$ and so that $\pi(\alpha)$ is a segment of length at most $2m$, then $d(o(p_i), y)\leqslant K''$.
\end{claim}

\begin{proof}
Note that $o(p_i)$ lies in between $o(\alpha)$ and $t(\alpha)$ along $\gamma$. Since $\alpha$ is a $(K'+1, 2K')$-quasi-geodesic by \cref{claim:qg} and $\gamma$ is a $(K, C)$-quasi-geodesic, there is a constant $\kappa$, depending only on $K, C, K'$, so that $\alpha\subset N_{\kappa}(\gamma[o(\alpha), t(\alpha)])$ and $\gamma[o(\alpha), t(\alpha)]\subset N_{\kappa}(\alpha)$. In particular, $o(p_i)$ lies at distance at most $\kappa$ from some point on $\alpha$. Since $\alpha$ has length at most $2m(K'+1)$, the claim follows by setting $K'' = 2m(K'+1) + \kappa + 1$.
\end{proof}

\begin{claim}
\label{claim:thin_hallway}
There is a constant $K'''$ such that the following holds. If $y, z\in p_i$ are points so that there are transverse crossing paths traversing $y$ and $z$ which project to segments of length at least $2m$ in $T$, then $d(y, z)\leqslant K'''$.
\end{claim}

\begin{proof}
Let $\alpha_1, \alpha_2$ be transverse crossing paths traversing $y, z$ respectively so that $\pi(\alpha_1), \pi(\alpha_2)$ are segments of length at least $2m$. Then there is a $(K'+1)$-thin essential hallway $h\colon [-m, m]\times I\to X$ such that $h\mid [-m, m]\times \{0\}$ is a subpath of $\alpha_1$ containing $y$ and $h\mid[-m, m]\times\{1\}$ is a subpath of $\alpha_2$ containing $z$. But then since $X$ has bounded girth hallways, $h\mid \{0\}\times I$ is a path of length at most $G(K'+1)$. Since the edge maps are all quasi-isometric embeddings, this implies that there is a constant $K'''$ so that $h\mid \{j\}\times I$ is a path of length at most $K'''$ for each $j\in \{-m, \ldots, m\}$. Since one of these paths connect $y$ with $z$, we see that $d(y, z)\leqslant K'''$ as claimed.
\end{proof}

\begin{claim}
\label{claim:final_claim}
There is a constant $K''''$ such that the following holds. If $y\in p_i$, then there is a point $z\in \lambda$ so that $d(y, z)\leqslant K''''$.
\end{claim}

\begin{proof}
By \cref{claim:short_path}, $y$ lies at distance at most $K''$ from a point $y'\in p_i$ so that any transverse crossing path $\alpha$ traversing $y'$ either has an endpoint on $q_n\cup\lambda\cup p_0$ and $\pi(\alpha)$ is a segment of length at most $2m$, or $\pi(\alpha)$ is a segment of length at least $2m$. In the first instance, since $\len(p_0), \len(q_n)\leqslant L$, we have that $y'$ lies at distance at most $2m(K'+1) + L$ from a point on $\lambda$ and so $y$ lies at distance at most $K'' + 2m(K'+1) + L$ from a point on $\lambda$. In the second instance, by \cref{claim:thin_hallway} $y'$ lies at distance at most $K'''$ from a point $z'\in p_i$ (following along $p_i$ in the direction towards $\lambda$) so that there is a transverse crossing path $\beta$ traversing $z'$ with an endpoint on $q_n\cup \lambda\cup p_0$ (in fact, it must have an endpoint on $\lambda$) and such that $\pi(\beta)$ is a segment of length at most $2m$. In this case we have that $z'$ lies at distance at most $2m(K'+1) + L$ from a point on $\lambda$ and so $y$ lies at distance at most $K'' + K''' + 2m(K'+1) + L$ from a point on $\lambda$. In all cases, we may take $K'''' = K'' + K''' + 2m(K'+1) + L$ to complete the proof.
\end{proof}

We may now complete the proof of the proposition. Let $x\in \gamma$ be any point. If there is no point $y\in \lambda$ so that $d(x, y)\leqslant K'+1$, then there is a point $y\in p_i$ for some $i$ so that $d(x, y)\leqslant K' + 1$ (using the fact that a point on $\gamma_i$ is at distance at most $K'$ from a point on $q_i\cup\lambda_i\cup p_i$). \cref{claim:final_claim} then completes the proof.
\end{proof}

\begin{proposition}
\label{prop:concatenated_qg_track_1}
Let $X$ be a tree of hyperbolic spaces satisfying the qi-embedded and the bounded girth hallways condition and let $K> 0, C\geqslant 0$ be constants. There is a constant $M'\geqslant 0$ such that the following holds.

If $\lambda \colon I \to X$ is a path so that its projection to $T$ is a path without backtracking and so that each subpath of $\lambda$ in each vertex space $X_v$ is $(K, C)$-quasi-geodesic, then (K, C)-quasi-geodesics in $X$ connecting $o(\lambda)$ with $t(\lambda)$ lie in the $M'$-neighbourhood of $\lambda$.
\end{proposition}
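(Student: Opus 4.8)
The plan is to reduce to \cref{prop:qg_track}. Given a $(K,C)$-quasi-geodesic $\gamma$ from $o(\lambda)$ to $t(\lambda)$, I will replace it by a companion quasi-geodesic $\gamma'$ with the same endpoints whose projection to $T$ is the (non-backtracking) geodesic $[\pi(o(\lambda)),\pi(t(\lambda))]$, apply \cref{prop:qg_track} to the pair $(\gamma',\lambda)$, and then transfer the conclusion back to $\gamma$ using stability of quasi-geodesics. Note first that, by \cref{thm:bounded_hallways}, $X$ is $\delta$-hyperbolic for some $\delta$ and each vertex space is $Q$-quasi-convex in $X$ for a uniform $Q$; consequently each inclusion $X_v\hookrightarrow X$ is an $(A_0,B_0)$-quasi-isometric embedding with $A_0,B_0$ depending only on $\delta$ and $Q$. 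All constants introduced below will depend only on $K,C$ and the data defining $X$.

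\textbf{The straightening.} Exactly as at the start of the proof of \cref{prop:qg_track}, after a homotopy rel endpoints (changing $K,C$ by controlled amounts) we may assume $\gamma$ is a concatenation of vertex-space quasi-geodesics and unit edge-crossings, transverse to every edge space, and that $o(\lambda),t(\lambda)$ lie over vertices $u_0,u_N$ of $T$. Write $u_0,u_1,\dots,u_N$ for the vertices of $[\pi(o(\lambda)),\pi(t(\lambda))]$, let $\bar e_i$ be the edge joining $u_{i-1}$ to $u_i$, and let $F_i=\pi^{-1}(\mathrm{midpt}(\bar e_i))$. Since $\gamma$ runs from over $u_0$ to over $u_N$, it meets each $F_i$; take $c_i$ to be the \emph{last} point of $\gamma$ on $F_i$, and $c_i^\pm$ its two correspondents in $X_{u_{i-1}}$ and $X_{u_i}$, which lie at distance $\le 2$ from $c_i$. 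A short tree argument shows $c_1,\dots,c_N$ occur in this order along $\gamma$: past $c_i$ the path $\gamma$ must remain on the $u_N$-side of $\bar e_i$, the $u_N$-side of $\bar e_{i+1}$ is contained in that of $\bar e_i$, and $F_i$ is disjoint from the $u_N$-side of $\bar e_{i+1}$. Now let $\gamma'$ be the concatenation of a geodesic of $X_{u_0}$ from $o(\lambda)$ to $c_1^-$, the edge-crossing at $c_1$, a geodesic of $X_{u_i}$ from $c_i^+$ to $c_{i+1}^-$ for $1\le i\le N-1$ interleaved with the edge-crossings at the $c_i$, and a geodesic of $X_{u_N}$ from $c_N^+$ to $t(\lambda)$. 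By construction $\gamma'$ projects to $u_0,\dots,u_N$ without backtracking. To see $\gamma'$ is a $(K',C')$-quasi-geodesic in $X$, the estimate one uses is that each vertex-space geodesic of $\gamma'$ from $c_i^+$ to $c_{i+1}^-$ has length at most $A_0\,\ell(\gamma|_{[c_i,c_{i+1}]})+O(1)$, by the quasi-isometric embedding of $X_{u_i}$ in $X$; because the $c_i$ occur in order these estimates telescope over any subsegment, and combining them with the fact that $\gamma$ is a $(K,C)$-quasi-geodesic and $N\le\ell(\gamma)$ yields the required bound on $\ell(\gamma'|_{[x,y]})$ in terms of $d_X(x,y)$. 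Using the single last-crossing point $c_i$ per vertex space, rather than recursively straightening one excursion at a time, is exactly what keeps the quasi-geodesic constants independent of $\gamma$.

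\textbf{Conclusion.} The paths $\gamma'$ and $\lambda$ project to the same non-backtracking path in $T$ and have equal endpoints (so $L=0$), $\gamma'$ is a $(K',C')$-quasi-geodesic in $X$, and each subpath of $\lambda$ in a vertex space is a $(K,C)$-quasi-geodesic; hence \cref{prop:qg_track}, applied with constants $\max(K,K')$, $\max(C,C')$ and $L=0$, provides a uniform $M$ with $\gamma'\subset N_M(\lambda)$. Finally $\gamma$ and $\gamma'$ are both $(\max(K,K'),\max(C,C'))$-quasi-geodesics in the $\delta$-hyperbolic space $X$ with the same endpoints, so stability of quasi-geodesics gives $\gamma\subset N_D(\gamma')$ for a uniform $D$. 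Therefore $\gamma\subset N_{M+D}(\lambda)$, and we may take $M'=M+D$.

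\textbf{Main obstacle.} The heart of the argument is the construction of $\gamma'$ — producing a straightened path whose quasi-geodesic constants do not depend on $\gamma$. I expect the two fiddly points to be (i) verifying that the last-crossing points $c_i$ really occur in order along $\gamma$, so that the subpaths $\gamma|_{[c_i,c_{i+1}]}$ are honest and the length estimates telescope, and (ii) carrying out the bookkeeping that converts the quasi-convexity of vertex spaces coming from \cref{thm:bounded_hallways} (that is, the bounded-girth-hallways hypothesis) into control of $\ell(\gamma'|_{[x,y]})$ over \emph{all} subsegments, not merely the whole path.
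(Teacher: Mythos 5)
Your approach is the paper's: straighten $\gamma$ into a companion path $\gamma'$ with the same endpoints projecting to a non-backtracking path in $T$, show $\gamma'$ is a uniform quasi-geodesic, apply \cref{prop:qg_track}, then transfer back by stability of quasi-geodesics. The straightening mechanics differ slightly (you use last-crossing points, the paper iterates maximal loop-replacements), and your version is more explicit, but these produce essentially the same path. The step that is not adequately justified in your write-up is the claim that $\gamma'$ is a uniform quasi-geodesic.

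Your telescoping bounds $\ell(\gamma'|_{[c_i^+, c_j^-]})$ by $A_0\ell(\gamma|_{[c_i,c_j]})+O(j-i)\leqslant A_0(K\,d_X(c_i,c_j)+C)+O(j-i)$, which works because the $c_k$ occur in order along the honest quasi-geodesic $\gamma$. But for an arbitrary subsegment $\gamma'|_{[x,y]}$ with $x\in\sigma_i$ and $y\in\sigma_j$, the bound you obtain this way is in terms of $d_X(c_i,c_{j+1})$, not $d_X(x,y)$. When $\sigma_i$ and $\sigma_j$ are long and $x$, $y$ sit near their far ends, $d_X(c_i,c_{j+1})$ is not controlled by $d_X(x,y)$ unless you also know that the intermediate crossing points $c_{i+1},\dots,c_j$ lie within uniformly bounded distance of an $X$-geodesic $[x,y]$, so that $d_X(x,c_{i+1})+d_X(c_{i+1},c_j)+d_X(c_j,y)$ is linearly bounded by $d_X(x,y)$. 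This does hold, but it needs a fellow-travelling step you do not carry out: each $\sigma_k$ is a quasi-geodesic in $X$ (by the qi-embedding of vertex spaces you established) with endpoints at bounded distance from $\gamma$, so by quasi-geodesic stability $\sigma_k$ fellow-travels $\gamma|_{[c_k,c_{k+1}]}$; hence $x$ is close to some $\tilde{x}\in\gamma|_{[c_i,c_{i+1}]}$ and $y$ to some $\tilde{y}\in\gamma|_{[c_j,c_{j+1}]}$, the points $c_{i+1},\dots,c_j$ lie on $\gamma|_{[\tilde{x},\tilde{y}]}$, and stability again places them near $[\tilde{x},\tilde{y}]$, hence near $[x,y]$. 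You invoke hyperbolicity and quasi-convexity of vertex spaces only to get the qi-embedding, not to run this fellow-travelling argument, so the phrase "yields the required bound on $\ell(\gamma'|_{[x,y]})$" is a jump. (The published proof is, to be fair, equally terse here, asserting the quasi-geodesic property directly "by \cref{thm:bounded_hallways}" without spelling the argument out.)
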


\begin{proof}
Let $\gamma'\colon I\to X$ be a (K, C)-quasi-geodesic connecting $o(\lambda)$ with $t(\lambda)$. Now replace each maximal subpath of $\gamma'$ that projects in $T$ to a loop of positive length at a vertex $v$ with a geodesic in $X_v$ connecting the endpoints. Since each such replacement decreases the length of the path in $T$, after finitely many replacements we may obtain a new path $\gamma\colon I\to X$ so that $\pi\circ\gamma$ is a path without backtracking in $T$. By \cref{thm:bounded_hallways}, it follows that $\gamma$ is a quasi-geodesic. In particular, there is some constant $K'$ (depending on the hyperbolicity, quasi-geodesic and quasi-convexity constants) so that $\gamma'$ lies in the $K'$-neighbourhood of $\gamma$. Finally, applying \cref{prop:qg_track} we see that $\gamma'$ lies in the $(M + K')$-neighbourhood of $\lambda$.
\end{proof}

\begin{proposition}
\label{prop:concatenated_qg_track_2}
Let $X$ be a tree of relatively hyperbolic spaces satisfying the assumptions of \cref{cor:bounded_hallways} and let $K> 0, C\geqslant 0$ be constants. There is a constant $M'\geqslant 0$ such that the following holds.

If $\lambda \colon I \to \wh{X}$ is a path so that its projection away from cone points to $T$ is a path without backtracking and so that each subpath of $\lambda$ in each vertex space $\wh{X}_v$ is $(K, C)$-quasi-geodesic, then (K, C)-quasi-geodesics in $X$ connecting $o(\lambda)$ with $t(\lambda)$ lie in the $M'$-neighbourhood of $\lambda$.
\end{proposition}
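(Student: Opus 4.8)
The plan is to reduce the statement to \cref{prop:concatenated_qg_track_1} applied to the induced tree of coned-off spaces $\overline{X}$. First I would observe that $\overline{X}$, regarded as a tree of metric spaces whose vertex spaces are the coned-off vertex spaces $\wh{X}_v$ and whose edge spaces are the coned-off edge spaces $\wh{X}_e$, is a tree of hyperbolic spaces: these spaces are uniformly $\delta$-hyperbolic because $X$ is a tree of relatively hyperbolic spaces, the qi-embedded condition for $\overline{X}$ is precisely the qi-preserving electrocution condition for $X$, and the bounded girth hallways condition for $\overline{X}$ is among the hypotheses of \cref{cor:bounded_hallways}. Hence \cref{thm:bounded_hallways} and \cref{prop:concatenated_qg_track_1} apply to $\overline{X}$; in particular $\overline{X}$ is hyperbolic. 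As used in the proof of \cref{cor:bounded_hallways}, the space $\wh{X}$ is hyperbolic, the collapsing map $p\colon\overline{X}\to\wh{X}$ is $1$-Lipschitz and restricts to the identity on $X\subset\overline{X}$, and by \cite[Lemma 2.4]{MR08} (together with the quasi-isometry between $\wh{X}$ and the coning-off of $\overline{X}$ along its cone-subtrees) there is a constant $K_1$ such that every $\wh{X}$-geodesic lies in the $K_1$-neighbourhood of the $p$-image of any $\overline{X}$-geodesic with the same endpoints.

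Next I would lift $\lambda$ to a path $\overline{\lambda}\colon I\to\overline{X}$ with the same endpoints, by replacing each excursion of $\lambda$ through a cone point $[S]$ of $\wh{X}$ (together with the two adjoining half-edges) with the corresponding excursion in $\overline{X}$: from a peripheral subspace in some $\wh{X}_v$ reached by $\lambda$, along the half-edge to the associated cone point of $\wh{X}_v$ (a vertex of the maximal cone-subtree $S$), along a geodesic of the tree $S$, and back out to a peripheral subspace in some $\wh{X}_w$. Then $p\circ\overline{\lambda}$ has the same image as $\lambda$; each subpath of $\overline{\lambda}$ in a vertex space $\wh{X}_v$ is either a subpath of $\lambda$ in $\wh{X}_v$, hence a $(K,C)$-quasi-geodesic by hypothesis, or a bounded path near the cone locus; and the projection of $\overline{\lambda}$ to $T$ is again a path without backtracking, because it agrees with the without-backtracking projection of $\lambda$ away from cone points while the inserted geodesics of the $S$'s project to geodesic segments of $T$. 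After enlarging $K,C$ so that these bounded pieces and geodesics are $(K,C)$-quasi-geodesics, $\overline{\lambda}$ satisfies the hypotheses of \cref{prop:concatenated_qg_track_1} for $\overline{X}$.

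Now let $\gamma'\colon I\to X$ be a $(K,C)$-quasi-geodesic joining $o(\lambda)$ to $t(\lambda)$. Its image in $\wh{X}$ is a uniform quasi-geodesic, with constants depending only on $K$, $C$ and the data defining $X$: quasi-geodesics of $X$ project to quasi-geodesics of the coned-off space of the relatively hyperbolic pair $(X,\mathcal{C})$ (see \cite{Hr10}). Since $\wh{X}$ is hyperbolic, this image lies in the $D$-neighbourhood of any $\wh{X}$-geodesic $\sigma$ from $o(\lambda)$ to $t(\lambda)$. Letting $\overline{\sigma}$ be an $\overline{X}$-geodesic between the same points, \cref{prop:concatenated_qg_track_1} applied to $\overline{X}$ with the path $\overline{\lambda}$ — the geodesic $\overline{\sigma}$ being in particular a $(K,C)$-quasi-geodesic after the enlargement above — yields a constant $M_0$ with $\overline{\sigma}\subset N_{M_0}(\overline{\lambda})$ in $\overline{X}$. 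Pushing forward by the $1$-Lipschitz map $p$ and using that $p\circ\overline{\lambda}$ has the same image as $\lambda$ gives $p(\overline{\sigma})\subset N_{M_0}(\lambda)$ in $\wh{X}$, hence $\sigma\subset N_{K_1+M_0}(\lambda)$ by the choice of $K_1$, and therefore $\gamma'\subset N_{M'}(\lambda)$ in $\wh{X}$ with $M'=D+K_1+M_0$.

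The step I expect to be the main obstacle is the assertion that an arbitrary $(K,C)$-quasi-geodesic of $X$, as opposed to a genuine geodesic, projects to a uniform quasi-geodesic of $\wh{X}$; for geodesics this is standard, and for quasi-geodesics it requires a little care. A more self-contained route is to first replace each maximal subpath of $\gamma'$ lying inside a member of $\mathcal{C}$ by the detour of length at most one through the corresponding cone point of $\wh{X}$ — a modification of bounded Hausdorff size in $\wh{X}$ — so that the resulting path has uniformly quasi-geodesic subpaths in every $\wh{X}_v$; removing the finitely many subpaths that project to loops of $T$ as in the proof of \cref{prop:concatenated_qg_track_1}, and lifting to $\overline{X}$, then produces a path to which \cref{prop:concatenated_qg_track_1} (or \cref{prop:qg_track}) applies directly, comparing it with $\overline{\lambda}$. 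A secondary technical point is to check that each maximal cone-subtree $S$ maps injectively to $T$, which is what ensures that the inserted geodesics of $S$ project to geodesics of $T$ and hence that $\overline{\lambda}$ genuinely projects to a path without backtracking.
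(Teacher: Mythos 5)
Your proposal takes essentially the same route as the paper: both reduce to \cref{prop:concatenated_qg_track_1} applied to the induced tree of coned-off spaces $\overline{X}$ (noting that the qi-preserving electrocution condition on $X$ is exactly the qi-embedded condition on $\overline{X}$, and that $\overline{X}$ inherits bounded girth hallways) and then transfer the conclusion down to $\wh{X}$ using \cite[Lemma 2.4]{MR08} together with the quasi-isometry obtained by collapsing the cone-subtrees. The paper's proof is terse and does not spell out the lift of $\lambda$ to $\overline{X}$ nor the passage from geodesics to quasi-geodesics; you supply those details, and correctly flag that the ``quasi-geodesic of $X$ versus quasi-geodesic of $\wh{X}$'' step needs care — a subtlety the paper's statement (where one should likely read quasi-geodesics in $\wh{X}$, as in the application in \cref{thm:qc_criterion}) glosses over.
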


\begin{proof}
As in the proof of \cref{cor:bounded_hallways}, let $\wh{\overline{X}}$ be the coning off of $\overline{X}$ with respect to the maximal cone-subtrees. Then \cite[Lemma 2.4]{MR08} states that there is a constant $K''$ such that any geodesic in $\wh{\overline{X}}$ lies in the $K''$-neighbourhood of any geodesic in $\overline{X}$ connecting the same pair of endpoints. Since the map $\wh{\overline{X}}\to \wh{X}$ given by collapsing the cones to points is a quasi-isometry, the result follows for $\wh{X}$ by applying \cref{prop:concatenated_qg_track_1} to $\overline{X}$ (which is a tree of hyperbolic metric spaces satisfying the qi-embedded and bounded girth hallways condition).
\end{proof}

\subsection{Relatively quasi-convex subgroups}

Recall that if $G$ is a group acting on a (simplicial) tree $T$ without edge inversions, then there is an associated graph of groups $\mathcal{G} = (\Gamma, \{G_v\}, \{G_e\}, \{\partial_{e}^\pm\})$ with $\Gamma\cong G\backslash T$ and with fundamental group $\pi_1(\mathcal{G})\isom G$. The reader is directed to the work of Bass \cite{Ba93} and Serre \cite{Se80} for the relevant background on graphs of groups.

We first restate \cref{thm:combination} in terms of groups using \cref{lem:action_def}.

\begin{theorem}
\label{thm:group_combination}
Let $X$ be a tree $(T)$ of relatively hyperbolic spaces such that:
\begin{enumerate}
\item $X$ satisfies the strictly type preserving, the qi-embedded and the qi-preserving electrocution condition.
\item The induced tree of coned-off spaces $\overline{X}$ satisfies the hallways flare condition and the cone-bounded hallways strictly flare condition.
\end{enumerate}
Let $G$ be a group acting geometrically on $X$ so that the projection map $\pi$ induces an action of $G$ on $T$. Then $G$ is isomorphic to the fundamental group of the quotient graph of relatively hyperbolic groups $\mathcal{G}$ and, if $\mathcal{P} = \{\stab_G(C)\}_{G\cdot C\in G\backslash\mathcal{C}}$, then the pair $(G, \mathcal{P})$ is relatively hyperbolic.
\end{theorem}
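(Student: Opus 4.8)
The plan is to deduce the statement directly from \cref{thm:combination} together with \cref{lem:action_def}, supplemented by a routine Bass--Serre argument for the graph-of-groups conclusion. First, the hypotheses on $X$ and $\overline{X}$ are precisely those of \cref{thm:combination}, so $(X, \mathcal{C})$ is a relatively hyperbolic space pair. Since the $G$-action on $X$ is compatible with $\pi\colon X\to T$ and with the peripheral structures $\mathcal{H}_v, \mathcal{H}_e$ (these being part of the data of an action on a tree of relatively hyperbolic spaces), it extends to an isometric action on the induced tree of coned-off spaces $\overline{X}$ which preserves the cone locus, hence permutes the maximal cone-subtrees of horosphere-like spaces; that is, $G$ acts on the pair $(X, \mathcal{C})$. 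The space $X$ is a geodesic path metric space on which $G$ acts geometrically, so $G$ is finitely generated (\v{S}varc--Milnor), and \cref{lem:action_def} applies: the associated group pair $(G, \mathcal{P})$ with $\mathcal{P} = \{\stab_G(C)\}_{G\cdot C\in G\backslash\mathcal{C}}$ is relatively hyperbolic. This is the second conclusion.

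For the first conclusion I would apply ordinary Bass--Serre theory to the action of $G$ on $T$. This action may a priori invert an edge, but after subdividing each edge of $T$ once --- which does not affect the isometry type of $X$ or the collection $\mathcal{C}$ of subspaces --- we may assume it is without inversions, so there is a quotient graph of groups $\mathcal{G} = (G\backslash T, \{G_v\}, \{G_e\}, \{\partial_e^{\pm}\})$ with $\pi_1(\mathcal{G})\cong G$, the vertex and edge groups being the $G$-stabilisers $G_v = \stab_G(\tilde v)$, $G_e = \stab_G(\tilde e)$ of chosen lifts. It remains to see that $\mathcal{G}$ is a graph of \emph{relatively hyperbolic} groups. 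Since $G$ acts cocompactly on $X$, the quotient graph $G\backslash T$ is finite and $X/G$ is a compact graph of spaces over it; in particular the stabiliser $G_v$ acts cocompactly on the single fibre $X_{\tilde v} = \pi^{-1}(\tilde v)$, and the action is also proper, being a restriction of a proper action, hence geometric. Since $X_{\tilde v}$ is relatively hyperbolic with respect to $\mathcal{H}_{\tilde v}$, a further application of \cref{lem:action_def} shows that $\bigl(G_v, \{\stab_{G_v}(H)\}_{G_v\cdot H\in G_v\backslash\mathcal{H}_{\tilde v}}\bigr)$ is relatively hyperbolic, and likewise for each $G_e$ using $X_{\tilde e}$ and $\mathcal{H}_{\tilde e}$.

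I do not anticipate a serious obstacle, as the statement is essentially a translation of the space-level combination theorem into the language of graphs of groups. The only points that need attention are: handling possible edge inversions by subdivision; observing that cocompactness of the $G$-action on $X$ descends to cocompactness of the vertex- and edge-stabiliser actions on the corresponding fibres, so that \cref{lem:action_def} can be applied to the pieces as well as to the total space; and checking that the peripheral collection $\mathcal{P} = \{\stab_G(C)\}_{G\cdot C\in G\backslash\mathcal{C}}$ coming from \cref{thm:combination} agrees, up to the conjugacy conventions in force, with the peripheral structure one assembles from the cone-subtree data of $\mathcal{G}$.
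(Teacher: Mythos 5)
Your proof is correct and follows the approach the paper intends: the paper in fact offers no proof of this theorem, presenting it with the one-line remark that it is a restatement of the space-level combination theorem (\cref{thm:combination}) in group terms via \cref{lem:action_def}, and your proposal supplies precisely the routine verifications (Bass--Serre, subdivision to avoid inversions, cocompactness descending to fibres, and matching peripheral collections) that this restatement tacitly requires.
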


Using the results obtained in \cref{sec:quasi-geodesics} we may obtain a criterion for relative quasi-convexity of subgroups of graphs of relatively hyperbolic groups.

\begin{theorem}
\label{thm:qc_criterion}
Let $G$, $T$ and $X$ be as in \cref{thm:group_combination} and suppose that $\overline{X}$ also satisfies the bounded girth hallways condition.

Let $H\leqslant G$ be a subgroup and let $T'\subset T$ be an $H$-invariant subtree. If $H\backslash T'$ is finite and if $\stab_H(v)$ is relatively quasi-convex in $\stab_G(v)$ (with respect to the induced relatively hyperbolic structure on $\stab_G(v)$) for each vertex $v\in T$, then $H$ is relatively quasi-convex in $G$.
\end{theorem}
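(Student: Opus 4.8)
The plan is to reduce relative quasi-convexity of $H$ in $G$ to a neighbourhood statement about quasi-geodesics in the coned-off space $\wh{X}$, using \cref{prop:concatenated_qg_track_2} as the main engine, and to build the relevant comparison path from the geometry of the $H$-action on $T'$. First I would fix a finite generating set for $(G,\mathcal{P})$ and recall via \cref{lem:action_def} and the relatively hyperbolic structure produced by \cref{thm:group_combination} that a subgroup $H\leqslant G$ is relatively quasi-convex precisely if some co-compact $H$-invariant subspace of $\wh{X}$ is quasi-convex. Since $H\backslash T'$ is finite, the preimage $Y = \pi^{-1}(T')\subset X$ is a co-compact $H$-invariant subspace (after thickening the vertex spaces so that $Y$ is connected and $H$ acts geometrically on it); its image $\wh{Y}$ in $\wh{X}$ is the candidate subspace. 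Fix a basepoint $x_0\in Y$; I must show there is a constant so that any geodesic in $\wh{X}$ between two points of the orbit $H\cdot x_0$ stays within a bounded neighbourhood of $H\cdot x_0$.

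Next I would produce, for any $h\in H$, a controlled path $\lambda_h$ in $\wh{X}$ from $x_0$ to $h\cdot x_0$ lying in $\wh{Y}$. Project the pair $x_0, h\cdot x_0$ to $T'$ and take the geodesic $[v_0, h v_0]$ in $T'$; since $H\backslash T'$ is finite there are only finitely many $H$-orbits of vertices and edges along it, and in each vertex space $\wh{X}_v$ met by this segment I concatenate a relative quasi-geodesic (in $\stab_G(v)$) connecting the relevant entry and exit points — these exist with uniform constants because $H\backslash T'$ is finite, there are finitely many vertex groups up to conjugacy, $\stab_H(v)$ is relatively quasi-convex in $\stab_G(v)$ by hypothesis, and relatively quasi-convex subgroups are undistorted (cf.\ \cref{sec:rel_hyp}). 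The resulting path $\lambda_h$ projects away from cone points to the geodesic $[v_0, hv_0]$ in $T$ — in particular to a path without backtracking — and each of its vertex-space subpaths is a $(K,C)$-quasi-geodesic in $\wh{X}_v$ for uniform $K,C$. This is exactly the hypothesis of \cref{prop:concatenated_qg_track_2} (whose assumptions hold because $\overline{X}$ satisfies the bounded girth hallways condition in addition to those of \cref{cor:bounded_hallways}), so every $(K,C)$-quasi-geodesic in $X$ from $x_0$ to $h\cdot x_0$ lies in the $M'$-neighbourhood of $\lambda_h$; combined with the fact that geodesics in $\wh{X}$ are tracked by quasi-geodesics in $X$ (via the quasi-isometry $\wh{\overline X}\to\wh X$ and \cite[Lemma 2.4]{MR08}, as in the proof of \cref{prop:concatenated_qg_track_2}), we get that geodesics in $\wh{X}$ from $x_0$ to $h\cdot x_0$ lie in a uniform neighbourhood of $\lambda_h\subset \wh Y$, hence of $H\cdot x_0$. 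This gives quasi-convexity of $H\cdot x_0$ in $\wh{X}$ and therefore relative quasi-convexity of $H$ by \cref{lem:action_def}.

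The main obstacle I expect is the uniformity of the entry/exit points and of the quasi-geodesic constants for the pieces $\lambda_h\cap \wh{X}_v$: one must argue that, although the edge spaces adjacent to a vertex $v$ crossed by $[v_0,hv_0]$ vary with $h$, up to the $\stab_H(v)$-action there are only finitely many relevant pairs of incident edges (because $H\backslash T'$ is finite, so the link data is finite), and that connecting the corresponding edge-space images inside $\wh{X}_v$ by a relative quasi-geodesic can be done with constants independent of $h$ — here one uses that $\stab_H(v)$ is relatively quasi-convex, hence undistorted, in $\stab_G(v)$, and that there are finitely many such configurations to bound. A secondary technical point is ensuring $\lambda_h$ genuinely projects to a path \emph{without backtracking} in $T$ after the concatenation: within a single vertex space the quasi-geodesic piece may wander into adjacent edges, but this is handled exactly as in the proof of \cref{prop:concatenated_qg_track_1}, by replacing maximal subpaths projecting to loops at a vertex with geodesics in that vertex space and invoking \cref{thm:bounded_hallways}; the bounded girth hallways hypothesis on $\overline{X}$ is what makes this replacement legitimate in the relative setting.
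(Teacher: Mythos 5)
Your proposal is correct and takes essentially the same approach as the paper: both build an $H$-cocompact comparison path projecting to a geodesic in $T'$ whose vertex-space pieces are controlled by the relative quasi-convexity of $\stab_H(v)$ in $\stab_G(v)$, and then invoke \cref{prop:concatenated_qg_track_2} to show geodesics in $\wh{X}$ track that path. The paper's choice of an explicit orbit set $X_H$ (which includes the edge-space anchor points $x_e^{\pm}$ alongside the vertex basepoints) makes the uniformity issue you flag as your ``main obstacle'' transparent, and your ``secondary technical point'' about backtracking is a non-issue since each vertex-space piece lives in a single $\wh{X}_v$ and so projects to a single vertex of $T$, but the substance of the argument is identical.
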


\begin{proof}
By \cref{cor:bounded_hallways}, there is a constant $K$ so that for each vertex $v\in T$, the subspace $\widehat{X}_v\subset \widehat{X}$ is $K$-quasi-convex. 

For each $H$-orbit of vertices $H\cdot v\subset T'$, choose an $H$-orbit of vertices $H\cdot x_v\subset \wh{X}$ where $x_v\in X_v$. For each $H$-orbit of edges $H\cdot e\subset T'$, choose $H$-orbits of vertices $H\cdot x_{e}^-, H\cdot x_{e}^+\subset \wh{X}$ where $x_e^{\pm}\in X_{e^{\pm}}$ are the images of $x_e$ under the two edge space maps. Then consider the $H$-invariant subset
\[
X_H = H\cdot \left(\bigcup_{H\cdot v\in V(H\backslash T')}x_v\right)\cup\left(\bigcup_{H\cdot e\in E(H\backslash T')}x_e^{\pm}\right)\subset \wh{X}.
\]
Since $H\backslash T'$ is finite, $H\backslash X_H$ is finite. We want to show that $X_H$ is a quasi-convex subset of $\wh{X}$. Let $K'$ be the maximum amongst all quasi-convexity constants for $X_H\cap \wh{X}_v$ over all vertices $v$. Let $x, y\in X_H$ and let $\gamma\colon I\to\wh{X}$ be a geodesic connecting $x$ with $y$. 

If $x, y$ lie in the same vertex space $\wh{X}_v$, then $\gamma$ lies in the $K$-neighbourhood of a geodesic in $\wh{X}_v$ connecting $x$ with $y$. Since $X_H\cap \wh{X}_v$ is a quasi-convex subspace by assumption, it follows that any geodesic in $\wh{X}$ connecting $x$ with $y$ lies in the $(K+K')$-neighbourhood of $X_H$.

Now suppose that $x, y$ do not lie in the same vertex space. Let $e_1*\ldots *e_n\subset T'$ be the geodesic in $T'$ connecting $\pi(x)$ with $\pi(y)$. Then choose any sequence of points $x = x_1, y_1, \ldots, x_n, y_n = y$ with $x_i\in H\cdot x_{e_{i-1}}^+\cap X_{e_{i-1}^+}$ for each $i>1$ and $y_i\in H\cdot x_{e_{i}}^-\cap X_{e_{i}^-}$ for each $i<n$. We also choose the points so that $d(y_i,x_{i+1}) = 1$ for each $i<n$. Note that each pair $x_i, y_i$ lies in the same vertex space and so we may connect them by geodesics (in the corresponding vertex spaces) and obtain a path $\lambda\colon I \to \wh{X}$ which projects to $e_1*\ldots *e_n$ and so that each subpath of $\lambda$ in each vertex space $\wh{X}_v$ is a geodesic connecting two points in $X_H$. In particular, since $X_H\cap \wh{X}_v$ is $K'$-quasi-convex in $\wh{X}_v$ for each vertex $v\in V(T)$, we see that $\lambda$ lies in the $K'$-neighbourhood of $X_H$. By \cref{prop:concatenated_qg_track_2}, $\gamma$ lies in the $M'$-neighbourhood of $\lambda$ and so $\gamma$ lies in the $(M'+K')$-neigbourhood of $X_H$. Thus, $X_H$ is quasi-convex in $\wh{X}$ and so $H$ is relatively quasi-convex in $G$ as claimed.
\end{proof}

\section{Graph pairs and presentations of mapping tori of free groups}
\label{sec:graph_pairs}

In this section we shall recap the notation and main results from the work of Feighn--Handel \cite{FH99}. Their main result is a description of particularly nice finite presentations of finitely generated subgroups of mapping tori of free groups. After describing their work, we shall prove a slight strengthening of their main result, \cref{FHmain}, and point out some useful corollaries.

\subsection{Invariant graph pairs}

Let $\F$ be a free group and let $(R, v_R)$ be a pointed graph with $\pi_1(R, v_R)$ identified with $\F$. Every graph we consider in this section will come with a pointed graph map $f_Z\colon (Z, v_Z)\to (R, v_R)$ which we shall often omit. Following \cite{FH99}, we shall denote by $Z^{\#}$ the image of $\pi_1(Z, v_Z)$ in $\pi_1(R, v_R) = \F$, induced by the graph map $f_Z$.

Let $(Z, v_Z)$ be a connected pointed graph, $f_Z\colon (Z, v_Z)\to (R, v_R)$ a graph map and let $X\subset Z$ be a connected subgraph containing the basepoint $v_Z$. We call $(Z, X)$, together with the map $f_Z$, which we shall often suppress, a \emph{graph pair}. The \emph{relative rank} of a graph pair $(Z, X)$ is 
\[
\rr(Z, X) = \rk(\pi_1(Z, v_Z)) - \rk(\pi_1(X, v_Z)),
\]
where $\rk(G)$ denotes the rank of a group $G$. When $Z- X$ consists of finitely many 0-cells and 1-cells we have $\rr(Z, X) = -\chi(Z, X)$. The graph pair $(Z, X)$ is \emph{tight} if the map $f_Z$ is an immersion. 

If $(Z', X')$ is another graph pair with $f_{Z'}\colon (Z', v_{Z'})\to (R, v_R)$ the underlying graph map, a \emph{map of graph pairs} is a pair 
\[
q = (q_Z, q_X)\colon (Z, X)\to (Z', X')
\]
of pointed graph maps $q_Z\colon (Z, v_Z)\to(Z', v_{Z'})$ and $q_X\colon (X, v_Z)\to (X', v_{Z'})$ such that $q_X = q_Z\mid X$ and so that the following diagram commutes
\[
\begin{tikzcd}
Z \arrow[rd, "f_{Z}"'] \arrow[r, "q_Z"] & Z' \arrow[d, "f_{Z'}"] \\
                                        & R                     
\end{tikzcd}
\]
The map $f_Z$ can (and often will) be considered as a map of graph pairs $(Z, X) \to (R, R)$.

Let $\psi\colon \F\to \F$ be an injective endomorphism of $\F$. A graph pair $(Z, X)$ is \emph{$\psi$-invariant} if
\[
Z^{\#} = \langle X^{\#}, \psi(X^{\#})\rangle.
\]
Now consider the group
\[
G = \F*_{\psi} = \langle \F, t \mid t^{-1}ft = \psi(t), \forall f\in \F\rangle.
\]
If $H\leqslant G$ is a subgroup, then we say that $(Z, X)$ is a \emph{$\psi$-invariant graph pair for $H$} if
\[
\langle X^{\#}, t \rangle = H.
\]
A finite $\psi$-invariant graph pair $(Z, X)$ for $H$ is \emph{minimal} if $\rr(Z, X)\leqslant \rr(Z', X')$ for all finite $\psi$-invariant graph pairs $(Z', X')$ for $H$.

\subsection{The Feighn--Handel tightening procedure}

A key part of the Feighn--Handel paper \cite{FH99} is their tightening procedure. This is a procedure which takes as input a finite $\psi$-invariant graph pair $(Z, X)$ for a subgroup $H$ and repeatedly folds the map $f_Z\colon(Z, X)\to (R, R)$, adding new loops to fix $\psi$-invariance when necessary, until a finite tight $\psi$-invariant graph pair $(\check{Z}, \check{X})$ for $H$ is obtained. Since we shall need to build on their approach, we describe their procedure in detail.

Let $(Z, X)$ be a graph pair. A fold $q_Z\colon Z\to Z_1$ induces a map of graph pairs 
\[
q = (q_Z, \check{q}_X)\colon (Z, X)\to (Z_1, X_1).
\]
The notation $\check{q}_X = q_Z\mid X$ is chosen to emphasise the fact that $\check{q}_X$ may not be a fold, even though $q_Z$ is. The induced map $\check{q}_X\colon X\to X_1$ falls into one of three cases:
\begin{enumerate}
\item $\check{q}_X$ is a fold, in which case $q$ is a \emph{subgraph fold}.
\item $\check{q}_X$ is not a fold and identifies two distinct vertices, in which case $q$ is an \emph{exceptional fold}.
\item $\check{q}_X$ is a homeomorphism.
\end{enumerate}

\begin{definition}[Folding and adding a loop if necessary]
Let $(Z, X)$ be a $\psi$-invariant graph pair for $H$, let $q_Z\colon Z\to Z_1$ be a fold and let $q = (q_Z, \check{q}_X)\colon (Z, X)\to (Z_1, X_1)$ be the induced map of pairs. If $q$ is not exceptional, then set $(Z_2, X_2) = (Z_1, X_1)$. If $q$ is exceptional, then $\check{q}_X$ identifies two distinct vertices, say $p$ and $q$. Let $\alpha, \beta\colon I\to X$ be two paths connecting the basepoint $v_Z$ with $p$ and $q$ respectively. Then let $\delta=[(z\circ\alpha)*(z\circ\overline{\beta})]\in \F$. If $\psi(\delta)\in Z_1^{\#}$, then set $(Z_2, X_2) = (Z_1, X_1)$. If not, then set $(Z_2, X_2) = (Z_2\vee\Gamma(\delta), X_1)$. We say that the pair $(Z_2, X_2)$ is obtained from $(Z, X)$ by \emph{folding and adding a loop if necessary}.
\end{definition}

The lemma below is \cite[Lemma 4.7]{FH99}.

\begin{lemma}
\label{lem:graph_pair_faalin}
If $(Z, X)$ is a $\psi$-invariant graph pair for $H$ and if $(Z_2, X_2)$ is obtained from $(Z, X)$ by folding and adding a loop if necessary, then $(Z_2, X_2)$ is also a $\psi$-invariant graph pair for $H$ and $\rr(Z_2, X_2)\leqslant \rr(Z, X)$.

If $(Z, X)$ factors through a tight $\psi$-invariant graph pair $(Z', X')$, then so does $(Z_2, X_2)$.
\end{lemma}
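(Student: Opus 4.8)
The plan is to track how the three ingredients of a $\psi$-invariant graph pair for $H$ — the equalities $\langle X^{\#},t\rangle = H$ and $Z^{\#} = \langle X^{\#},\psi(X^{\#})\rangle$, together with connectedness — behave under a single fold $q_Z\colon Z\to Z_1$, and then under wedging on a loop, treating separately the three possibilities for the induced map $\check{q}_X\colon X\to X_1$. First I would isolate two facts about folds over $R$: (i) a fold is $\pi_1$-surjective, so $Z_1^{\#} = Z^{\#}$, and likewise $X_1^{\#} = X^{\#}$ whenever $q$ is a subgraph fold or $\check{q}_X$ is a homeomorphism; and (ii) when $q$ is an exceptional fold identifying distinct vertices $p,q$ of $X$, the images of $\alpha$ and $\overline{\beta}$ concatenate to a loop in $X_1$ representing $\delta$ (since $f_Z(p) = f_Z(q)$, as $p,q$ are the far endpoints of the folded edges), so that $\delta\in X_1^{\#}$, and in fact $X_1^{\#} = \langle X^{\#},\delta\rangle$ with $\rk(\pi_1(X_1)) = \rk(\pi_1(X))+1$.

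Granting (i) and (ii), the assertion that $(Z_2,X_2)$ is a graph pair for $H$ reduces to checking that $\delta\in H$ whenever $X_2^{\#}$ acquires $\delta$ as a new generator; this is immediate, since $\delta\in X_1^{\#}\subseteq Z_1^{\#} = Z^{\#} = \langle X^{\#},\psi(X^{\#})\rangle$ by $\psi$-invariance of $(Z,X)$, while in $G = \F*_{\psi}$ one has $\psi(X^{\#}) = t^{-1}X^{\#}t$, whence $\langle X^{\#},\psi(X^{\#})\rangle\leqslant\langle X^{\#},t\rangle = H$. For the $\psi$-invariance of $(Z_2,X_2)$: in the non-exceptional cases $Z_2^{\#} = Z^{\#} = \langle X^{\#},\psi(X^{\#})\rangle = \langle X_2^{\#},\psi(X_2^{\#})\rangle$ and there is nothing to do; in the exceptional case $\langle X_2^{\#},\psi(X_2^{\#})\rangle = \langle X^{\#},\delta,\psi(X^{\#}),\psi(\delta)\rangle = \langle Z^{\#},\psi(\delta)\rangle$ (using $\delta\in Z^{\#}$), and this equals $Z_2^{\#}$ exactly because $Z_2$ is defined to be $Z_1$ with a loop representing $\psi(\delta)$ wedged on precisely when $\psi(\delta)\notin Z_1^{\#}$, and to be $Z_1$ otherwise. (I read the exceptional clause of the definition as $(Z_2,X_2) = (Z_1\vee\Gamma(\psi(\delta)),X_1)$, since wedging on $\Gamma(\delta)$ would change nothing: $\delta\in Z_1^{\#}$ already.)

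For the relative-rank inequality I would run the four cases. A subgraph fold changes $\rk(\pi_1(Z))$ and $\rk(\pi_1(X))$ by the same amount — $1$ or $0$, according to whether $e_1$ and $e_2$ share both endpoints or only one — so $\rr(Z,X)$ is unchanged. If $\check{q}_X$ is a homeomorphism, $\rk(\pi_1(X))$ is fixed while $\rk(\pi_1(Z))$ can only decrease, so $\rr$ can only decrease. An exceptional fold with no loop added raises $\rk(\pi_1(X))$ by $1$ without raising $\rk(\pi_1(Z))$, so $\rr$ strictly decreases; and an exceptional fold with $\Gamma(\psi(\delta))$ wedged on raises both ranks by $1$ — here $\delta\neq 1$, for otherwise $\psi(\delta) = 1\in Z_1^{\#}$, so $\psi(\delta)\neq 1$ by injectivity of $\psi$ and $\Gamma(\psi(\delta))$ is a circle — so $\rr$ is again unchanged. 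For the final assertion, suppose $r = (r_Z,r_X)\colon(Z,X)\to(Z',X')$ realises a factorisation through a tight $\psi$-invariant pair, so $f_{Z'}$ is an immersion. Since $q_Z$ folds edges $e_1,e_2$ sharing an endpoint $v$ and mapping to a common edge of $R$, the images $r_Z(e_1),r_Z(e_2)$ share $r_Z(v)$ and map to the same edge of $R$; as $f_{Z'}$ is an immersion this forces $r_Z(e_1) = r_Z(e_2)$, so $r_Z$ factors uniquely as $Z\xrightarrow{q_Z}Z_1\xrightarrow{r'_Z}Z'$ over $R$, with $r'_Z$ restricting to a map $X_1\to X'$. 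If no loop is wedged on, we are done; if $\Gamma(\psi(\delta))$ is wedged on, then $\delta\in X_1^{\#}$ maps into ${X'}^{\#}$, so $\psi(\delta)\in\psi({X'}^{\#})\subseteq\langle {X'}^{\#},\psi({X'}^{\#})\rangle = {Z'}^{\#}$ by $\psi$-invariance of $(Z',X')$, and then \cref{lem:factorise}, applied to the defining pointed immersion $\Gamma(\psi(\delta))\to R$ (which is pointed core) and the immersion $Z'\to R$, yields a pointed graph map $\Gamma(\psi(\delta))\to Z'$ over $R$; gluing it to $r'_Z$ along the basepoint gives the desired $(Z_2,X_2)\to(Z',X')$.

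The main obstacle — indeed the point the whole argument turns on — is the bookkeeping around exceptional folds, and specifically the observation that the new generator $\delta$ already lies in $X_1^{\#}$, not merely in $Z_1^{\#}$: this is simultaneously what balances the rank count after attaching $\Gamma(\psi(\delta))$ and what lets $\psi(\delta)$ be absorbed into ${Z'}^{\#}$ in the last step. Everything else is routine: a $\pi_1$- and Euler-characteristic computation, together with the universal property of folds against the immersion $f_{Z'}$.
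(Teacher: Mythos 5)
Your proof is correct, and since the paper simply cites \cite[Lemma~4.7]{FH99} without reproducing an argument, there is no in-house proof to compare against; but your bookkeeping matches the standard Feighn--Handel approach. The one substantive point is your reading of the definition of ``folding and adding a loop if necessary'': the paper's displayed formula $(Z_2,X_2)=(Z_2\vee\Gamma(\delta),X_1)$ is circular and, as you correctly observe, wedging on $\Gamma(\delta)$ would be vacuous since $\delta\in X_1^{\#}\subseteq Z_1^{\#}$; the intended reading is $(Z_1\vee\Gamma(\psi(\delta)),X_1)$, and your verification of $\psi$-invariance, the relative-rank count (both ranks rise by one, using $\rk\pi_1(Z_1)=\rk\pi_1(Z)$ for an exceptional fold), and the factorisation through a tight pair via \cref{lem:factorise} all hinge on exactly this correction. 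The only cosmetic quibble is that $\Gamma(\psi(\delta))$ need not be literally a circle (it may carry a tail if $\psi(\delta)$ is not cyclically reduced), but it is pointed core with $\pi_1\cong\Z$, which is all the rank count needs.
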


\begin{definition}[Tightening]
Let $(Z, X)$ be a finite $\psi$-invariant graph pair for $H$. If $(Z, X)$ is tight then do nothing. If $X$ is not tight, then perform a subgraph fold. If $X$ is tight, but $Z$ is not, then fold and add a loop if necessary. Repeat until we are left with a tight $\psi$-invariant graph pair $(\check{Z}, \check{X})$ for $H$. Say $(\check{Z}, \check{X})$ is obtained from $(Z, X)$ by \emph{tightening}.
\end{definition}

The lemma below is stated in \cite[Definition 4.6]{FH99}.

\begin{lemma}
\label{lem:FH_tightening}
If $(Z, X)$ is a finite $\psi$-invariant graph pair for $H$, the tightening procedure terminates after finitely many steps having produced a finite tight $\psi$-invariant graph pair $(\check{Z}, \check{X})$ for $H$.

If $(Z, X)$ factors through a tight $\psi$-invariant graph pair $(Z', X')$, then so does $(\check{Z}, \check{X})$.
\end{lemma}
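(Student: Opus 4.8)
The second assertion is the easier one, and I would obtain it by showing that \emph{each} elementary move of the tightening procedure preserves the property of factoring through a fixed tight $\psi$-invariant graph pair $(Z', X')$, and then inducting on the number of moves. For the move \emph{fold and add a loop if necessary} this is precisely the last sentence of \cref{lem:graph_pair_faalin}. For a subgraph fold $q_Z\colon Z\to Z_1$ it follows from the uniqueness part of Stallings' factorisation: the two edges folded by $q_Z$ share an endpoint and have the same image in $R$, so their images in $Z'$ share an endpoint and map to the same edge of $R$; since $f_{Z'}$ is an immersion these images coincide, so the map $Z\to Z'$ identifies the two edges and therefore factors through $q_Z$, compatibly with the subgraph structure. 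Hence $(\check Z, \check X)$ factors through $(Z', X')$ as well.

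For termination I would fix the starting pair $(Z_0, X_0)$ and track three combinatorial quantities along the run. First, $\#E(X)$ never increases, and strictly decreases exactly on subgraph folds (the induced map $\check q_X$ in the other move is a homeomorphism or identifies two distinct vertices, neither of which identifies edges); being a non-negative integer, it changes at most $\#E(X_0)$ times, so there are at most $\#E(X_0)$ subgraph folds. Second, going through the three cases for $\check q_X$, one sees that $\rk\pi_1(X)$ is raised — by exactly one — precisely on exceptional folds, and is lowered only by those subgraph folds that identify a pair of parallel edges; since $0\leq \rk\pi_1(X)\leq \#E(X)\leq \#E(X_0)$ throughout and the parallel-edge subgraph folds are themselves bounded by $\#E(X_0)$, the number of exceptional folds is at most $2\#E(X_0)$. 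Third, a loop is added to $Z$ only inside an exceptional-fold move, and each such addition contributes a single finite wedge summand $\Gamma(\delta)$; hence the total number $N$ of edges ever adjoined to $Z$ over the whole run is finite. Since every fold removes exactly one edge of $Z$, and adjoining loops is the only operation adding edges to $Z$, the number of folds — equivalently, the number of moves, each move performing exactly one fold — is at most $\#E(Z_0)+N<\infty$ (there is always such a move available unless the pair is already tight, by the definitions). Thus the procedure halts; it halts at a \emph{finite, tight, $\psi$-invariant graph pair for $H$} because tightness is the stopping condition, finiteness is preserved by subgraph folds and by adjoining finite loops, and the property of being $\psi$-invariant for $H$ is preserved since a subgraph fold is $\pi_1$-surjective on $X$ and on $Z$ and hence leaves $X^{\#}$ and $Z^{\#}$ unchanged, while the other move is handled by \cref{lem:graph_pair_faalin}.

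The main obstacle is that the procedure is genuinely non-monotone in the obvious complexity, since \emph{fold and add a loop if necessary} can make $Z$ larger; the crux is therefore the a priori bound on how many loops get adjoined. The insight I would rely on is that a loop is adjoined only at an exceptional fold, exceptional folds are exactly the moves that raise $\rk\pi_1(X)$, and $\rk\pi_1(X)$ cannot run away because it is bounded above by $\#E(X)$, which never grows. The delicate bookkeeping is verifying, via the trichotomy for the induced map $\check q_X$ on $X$, that the only moves decreasing $\rk\pi_1(X)$ are the parallel-edge subgraph folds — and those are controlled because they strictly shrink $X$.
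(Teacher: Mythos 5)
Your argument is correct, and it supplies a self-contained proof for a statement the paper does not actually prove: the paper only cites Feighn--Handel's Definition~4.6, where the termination claim is bundled into the definition of the tightening procedure. Your combinatorial bookkeeping reconstructs an honest argument for it.

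The key insight — bounding the number of exceptional folds by tracking $\rk\pi_1(X)$ against the monotone quantity $\#E(X)$, and noting that loops are only adjoined at exceptional folds — is the heart of the matter, and you identify it correctly. The trichotomy you invoke for the effect of a single fold on $X$ (parallel subgraph fold: $\rk\pi_1(X)$ drops by one; non-parallel subgraph fold or homeomorphism: unchanged; exceptional fold: rises by one) is verified by an Euler-characteristic count of vertices and edges identified, exactly as you indicate. Two small remarks. First, your separate Stallings-uniqueness argument for the second assertion in the subgraph-fold case is redundant: in the paper's formulation, a subgraph fold is literally a special case of ``fold and add a loop if necessary'' — it is a fold $q_Z$ whose induced map $\check q_X$ happens to be a fold, hence is not exceptional, hence $(Z_2, X_2) = (Z_1, X_1)$ — so the last sentence of \cref{lem:graph_pair_faalin} already covers it. Your argument is nonetheless correct, and having the direct uniqueness-of-immersion observation on record is not a loss. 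Second, the final count ``number of moves $\leq \#E(Z_0) + N$'' is phrased in terms of $N$, the total number of edges ever adjoined, which is determined only by the run itself; to close the circle one should say explicitly that, since at most $2\#E(X_0)$ exceptional folds occur and each adjoins a finite graph $\Gamma(\delta)$, any prefix of the run adjoins only finitely many edges, so that if the run were infinite one could pass to a tail with no edge-additions at all, and then $\#E(Z)$ would decrease strictly to a contradiction. This is implicit in what you wrote, and it is only a matter of presentation.
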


The following is the key result in \cite[Proposition 5.4]{FH99}.

\begin{proposition}
\label{prop:FH}
Let $(Z, X)$ be a finite $\psi$-invariant graph pair for $H$ with $(f_X)_*$ injective, but $(f_Z)_*$ not injective. If $(\check{Z}, \check{X})$ is obtained from $(Z, X)$ by tightening, then $\rr(\check{Z}, \check{X})<\rr(Z, X)$.
\end{proposition}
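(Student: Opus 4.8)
The plan is to bound each relative rank in terms of the image subgroups $X^{\#}$ and $\check X^{\#}$ and then compare. Write $g(K)=\rk\langle K,\psi(K)\rangle-\rk K$ for finitely generated $K\leqslant\F$. Since $(\check Z,\check X)$ is tight, $f_{\check Z}$ and its restriction $f_{\check X}$ are immersions, hence injective on $\pi_1$, so $\rk\pi_1(\check Z)=\rk\check Z^{\#}$, $\rk\pi_1(\check X)=\rk\check X^{\#}$, and $\psi$-invariance of $(\check Z,\check X)$ gives $\rr(\check Z,\check X)=\rk\langle\check X^{\#},\psi(\check X^{\#})\rangle-\rk\check X^{\#}=g(\check X^{\#})$. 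On the other side $(f_X)_*$ injective gives $\rk\pi_1(X)=\rk X^{\#}$; writing $\beta_0=\rk\pi_1(Z)-\rk Z^{\#}$ (which is $\geqslant 1$, since $(f_Z)_*$ is not injective and a proper free quotient of a free group has strictly smaller rank by Hopfianity), and using $\psi$-invariance of $(Z,X)$, one gets $\rr(Z,X)=\rk\pi_1(Z)-\rk X^{\#}=g(X^{\#})+\beta_0$. So the proposition is equivalent to the inequality $g(\check X^{\#})-g(X^{\#})<\beta_0$.

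The next step is to understand how $X^{\#}$ changes along the tightening. A subgraph fold is $\pi_1$-onto on $X$, and a case (3) move leaves $X$ unchanged; only an exceptional fold alters $X^{\#}$, replacing it by $\langle X^{\#},\delta\rangle$, where $\delta$ is the label of the single loop created when $p$ and $q$ are identified (identifying two distinct vertices of a connected graph raises its first Betti number by exactly one). That loop already lies in the current $X\subseteq Z$, so $\delta$ lies in the current $Z^{\#}$, which by \cref{lem:graph_pair_faalin} (every intermediate pair is a $\psi$-invariant graph pair for $H$) equals $\langle X^{\#},\psi(X^{\#})\rangle$. Moreover a loop is adjoined to $Z$ at this step only when $\psi(\delta)\notin Z^{\#}$, in which case the exceptional fold leaves $\rr$ unchanged, whereas if $\psi(\delta)\in Z^{\#}$ no loop is added and the exceptional fold strictly lowers $\rr$ (as does a rank-reducing case (3) fold). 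Since $\rr$ is non-increasing along the tightening by \cref{lem:graph_pair_faalin}, the proposition reduces to showing that at least one such $\rr$-lowering move must occur.

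To force this I would run the bookkeeping move by move, tracking $(\rr,\ \alpha,\ \beta)$ with $\alpha=\rk\pi_1(X)-\rk X^{\#}$ and $\beta=\rk\pi_1(Z)-\rk Z^{\#}$. If no $\rr$-lowering move occurs, every move is a subgraph fold, a case (3) homeomorphism, or an exceptional fold adjoining a loop; then $\rk\pi_1(X)$ and $\rk\pi_1(Z)$ change by the same amount at every move ($+1$ at an exceptional fold, $-1$ at a rank-reducing subgraph fold, $0$ otherwise), and $\alpha$ returns to $0$ at the end of each maximal run of subgraph folds because $X$ is then tight. Comparing the total change of $\alpha$ (namely $0$) with that of $\beta$ (namely $-\beta_0$) over the whole procedure yields the identity $\beta_0=\sum\big(\rk\langle Z^{\#},\psi(\delta)\rangle-\rk Z^{\#}\big)-\big(\rk\langle X^{\#},\delta\rangle-\rk X^{\#}\big)$, the sum running over the loop-adjoining exceptional folds and each summand evaluated at the current $X^{\#}$ and $Z^{\#}=\langle X^{\#},\psi(X^{\#})\rangle$. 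As $\beta_0\geqslant 1$, it suffices to prove each summand is $\leqslant 0$: whenever $\psi(\delta)\notin\langle X^{\#},\psi(X^{\#})\rangle$ one has $\rk\langle\langle X^{\#},\psi(X^{\#})\rangle,\psi(\delta)\rangle-\rk\langle X^{\#},\psi(X^{\#})\rangle\leqslant \rk\langle X^{\#},\delta\rangle-\rk X^{\#}$.

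The hard part is precisely this last inequality, which I would isolate as a lemma about finitely generated subgroups of $\F$ and the monomorphism $\psi$. For arbitrary subgroups it is \emph{false} — adjoining a single generator can strictly decrease the rank of a subgroup of a free group, so the right-hand side can be very negative while the left-hand side is $0$ — and the essential extra input is the hypothesis $\psi(\delta)\notin\langle X^{\#},\psi(X^{\#})\rangle$ that governs when a loop is actually added: morally, a drop in $\rk\langle X^{\#},\delta\rangle$ forces $X^{\#}$ to be large, hence $\langle X^{\#},\psi(X^{\#})\rangle$ even larger and already containing $\psi(\delta)$, so no loop arises in that case. I expect the proof of the lemma — carried out with Stallings graphs and relative Euler characteristics, so as to sidestep the abstract rank pathologies — to be the technical core; the residual case analysis, including the degenerate situations where $X^{\#}$ is trivial or cyclic, should be routine. (Alternatively, one can avoid the lemma altogether by doing the entire count at the graph level, tracking $\rr=-\chi(Z,X)$ directly through each fold and loop addition, which is the route of \cite{FH99}; there the same difficulty reappears as the need to control the interplay of rank-reducing folds with loop additions.)
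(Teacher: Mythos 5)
The paper itself does not prove this proposition; it is quoted verbatim from Feighn--Handel and cited as \cite[Proposition 5.4]{FH99}, so there is no internal proof to compare against. Judged on its own terms, your attempt stops short of a proof, and you say as much: after the bookkeeping reduction to the identity $\beta_0=\sum\bigl[(\rk\langle Z^{\#},\psi(\delta)\rangle-\rk Z^{\#})-(\rk\langle X^{\#},\delta\rangle-\rk X^{\#})\bigr]$ (which, as far as I can tell, is set up correctly: one needs $\Delta\rk\pi_1(X)=\Delta\rk\pi_1(Z)$ at every step under the ``no $\rr$-drop'' hypothesis, $\alpha$ and $\beta$ both vanishing at the end, and folds only changing $X^{\#},Z^{\#}$ at exceptional-fold steps), the whole burden lands on the claim that each summand is nonpositive, i.e.\ that whenever $\psi(\delta)\notin\langle X^{\#},\psi(X^{\#})\rangle$ one has
\[
\rk\bigl\langle\langle X^{\#},\psi(X^{\#})\rangle,\psi(\delta)\bigr\rangle-\rk\langle X^{\#},\psi(X^{\#})\rangle\;\leqslant\;\rk\langle X^{\#},\delta\rangle-\rk X^{\#}.
\]
You correctly observe that this statement is \emph{false} for arbitrary finitely generated subgroups of a free group (adjoining one generator can decrease rank, and rank is not submodular), and you do not supply the extra argument that would make it true under the actual constraints: that $\delta$ is the specific element arising from an exceptional fold of a tight $(Z,X)$, that $Z^{\#}=\langle X^{\#},\psi(X^{\#})\rangle$, and that $\psi(\delta)\notin Z^{\#}$. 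Without that lemma the contradiction $\beta_0\leqslant0$ is not reached, so the proposition is not established.

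This is a genuine gap rather than a presentational one. The quantity $\beta_0\geqslant 1$ does measure the non-injectivity of $(f_Z)_*$, and the reduction is a sensible way to organise the proof, but it has transferred all the content to a rank inequality whose truth is not evident and whose only justification you offer is a heuristic (``a drop in $\rk\langle X^{\#},\delta\rangle$ forces $X^{\#}$ to be large, hence already containing $\psi(\delta)$''). In particular, nothing in the write-up rules out an exceptional fold at which $\rk\langle X^{\#},\delta\rangle-\rk X^{\#}<0$ while $\rk\langle Z^{\#},\psi(\delta)\rangle-\rk Z^{\#}=1$; you would need a concrete argument --- say a Stallings-graph/Euler-characteristic count of the sort you allude to, comparing $\core(\Gamma(X^{\#})\vee\Gamma(\delta))$ against $\core(\Gamma(Z^{\#})\vee\Gamma(\psi(\delta)))$ and exploiting that $\psi$ is a graph immersion on the level of subgroup graphs --- before the proposition can be claimed. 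The parenthetical at the end acknowledges this is ``the route of \cite{FH99}'' and that ``the same difficulty reappears,'' which is an honest assessment: the difficulty reappears because it is the difficulty, and it has not been resolved here.
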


\subsection{Finitely generated subgroups of mapping tori of free groups}

Using \cref{prop:FH} we may obtain a criterion for minimality. The proof is essentially the proof of the main proposition in \cite{FH99}. Note that \cref{itm:pres} in \cref{FHmain} is precisely Feighn--Handel's main proposition.

\begin{theorem}
\label{FHmain}
Let $(Z, X)$ be a finite $\psi$-invariant graph pair for $H$ with $(f_Z)_*$ injective and let $C\leqslant Z^{\#}$ so that $Z^{\#} = X^{\#}*C$. The following are equivalent:
\begin{enumerate}
\item\label{itm:min} The pair $(Z, X)$ is minimal.
\item\label{itm:inj} The map
\[
\theta_n\colon\pi_1\left(X\vee \bigvee_{i=0}^n\Gamma(\psi^i(C)), v_X\right) \to \pi_1(R, v_R)
\]
is injective for all $n\geqslant 0$.
\item\label{itm:pres} We have
\[
H\isom \langle Z^{\#}, t \mid t^{-1}xt = \psi(x), \forall x\in X^{\#}\rangle.
\]
\end{enumerate}
In particular, if any of the above hold, then $\chi(H) = -\rr(Z, X)$.
\end{theorem}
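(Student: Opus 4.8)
The plan is to prove the cyclic chain $(\ref{itm:min})\Rightarrow(\ref{itm:inj})\Rightarrow(\ref{itm:pres})\Rightarrow(\ref{itm:min})$ and then read off the Euler characteristic formula. Throughout write $P = \langle Z^\#, t\mid t^{-1}xt = \psi(x),\ \forall x\in X^\#\rangle$. Since $\psi|_{X^\#}$ is an isomorphism of the subgroup $X^\#\leqslant Z^\#$ onto the subgroup $\psi(X^\#)\leqslant Z^\#$ (here one uses $\psi$-invariance and injectivity of $(f_Z)_*$; the latter also makes $X^\#$ a free factor of $Z^\#$, so that $C$ exists with $\rk C = \rr(Z,X)$), the group $P$ is an HNN extension of the free group $Z^\#$ along $X^\#$, whence $\chi(P) = \chi(Z^\#) - \chi(X^\#) = \rk X^\# - \rk Z^\# = -\rr(Z,X)$. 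The inclusion $Z^\#\injects\F\injects G$ together with $t\mapsto t$ induces a homomorphism $P\to G$ with image $\langle X^\#,t\rangle = H$, so there is always a natural surjection $P\twoheadrightarrow H$, and $(\ref{itm:pres})$ asserts exactly that this surjection is injective. Granting $(\ref{itm:pres})$, then, $H\cong P$ is of type $F$ with $\chi(H) = -\rr(Z,X)$; the final clause will follow once the cycle is closed, since a minimal pair also satisfies $(\ref{itm:pres})$ and so the identity holds whenever any of the three conditions does.

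For $(\ref{itm:min})\Rightarrow(\ref{itm:inj})$: first I would record the elementary identity, proved by induction from $\psi$-invariance and $Z^\# = X^\# * C$, that $K_{n+1} := \langle X^\#, \psi^0(C), \psi^1(C),\ldots, \psi^n(C)\rangle$ equals $\langle\psi^i(X^\#): 0\leqslant i\leqslant n+1\rangle$ and equals $\langle K_n, \psi^n(C)\rangle$; thus $\theta_n$ is injective if and only if $K_{n+1}$ is the internal free product $X^\#\ast C\ast\psi(C)\ast\cdots\ast\psi^n(C)$, if and only if $\rk K_{n+1} = \rk X^\# + (n+1)\rk C$, and $\theta_0$ is always injective (this is just $Z^\# = X^\#\ast C$). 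Suppose $(\ref{itm:inj})$ fails and let $n^\ast\geqslant 1$ be minimal with $\theta_{n^\ast}$ non-injective. Folding $X\vee\bigvee_{i=0}^{n^\ast}\Gamma(\psi^i(C))$ produces an immersed core graph $Z_{n^\ast}$ with $Z_{n^\ast}^\# = K_{n^\ast+1}$, inside which the image of the subgraph $X\vee\bigvee_{i=0}^{n^\ast-1}\Gamma(\psi^i(C))$ is a connected subgraph $X_{n^\ast-1}$ with fundamental group $K_{n^\ast}$. Using $\psi^i(C) = t^{-i}Ct^i\in H$ and $K_{n^\ast+1} = \langle K_{n^\ast}, \psi(K_{n^\ast})\rangle$, one checks that $(Z_{n^\ast}, X_{n^\ast-1})$ is a finite $\psi$-invariant graph pair for $H$. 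Since $Z_{n^\ast}$ is immersed, $\rr(Z_{n^\ast}, X_{n^\ast-1}) = \rk K_{n^\ast+1} - \rk K_{n^\ast}$; minimality of $n^\ast$ gives $\rk K_{n^\ast} = \rk X^\# + n^\ast\rk C$, while the failure of $\theta_{n^\ast}$ means $K_{n^\ast+1} = \langle K_{n^\ast}, \psi^{n^\ast}(C)\rangle$ is not a free product, so Grushko's theorem (\cref{ff_inequality}) forces $\rk K_{n^\ast+1} < \rk K_{n^\ast} + \rk C$. Hence $\rr(Z_{n^\ast}, X_{n^\ast-1}) < \rk C = \rr(Z,X)$, contradicting minimality of $(Z,X)$.

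The implication $(\ref{itm:inj})\Rightarrow(\ref{itm:pres})$ is the heart of the matter, and is where I would follow the argument of Feighn--Handel \cite[\S 5]{FH99} (compare \cref{prop:FH}). We must show the natural surjection $P\twoheadrightarrow H\injects G$ is injective. Since $P$ acts on the Bass--Serre tree of its HNN splitting over $X^\#$ with vertex stabilisers conjugate to $Z^\#$, and $Z^\#$ embeds in $\F\leqslant G$, every elliptic element of the kernel is trivial, so a nontrivial kernel element would be conjugate to a cyclically reduced HNN word $w = g_0t^{\epsilon_1}g_1\cdots t^{\epsilon_k}g_k$ with $k\geqslant 1$, $g_i\in Z^\#$ and $\sum_i\epsilon_i = 0$. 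Viewing $w$ inside $G = \F*_\psi$ and reducing it there by Britton's lemma, the only available reductions are of the shape ``replace $t^{-1}g_it$ by $\psi(g_i)$'' for $g_i\in Z^\#\setminus X^\#$ (and the symmetric one), which introduce letters lying in subgroups of the form $\langle\psi^a(Z^\#),\ldots,\psi^b(Z^\#)\rangle$. The role of $(\ref{itm:inj})$ --- equivalently, of the $K_n$ being the expected internal free products --- is precisely to force intersections such as $\psi^a(Z^\#)\cap\psi^b(Z^\#)$ to be as small as possible (for instance $Z^\#\cap\psi(Z^\#) = \psi(X^\#)$ once $\theta_1$ is injective), so that no cascade of such reductions can collapse the whole syllable structure: the fully reduced $G$-form of $w$ still contains a letter $t^{\pm 1}$, or else lies in the free group $\bigcup_n K_n$ and is visibly nontrivial. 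Either way $w\neq 1$ in $G$ by Britton's lemma, so the kernel is trivial. Carrying out this bookkeeping with free factor systems and the subgroups $\psi^i(C)$ is the main obstacle; the surrounding structure is soft.

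Finally, for $(\ref{itm:pres})\Rightarrow(\ref{itm:min})$ I would fix a minimal finite $\psi$-invariant graph pair $(Z',X')$ for $H$ and replace it by its tightening, which does not raise the relative rank (\cref{lem:FH_tightening}, \cref{lem:graph_pair_faalin}) and so is still minimal and has $(f_{Z'})_*$ injective. The implications already established give $(\ref{itm:inj})$ and hence $(\ref{itm:pres})$ for $(Z',X')$, whence $\chi(H) = -\rr(Z',X')$; but $(\ref{itm:pres})$ for $(Z,X)$ gives $\chi(H) = -\rr(Z,X)$, so $\rr(Z,X) = \rr(Z',X')$ is the minimum and $(Z,X)$ is minimal. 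The same computation shows $\chi(H) = -\rr(Z,X)$ whenever any of $(\ref{itm:min})$--$(\ref{itm:pres})$ holds, which is the final assertion.
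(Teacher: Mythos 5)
Your overall skeleton (the cyclic chain $(\ref{itm:min})\Rightarrow(\ref{itm:inj})\Rightarrow(\ref{itm:pres})\Rightarrow(\ref{itm:min})$, reading off $\chi(H)=-\rr(Z,X)$ from the HNN presentation) matches the paper, and your $(\ref{itm:pres})\Rightarrow(\ref{itm:min})$ and Euler-characteristic computations are correct. However, the two forward implications each have a genuine gap.

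For $(\ref{itm:min})\Rightarrow(\ref{itm:inj})$ you try to re-derive the content of \cref{prop:FH} by hand rather than citing it. After folding $W_{n^\ast}:=X\vee\bigvee_{i=0}^{n^\ast}\Gamma(\psi^i(C))$ you assert that the image of the subgraph $W_{n^\ast-1}$ has fundamental group exactly $K_{n^\ast}$, but folding can identify edges of $W_{n^\ast-1}$ with edges of $\Gamma(\psi^{n^\ast}(C))$ in a way that enlarges the image subgraph's fundamental group beyond $K_{n^\ast}$; then neither the computation $\rr(Z_{n^\ast},X_{n^\ast-1})=\rk K_{n^\ast+1}-\rk K_{n^\ast}$ nor $\psi$-invariance of the resulting pair is clear. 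The appeal to \cref{ff_inequality} is also misplaced: that lemma compares two free factor systems, one refining the other, whereas the failure of $\theta_{n^\ast}$ says precisely that $\{K_{n^\ast},\psi^{n^\ast}(C)\}$ is \emph{not} a free factor system. The paper avoids all of this by simply observing that $(W_{n^\ast}, W_{n^\ast-1})$ is a finite $\psi$-invariant graph pair for $H$ with $(f_{W_{n^\ast-1}})_\ast=\theta_{n^\ast-1}$ injective, $(f_{W_{n^\ast}})_\ast=\theta_{n^\ast}$ not injective, and $\rr(W_{n^\ast},W_{n^\ast-1})=\rk C=\rr(Z,X)$, and then invoking \cref{prop:FH} to produce, via the tightening procedure, a strictly smaller pair. (You cite \cref{prop:FH} in your write-up, but in connection with the wrong implication.)

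For $(\ref{itm:inj})\Rightarrow(\ref{itm:pres})$ you do not give a proof: you describe a Britton's-lemma strategy and then write that carrying out the bookkeeping ``is the main obstacle; the surrounding structure is soft.'' That bookkeeping \emph{is} the theorem. The paper's argument is concrete: it introduces $\lambda\colon\overline H\to\Z$ killing $Z^\#$, identifies $\ker\lambda$ with an ascending chain of amalgams $\F_i=\F_{i-1}*_{t^{-i}X^\#t^i}t^{-i}Z^\#t^i$, uses $Z^\#=X^\#*C$ to deduce $\F_\infty=X^\#*\Asterisk_{i\geqslant 0}t^{-i}Ct^i$, and then uses condition $(\ref{itm:inj})$ exactly once, to identify $\phi|_{\F_\infty}$ with an isomorphism onto $L=X^\#*\Asterisk_{i\geqslant 0}\psi^i(C)\leqslant\F$. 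Since every element of $\ker\lambda$ is conjugate into $\F_\infty$, and elements of nonzero $t$-exponent cannot lie in $\ker\phi$, this gives $\ker\phi=1$. Your sketch, by contrast, would need to control arbitrary iterated intersections of the form $\psi^a(Z^\#)\cap\psi^b(Z^\#)$ arising from cascades of Britton reductions, not just the single intersection $Z^\#\cap\psi(Z^\#)$; you have not shown how $(\ref{itm:inj})$ supplies that control.
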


\begin{proof}
If $(Z, X)$ is minimal, then the map $\theta_n$ injective for all $n\geqslant 0$ by \cref{prop:FH}. Hence, \cref{itm:min} implies \cref{itm:inj}.

Now suppose that $\theta_n$ is injective for all $n$. Then, we may identify the group
\[
L = X^{\#}*\Asterisk_{i=0}^{\infty}\psi^i(C)
\]
with a subgroup of $\F = \pi_1(R, v_R)$. In particular, since $C\leqslant H$ and $H = \langle X^{\#}, t\rangle$, we see that $H = \langle L, t\rangle$. We claim that the homomorphism
\[
\phi\colon \langle X^{\#}, C, t \mid t^{-1}ft = \psi(f), \forall f\in X^{\#}\rangle = \overline{H} \to G
\]
is injective. Since $\phi(\overline{H}) = H$, this will imply that \cref{itm:inj} implies \cref{itm:pres}.

The kernel of the epimorphism $\lambda\colon\overline{H}\to \Z$ given by quotienting by the normal closure of $Z^{\#} = X^{\#}*C$ is isomorphic to
\[
\ker(\lambda) \isom \ldots \underset{X^{\#}}{*}Z^{\#}\underset{t^{-1}X^{\#}t}{*}t^{-1}Z^{\#}t\underset{t^{-2}X^{\#}t^2}{*}\ldots
\]
For $i\geqslant 0$, denote by
\[
\F_{i} = \langle Z^{\#}, t^{-1}Z^{\#}t, \ldots, t^{-i}Z^{\#}t^i\rangle.
\]
and let $\F_{\infty} = \bigcup_{i=0}^{\infty}\F_i$. Using the decomposition of $\ker(\lambda)$ above we see that for each $i\geqslant 0$ we have
\begin{align*}
\F_{i} &= \F_{i-1}\underset{t^{-i}X^{\#}t^i}{*}t^{-i}Z^{\#}t^i\\
		&= \F_{i-1}\underset{t^{-i}X^{\#}t^i}{*}(t^{-i}X^{\#}t^i*t^{-i}Ct^i)\\\
		&= \F_{i, j-1}*t^{-i}Ct^i.
\end{align*}
It follows by induction that we have
\[
\F_{\infty} = X^{\#} * C* \ldots * t^{-i}Ct^i*\ldots
\]
Hence, the homomorphism $\phi\mid \F_{\infty}$ is an isomorphism onto $L$. Since every element in $\ker(\lambda)$ is conjugate into $\F_{\infty}$, it follows that any non-trivial element in $\ker(\phi)$ has non-zero exponent sum on $t$. This is not possible and so $\ker(\phi) = 1$, proving our claim. In particular, \cref{itm:inj} implies \cref{itm:pres}.

Assuming \cref{itm:pres}, by a result of Chiswell--Collins--Huebschmann \cite{CCH81}, we have that $-\chi(H) = \rk(C) = \rr(Z, X)$. Since $\chi(H)$ is a group invariant, the value $\rr(Z, X)$ depends only on $H$. Thus, since \cref{itm:min} implies \cref{itm:pres}, we see that $\rr(Z, X)$ is minimal and so $(Z, X)$ is minimal. This completes the proof.
\end{proof}

We conclude this section with some auxiliary facts about mapping tori of free groups which follow from the work of Feighn--Handel.

\begin{corollary}
\label{cor:free_prod_decomp1}
Let $\F$ be a free group, let $\psi\colon \F\to \F$ be a monomorphism and let $G=M(\psi)$ be the mapping torus. Then $G$ is finitely generated if and only if there is a $\psi$-invariant subgroup
\[
\F' = A*\left(\Asterisk_{i\geqslant 0}C_i\right)\leqslant \F
\]
where $A$ and $C_0$ are finitely generated, where $C_i = \psi^i(C_0)$ for each $i\geqslant 0$ and so that $G\cong M(\phi)$ (induced by the inclusion) where $\phi = \psi\mid\F'$. 
\end{corollary}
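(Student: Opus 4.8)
The plan is to extract the required $\psi$-invariant subgroup $\F'$ from a finite $\psi$-invariant graph pair for $G$ itself, using the structural information provided by \cref{FHmain}. For the backward direction, suppose such an $\F' = A*\left(\Asterisk_{i\geqslant 0}C_i\right)$ exists with $A, C_0$ finitely generated, $C_i = \psi^i(C_0)$, and $G\cong M(\phi)$ where $\phi = \psi\mid\F'$. Take a finite subgraph $X\subset R$ with $\pi_1(X, v_R)$ conjugate to $A$ (possible since $A$ is finitely generated) and build $Z = X\vee\Gamma(C_0)$. Then $(Z, X)$ is a finite graph pair with $Z^{\#} = A*C_0$, and the $\psi$-invariance condition $Z^{\#} = \langle X^{\#}, \psi(X^{\#})\rangle$ may require enlarging $X$ slightly, but in any case one checks $\langle X^{\#}, t\rangle = M(\phi) = G$ using $\F' = \langle X^{\#}, t\rangle$ inside $M(\phi)$ together with the decomposition of $\F'$ as an iterated free product; hence $(Z, X)$ is a finite $\psi$-invariant graph pair for $G$, so $G$ is finitely generated.

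For the forward direction, assume $G = M(\psi)$ is finitely generated. Applying \cref{lem:FH_tightening} to any finite $\psi$-invariant graph pair for $G$ — or simply by finite generation, which guarantees one exists — we obtain a finite \emph{tight} $\psi$-invariant graph pair $(Z, X)$ for $G$; passing to a minimal such pair and invoking \cref{FHmain}, the map $(f_Z)_*$ is injective, so we may identify $Z^\#$ with a subgroup of $\F$, write $Z^{\#} = X^{\#}*C$ with $C$ finitely generated (its rank is $\rr(Z, X)$), and set $A = X^{\#}$. Now I would set $\F' = X^{\#}*\Asterisk_{i\geqslant 0}\psi^i(C) = L$, the subgroup appearing in the proof of \cref{FHmain}: by condition \cref{itm:inj} of \cref{FHmain} the natural map from the abstract iterated free product into $\F$ is injective, so $L$ genuinely decomposes as $A*\left(\Asterisk_{i\geqslant 0}C_i\right)$ with $C_i = \psi^i(C_0)$, $C_0 = C$. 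This $\F'$ is $\psi$-invariant because $\psi(X^{\#})\subseteq Z^{\#}\subseteq L$ (using $\psi$-invariance of $(Z,X)$, i.e. $Z^{\#} = \langle X^{\#}, \psi(X^{\#})\rangle$) and $\psi(C_i) = C_{i+1}\subseteq L$. Finally, $H = \langle X^{\#}, t\rangle = G$ and, since $C\leqslant Z^{\#}\leqslant G$ with $Z^{\#} = X^{\#}*C$ and $\psi(C_i) = C_{i+1}$, one sees $\langle \F', t\rangle = \langle X^{\#}, t\rangle = G$; the presentation statement \cref{itm:pres} of \cref{FHmain} then identifies $G$ with $\langle \F', t\mid t^{-1}xt = \psi(x), \forall x\in X^{\#}\rangle$, which — since $\psi(\F')\subseteq \F'$ follows from the $\psi$-invariance just checked — is precisely $M(\phi)$ for $\phi = \psi\mid\F'$, the inclusion $\F'\hookrightarrow \F$ inducing the isomorphism $M(\phi)\xrightarrow{\cong} M(\psi) = G$.

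The main obstacle I anticipate is bookkeeping around the three slightly different notions of ``invariant graph pair for $H$'' versus ``for $G$ itself'': one must be careful that when $H = G$ the condition $\langle X^{\#}, t\rangle = G$ together with tightness really does force the iterated-free-product structure on $L$, and that the presentation in \cref{itm:pres} of \cref{FHmain} is literally the mapping torus presentation of $\phi = \psi\mid\F'$ rather than merely abstractly isomorphic to it. Verifying that $\phi$ is well-defined — i.e. that $\psi(\F')\subseteq \F'$ — is where the $\psi$-invariance of $(Z,X)$ is essential and is the one place the argument genuinely uses more than formal manipulation; everything else is a matter of transcribing the relevant pieces of the proof of \cref{FHmain} and of \cref{lem:subgroup_bijections}.
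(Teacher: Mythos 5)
Your proof is correct and fills in the details of the paper's proof, which simply cites \cref{FHmain}. One minor clarification: $(f_Z)_*$ is automatically injective once $(Z,X)$ is tight (immersions are $\pi_1$-injective), so the injectivity is not a consequence of \cref{FHmain} but rather the hypothesis you need in order to invoke it; the logical order is to take a minimal finite pair, tighten it (which preserves minimality), and then apply \cref{FHmain}.
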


\begin{proof}
This is a direct application of \cref{FHmain}.
\end{proof}

If $G$ is a group and $h\in G$ is an element, the conjugation (by $h$) homomorphism, denoted by $\gamma_h\colon G\to G$ is given by $g\mapsto h^{-1}gh$.

\begin{corollary}
\label{cor:Euler}
Let $\psi\colon \F\to \F$ be a monomorphism and let $G = \F*_{\psi}$. If $H\leqslant \F$ is a finitely generated subgroup so that $\psi^k(H)\leqslant H^f$ for some $k\geqslant 1$ and $f\in \F$, then $\langle H, t^kf\rangle \cong M(\phi)$ where $\phi = \gamma_f\circ\psi^k\mid H$.
\end{corollary}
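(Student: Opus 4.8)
The plan is to realise $M(\phi)$ as the subgroup $\langle H, t^kf\rangle$ of $G$ via the obvious homomorphism, and then to prove injectivity with an exponent-sum (degree) argument on the stable letters.

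First I would put $s = t^kf$ and note that in $G = \F*_{\psi}$ we have $t^{-k}xt^k = \psi^k(x)$ for all $x\in\F$, so that
\[
s^{-1}xs = f^{-1}t^{-k}xt^kf = f^{-1}\psi^k(x)f = \phi(x)
\]
for every $x\in H$. Since $\phi$ is a monomorphism of $H$ into itself (this is exactly the content of the hypothesis $\psi^k(H)\leqslant H^f$), the rules $x\mapsto x$ for $x\in H$ and $\tau\mapsto s$ extend to a homomorphism $\Phi\colon M(\phi)\to G$ with image $\langle H, s\rangle$. Everything then reduces to showing $\Phi$ is injective.

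For this I would use the exponent-sum homomorphisms $\epsilon\colon G\to\Z$ (sending $\F$ to $0$ and $t$ to $1$) and $\bar\epsilon\colon M(\phi)\to\Z$ (sending $H$ to $0$ and $\tau$ to $1$). As $\epsilon(s) = k$ we get $\epsilon\circ\Phi = k\bar\epsilon$, hence $\ker\Phi\subseteq\ker\bar\epsilon$ since $k\geqslant 1$. Because $M(\phi)$ is an ascending HNN extension of $H$, the relation $\tau^{-1}x\tau = \phi(x)$ yields $H\subseteq\tau H\tau^{-1}$, so $N := \bigcup_{n\geqslant 0}\tau^nH\tau^{-n}$ is an increasing union of subgroups and therefore a subgroup; a short check shows $N$ is normal in $M(\phi)$ with infinite cyclic quotient, so $N = \ker\bar\epsilon$. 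Finally, $\Phi$ restricted to $N$ is injective: an element of $N$ has the form $\tau^nh\tau^{-n}$ with $h\in H$, and $\Phi(\tau^nh\tau^{-n}) = s^nhs^{-n}$ can only be trivial in $G$ if $h = 1$ (because $\F$, and hence $H$, embeds in the HNN extension $G$), forcing $\tau^nh\tau^{-n} = 1$. Combined with $\ker\Phi\subseteq N$, this gives $\ker\Phi = 1$, so $\Phi$ is an isomorphism onto $\langle H, t^kf\rangle$.

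The one step demanding a little care is the identification $\ker\bar\epsilon = N$, i.e.\ pinning down the elements of $M(\phi)$ of zero $\tau$-degree; but this is the routine normal-form bookkeeping for an ascending HNN extension, and everything else is a one-line verification.
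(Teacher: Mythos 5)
Your proof is correct, and it takes a genuinely different route from the paper's. The paper proves this as a one-line consequence of \cref{FHmain}: after replacing $\psi$ by $\gamma_f\circ\psi^k$, the pair $(\Gamma(H),\Gamma(H))$ is already a tight $\psi$-invariant graph pair for $\langle H, t^kf\rangle$, and condition (3) of \cref{FHmain} instantly gives the claimed HNN presentation. This is efficient precisely because \cref{FHmain} has done all the HNN bookkeeping once and for all. Your argument bypasses that machinery and works directly inside $G$: you define $\Phi\colon M(\phi)\to G$ on generators, check the relator is respected, and then prove injectivity by showing $\ker\Phi\subseteq\ker\bar\epsilon=\bigcup_n\tau^nH\tau^{-n}$ and that $\Phi$ is injective on that union (since $\F$, hence $H$, embeds in $G$ and conjugation in $G$ is bijective). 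The identification $\ker\bar\epsilon=\bigcup_n\tau^nH\tau^{-n}$ is exactly the standard normal-form fact for ascending HNN extensions (Britton reduction forces every element into the form $\tau^a h\tau^{-b}$ with $a,b\geqslant 0$, and zero $\tau$-degree forces $a=b$), so your sketch of that step is fine. The payoff of your approach is that it is self-contained and elementary; the cost is that you are redoing by hand the part of the Feighn--Handel analysis that \cref{FHmain} packages. Both are valid; the paper's choice is natural given that \cref{FHmain} is already on the table. One presentational caveat, inherited from the paper rather than introduced by you: for $\phi=\gamma_f\circ\psi^k\mid H$ (with $\gamma_f(g)=f^{-1}gf$ as the paper defines) to land in $H$, one needs $\psi^k(H)\leqslant fHf^{-1}$, so the hypothesis $\psi^k(H)\leqslant H^f$ should be read with the convention $H^f=fHf^{-1}$; you and the paper both implicitly use this reading, so there is no gap in your argument.
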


\begin{proof}
After replacing $\psi$ with the monomorphism $\gamma_f\circ\psi^k\colon \F\to \F$, we see that $(\Gamma(H), \Gamma(H))$ is a tight $\psi$-invariant graph pair for $\langle H, t^kf\rangle$. Now $\langle H, t^kf\rangle$ has the required presentation by \cref{FHmain}.
\end{proof}

\begin{remark}
The subgroup $\langle H, t^kf\rangle\leqslant M(\psi)$ from \cref{cor:Euler} is sometimes referred to a \emph{sub-mapping torus}. It is a consequence of \cref{FHmain} that every non-cyclic subgroup $H\leqslant M(\psi)$ with $\chi(H) = 0$ is conjugate to a sub-mapping torus.
\end{remark}

\begin{corollary}
\label{fg_kernel} 
Let $\F$ be a free group and let $F_n$ be the free group of rank $n\geqslant 1$. If $\psi\colon \F\to \F$ is an isomorphism, $\phi\colon F_n\to F_n$ is a monomorphism and if $\F\rtimes_{\psi}\Z\isom M(\phi)$, then $\F$ is finitely generated.
\end{corollary}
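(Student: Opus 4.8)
The plan is to combine the vanishing of $\chi(M(\phi))$ with \cref{FHmain} and Takahasi's theorem on ascending chains of subgroups of bounded rank in a free group. Write $G:=\F\rtimes_{\psi}\Z = M(\psi)$ and let $\alpha\colon G\to\Z$ be the projection with $\ker\alpha=\F$. First I would note that $G$ is finitely generated, being isomorphic to $M(\phi)$, which is generated by $F_n\cup\{t\}$; and that $\chi(G)=\chi(M(\phi))=0$. For the latter, apply \cref{FHmain} to the tight, minimal $\phi$-invariant graph pair $(R,R)$ for $M(\phi)$, where $R$ is a rose with $\pi_1(R)=F_n$ and $f_R=\mathrm{id}$ (here $Z^{\#}=X^{\#}=F_n$, so the complementary free factor is trivial): this gives $\chi(M(\phi))=-\rr(R,R)=0$, and $\chi$ is an isomorphism invariant.

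Since $G$ is finitely generated, it admits a finite $\psi$-invariant graph pair for itself (cf. the proof of \cref{cor:free_prod_decomp1}). Choose one of minimal relative rank and tighten it; by \cref{lem:graph_pair_faalin} tightening does not increase the relative rank, so \cref{lem:FH_tightening} produces a minimal \emph{tight} $\psi$-invariant graph pair $(Z,X)$ for $G$, and in particular $(f_Z)_*$ is injective. Writing $Z^{\#}=X^{\#}*C$, \cref{FHmain} gives $\rk(C)=\rr(Z,X)=-\chi(G)=0$; hence $C=1$, $Z^{\#}=X^{\#}$, $\psi(X^{\#})\subseteq X^{\#}$, and
\[
G\;\cong\;\bigl\langle\,X^{\#},\,t \;\big|\; t^{-1}xt=\psi(x)\ \ \forall x\in X^{\#}\,\bigr\rangle \;=\; M\bigl(\psi|_{X^{\#}}\bigr),
\]
an ascending HNN extension of the finitely generated free group $X^{\#}\leqslant\F$.

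It remains to locate $\F$ in this HNN extension. Since $X^{\#}\leqslant\F=\ker\alpha$ and $\alpha(t)=1$, the homomorphism $\alpha$ is the natural retraction $M(\psi|_{X^{\#}})\to\Z$; hence $\F=\ker\alpha=\normal{X^{\#}}=\bigcup_{k\geqslant 0}t^{k}X^{\#}t^{-k}$, an ascending union of subgroups of the free group $\F$, each a $G$-conjugate of $X^{\#}$ and therefore of rank exactly $\rk(X^{\#})$. By Takahasi's theorem this chain is eventually constant, which forces the transition map $\psi|_{X^{\#}}\colon X^{\#}\to X^{\#}$ to be onto; hence $\F=X^{\#}$ is finitely generated.

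The step I expect to be delicate is arranging $(Z,X)$ to be minimal and tight at the same time, so that \cref{FHmain} can be used both to compute $\rr(Z,X)=-\chi(G)$ and to produce the ascending-HNN description $G\cong M(\psi|_{X^{\#}})$; once that description is available, freeness of $\F$ and Takahasi's theorem close the argument at once. It is also worth flagging that the hypothesis that $\psi$ is an \emph{isomorphism} is used in an essential way: it is what makes $\F$ normal in $G$ with $\F=\ker\alpha$, so that $\F$ must coincide with the free kernel $\bigcup_k t^{k}X^{\#}t^{-k}$ of the ascending HNN extension — for a general monomorphism $\psi$ the relevant subgroup need not even be free, and Takahasi's theorem would not apply.
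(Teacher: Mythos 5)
Your proof is correct and takes essentially the same route as the paper's: both use \cref{FHmain} to compute $\chi(G)=0$ from $G\cong M(\phi)$, pass to a minimal tight $\psi$-invariant graph pair $(Z,X)$ for $G$ with $\rr(Z,X)=0$ (forcing $Z^\#=X^\#$ and $\psi(X^\#)\leqslant X^\#$), and then invoke Takahasi's theorem on the ascending chain $Z^\#\leqslant\psi^{-1}(Z^\#)\leqslant\cdots$ of bounded-rank subgroups of $\F$, which is exactly your chain $\{t^kX^\#t^{-k}\}$. Your write-up is slightly more explicit about identifying $\F$ with $\ker\alpha=\bigcup_{k\geqslant 0}t^kX^\#t^{-k}$ once stability is reached and about the minimise-then-tighten bookkeeping, but the substance is the same.
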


\begin{proof}
Let $G = \F\rtimes_{\psi}\Z$. If $G\isom M(\phi)$ for some $n\geqslant 1$ and some injective endomorphism $\phi$, then $\chi(G) = 0$ by \cref{FHmain}. Also by \cref{FHmain}, we have $\chi(G) = \rr(Z, X)$ for some finite $\psi$-invariant graph pair $(Z, X)$. Hence, $\rr(Z, X) = 0$ and so $\psi(X^{\#}) = \psi(Z^{\#})\leqslant Z^{\#}$. If $\psi\mid Z^{\#}$ is not surjective on $Z^{\#}$, then $Z^{\#}\leqslant \psi^{-1}(Z^{\#})\leqslant \psi^{-2}(Z^{\#})\leqslant\ldots\leqslant \F$ form a proper ascending chain of subgroups of a free group of fixed rank. But this contradicts Takahasi's Theorem \cite{Ta51} and so $\F = Z^{\#}$ as claimed.
\end{proof}

\section{The peripheral subgroups: maximal sub-mapping tori}
\label{sec:peripherals}

In this section, we show that any finitely generated mapping torus of a free group has a canonical collection of (conjugacy classes of) maximal subgroups that are sub-mapping tori of finitely generated free groups. This collection of sub-mapping tori will be the peripheral subgroups for the relatively hyperbolic structure from \cref{main}.

Using the decomposition from \cref{cor:free_prod_decomp1}, we first explain how to obtain a natural splitting of a finitely generated mapping torus $M(\psi)$ as a HNN-extension over a finitely generated free group.

Assume the notation from \cref{cor:free_prod_decomp1}. Let $m\geqslant 0$ be large enough so that
\[
\psi(A)\leqslant A*(\Asterisk_{i=0}^m\psi^i(C)).
\]
Such an $m$ exists since $A$ is finitely generated. Then, denoting by $C_i = \psi^i(C)$ for $0\leqslant i\leqslant m$, define
\[
\phi\colon A*(\Asterisk_{i=0}^{m-1}C_i) = L \to U =  \psi(A)*(\Asterisk_{i=1}^mC_i)
\]
to be the isomorphism given by $\psi\mid L$. Denoting by
\[
F = A*(\Asterisk_{i=0}^mC_i),
\]
it is not hard to see that we have:
\begin{align}
\label{eq:decomposition}
M(\psi) &\isom F*_{\phi}.
\end{align}
We now analyse the action of $M(\psi)$ on the Bass--Serre tree associated with the decomposition \eqref{eq:decomposition}.

\begin{theorem}
\label{thm:acylindrical}
Let $F = A*(\Asterisk_{i=0}^mC_i)$ be a finitely generated free group and let 
\[
\phi\colon A*(\Asterisk_{i=0}^{m-1}C_i) = L \to U = \psi(A)*(\Asterisk_{i=1}^mC_i)
\]
be an isomorphism so that $\phi(C_i) = C_{i+1}$ for each $0\leqslant i<n$. There exists a free product decomposition
\[
F = A_1*\ldots *A_n*B*\left(\Asterisk_{i = 0}^mC_i\right)
\]
so that the following holds.
\begin{enumerate}
\item There is a map $\sigma\colon \{1, \ldots, n\}\to\{1, \ldots, n\}$ such that for each $1\leqslant i\leqslant n$, there is an $f_i\in F$ so that $\psi(A_i)^{f_i} \leqslant A_{\sigma(i)}$. Moreover, there can be no $1\leqslant i< j\leqslant n$ with $\sigma(i) = \sigma(j)$ and $f_i, f_j\in L$ so that $\psi(A_i^{f_i}), \psi(A_j^{f_j})\leqslant A_{\sigma(i)}$.
\item If $T$ is the Bass--Serre tree for the HNN-extension $G = F*_{\phi}$, then there exists an integer $k\geqslant 0$ such that for any subset $S\subset T$ containing at least $k$ vertices, the pointwise stabiliser $\stab(S)$ of $S$ is conjugate into some $A_i$.
\item\label{itm:submapping_tori} For each $1\leqslant i\leqslant n$ so that $\sigma^{\ell_i}(i) = i$ for some $\ell_i\geqslant 1$ (assume $\ell_i$ is the minimal such integer), we have $A_{i}^{h_i}\leqslant A_{i}$ where 
\[
h_i = tf_{i}tf_{\sigma(i)}\ldots tf_{\sigma^{\ell_i-1}(i)}.
\]
In particular, $H_i = \langle A_{i}, h_{i}\rangle$ is isomorphic to a mapping torus of the finitely generated free group $A_{i}$.
\item If $H\leqslant G$ is a isomorphic to a mapping torus of a finitely generated free group, then $H$ is conjugate into some $H_i$ as in \cref{itm:submapping_tori}.
\end{enumerate}
\end{theorem}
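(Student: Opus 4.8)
The plan is to set up the free product decomposition of $F$ and then verify the four properties in order, with the acylindricity statement (2) being the heart of the argument from which the others will largely follow. First I would apply the Feighn--Handel tightening machinery to extract the decomposition itself. Starting from $F = A*(\Asterisk_{i=0}^m C_i)$, I would repeatedly pass to the free factor system obtained by intersecting with the images $\psi(A)$, together with the $C_i$'s, and use \cref{ff_system} and \cref{ff_inequality} to show that this refinement process stabilises: the reduced rank is non-increasing under refinement and bounded below, so after finitely many steps we obtain a $\psi$-invariant free factor system. The factors of this stable system, appropriately conjugated so they sit as an honest free factorisation of $F$, are declared to be $A_1,\ldots,A_n$ (those on which $\psi$ acts "recurrently") together with a remaining free factor $B$ and the unchanged $\Asterisk_i C_i$. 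The map $\sigma$ and the elements $f_i$ are then read off from how $\psi$ permutes (up to conjugacy) the pieces of the stable system; the "no two indices collapse via elements of $L$" clause is exactly the statement that the system does not properly refine — i.e. the equality case of \cref{ff_inequality} — so it is forced by maximality of the reduced rank.

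For part (2), the relative acylindricity of the action on the Bass--Serre tree $T$ of $F*_\phi$, I would argue as follows. A segment of length $\ell$ in $T$ is stabilised by a subgroup of the form $F \cap F^{g_1} \cap \cdots \cap F^{g_\ell}$ where the $g_j$ are suitable products of $t^{\pm 1}$ and elements of $F$ realising the segment; via the HNN structure this intersection is governed by the chain $L, \phi(L)=U \cap L, \ldots$ of iterated images and preimages. The point is that $\phi$ identifies the free factor $L = A*(\Asterisk_{i=0}^{m-1}C_i)$ with $U$, and iterating forces any element surviving in a long segment stabiliser to lie (up to conjugacy) in $\bigcap_{j\geq 0}\psi^j(\text{something})$; using the $\psi$-invariance of the free factor system $\{A_i\}\cup\{C_i\}$ and \cref{ff_malnormal}, a nested intersection of conjugates of members of a free factor system either drops in rank or is conjugate into a single $A_i$. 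Since the total reduced rank is finite, after a uniformly bounded number $k$ of steps the intersection must be conjugate into some $A_i$ (the $C_i$'s cannot persist because $\phi$ shifts them, $C_i \mapsto C_{i+1}$, and they fall off the end after $m+1$ steps). This is the step I expect to be the main obstacle: making the bookkeeping of which conjugating elements appear precise, and extracting a uniform bound $k$ independent of the segment, requires care with the normal form for HNN-extensions and with Britton's lemma.

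Given (1) and (2), parts (3) and (4) are comparatively formal. For (3), if $\sigma^{\ell_i}(i)=i$ then composing the containments $\psi(A_i)^{f_i}\leqslant A_{\sigma(i)}$ around the cycle yields $\psi^{\ell_i}(A_i)^{f}\leqslant A_i$ for the appropriate product $f$ of the $f_j$'s along the orbit, which rearranges to $A_i^{h_i}\leqslant A_i$ with $h_i = tf_i\,tf_{\sigma(i)}\cdots tf_{\sigma^{\ell_i-1}(i)}$ once one tracks how $t$ conjugates $F$ to $\psi(F)$; then \cref{cor:Euler} (with $H=A_i$, $k=\ell_i$) gives that $H_i=\langle A_i, h_i\rangle\cong M(\phi_i)$ for $\phi_i=\gamma_f\circ\psi^{\ell_i}\mid A_i$. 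For (4), suppose $H\leqslant G$ is isomorphic to a mapping torus of a finitely generated free group; then $H$ contains $\langle A, t^r f\rangle$-type elements, and in particular $H$ contains an element $h$ acting hyperbolically on $T$ and a finitely generated free normal-ish base subgroup $N\trianglelefteq H$ which must lie in a conjugate of $F$ (as $N$ is elliptic, being contained in the kernel of the map to $\Z$ dual to the HNN splitting). Iterating the action of $h$ on $N$ and invoking (2): large powers of $h$ translate long segments of $T$ that $N$ must stabilise after conjugation, forcing $N$ — and hence, since $H = \langle N, h\rangle$ and $\chi(H)=0$ pins down the rank via \cref{FHmain} — to be conjugate into some $A_i$; the compatible conjugate of $h$ then normalises this $A_i$ up to finite index, so it is (a power times an element of $A_i$ times) a conjugate of $h_i$, and one concludes $H$ is conjugate into $H_i$ using the minimality/maximality built into the construction of the $A_i$.
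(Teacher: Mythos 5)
Your overall architecture is the right one — build a $\psi$-invariant free factor system of $F$, get the acylindricity constant from finiteness of reduced rank, and then read off (3) and (4) more or less formally — but you have a genuine gap precisely at the step you flagged as the main obstacle, and the paper handles it by citation rather than from scratch. The paper does not build $\mathcal{F}=\{A_i\}$ by iterated refinement with \cref{ff_system} and \cref{ff_inequality}; it invokes Mutanguha's result \cite[Proposition~5.3.1]{Mu21}, which simultaneously produces the free factor system $\mathcal{F}$ and the uniform constant $\kappa$ with the stronger property that \emph{any} finitely generated $H\leqslant F$ with $\psi^n(H)$ conjugate into $F$ for some $n\geqslant \kappa$ is conjugate into a member of $\mathcal{F}$. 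Your refinement sketch plausibly stabilises (the reduced-rank bound is correct), but stabilisation alone does not give this universality-with-a-uniform-bound over all finitely generated subgroups, and you need exactly that to get a single $\kappa$ controlling all long segment stabilisers at once. The "no collision" clause in (1) is also proved in the paper from this universality (if $\langle A_i^{f_i},A_j^{f_j}\rangle$ survived iteration it would force a contradiction with $\mathcal{F}$ being a free factor system), rather than from the equality case of \cref{ff_inequality} as you suggest; both phrasings express "cannot refine further," but the paper's version is what the Mutanguha black box delivers directly.

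For part (2) there is a second, smaller gap: the statement is about arbitrary subsets $S$ with $\geqslant k$ vertices, not just segments, and your sketch only treats geodesic segments. The paper reduces to the convex closure of $S$ and then splits into two cases using the orientation on $T$ induced by the stable letter: if $S$ contains two distinct edges leading out of a common vertex, the stabiliser is already trivial because $L$ is a malnormal free factor of $F$ (\cref{ff_malnormal}), so $L^{g_1}\cap L^{g_2}=1$ for distinct cosets; otherwise $S$ is forced to be a segment (or union of two sharing a terminal piece), one arm of which has length $\geqslant\kappa$, and then the Mutanguha bound kicks in. Your set-up with chains $L\cap\phi(L)\cap\cdots$ misses this orientation dichotomy, which is what lets one take $k=2\kappa$ rather than just handling oriented geodesics. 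Your reasoning for (3) is correct and matches the paper via \cref{cor:Euler}. For (4) your outline is on track, but the paper's actual mechanism is cleaner than "normal-ish base subgroup": after conjugating so that $H=\langle F',t^jf\rangle$ with $\psi^j(F')^f\leqslant F'$ via \cref{cor:Euler}, the fibre $F'$ stabilises the whole axis of $t^jf$, so (2) forces $F'$ into some $A_i$, and then malnormality of $A_i$ pins $t^jf$ down as a power of $h_i$ times an element of $A_i$; you gesture at this but the axis-stabilisation step is the precise bridge you need.
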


\begin{proof}
We first note that
\[
\langle \{A, t^{-i}C_0t^i\}_{i\geqslant 0}\rangle \cong A*(\Asterisk_{i\geqslant 0}t^{-i}C_0t^i).
\]
Denote by
\[
\F = A*(\Asterisk_{i\geqslant 0}C_i)
\]
where here we identify $C_i$ with $t^{-i}C_0t^i$ for each $i\geqslant 0$. Importantly, $F$ here is a free factor of $\F$. Let $\psi\colon \F\to \F$ be given by $\phi\mid F$ and by identifying $C_i$ with $C_{i+1}$. We have
\[
F*_{\phi} \cong M(\psi)
\]
Let $T$ be the Bass--Serre tree for the splitting $F*_{\phi}$. Recall that the vertices of $T$ are cosets $gF$ of $F$ in $F*_{\phi}$. There is a natural orientation on the edges of $T$ induced by a choice of orientation on the single edge in $F*_{\phi}\backslash T$.

By Mutanguha \cite[Proposition 5.3.1]{Mu21}, there is a constant $\kappa>0$ and a free factor system $\mathcal{F}$ of $F$ so that the following holds. For each $H\in \mathcal{F}$, $\psi(H)$ is conjugate into a free factor in $\mathcal{F}$ and so that for every finitely generated subgroup $H\leqslant F$ such that $\psi^n(H)$ is conjugate into $F$ (within $\F$) for some $n\geqslant \kappa$, we have that $H$ is conjugate into a free factor in $\mathcal{F}$. Then since $\F = F*(\Asterisk_{i\geqslant 0}\psi^{m+1}(C_i))$, if $\kappa>m$ we see that
\[
\{\psi^{\kappa}(H)\}_{H\in \mathcal{F}}\cup \{\psi^{\kappa}(C_i)\}_{i=0}^m
\]
is a free factor system of $\F$. Thus, after possibly increasing $\kappa$ if necessary,
\[
\mathcal{F}\cup \{C_i\}_{i=0}^m
\]
is a free factor system of $F$. In particular, we have
\[
F = A_1*\ldots*A_n*B*(\Asterisk_{i=0}^mC_i),
\]
where $\mathcal{F} = \{A_i\}_{i=1}^n$ (after possibly replacing groups in $\mathcal{F}$ with conjugates) and there is a map $\sigma\colon \{1, \ldots, n\}\to \{1, \ldots, n\}$ and elements $f_i\in F$ such that $\psi(A_i)^{f_i} \leqslant A_{\sigma(i)}$ for each $i$. 

If there is some $i\neq j$ such that $\sigma(i) = \sigma(j)$ and such that $\psi(A_i^{f_i}), \psi(A_j^{f_j})\leqslant A_{\sigma(i)}$ for some $f_i, f_j\in L$, then $\psi^m(\langle A_i^{f_i}, A_j^{f_j}\rangle)$ would be conjugate into $F$ for all $m\geqslant 0$. But this would imply that $\langle A_i^{f_i}, A_j^{f_j}\rangle$ would have to be conjugate into some free factor in $\mathcal{F}$ which is not possible. Thus, there can be no $1\leqslant i< j\leqslant n$ with $\sigma(i) = \sigma(j)$ and $f_i, f_j\in L$ so that $\psi(A_i^{f_i}), \psi(A_j^{f_j})\leqslant A_{\sigma(i)}$.

By definition of $\mathcal{F}$, the segments in the Basse--Serre tree $T$ of length at least $\kappa$ that follow the induced orientation have stabiliser conjugate into some $A_i\in \mathcal{F}$ within $F$. Now let $S\subset T$ be a subset containing at least $k$ vertices, where we set $k = 2\kappa$. The pointwise stabiliser of $S$ also fixes the convex closure of $S$ so we may assume that $S$ is convex. We may also assume that $S$ is compact.

If $S$ contains a pair of distinct edges leading out of a common vertex (according to the induced orientation), then we claim that $\stab(S) = 1$. Indeed, if $e_1, e_2$ are the two edges, we see that (after possibly translating them so they lead out of the vertex $F$) $\stab(e_1) = g_1^{-1}L$ and $\stab(e_2) = g_2^{-1}L$ for some elements $g_1, g_2\in F$ in distinct $L$ cosets (in $F$). Since $L$ is a free factor of $F$, it is malnormal by \cref{ff_malnormal}. Since $\stab(e_1\cup e_2) = L^{g_1}\cap L^{g_2}$, we see that $\stab(e_1\cup e_2) = 1$. Hence also $\stab(S) = 1$.

If $S$ does not contain a pair of distinct oriented edges leading out of a common vertex, then it is either a single geodesic segment of length $\geqslant 2\kappa$ following the induced orientation, or it is a union of two geodesic segments following the induced orientation which intersect in a common terminal segment and such that one of the two segments has length at least $\kappa$. Hence, without loss of generality, we may assume that $S$ is actually a geodesic segment of length at least $\kappa$. But we already showed that $\stab(S)$ is conjugate into $A_i$ for some $1\leqslant i\leqslant n$.

Now for the final statements. The fact that $A_{i}^{h_i}\leqslant A_{i}$ for each $i$ follows from the definition of the $h_i$. Hence, the subgroup $H_i = \langle A_{i}, h_i\rangle$ is isomorphic to a mapping torus of a finitely generated non-trivial free group with $H_i/ \normal{A_{i}} \isom \Z$ by \cref{cor:Euler}. Now let $H\leqslant F*_{\phi}$ be isomorphic to a mapping torus of a finitely generated (non-trivial) free group. Since $H$ is not free, we have that $H$ has non-trivial image in $F*_{\phi}/\normal{F} \isom \Z$. By \cref{cor:Euler}, after possibly replacing $H$ with a conjugate, there is a finitely generated subgroup $F'\leqslant \F$, an element $f\in \F$ and an integer $j$ so that $\psi^j(F')^f\leqslant F'$ and $H = \langle F', t^jf\rangle$. By possibly increasing $m$ if necessary, we may assume that $F'\leqslant F$. Then $F'$ stabilises the axis for $t^jf$ in the Bass--Serre tree for $F*_{\phi}$. We showed that this implies that $F'$ must be conjugate into some $A_i$. After replacing $H$ with a conjugate again, we may assume that $F'\leqslant A_i$. Now since $F'^{t^jf}\leqslant F'$, this also implies that $\psi^j(A_i)$ is conjugate into $A_i$ and so that $\sigma^j(i) = i$. Let $\ell\geqslant 2$ be minimal so that $i_1 = i, \ldots, i_{\ell} = \sigma^{\ell-1}(i) = i$. We see that $\ell$ must divide $j$ and so $h_i^p = t^jfg$ for some $g\in \F$ and where $p = \frac{j}{\ell}$. Since $A_i$ is a free factor of $\F$, it is malnormal by \cref{ff_malnormal}. Hence, since $F'^{t^jf} \leqslant A_i$ and $F'^{h_i^p} = F'^{t^jfg}\leqslant A_i$, we see that $g\in A_i$. This thus implies that $H\leqslant H_i$ and we are done.
\end{proof}

\begin{remark}
The condition of $H\leqslant G$ being isomorphic to a mapping torus of a finitely generated non-trivial free group is equivalent to $H$ not being cyclic and $\chi(H) = 0$ by \cref{FHmain}.
\end{remark}

Note that it is not true that each $1\leqslant i\leqslant n$ in \cref{thm:acylindrical} gives rise to a mapping torus of a finitely generated free group $H_i$; it is only the indices $i$ so that $\sigma^j(i) = i$ for some $j\geqslant 2$. Note also that any pair of indices $1\leqslant i, j\leqslant n$ that lie in the same $\sigma$(-periodic) orbit give rise to conjugate mapping tori of finitely generated free groups.

Applying \cref{thm:acylindrical} to the splitting \eqref{eq:decomposition} we obtain \cref{cor:canonical_collection} which is the first statement in \cref{main}.

\begin{corollary}
\label{cor:canonical_collection}
Let $\F$ be a free group and let $\psi\colon \F\to \F$ be a monomorphism so that the mapping torus $M(\psi)$ is finitely generated. There is a (possibly empty) finite collection of (conjugacy classes of) subgroups $\mathcal{P}$ of $M(\psi)$, each isomorphic to a mapping torus of a finitely generated free group so that if $H\leqslant M(\psi)$ is isomorphic to a mapping torus of a finitely generated non-trivial free group, then $H$ is conjugate into a unique $P\in\mathcal{P}$.
\end{corollary}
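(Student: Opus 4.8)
The plan is to produce $\mathcal{P}$ by applying \cref{thm:acylindrical} to the HNN-decomposition \eqref{eq:decomposition}: all three requirements of the statement will then be read off directly from its conclusions, and only the uniqueness clause will need an extra argument.

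First I would set up the splitting. Since $M(\psi)$ is finitely generated, \cref{cor:free_prod_decomp1} provides a $\psi$-invariant $\F' = A*(\Asterisk_{i\geqslant 0}C_i)$ with $A, C_0$ finitely generated and $M(\psi)\isom M(\psi\mid\F')$; picking $m$ large enough that $\psi(A)\leqslant A*(\Asterisk_{i=0}^m\psi^i(C))$ yields the decomposition $M(\psi)\isom F*_\phi$ of \eqref{eq:decomposition} with $F = A*(\Asterisk_{i=0}^m C_i)$ finitely generated. Applying \cref{thm:acylindrical} to $F*_\phi$ then produces a free product decomposition $F = A_1*\cdots*A_n*B*(\Asterisk_{i=0}^m C_i)$, a self-map $\sigma$ of $\{1,\dots,n\}$ together with elements $f_i\in F$ satisfying $\psi(A_i)^{f_i}\leqslant A_{\sigma(i)}$, and, for each $\sigma$-periodic index $i$, an element $h_i$ with $A_i^{h_i}\leqslant A_i$ and a subgroup $H_i = \langle A_i, h_i\rangle$ which by item~(3) is isomorphic to a mapping torus of the finitely generated non-trivial free group $A_i$. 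I set $\mathcal{P} = \{[H_i] : i\text{ is }\sigma\text{-periodic}\}$; this is finite (at most $n$ classes), possibly empty (if $\sigma$ has no periodic point), and by the remark following \cref{thm:acylindrical} indices in a common $\sigma$-orbit contribute the same class, so that $\#\mathcal{P}$ is bounded by the number of periodic orbits of $\sigma$. Existence is then immediate from item~(4) of \cref{thm:acylindrical}: a subgroup $H\leqslant M(\psi)$ isomorphic to a mapping torus of a finitely generated non-trivial free group is conjugate into some $H_i$ with $i$ as in item~(3), hence $\sigma$-periodic, and so into a representative of some $P\in\mathcal{P}$.

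The substance is uniqueness, so suppose $H^a\leqslant H_i$ and $H^b\leqslant H_j$ with $i,j$ both $\sigma$-periodic; I want to conclude $[H_i]=[H_j]$. Since $H$ is non-cyclic (the remark after \cref{thm:acylindrical}, via \cref{FHmain}), the restriction to $H$ of the retraction $M(\psi)\to\Z$ sending $F$ to $0$ and $t$ to $1$ is not injective, so I may pick $1\neq u\in H$ lying in its kernel. From $A_i^{h_i}\leqslant A_i$ one gets that $\normal{A_i}$ is the ascending union $\bigcup_{n\geqslant 0}h_i^{n}A_ih_i^{-n}$, which — as the retraction vanishes on $F\supseteq A_i$ while $h_i$ maps to a non-zero integer — is precisely the kernel of the retraction restricted to $H_i$; since $u^a$ lies in that kernel and in $H_i$, it therefore lies in $h_i^{n}A_ih_i^{-n}$ for some $n$, i.e. some non-trivial conjugate $u_i$ of $u$ lies in $A_i$, and symmetrically some non-trivial conjugate $u_j$ of $u$ lies in $A_j$. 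Now $u_i$ and $u_j$ are conjugate in $M(\psi)$ and both lie in the vertex group $F$ of $F*_\phi$; analysing this conjugacy in the HNN extension and using the $\sigma$-equivariance of $\psi$ on the $A_k$ recorded in item~(1) of \cref{thm:acylindrical}, together with the malnormality of the free factor system $\{A_1,\dots,A_n\}$ (\cref{ff_malnormal}), forces $i$ and $j$ to lie in a common $\sigma$-orbit, whence $[H_i]=[H_j]$.

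I expect essentially all the difficulty to be concentrated in this last step, and within it in reconciling the two conjugates $u_i\in A_i$ and $u_j\in A_j$: the elements conjugating $u$ into $A_i$ and into $A_j$ need not lie in $F$, so one cannot directly apply malnormality of $\{A_k\}$ inside $F$. The remedy I have in mind is to run the comparison through the Bass--Serre tree $T$ of $F*_\phi$: $u_i$ and $u_j$ are elliptic, the rays fixed by $A_i$ and $A_j$ must fellow-travel the axes of the hyperbolic elements $h_i$ and $h_j$, and the acylindricity constant $k$ from item~(2) of \cref{thm:acylindrical} controls the overlap of the relevant fixed segments, after which the $\sigma$-equivariance of item~(1) transports the index along $\sigma$. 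Making this bookkeeping precise is the only genuinely delicate point; finiteness, the description of $\mathcal{P}$, and the existence half of the statement are direct transcriptions of \cref{thm:acylindrical}.
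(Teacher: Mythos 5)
Your proposal is correct and follows the paper's approach exactly: apply \cref{thm:acylindrical} to the HNN-decomposition $M(\psi)\cong F*_{\phi}$ of \eqref{eq:decomposition} and take $\mathcal{P}$ to be the classes $[H_i]$ over $\sigma$-periodic $i$, with existence read off item~(4). The paper's recorded proof is only this one-sentence citation, with uniqueness deferred to the subsequent (unproved) remark that $\mathcal{P}$ is a malnormal collection; your elaboration of the uniqueness step via the Bass--Serre tree and acylindricity is a sound sketch of the content behind that remark, and you correctly isolate where the genuine work lies (the conjugating elements needn't lie in $F$, so naive malnormality of $\{A_k\}$ in $F$ does not suffice, and one should instead argue that the ascending rays fixed by $u_i$ and $u_j$ must share an end, as in the proof of \cref{thm:acylindrical}(4) which runs the analogous argument with the full fibre $F'$ rather than a single element).
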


\begin{remark}
The (conjugacy classes of) subgroups $\mathcal{P}$ from \cref{cor:canonical_collection} form a malnormal collection (after removing repeats) and so are canonical.
\end{remark}

\section{Relative hyperbolicity of the mapping torus}
\label{sec:rel_hyp_mapping_tori}

The aim of this section is to establish the second statement from \cref{main}.

\begin{theorem}
\label{thm:rel_hyp}
Let $\F$ be a free group and let $\psi\colon \F\to \F$ be a monomorphism so that the mapping torus $M(\psi)$ is finitely generated. Then $(M(\psi), \mathcal{P})$ is relatively hyperbolic, where here $\mathcal{P}$ is the canonical collection of maximal sub-mapping tori of finitely generated free groups from \cref{cor:canonical_collection}.
\end{theorem}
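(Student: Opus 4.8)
The plan is to construct an explicit tree of relatively hyperbolic spaces for the HNN-decomposition $M(\psi) \cong F*_\phi$ from \eqref{eq:decomposition}, verify all the hypotheses of the Mj--Reeves combination theorem (\cref{thm:combination}), and conclude via \cref{thm:group_combination}. The single vertex group is the finitely generated free group $F = A_1*\ldots*A_n*B*(\Asterisk_{i=0}^m C_i)$, and the peripheral collection $\mathcal{P}$ from \cref{cor:canonical_collection} corresponds exactly to the sub-mapping tori $H_i = \langle A_i, h_i\rangle$ attached to the $\sigma$-periodic orbits. The first step is to set up the relatively hyperbolic structure on the vertex and edge spaces: take $R$ to be a rose with $\pi_1(R) = F$, let $\{\Delta_\alpha\}$ be disjoint connected subgraphs realising the free factors $A_1, \ldots, A_n$ (the ones lying in $\sigma$-periodic orbits), and cone off the universal cover $\widetilde R$ along the lifts of these subgraphs as in \cref{lem:tree_rel_hyp}; this makes the vertex space $\widetilde R$ relatively hyperbolic with peripherals the conjugates of the $A_i$. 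The edge space is a graph realising $L = A_1*\ldots*A_n*B*(\Asterisk_{i=0}^{m-1}C_i)$ (a free factor of $F$), coned off along the lifts of the subgraphs realising those $A_i$ that $\phi$ carries into a periodic $A_j$; the two edge maps $\partial^\pm$ are $\mathrm{id}$ and $\phi$, both $\pi_1$-injective.

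Next I would verify the structural hypotheses. The \emph{qi-embedded} condition holds because the edge group $L$ is a free factor of $F$, so the inclusion $\widetilde{\text{edge}} \injects \widetilde R$ is an isometric embedding of trees, and $\phi$ is an isomorphism onto another free factor $U$, likewise quasi-isometrically embedded. The \emph{strictly type preserving} condition is exactly the statement that $\phi$ and its inverse permute the peripheral free factors up to conjugacy: this is the content of part (1) of \cref{thm:acylindrical} ($\psi(A_i)^{f_i} \leqslant A_{\sigma(i)}$), together with the fact that $\sigma$ restricted to non-periodic indices eventually leaves the peripheral collection, so the preimages and images of horoball-like subspaces land inside horoball-like subspaces. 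The \emph{qi-preserving electrocution} condition follows similarly once one checks that $\phi$ induces a quasi-isometric embedding of the coned-off edge space into the coned-off vertex space; this uses that $\phi$ maps the coned-off subgraphs to coned-off subgraphs and distorts distances in the Cayley-graph-of-a-free-group boundedly.

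The heart of the argument — and the step I expect to be the main obstacle — is verifying the \emph{hallways flare} and \emph{cone-bounded hallways strictly flare} conditions for the induced tree of coned-off spaces $\overline X$. This is where the acylindricity from \cref{thm:acylindrical}(2) does the real work: a long essential thin hallway whose boundary stays thin must, by the relative acylindricity constant $k$, have its core segment carried by a single $A_i$, i.e. it is "trapped" in a periodic $\sigma$-orbit; in the coned-off picture such a hallway collapses, so it cannot violate flaring. For hallways \emph{not} trapped in a periodic orbit, the $\sigma$-map moves the relevant free factor out of the peripheral system after finitely many steps, and then one invokes a Grushko/reduced-rank estimate (as in \cref{ff_inequality}) to show the girth must grow at the required exponential rate — this is essentially Brinkmann-type flaring in the coned-off quotient. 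Making this dichotomy uniform (choosing one $m$ and one $\lambda>1$ that work for all hallways) is the technical crux; I would isolate it as a lemma stating that exponential growth of lengths under iterating $\psi$ occurs precisely off the $\psi$-periodic free factor system, which is available from Mutanguha's work already cited in the proof of \cref{thm:acylindrical}. For the cone-bounded case one argues that a cone-bounded hallway lies inside a single maximal cone-subtree, i.e. inside (the coned-off Bass--Serre tree of) one of the peripheral sub-mapping tori $H_i$, and there strict flaring of the coned-off hallway is automatic because the hallway is two-sided bounded in the horoball and the induced tree structure on $H_i$ forces the length to strictly increase — alternatively this reduces to the already-known hyperbolicity of the coned-off mapping torus rel. its own peripherals. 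Once all conditions are checked, \cref{thm:group_combination} gives that $(M(\psi), \mathcal{P})$ is relatively hyperbolic, and by construction the $\mathcal{P}$ produced coincides with the canonical collection of \cref{cor:canonical_collection}, completing the proof.
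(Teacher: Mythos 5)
Your overall architecture matches the paper's: build a tree of relatively hyperbolic spaces over the Bass--Serre tree of the HNN decomposition $M(\psi)\cong F*_\phi$, cone off the vertex and edge spaces along the free factors $A_i$, verify the Mj--Reeves hypotheses, and invoke \cref{thm:group_combination}. The structural conditions (qi-embedded, strictly type preserving, qi-preserving electrocution) you dispose of essentially as the paper does. However, the verification of the two flaring hypotheses -- which is where the real content lies -- has genuine gaps.

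For the \emph{cone-bounded hallways strictly flare} condition, your argument is that a cone-bounded hallway lies inside one maximal cone-subtree, hence inside a peripheral $H_i$, where ``strict flaring is automatic.'' This is not correct: the peripherals $H_i$ are arbitrary mapping tori of finitely generated free groups and in general are not hyperbolic (e.g.\ $F_k\times\Z$ or $\bs(1,n)$), so no flaring is available inside them. Moreover, the combination theorem asks for flaring in the induced tree of coned-off spaces $\overline{X}$, where the cone-subtrees have \emph{not} yet been collapsed, so ``the hallway collapses'' is not a valid appeal. The paper resolves this differently: it shows (Proposition \ref{prop:hallways}, using Lemma \ref{lem:iteration}) that for $m\geqslant k$ there are \emph{no} cone-bounded essential hallways of length $2m$ at all -- a cone-bounded hallway would force one of the sides $h\mid\{m\}\times I$ to be a geodesic from a cone point to itself inside a single lift of $R_{A_i}$, which must be degenerate -- so the condition holds vacuously. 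You need this non-existence statement (or some substitute), not an appeal to flaring inside the peripherals.

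For the \emph{hallways flare} condition, your mechanism -- ``the $\sigma$-map moves the relevant free factor out of the peripheral system after finitely many steps,'' followed by a Grushko / reduced-rank estimate -- does not match the actual dynamics. The map $\sigma\colon\{1,\dots,n\}\to\{1,\dots,n\}$ never leaves $\{1,\dots,n\}$, and the free factors $A_i$ never escape the peripheral system; what shrinks under iteration of $f$ is the portion of a path supported \emph{outside} the $A_i$'s. The precise statement is \cref{lem:iteration_general}: if $\gamma_1 * f^m(\lambda) * \gamma_2$ tightens to a long immersed path, its middle must be supported in $\bigsqcup_i R_{A_i}$, proved via a finite pullback $\Theta\times_R R_l$ whose core maps into $\bigsqcup_i R_{A_i}$ by \cref{thm:acylindrical}. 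Feeding this into the hallway estimates yields the (strictly stronger) \emph{bounded girth} hallways condition, which is also what the paper needs downstream for \cref{thm:qc_criterion}. Your ``Brinkmann-type flaring'' sketch identifies the correct intuition but does not engage with the fact that the growth certificate here is a single finite pullback graph rather than a free-factor rank inequality; \cref{ff_inequality} plays no role in the paper's proof of this step.

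A smaller point: you propose coning off only ``those $A_i$ lying in $\sigma$-periodic orbits.'' The paper cones off \emph{all} $A_1,\dots,A_n$ in every vertex and edge space; non-periodic indices still feed into the cone locus, and are needed for the strictly type preserving condition to hold (a non-periodic $A_j$ with $\phi(A_j)$ conjugate into a periodic $A_i$ would otherwise produce a preimage of a horosphere-like set not contained in any horosphere-like set). The distinction between periodic and non-periodic indices only matters when identifying which maximal cone-subtrees give the peripherals of $G$.
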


\begin{remark}
If in \cref{thm:rel_hyp} $\psi$ is an automorphism, then $M(\psi)\cong \F\rtimes_{\psi}\Z$ is free-by-$\Z$ and each $P\in \mathcal{P}$ will be \{finitely generated free\}-by-$\Z$  by \cref{fg_kernel}.
\end{remark}

In order to prove \cref{thm:rel_hyp}, we will verify that all the conditions in the Mj--Reeves combination theorem are satisfied for a certain partial mapping torus constructed from the splitting from \cref{thm:acylindrical}.

\subsection{A (partial) mapping torus}
\label{sec:mapping_torus}

If $X$ is a space, $Y\subset X$ is a subspace and $f\colon Y\to X$ is a map, then the \emph{partial mapping torus} $M(f)$ of $f$ is the space
\[
M(f) = X\sqcup \left(Y\times[-1, 1]\right)/ \{y \sim (y, -1), f(y) \sim (y, 1) \mid \forall y\in Y\}.
\]
Note that this is a graph of spaces with underlying graph with a single vertex and a single edge. When $X = Y$, this is the usual definition of the \emph{mapping torus} of $f$.

If $f$ is a cellular map of graphs, then $M(f)$ has a natural combinatorial 2-complex structure obtained from $X$ by attaching 1-cells $t_x$ connecting each 0-cell $x\in X$ with $f(x)$ and attaching 2-cells $c_e$ for each 1-cell $e\subset X$ with attaching map given by the loop $e*t_{e^+}*\overline{f(e)}*\overline{t}_{e^-}$. We now describe a (partial) mapping torus of graphs which we shall work with for the remainder of this section. We shall always assume that our (partial) mapping tori are endowed with such a combinatorial 2-complex structure.

\subsection*{The base space}

Let $G = M(\psi)$ be a finitely generated mapping torus of a free group. By \cref{cor:free_prod_decomp1,thm:acylindrical} we may assume that $\psi\colon \F \to \F$ is a monomorphism so that
\[
\F = A_1*\ldots*A_n*B*\left(\Asterisk_{i\geqslant 0}C_i\right),
\]
with $B$ and each $A_i$, $C_i$ finitely generated, $\psi(C_i) = \psi(C_{i+1})$ for each $i\geqslant 0$ and there is some map $\sigma\colon\{1, \ldots, n\}\to\{1, \ldots, n\}$ and elements $f_i\in \F$ so that $\psi(A_i) \leqslant A_{\sigma(i)}^{f_i}$ for each $1\leqslant i\leqslant n$.

Choose a free basis $\mathcal{A}_i$ for each $A_i$, a free basis $\mathcal{B}$ for $B$ and a free basis $\mathcal{C}_0$ for $C_0$. Let $\mathcal{C}_i = \psi^i(\mathcal{C}_0)$ for each $i\geqslant 0$. The set
\[
\mathcal{F} = \left(\bigsqcup_{i=1}^n\mathcal{A}_i\right)\sqcup\mathcal{B}\sqcup\left(\bigsqcup_{j\in \N}\mathcal{C}_i\right)
\]
is therefore a free basis for $\F$.

Let $R_{A_i}$ be the rose graph with a petal for each free generator in $\mathcal{A}_i$, let $R_B$ be the rose graph with a petal for each free generator in  $\mathcal{B}$ and let $R_{C_i}$ be the rose graph with a petal for each free generator in $\mathcal{C}_i$. 

Let $R$ be the graph obtained from 
\[
\left(\bigsqcup_{i=1}^nR_{A_i}\right)\sqcup R_{B}\vee\left(\bigvee_{j\in \N}R_{C_j}\right)
\]
by adding an edge $r_i$ for each $1\leqslant i\leqslant n$ connecting the vertex $v\in R_{B}\vee\left(\bigvee_{j\in \N}R_{C_i}\right)$ with the vertex $v_i\in R_{A_i}$. There is a natural identification
\[
\pi_1(R, v) \isom \F.
\]

\subsection*{The mapping torus}

We are going to define a map $f\colon R\to R$ such that $f_* = \psi$.
\begin{enumerate}
\item For each $1\leqslant i\leqslant n$, let $f_i\in F$ be an element so that $\psi(A_i) \leqslant A_{\sigma(i)}^{f_i}$ and denote by $p_i\colon I\immerses X$ the immersed loop such that $[p_i] = f_i$.
\item For each $1\leqslant i\leqslant n$ and each $g\in \mathcal{A}_i$, denote by $q_g\colon I\immerses R_{A_{\sigma(i)}}$ the immersed loop such that $[q_g] = \psi(g)^{f_i^{-1}}$ (which lies in $A_{\sigma(i)}$ by definition of the $f_i$).
\item For all $g\in \mathcal{B}\sqcup\left(\bigsqcup_{j\in \N}\mathcal{C}_j\right)$, denote by $q_g\colon I\immerses X$ the immersed loop such that $[q_g] = \psi(g)$.
\end{enumerate}
Then we define $f$ by:
\begin{align*}
f(v), f(v_i) &= v, v_{i}  & \text{ for all $1\leqslant i\leqslant n$}\\
f(r_i) &= p_i*r_{\sigma(i)} & \text{ for all $1\leqslant i\leqslant n$}\\
f(g) &= q_g & \text{ for all $g\in \mathcal{F}$}
\end{align*}
By construction, we have that $f_* = \psi$ and so
\[
\pi_1(M(f), v) \isom M(\psi).
\]

\subsection*{The partial mapping tori}

For each $l\geqslant 0$, denote by $R_l\subset R$ the subgraph obtained by removing all edges in $\bigvee_{j>l}R_{C_j}$. There is a constant $\mu\geqslant 0$ so that for all $l\geqslant \mu$ we have that
\begin{align*}
f(R_{l-1}) &\subseteq R_{l}\\
f_*(\pi_1(R_{l-1}, v)) &\leqslant \pi_1(R_{l}, v).
\end{align*}
For each $l\geqslant \mu$, denote by $M_{l}\subset M(f)$ the partial mapping torus of $f\mid R_{l-1}$ with base space $R_{l}$. This will be the (compact) space we shall work with. Note that we have
\[
\pi_1(M_l, v) \cong \pi_1(R_{l}, v)*_{\phi_l}
\]
where $\phi_l = \psi\mid\pi_1(R_{l-1}, v)$.

\subsection*{The peripheral subcomplexes}

Note that for each $1\leqslant i\leqslant n$ we have
\[
f(A_i)\subset A_{\sigma(i)}
\]
by construction. Let $\sim$ be the equivalence relation on the set $\{1, \ldots, n\}$ generated by $i\sim \sigma(i)$. Note that for each equivalence class $[i]$, there is a sequence $i_1, i_2, \ldots, i_{\ell}\in [i]$ so that $i_1 = \sigma(i_{\ell})$ and $i_j = \sigma(i_{j-1})$ for $2\leqslant j\leqslant \ell$. Moreover, for each $i\in [i]$, there is an integer $m_i\geqslant 1$ so that $\sigma^{m_i}(i) = i_1$. Denote by $M[i]\subset M(f)$ the maximal subcomplex so that
\[
M[i] \cap R = \bigcup_{i\in [i]} R_{A_i}.
\]
Note that $f^{\ell}(A_{i_1})\subset A_{i_1}$ and $M[i]$ is homotopy equivalent to the mapping torus $M(f^{\ell}\mid R_{A_{i_1}})$. Thus, we have
\begin{equation}
\label{eq:submapping}
\pi_1(M(f^{\ell}\mid R_{A_{i_1}}), v_{i_1})\cong \pi_1(M[i], v_i)\in \mathcal{P}
\end{equation}
where $\mathcal{P}$ is the collection of subgroups from \cref{cor:canonical_collection}.

\subsection*{Some facts and some constants}

We collect some essential facts about the mapping torus, the partial mapping tori and the peripheral subcomplexes.

\begin{lemma}
\label{lem:submapping_torus}
The following properties hold for all $l\geqslant \mu$:
\begin{enumerate}
\item\label{itm:incl_iso} The inclusion
\[
M_{l} \injects M(f)
\]
induces an isomorphism on fundamental groups and so $\pi_1(M_l) \cong M(\psi)$.
\item\label{itm:incl_P} For each $1\leqslant i\leqslant n$, the inclusion
\[
M[i]\injects M_l \subset M(f)
\]
induces an injection on $\pi_1$ and
\[
\pi_1(M[i], v_i)\in \mathcal{P}
\]
where $\mathcal{P}$ is the collection of subgroups from \cref{cor:canonical_collection}.
\item\label{itm:qi1} Lifts $\wt{R}_{l-1}\to \wt{R}_{l}$ of $f\mid R_{l-1}$ to the universal covers are quasi-isometric embeddings.
\item\label{itm:qi2} Lifts $\wt{f}\colon \wt{R}\to\wt{R}$ of $f$ to the universal cover $\wt{R}$ are quasi-isometric embeddings.
\end{enumerate}
\end{lemma}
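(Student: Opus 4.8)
The plan is to establish the four items in order, each following from the construction plus a general fact about partial mapping tori and trees of spaces. For \eqref{itm:incl_iso}, I would argue as follows: the complex $M(f)$ is the increasing union $\bigcup_{l\geqslant\mu} M_l$, since the base graphs $R_l$ exhaust $R$ and the edge-attaching data for the 2-cells of $M(f)$ coming from $R_{l-1}$ already lies in $M_l$. More usefully, I would produce an explicit deformation retraction $M(f)\to M_l$ for each $l\geqslant\mu$: the extra generators $\mathcal{C}_j$ for $j>l$ satisfy $\mathcal{C}_j = \psi^{j-l}(\mathcal{C}_l)$, so the corresponding petals of $R$ together with the stable letter edges $t_x$ can be collapsed one ``level'' at a time, each collapse being an elementary expansion/collapse of a free face. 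Concretely, the 2-cell $c_e$ for a petal $e\subset R_{C_j}$ with $j>\mu$ has boundary $e*t_{e^+}*\overline{f(e)}*\overline{t}_{e^-}$ where $f(e)$ is a path in $R_{C_{j+1}}$; one collapses $e$ across $c_e$, which is a free-face collapse since $e$ meets no other 2-cell. Performing these collapses for all $j$ sufficiently large gives the homotopy equivalence onto $M_l$, and hence $\pi_1(M_l)\cong\pi_1(M(f))\cong M(\psi)$ by the earlier identification $f_*=\psi$.

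For \eqref{itm:incl_P}, the subcomplex $M[i]$ is by construction homotopy equivalent to the mapping torus $M(f^{\ell}\mid R_{A_{i_1}})$ of a self-immersion of a rose, so its fundamental group is a mapping torus of the finitely generated free group $A_{i_1}$; combined with \eqref{eq:submapping} this is exactly the statement that $\pi_1(M[i],v_i)\in\mathcal{P}$. Injectivity of $\pi_1(M[i])\to\pi_1(M_l)$ then follows from \cref{thm:acylindrical}: the image is conjugate to one of the subgroups $H_i=\langle A_i,h_i\rangle$, which was shown there to be isomorphic to a genuine mapping torus, so in particular no relation collapses it, and the inclusion $M[i]\hookrightarrow M(f)$ is $\pi_1$-injective because both are built from graphs via $\pi_1$-injective edge maps and one may appeal to the standard fact that vertex and edge spaces of a graph of spaces embed $\pi_1$-injectively when the edge maps are.

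For \eqref{itm:qi1} and \eqref{itm:qi2}, the point is that a lift $\wt{f}\colon\wt{R}_{l-1}\to\wt{R}_l$ (resp.\ $\wt{f}\colon\wt R\to\wt R$) is a lift of a graph map and hence $1$-Lipschitz on the combinatorial metric; to get the lower bound I would use that $f$ factors through an immersion after finitely many folds, or more directly that $\psi$ is injective and hence $\psi$-distortion is controlled: an edge path $\gamma$ in $\wt R_{l-1}$ of combinatorial length $n$ maps to a path $\wt f(\gamma)$ which, after tightening (removing backtracking), represents the same element of $\pi_1$ under $\psi$, and injectivity of $\psi$ together with finite generation of $\F$ gives a linear lower bound $\ell(\widetilde{f(\gamma)}_{\mathrm{tight}})\geqslant \lambda n - c$ for constants $\lambda>0$, $c\geqslant0$ depending only on $\psi$ and the chosen bases. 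Here I would invoke the fact (implicit in Feighn--Handel's setup, or provable directly by a folding argument on the finite graph $R$) that an injective endomorphism of a finitely generated free group does not shrink reduced word length by more than a bounded linear factor along any lift. The main obstacle is this last point: making the quasi-isometric lower bound genuinely uniform. I would handle it by passing through the factorisation of $f$ into folds followed by an immersion (\cref{sec:folds}), noting that an immersion of graphs lifts to an isometric embedding on universal covers, and that each fold changes lengths in a controlled way; since there are finitely many folds (as $R$ is finite at each level $\leqslant l$, though $R$ itself is infinite one works level by level using the constant $\mu$), the composite distortion is bounded. Alternatively, and perhaps more cleanly, one observes that $\pi_1(M(f))\cong M(\psi)$ is finitely generated, $R$ embeds as a vertex space of $M(f)$ with $\pi_1$-injective inclusion, so $\wt R$ quasi-isometrically embeds in $\wt{M(f)}$, and conjugation by the stable letter $t$ is a quasi-isometry of $\wt{M(f)}$ that restricts to $\wt f$ up to bounded error — giving \eqref{itm:qi2}, with \eqref{itm:qi1} following since $\wt R_{l-1}\hookrightarrow\wt R$ is an isometric embedding onto a convex subtree and $\wt R\hookrightarrow\wt R_l$ likewise.
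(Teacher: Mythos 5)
Items (i) and (ii) are fine in substance and your argument for (ii) matches the paper's (\eqref{eq:submapping} plus \cref{cor:Euler}), though your elementary-collapse argument for (i) has a small error: an edge $e\subset R_{C_j}$ with $j>\mu$ lies in the boundary of \emph{two} $2$-cells, namely $c_e$ and $c_{e'}$ where $e'\in R_{C_{j-1}}$ is the unique edge with $f(e')=e$, so $e$ is not a free face. (A cleaner route is to compare the HNN presentations: the inclusion $\pi_1(M_l)\to\pi_1(M(f))$ is a morphism of HNN extensions $\pi_1(R_l)*_{\phi_l}\to\F*_{\psi}$, hence injective, and surjective since $C_j=t^{-(j-l)}C_lt^{j-l}$ for $j>l$.) The serious gap is in (iv). You cannot invoke ``injectivity of $\psi$ together with finite generation of $\F$'' because $\F$ is not finitely generated --- that is the whole point of this paper. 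You acknowledge this (``one works level by level using the constant $\mu$'') but never make it precise. Worse, the ``cleaner'' alternative is false: the assertion that $\wt{R}$ quasi-isometrically embeds in $\wt{M(f)}$ is the same as saying that $\F\leqslant M(\psi)$ is undistorted, which fails in general --- the paper itself proves in \cref{lem:non_surjective_distortion} that a non-surjective $\psi\colon F_n\to F_n$ makes $F_n$ distorted in $M(\psi)$. Moreover, undistortion of the vertex space is precisely what the hallways-flare analysis of \cref{sec:rel_hyp_mapping_tori} is designed to establish, so assuming it here would be circular.

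The missing idea in (iv) is a geodesic decomposition that reduces it to (iii) by exploiting that $f$ acts as a graph isomorphism on the infinite part of $R$. Write a geodesic $\lambda$ in $\wt{R}$ as an alternating concatenation of maximal subpaths lying in lifts of the compact subgraph $R_{l-1}$ and maximal subpaths lying in lifts of $\bigvee_{j\geqslant l}R_{C_j}$. By construction $f$ restricts to a graph isomorphism $R_{C_j}\to R_{C_{j+1}}$ for every $j\geqslant l$, so $\wt{f}$ maps each outer subpath isometrically. Each inner subpath lands in a lift of $R_l$, where (iii) applies; and (iii) is the easy compact case, since $\pi_1(R_{l-1})$ is a finitely generated subgroup of the finitely generated free group $\pi_1(R_l)$ and hence undistorted, which is exactly what it means for the lifted map to be a quasi-isometric embedding. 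Replacing the images of the inner subpaths by geodesics in the corresponding lifts of $R_l$ produces a geodesic again, because lifts of $R_l$ and lifts of $\bigvee_{j\geqslant l+1}R_{C_j}$ share only vertices, so no backtracking can occur at the junctions. The quasi-isometric embedding constants for $\wt{f}$ on $\wt{R}$ are therefore exactly those of (iii).
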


\begin{proof}
\cref{itm:incl_iso} holds by definition of $M(f)$ and $M_l$.

\cref{itm:incl_P} holds by \eqref{eq:submapping} and \cref{cor:Euler}.

\cref{itm:qi1} follows from the fact that $R_{l-1}, R_{l}$ are compact graphs.

Now we prove \cref{itm:qi2}. Let $\lambda = \lambda_0*\gamma_1*\lambda_1*\ldots*\gamma_m*\lambda_m$ be a geodesic in $\widetilde{R}$ where the $\gamma_i$ are maximal subpaths which do not traverse edges in any lift $\widetilde{R}_{l-1}\injects \widetilde{R}$. In other words, each $\gamma_i$ does not traverse any edges which project to $R_{C_j}$ for any $j\geqslant l$. Letting $\lambda_i'$ be the geodesic in $\widetilde{R}$ connecting the endpoints of $\widetilde{f}(\lambda_i)$, we see that the path $\lambda_0'*\widetilde{f}(\gamma_1)*\lambda_1'*\ldots*\widetilde{f}(\gamma_m)*\lambda_m'$ is a geodesic. Since each $\lambda_i'$ must lie in a copy of $\widetilde{R}_{l}$, we see that $\widetilde{f}$ is a quasi-isometric embedding precisely if the restriction $\widetilde{R}_{l-1}\to \widetilde{R}_{l}$ is. Since $\widetilde{R}_{l-1}\to \widetilde{R}_{l}$ is a quasi-isometric embedding by \cref{itm:qi1}, $\widetilde{f}$ is a quasi-isometric embedding.
\end{proof}

We now fix some constants for the rest of the section:
\begin{enumerate}
\item $l$ is any integer greater than $\mu$.
\item $k$ is the constant from \cref{thm:acylindrical} when applied to the splitting $\pi_1(R_{l})*_{\phi_l}\cong M(\psi)$.
\item $K>0, C\geqslant 0$ are constants so that $\wt{f}$ is a $(K, C)$-quasi-isometric embedding (which exist by \cref{lem:submapping_torus}).
\end{enumerate}

\subsection{The tree of relatively hyperbolic spaces}

Consider $M_{l}\subset M(f)$ and let $\mathfrak{p}\colon\widetilde{M}_{l}\to M_{l}$ denote its universal cover. Since $M_{l}$ has the structure of a graph of spaces, $\widetilde{M}_{l}$ has the structure of a tree of spaces
\[
(T, \{X_v\}_{v\in V(T)}, \{X_e\}_{e\in E(T)}, \{\partial_e^{\pm}\}_{e\in E(T)})
\]
 with underlying tree the Bass--Serre tree $T$ for the splitting $\pi_1(M_l)\cong F*_{\phi}$, where
\[
F = A_1*\ldots *A_n*B*\left(\Asterisk_{0\leqslant i\leqslant l}C_i\right) \cong \pi_1(R_{l}, v),
\]
with each vertex space $X_v$ isomorphic to the universal cover $\widetilde{R}_{l}$ of $R_{l}$ and with each edge space $X_e$ isomorphic to the universal cover $\widetilde{R}_{l-1}$ of $R_{l-1}$. The edges of $T$ have a natural orientation given by the action of the stable letter $t$ of the HNN-extension $F*_{\phi}$ on $T$. The edges in $\widetilde{M}_{l}$ that project to edges in the tree $T$ inherit an orientation so that they connect copies of $\widetilde{R}_{l-1}$ with their images under $\widetilde{f}$.

For each edge $e\in R_{l}$, we may metrise the 2-cell $c_e$ appropriately so that the boundary path $e*t_{e^+}*\overline{f(e)}*\overline{t}_{e^-}$ has the desired length $3 + \len(f(e))$. This naturally makes $\wt{M}_l$ a metric space. Technically, in order to ensure $\wt{M}_l$ has all the properties required to be a tree of relatively hyperbolic spaces, we should ensure that a neighbourhood of each edge space $X_e$ is isometric to $X_e\times(0, 1)$, but this is not important for the proofs.

\begin{lemma}
\label{lem:conditions}
The tree of spaces $\widetilde{M}_{l}$ is a tree of relatively hyperbolic spaces, with vertex and edge pairs
\begin{align*}
(X_v, &\{X_v\cap \mathfrak{p}^{-1}(R_{A_i})\}_{i=1}^n)\\
(X_e, &\{X_e\cap \mathfrak{p}^{-1}(R_{A_i})\}_{i=1}^n).
\end{align*}
Moreover, $\wt{M}_l$ satisfies the strictly type preserving, the qi-embedded and the qi-preserving electrocution condition.
\end{lemma}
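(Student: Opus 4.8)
The plan is to check, one clause at a time, the definition of a tree of relatively hyperbolic spaces together with the strictly type preserving, qi-embedded and qi-preserving electrocution conditions. Almost every clause is forced by the combinatorial model set up in \cref{sec:mapping_torus}; the substantive point is the pull-back half of the strictly type preserving condition, and the qi-preserving electrocution condition is the other place that takes a little work.

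\emph{The underlying structure.} Each vertex space $X_v$ is isometric to the universal cover $\widetilde{R}_{l}$ of the finite graph $R_{l}$, hence a tree, and each edge space $X_e$ is isometric to the tree $\widetilde{R}_{l-1}$; in particular both are $0$-hyperbolic. The rose subgraphs $R_{A_1},\dots,R_{A_n}$ are pairwise disjoint connected subgraphs of $R_{l}$ and of $R_{l-1}$, and $\mathcal{H}_v$, $\mathcal{H}_e$ are the collections of connected components of their preimages, so $(X_v,\mathcal{H}_v)$ and $(X_e,\mathcal{H}_e)$ are relatively hyperbolic by \cref{lem:tree_rel_hyp}; since there is a single isometry type of vertex space and of edge space the coned-off spaces are uniformly hyperbolic. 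The metric clauses hold for the chosen metrisation of the $2$-complex $\widetilde{M}_{l}$ (the product-neighbourhood clause needing the harmless adjustment already flagged in \cref{sec:mapping_torus}), and the inclusions $\iota_v\colon X_v\hookrightarrow\widetilde{M}_{l}$ are uniformly proper because $M(\psi)$ acts geometrically on $\widetilde{M}_{l}$, the free group $\pi_1(R_{l})$ acts geometrically on a vertex space, and the preimage of a bounded set under the inclusion of a finitely generated subgroup of $M(\psi)$ is bounded, uniformly over $v$ by cocompactness.

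\emph{qi-embedded and the easy half of strictly type preserving.} The two edge maps $\partial_e^{\pm}$ are a lift of the inclusion $R_{l-1}\hookrightarrow R_{l}$ and a lift of $f\mid R_{l-1}$. The inclusion realises the embedding of the free factor $\pi_1(R_{l-1})$ in $\pi_1(R_{l})$, hence is undistorted and a quasi-isometric embedding; the lift of $f\mid R_{l-1}$ is a quasi-isometric embedding by \cref{lem:submapping_torus}. Since all vertex and edge spaces are isometric and $M(\psi)$ acts cocompactly, the constants are uniform over edges; this is the qi-embedded condition (with $K,C$ as fixed at the end of \cref{sec:mapping_torus}). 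For the inclusion map $\partial_e^{-}$ both halves of the strictly type preserving condition hold simultaneously: each $R_{A_i}\subseteq R_{l-1}$ and, $R_{l-1}\hookrightarrow R_{l}$ being $\pi_1$-injective, a component of the preimage of $R_{A_i}$ in $\widetilde{R}_{l-1}$ maps bijectively onto a component of the preimage of $R_{A_i}$ in $\widetilde{R}_{l}$ (the relevant map of coset spaces $\pi_1(R_{l-1})/A_i\to\pi_1(R_{l})/A_i$ is injective because $A_i\leqslant\pi_1(R_{l-1})$). For the map $\partial_e^{+}$, induced by $f\mid R_{l-1}$, the forward half holds because $f(R_{A_i})\subseteq R_{A_{\sigma(i)}}$ by construction, so $\widetilde{f}$ carries each peripheral subtree of $X_e$ into a peripheral subtree of $X_{e^{+}}$.

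\emph{The main obstacle and the last condition.} What remains is the pull-back half of the strictly type preserving condition for $\partial_e^{+}$: the $\widetilde{f}$-preimage of a peripheral subtree of $X_{e^{+}}$ must lie in a single peripheral subtree of $X_e$. On fundamental groups this amounts to showing that, for $g\in\pi_1(R_{l})$, the subgroup of $\pi_1(R_{l-1})$ that $\phi=\psi\mid\pi_1(R_{l-1})$ maps into the conjugate $gA_jg^{-1}$ is conjugate within $\pi_1(R_{l-1})$ into some $A_k$ --- indeed it equals $A_k$ in the appropriate coset --- which one deduces from the maximality clause of Mutanguha's theorem (iterating $\psi$ keeps this subgroup conjugate into $F$), from malnormality of free factors (\cref{ff_malnormal}), and from \cref{ff_system}. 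Turning this group-theoretic fact into the literal containment of subgraphs is the delicate step: the combinatorial model must be built so that $f$ is transverse to the subgraphs $R_{A_i}$ --- i.e. no non-peripheral edge of $R_{l-1}$ has its $f$-image running through some $R_{A_j}$ --- and this is precisely where the explicit construction of $R$ and $f$ in \cref{sec:mapping_torus} is used; I expect this to be the main point to get right. Granting the strictly type preserving condition, the qi-preserving electrocution condition follows: $\widehat{\partial}_e^{-}$ is a quasi-isometric embedding since the underlying inclusion is one and pairs up the peripheral subtrees bijectively, and $\widehat{\partial}_e^{+}$ is a quasi-isometric embedding by a convexity argument inside the trees $X_e$ and $X_{e^{+}}$ --- using that subtrees are convex, that $\widetilde{f}$ is a quasi-isometric embedding, and that $\widetilde{f}$ sends distinct peripheral subtrees of $X_e$ into distinct peripheral subtrees of $X_{e^{+}}$, the non-collapsing being ensured for distinct indices by \cref{thm:acylindrical}(1) and for distinct cosets of a single index again by malnormality together with the maximality of $\{A_i\}$ --- so that coning off collapses no distances between peripheral subtrees.
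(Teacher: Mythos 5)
You follow the paper's architecture exactly --- relative hyperbolicity of vertex and edge pairs from \cref{lem:tree_rel_hyp}, qi-embeddedness (and, after coning, electrocution) from \cref{lem:submapping_torus}, and strict type preservation from the construction of $f$ together with \cref{thm:acylindrical} --- so the overall plan is the same as in the paper.

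However, the mechanism you propose for the pull-back half of the strictly type preserving condition does not hold, and the claim you hedge on is in fact false. You assert that the combinatorial model is built so that ``$f$ is transverse to the subgraphs $R_{A_i}$ --- i.e.\ no non-peripheral edge of $R_{l-1}$ has its $f$-image running through some $R_{A_j}$'' and that ``this is precisely where the explicit construction of $R$ and $f$ in \cref{sec:mapping_torus} is used.'' Look back at that construction: for a generator $g\in\mathcal{B}$ the image $f(g)=q_g$ is simply the immersed loop representing $\psi(g)\in\F$, and $\psi(g)$ has no reason to avoid the free factors $A_j$; likewise $f(r_i)=p_i*r_{\sigma(i)}$ with $p_i$ the immersed loop representing $f_i\in F$, and $f_i$ is an arbitrary conjugator which can perfectly well traverse the roses $R_{A_j}$. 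So non-peripheral edges of $R_{l-1}$ do, in general, have $f$-images running through the $R_{A_j}$, and transversality is not a feature of the model. Your own caveat (``I expect this to be the main point to get right'') correctly flags that you have not verified the claim, but the resolution you sketched is not available.

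The group-theoretic statement you extract --- that the $\phi$-preimage of a $\pi_1(R_l)$-conjugate of $A_j$ inside $\pi_1(R_{l-1})$ is, up to conjugacy in $\pi_1(R_{l-1})$, one of the $A_k$ (or trivial), by maximality of the free factor system, malnormality (\cref{ff_malnormal}), and \cref{ff_system} --- is the right fact to have in mind and is what the paper's appeal to \cref{thm:acylindrical} (especially the no-collision clause in its part (1)) is doing. The work to be done is to verify the Mj--Reeves condition at the level of the peripheral subtrees and their stabilisers, not to try to force literal point-set transversality of the graph map $f$, since the latter is simply not true of the model.
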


\begin{proof}
By \cref{lem:tree_rel_hyp}, for each $v\in V(T)$ and $e\in E(T)$, the pairs
\begin{align*}
(X_v, &\{X_v\cap \mathfrak{p}^{-1}(R_{A_i})\}_{i=1}^n)\\
(X_e, &\{X_e\cap \mathfrak{p}^{-1}(R_{A_i})\}_{i=1}^n)
\end{align*}
are relatively hyperbolic and so $\wt{M}_l$ is a tree of relatively hyperbolic spaces. The tree of spaces $\wt{M}_l$ satisfies the strictly type preserving condition by definition of $f$ and by \cref{thm:acylindrical}. Finally, the tree of relatively hyperbolic spaces $\wt{M}_{l}$ satisfies the qi-embedded and the qi-preserving electrocution condition by \cref{lem:submapping_torus}.
\end{proof}

\subsection{Hallways flare}

In view of \cref{lem:conditions}, it remains to verify the hallways flare and cone-bounded hallways strictly flare properties for the induced tree of coned-off spaces for $(\wt{M}_{l}, \mathcal{C})$ so that we may apply \cref{thm:combination}. We first need two lemmas.

\begin{lemma}
\label{lem:iteration_general}
Let $\omega\geqslant 0$ and let $m\geqslant k$ be an integer. There is a constant $M$ such that the following holds. 

Let $\gamma_1, \gamma_2\colon I \to R$ be immersed paths with $\len(\gamma_1), \len(\gamma_2)\leqslant \omega$ and let $\lambda\colon I\to R_{l}$ be an immersed path such that $\gamma_1*f^m(\lambda)*\gamma_2$ is path homotopic to an immersed path $\delta\colon I\to R_{l}$. If $\len(\delta)\geqslant M$, then $\delta = \delta_1*\alpha*\delta_2$ with $\len(\delta_1), \len(\delta_2)\leqslant M$ and with $\alpha$ a non-trivial path supported in $\bigsqcup_{i=1}^nR_{A_i}$.
\end{lemma}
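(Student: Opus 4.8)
The plan is to analyze the concatenation $\gamma_1 * f^m(\lambda) * \gamma_2$ by passing to the universal cover $\wt{R}$ and using the quasi-isometric embedding property of $\wt{f}$ from \cref{lem:submapping_torus}\eqref{itm:qi2}. First I would lift everything: choose a lift $\wt{\lambda}$ of $\lambda$ to $\wt{R}_l \subset \wt{R}$, so that $\wt{f}^m(\wt{\lambda})$ is a path in $\wt{R}$ which, since $\wt{f}$ is a $(K,C)$-quasi-isometric embedding and $\wt{f}^m$ is too (with constants depending on $K$, $C$, $m$), stays within bounded Hausdorff distance of the geodesic $[\wt{f}^m(o(\wt\lambda)), \wt{f}^m(t(\wt\lambda))]$. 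Attaching lifts $\wt\gamma_1, \wt\gamma_2$ of the short paths $\gamma_1, \gamma_2$ (each of length $\le \omega$) at the two ends, the total path $\wt\gamma_1 * \wt{f}^m(\wt\lambda) * \wt\gamma_2$ lifts $\delta$, and since $\delta$ is immersed it is the geodesic in $\wt{R}$ between its endpoints; hence $\delta$ stays within a bounded neighbourhood (depending on $K, C, m, \omega$) of $\wt{f}^m(\wt\lambda)$, and vice versa.

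The key geometric input is then the structure of the image of $\wt{f}^m$ restricted to a copy of $\wt{R}_{l-1}$: because $f^m(R_{C_j}) \subset R_{C_{j+m}}$ and, crucially, $f(A_i) \subset A_{\sigma(i)}$ by construction (see the definition of $f$ and \cref{thm:acylindrical}), the map $f^m$ sends the "core" part $\bigcup_i R_{A_i} \cup R_B \cup \bigvee_j R_{C_j}$ into itself in a way that pushes the $C_j$-indices up by $m$ and keeps the $A_i$-indices inside the $A$-part, while the connecting edges $r_i$ map to $p_i * r_{\sigma(i)}$. The point I want to extract: if the immersed geodesic $\delta$ in $\wt{R}_l$ is long, then the bulk of its length must be contributed by a long immersed subpath of $\wt\lambda$ itself (not by the bounded-length correction paths $\gamma_1, \gamma_2$ or the bounded "prefix/suffix" distortion from the quasi-isometry constants), and moreover any portion of $\wt{f}^m(\wt\lambda)$ that escapes the subtrees $\mathfrak{p}^{-1}(R_{A_i})$ must, after $m \ge k$ iterations, have its preimage under $\psi^m$ land in a free factor system — this is exactly where the constant $k$ from \cref{thm:acylindrical} enters: segments of length $\ge k$ following the induced orientation have stabiliser (equivalently, the relevant subgroup) conjugate into some $A_i$. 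So a long middle portion of $\delta$ is forced to be supported in $\bigsqcup_{i=1}^n R_{A_i}$.

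Concretely, I would set $M$ large enough to absorb: the quasi-isometry constants of $\wt{f}^m$ (a function of $K, C, m$), the bound $\omega$ on $\len(\gamma_1), \len(\gamma_2)$, the fixed graph $R_l$'s diameter-type data, and the constant $k$. Then writing $\delta = \delta_1 * \alpha * \delta_2$, I take $\delta_1$ (resp. $\delta_2$) to be the maximal initial (resp. terminal) subpath that either comes from $\gamma_1$/$\gamma_2$, or from the bounded distortion at the ends of $\wt{f}^m(\wt\lambda)$, or traverses an $r_i$-edge (or lies in $R_B \cup \bigvee_j R_{C_j}$); the complementary middle segment $\alpha$ is then supported in $\bigsqcup_i R_{A_i}$ and is non-trivial precisely because $\len(\delta) \ge M$ exceeds the sum of the guaranteed bounds on $\len(\delta_1) + \len(\delta_2)$. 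The main obstacle I anticipate is bookkeeping the interaction between the folding/immersion of $\gamma_1 * f^m(\lambda) * \gamma_2$ down to $\delta$ and the combinatorial structure of $f^m$ on the connecting edges $r_i$ and the $C_j$-roses: one must rule out that cancellation between $\gamma_1$ (or $\gamma_2$) and the ends of $f^m(\lambda)$ creates a long immersed segment straddling the $A_i$-subgraphs and the rest of $R_l$ — but this is controlled because such straddling segments must pass through the finitely many edges $r_i$, of which there are only $n$, each of bounded length, so only boundedly much length near the two ends of $\delta$ can be "mixed," which is absorbed into $\delta_1, \delta_2$.
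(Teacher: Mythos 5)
Your plan takes a coarse-geometric route (lift to $\wt{R}$, use the qi-embedding property of $\wt{f}$ from \cref{lem:submapping_torus}, view $\delta$ as the geodesic tightening of a quasi-geodesic and control it by Hausdorff distance), whereas the paper argues combinatorially with Stallings folds and pullbacks: it replaces every edge of $R_l$ by its $f^m$-image to get a graph $\Delta\to R$, folds to an immersion $\Theta\to R$ whose pointed core is $\Gamma(\psi^m(\pi_1(R_l)))$, and forms the pullback $\Gamma=\Theta\times_R R_l$. By \cref{lem:double_coset} the components of $\core(\Gamma)$ encode the nontrivial intersections of conjugates of $\psi^m(\pi_1(R_l))$ with $\pi_1(R_l)$; \cref{thm:acylindrical} with $m\geqslant k$ forces these to be conjugate into some $A_i$, so $\core(\Gamma)$ maps into $\bigsqcup_i R_{A_i}$, and one simply sets $M=|E(\Gamma)|+2\omega+1$.

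The gap in your sketch is exactly at the crux. You write that "any portion of $\wt{f}^m(\wt\lambda)$ that escapes the subtrees $\mathfrak{p}^{-1}(R_{A_i})$ must, after $m\geqslant k$ iterations, have its preimage under $\psi^m$ land in a free factor system --- this is exactly where the constant $k$ from \cref{thm:acylindrical} enters." But \cref{thm:acylindrical} is a statement about pointwise stabilisers of segments in the Bass--Serre tree of $F*_{\phi}$, not directly about paths in $R$ or $\wt{R}$. To convert it into the claim you need --- that a long immersed subpath of $\delta$ lying simultaneously in (the folded image of) $f^m(R_l)$ and in $R_l$ must, after trimming bounded ends, be supported in $\bigsqcup_i R_{A_i}$ --- requires identifying the overlap with intersections $\psi^m(\pi_1(R_l))^g\cap\pi_1(R_l)$ and then bounding the non-core part of the corresponding Stallings graph. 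That is precisely what the pullback $\Theta\times_R R_l$ does, and without it your claim that "a long middle portion of $\delta$ is forced to be supported in $\bigsqcup_{i=1}^n R_{A_i}$" is asserted rather than proved. Separately, your closing explanation of why the end-straddling is bounded (that there are only $n$ edges $r_i$, each of bounded length) is not the right mechanism: what bounds the cancellation at the ends is simply $\len(\gamma_1),\len(\gamma_2)\leqslant\omega$, which is where the $2\omega$ in the paper's choice of $M$ comes from.
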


\begin{proof}
Let $\Delta\to R$ be the graph obtained from $R_l\to R$ by replacing each edge in $R_l$ with the edge path obtained by applying $f^m$. As remarked in \cref{sec:folds}, there is a unique graph immersion $\Theta\to R$ that $\Delta\to R$ factors surjectively through. In particular, we have the following commutative diagram
\[
\begin{tikzcd}
\Delta \arrow[rd] \arrow[r] & \Theta \arrow[d] & {\Gamma(\psi^m(\pi_1(\Gamma, v))} \arrow[l, hook'] \arrow[ld] \\
                            & R                &                                                              
\end{tikzcd}
\]
where $\core(\Theta, v) = \Gamma(\psi^m(\pi_1(\Gamma, v))$. Finally denote by 
\[
\Gamma = \Theta\times_RR_{l}.
\]
By \cref{thm:acylindrical}, since $m\geqslant k$ we have that $\core(\Gamma)$ maps into $\bigsqcup_{i=1}^nR_{A_i}$. We set
\[
M = |E(\Gamma)| + 2\omega + 1.
\]
Let $\lambda'\colon I\to R$ be the immersed path with $\lambda'\sim f^m(\lambda)$. If $\len(\delta)\geqslant M$, then $\gamma_1*\lambda'*\gamma_2$ is path homotopic to the immersed path
\[
\delta = \gamma_1'*\lambda''*\gamma_2'
\]
where $\gamma_1'$ is a prefix of $\gamma_1$, $\lambda''$ is a subpath of $\lambda'$ of length at least $|E(\Gamma)|+1$ and where $\gamma_2'$ is a suffix of $\gamma_2$.

The path $\lambda'$ lifts to $\Theta$ by assumption. In particular, the subpath $\lambda''$ also lifts to $\Theta$. Since $\lambda''$ is supported in $R_{l}$, we see that $\lambda''$ also lifts to the pullback $\Gamma$. Since the core of each component of $\Gamma$ maps to $\bigsqcup_{i=1}^nR_{A_i}$ we see that 
\[
\lambda'' = \lambda_1''*\alpha*\lambda_2''
\]
with $\len(\lambda_1'') + \len(\lambda_2'') \leqslant |E(\Gamma)|$ and with $\alpha$ supported in $\bigsqcup_{i=1}^nR_{A_i}$.
\end{proof}

\begin{lemma}
\label{lem:iteration}
Let $m\geqslant k$ be an integer and let $\lambda\colon I\to R_l$ be a path connecting $v_i$ with $v_j$ for some $1\leqslant i, j\leqslant n$. If $f^m(\lambda)$ is path homotopic to an immersed path $\delta\colon I \to R_l$, then $\sigma^m(i) = \sigma^m(j)$ and $\delta$ is supported in $R_{A_{\sigma^m(i)}}$. 
\end{lemma}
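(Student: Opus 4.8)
The plan is to reduce everything to showing that the reduced representative $\delta$ is supported in $\bigsqcup_{s=1}^n R_{A_s}$. Granting this, connectedness of $\delta$ forces it into a single rose $R_{A_{s_0}}$; and since $f$ carries each $R_{A_s}$ into $R_{A_{\sigma(s)}}$ with $f(v_s)=v_{\sigma(s)}$, the path $f^m(\lambda)$, hence $\delta$, runs from $v_{\sigma^m(i)}$ to $v_{\sigma^m(j)}$, so both of these vertices lie in $R_{A_{s_0}}$ and therefore $\sigma^m(i)=\sigma^m(j)=s_0$, as claimed.

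To control $\delta$ I would reuse the graphs $\Delta,\Theta$ and the pullback $\Gamma=\Theta\times_R R_l$ constructed in the proof of \cref{lem:iteration_general}, together with the fact recorded there that, because $m\geq k$, \cref{thm:acylindrical} forces $\core(\Gamma)$ to map into $\bigsqcup_{s=1}^n R_{A_s}$. Now $f^m(\lambda)$ lifts to $\Delta$ by construction, hence to $\Theta$; reducing, $\delta$ lifts to a reduced path in $\Theta$, and since $\delta$ is also a path in $R_l$ these lifts combine to a reduced lift $\widetilde{\delta}\colon I\to\Gamma$, which (tracking basepoints through the folding $\Delta\to\Theta$) runs between the vertices $\overline{v}_i,\overline{v}_j$ of $\Gamma$ lying over the images of $v_i,v_j$ in $\Delta$.

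It then suffices to prove that $\widetilde{\delta}$ is supported in $\bigsqcup_{s=1}^n R_{A_s}$. Because $\Gamma\setminus\core(\Gamma)$ is a forest whose components are trees, each attached to $\core(\Gamma)$ along a single vertex, a reduced path in $\Gamma$ can leave the core only at its very start and re-enter only at its very end; write $\widetilde{\delta}=\widetilde{\delta}_1*\alpha*\widetilde{\delta}_2$ accordingly, with $\alpha\subset\core(\Gamma)$. The middle piece $\alpha$ maps into $\bigsqcup_s R_{A_s}$ by the above, so we are left with the two non-core tails $\widetilde{\delta}_1,\widetilde{\delta}_2$. Here one uses that $\widetilde{\delta}_1$ emanates from $\overline{v}_i$: choosing a petal $g$ of the rose $R_{A_i}$ (a non-trivial element, since free factor systems consist of non-trivial subgroups), the loop $g$ at $v_i$ lifts through $\Delta$ to a closed path at $\overline{v}_i$ in $\Theta$ mapping onto $f^m(g)$, whose reduction is a non-trivial reduced loop at $v_{\sigma^m(i)}$ supported in $R_{A_{\sigma^m(i)}}$; lifting this loop to $\Gamma$ one reads off a path from $\overline{v}_i$ to $\core(\Gamma)$ whose image lies entirely in $R_{A_{\sigma^m(i)}}$. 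Since the reduced path from $\overline{v}_i$ to the core inside its ambient non-core tree is unique, $\widetilde{\delta}_1$ must be an initial segment of this path, hence maps into $R_{A_{\sigma^m(i)}}$; symmetrically $\widetilde{\delta}_2$ maps into $R_{A_{\sigma^m(j)}}$. Therefore $\widetilde{\delta}$, and with it $\delta$, maps into $\bigsqcup_s R_{A_s}$, completing the proof.

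The main obstacle is the control of the two non-core tails in the last paragraph; this is precisely where the hypothesis that $\lambda$ joins the distinguished vertices $v_i$ and $v_j$ (rather than arbitrary vertices of $R_l$) is essential — for general endpoints the statement fails, and \cref{lem:iteration_general} already reflects this, its analogous tails $\delta_1,\delta_2$ being genuinely allowed to be non-trivial. The remaining points — existence and basepoint-tracking of the reduced lift to $\Theta$, and the structure of $\Gamma\setminus\core(\Gamma)$ — are routine.
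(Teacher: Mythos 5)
Your proof is correct, but it takes a genuinely different route from the paper's. The paper's argument is a short reduction to \cref{lem:iteration_general}: apply that lemma with $\omega = 0$, pad $\delta$ on the left by a long reduced loop $b_i$ in $R_{A_{\sigma^m(i)}}$ and on the right by a long reduced loop $b_j$ in $R_{A_{\sigma^m(j)}}$ (images of loops $a_i$ in $R_{A_i}$, $a_j$ in $R_{A_j}$ under $f^m$, chosen so that $b_i*\delta*b_j$ is reduced and $\len(b_i),\len(b_j)\geqslant M$). Then $b_i*\delta*b_j$ is the reduced representative of $f^m(a_i*\lambda*a_j)$ and has length $\geqslant M$, so \cref{lem:iteration_general} forces the middle of this padded path to lie in $\bigsqcup_s R_{A_s}$; since the tails $\delta_1,\delta_2$ furnished by that lemma have length $\leqslant M\leqslant\len(b_i),\len(b_j)$, they are absorbed into the padding and $\delta$ itself lands in $\bigsqcup_s R_{A_s}$. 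You instead reopen the proof of \cref{lem:iteration_general} and track the reduced lift $\widetilde\delta$ to the pullback $\Gamma = \Theta\times_R R_l$, which is essentially proving \cref{lem:iteration_general} and \cref{lem:iteration} in one shot rather than using the former as a black box. Your key extra input --- that the non-core tails of $\widetilde\delta$ are controlled because a petal of $R_{A_i}$ lifts to a nontrivial reduced loop at $\overline{v}_i$ supported over $R_{A_{\sigma^m(i)}}$, pinning down the unique path from $\overline{v}_i$ to the core --- is exactly the role played in the paper by the choice of $a_i$. Both arguments rest on the same acylindricity fact, so the difference is mainly in presentation: yours is more explicit about what happens in $\Gamma$ (arguably more illuminating), while the paper's is shorter but requires the small verification, which it leaves implicit, that one can choose $a_i, a_j$ with the desired length and immersion properties. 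Two very minor points worth tightening in yours: you should note that the petal-loop lift is nontrivial (so the component of $\Gamma$ containing $\overline{v}_i$ genuinely has nonempty core, making the tree-plus-core decomposition available), and the case where $\widetilde\delta$ never meets $\core(\Gamma)$, so that $\alpha$ is empty and $\widetilde\delta_1*\widetilde\delta_2$ is the whole path, needs a sentence --- there $\widetilde\delta$ is contained in the union of the two unique tree-paths to the core and the conclusion follows by the same projection argument, together with the observation that the two roses can only share a vertex if $\sigma^m(i)=\sigma^m(j)$.
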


\begin{proof}
Let $M$ be the constant from \cref{lem:iteration_general} when applied to $\omega = 0$ and $m$. Since $f^m(R_{A_i}), f^m(R_{A_j})\subset R_l$ by construction, we see that there are paths $a_i\colon I\to R_{A_i}$, $a_j\colon I\to R_{A_j}$ so that, if $b_i, b_j$ denote the immersed paths with $b_i\sim f^m(a_i), b_j\sim f^m(a_j)$, then $\len(b_i), \len(b_j)\geqslant M$ and $b_i*\delta*b_j$ is an immersed path supported in $R_l$. By definition of $M$, we see that $\delta$ is supported in $\bigsqcup_{i=1}^nR_{A_i}$.
\end{proof}

To avoid double superscripts, we shall denote by $\overline{M}_l$ the induced tree of coned-off spaces.

\begin{proposition}
\label{prop:hallways}
If $m\geqslant k$ and $\rho\geqslant 0$, then there is a constant $L\geqslant 0$ such that the following holds. If $h\colon [-m, m]\times I\to \overline{M}_{l}$ is an essential hallway, then
\begin{itemize}
\item $h$ is not cone-bounded,
\item the girth of $h$ is at most $L$ if $h$ is $\rho$-thin.
\end{itemize}
\end{proposition}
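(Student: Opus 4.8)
The plan is to analyse the essential hallway $h$ through its projection $\tau$ to the Bass--Serre tree $T$, a non‑backtracking path of length $2m$ with edges $e_{-m},\dots,e_m$. Recall that the two edge‑to‑vertex maps of the tree of spaces $\overline{M}_l$ are, on each edge $e$, the isometric inclusion $\partial^-_e\colon X_e\cong\widetilde{R}_{l-1}\hookrightarrow X_{e^-}\cong\widetilde{R}_l$ coming from $L\hookrightarrow F$, and a lift $\widetilde f$ of $f$ coming from $\phi=\psi|_L$, which is a $(K,C)$‑quasi‑isometric embedding of trees. Hence, moving one step along $\tau$ through a vertex $v$ transforms the vertical geodesic $\mu_i:=h|_{\{i\}\times I}$ of the hallway into $\mu_{i+1}$ by one of four composites of $\partial^\pm$ and $(\partial^\pm)^{-1}$; in every case $\ell(\mu_{i+1})\le K\,\ell(\mu_i)+KC$, and so, writing $g_j=\ell(\mu_j)$, one has $g_0\le K^{2m}g_j+K^{2m}C$ for every $j\in\{-m,\dots,m\}$.

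The key observation is a dichotomy for $\tau$. If at some time $i$ the edges $e_i,e_{i+1}$ of $\tau$ meet $v$ on the same side (so $\tau$ ``zigzags'' there), then the images of $X_{e_i}$ and $X_{e_{i+1}}$ in $X_v$ are two distinct translates of the $L$‑subtree (resp. of the $U=\phi(L)$‑subtree) of the vertex space; since $L$ and $U$ are free factors of $F$, hence malnormal by \cref{ff_malnormal}, these translates meet in a single point, so $\mu_{i+1}$ degenerates to a point and $g_{i+1}=0$. By $\rho$‑thinness and the fact that $\partial^\pm_e$ are $(K,C)$‑quasi‑isometric embeddings one then gets $g_{i}\le 2K\rho'+KC$ for a constant $\rho'=\rho+O(1)$, whence $g_0\le K^{2m}(2K\rho'+KC)+K^{2m}C=:L_1(\rho)$, so the girth is $\le L_1(\rho)$. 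If instead $\tau$ has no zigzag, then $\tau$ is monotone; assuming (by symmetry) it is ``ascending'', the $m$ steps from time $-m$ to time $0$ each apply $\widetilde f$ composed with an isometric restriction, so $\mu_0$ equals, up to reduction and a correction path of length $O(\rho)$ at each end, the image under $\widetilde f^{m}$ of a subpath of $\mu_{-m}$. Since $m\ge k$, \cref{lem:iteration_general} (applied with $\omega$ a function of $\rho$, absorbing the correction paths $\gamma_1,\gamma_2$) writes this reduced image as $\delta_1*\alpha*\delta_2$ with $\ell(\delta_1),\ell(\delta_2)\le M(\omega)$ and $\alpha$ supported in a single rose $R_{A_i}$; hence it has length at most $2M(\omega)+1$ in the coned‑off edge space $\widehat X_{e_0}$. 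Thus the girth is $\le L_2(\rho):=2M(\omega)+1+O(\rho)$, and taking $L=\max\{L_1(\rho),L_2(\rho)\}$ proves the bounded‑girth statement of the proposition.

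For the first bullet, suppose $h$ is cone‑bounded, so both boundary paths lie in the cone locus. A zigzag is now impossible along either boundary path: at a zigzag the relevant cone points in $\widehat X_v$ would be cones over $\partial^\pm$‑images of peripherals of $X_{e_i},X_{e_{i+1}}$ which, as above, meet in a single point, yet peripherals are lifts of the non‑trivial roses $R_{A_i}$, hence infinite trees --- a contradiction. So $\tau$ is monotone of length $2m\ge 2k$. Along a monotone path the peripheral index of a boundary cone point transforms by $\sigma$ at every step (via $\widetilde f$), hence is eventually $\sigma$‑periodic; combining this with \cref{thm:acylindrical}(2) (valid since $m\ge k$), which says the pointwise stabiliser of a length‑$\ge k$ segment of $T$ already lies in one of the $A_i$, one concludes that $h$ is supported in the lift of a peripheral subcomplex $M[i]$. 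But over such a lift every vertex and edge space of $\overline{M}_l$ has been entirely coned off, so that piece of $\overline{M}_l$ is a tree, and no essential hallway in a tree can have a non‑trivial vertical geodesic; hence $h$ is degenerate, contradicting that it is a genuine hallway. Therefore $h$ is not cone‑bounded.

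The main obstacle is the bookkeeping in the previous paragraph: converting the acylindricity statement \cref{thm:acylindrical} into a statement about the maximal cone‑subtrees of $\overline{M}_l$, and more generally identifying precisely how the maps $\partial^\pm_e$ act on geodesics and on peripheral subspaces (it is exactly the malnormality of $L$, $U$ and the $A_i$ in the relevant free‑factor systems that makes the zigzag analysis and the index‑transformation work). Once this is set up carefully, both bullets of \cref{prop:hallways} follow as above, with $m$ any integer $\ge k$.
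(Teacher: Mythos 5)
Your overall strategy---project the hallway to the Bass--Serre tree, split into a zigzag case and a monotone case, and invoke \cref{lem:iteration_general} for the monotone run---is the same as the paper's. But the zigzag analysis, which is the crux of both the flare bound and the cone-bounded step, has genuine gaps.

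The claim that two distinct translates of the $L$-subtree or $U$-subtree ``meet in a single point, so $\mu_{i+1}$ degenerates to a point and $g_{i+1}=0$'' is false. When both $e_i, e_{i+1}$ point \emph{away} from $v$, the two images $\partial^-_{e_i}(X_{e_i})$, $\partial^-_{e_{i+1}}(X_{e_{i+1}})$ are distinct components of the preimage of the subgraph $R_{l-1}$ inside $\widetilde{R}_l$; such components are literally \emph{disjoint}, not touching at a point. The correct consequence is not that the vertical geodesic is trivial, but that both boundary arcs $g_{i+1}$, $g'_{i+1}$ must exit the first subtree at the same bridge point and enter the second at the same bridge point, which forces $\len(h_i^+)$ and $\len(h_{i+1}^-)$ to be at most about $2\rho$. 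This is what feeds into the flare. Your final bound $g_i\le 2K\rho'+KC$ is about right, but the reasoning you give for it does not establish it.

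More importantly, the case where both $e_i, e_{i+1}$ point \emph{into} $v$ (both boundary maps are $\widetilde{f}$) is not handled by your argument at all: $\widetilde{f}(\widetilde{R}_{l-1})$ is the image of a graph map, not a component of the preimage of a subgraph, so two translates of it are not disjoint, and certainly do not ``meet in a single point.'' Malnormality of $U$ gives at most that the intersection is bounded, and making that bound \emph{uniform} over all translates requires an extra argument that you do not supply. The paper sidesteps this entirely: since a path without backtracking with no ``both-out'' zigzag has at most \emph{one} ``both-in'' zigzag, there is always a run of $m$ consecutive oriented edges, and \cref{lem:iteration_general} applied to that run (together with \eqref{eq: length bound 3}) closes the case. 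That observation is the missing ingredient in your proof of the girth bound.

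Similar issues infect the cone-bounded step. The stated contradiction---``meet in a single point, yet peripherals are lifts of non-trivial roses, hence infinite trees''---does not land: two infinite trees can certainly share one point. In the both-out zigzag the real obstruction is that the two adjacent cone points in $\widehat{X}_v$ lie in disjoint copies of $\widetilde{R}_{l-1}$, so they are distinct, and a cone-bounded hallway has trivial horizontal pieces $g_{i+1}$, a contradiction. And once again the both-in zigzag is not ruled out by this reasoning. The paper's own route is more direct: triviality of all $g_j$ lets one apply \cref{lem:iteration} at once to conclude that $h_m^+$ connects a cone point to itself and is therefore trivial.

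In short: right skeleton, but the zigzag case as written conflates two situations that require different arguments, and the claims about single-point intersections and degenerating geodesics are incorrect; repairing both requires exactly the observation about ``at most one valley'' that the paper's proof turns on.
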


\begin{proof}
After performing a homotopy to $h$, we may assume that the path $h\mid[-m, m]\times \{0\}$ is of the form
\[
t_{-m}*g_{-m+1}*\ldots*g_0*t_0*g_1*t_1*\ldots*g_m*t_m
\]
where each $g_i$ is supported in a copy of $\wt{R}_{l}$, where $t_{-m}$ is a half edge of the form $[0, 1]\times\{x_{-m}\}$, $t_m$ is a half edge of the form $[-1, 0]\times\{x_m\}$ and where $t_i$ is an edge of the form $[-1, 1]\times\{x_i\}$ for all other values of $i$ and where here $x_i\in X_{e_i}^{(0)}$ is a 0-cell in an edge space associated with the edge $\pi(t_i) = e_i\in E(T)$. Note that this homotopy only increases lengths of the paths $h\mid[i, i+1]\times\{0\}$ by a constant $\xi$ depending on the metrics on the 2-cells in $\overline{M}_l$ (of which there are finitely many types). Similarly, we may assume that $h\mid[0, m]\times \{1\}$ is a path of the form
\[
t'_{-m}*g'_{-m+1}*\ldots*g'_0*t'_0*g'_1*t'_1*\ldots*g'_m*t'_m.
\]
If $h$ was $\rho$-thin before the homotopy, $h$ will by $\xi\rho$-thin after the homotopy. For ease of notation, replace $\rho$ with $\xi\rho$.

For each $i$, denote by $h_i^{\pm}$ the geodesic in $X_{e_i^{\pm}}$ connecting the endpoints of the path $\partial^{\pm}_{e_i}\circ h\mid\{i\}\times I$.

Suppose first for a contradiction that $h$ is cone-bounded. Then each $g_i, g_i'$ is trivial by definition and so $f^m(\mathfrak{p}\circ h_0^+)$ is path homotopic to $\mathfrak{p}\circ h_m^+$. Now \cref{lem:iteration} implies that $h_m^+$ is supported in a copy of the coning off of the universal cover of $R_{A_i}$ for some $i$. This implies that $h\mid\{m\}\times I$ is a geodesic connecting a cone point with itself which is not possible (since it would have to be trivial), a contradiction. Thus, $h$ cannot be cone-bounded as claimed.

Now suppose that $h$ is $\rho$-thin. Then each $g_{i}$, $g_{i}'$ has length at most $\rho$. By definition of $K, C$, we have that 
\begin{equation}
\label{eq: length bound 1}
K^{-1}\cdot \len(h\mid\{i\}\times I)  - C\leqslant \len(h_i^{\pm})\leqslant K\cdot \len(h\mid\{i\}\times I) + C
\end{equation}
for each $i$. The map $h|[i, i+1]\times I$ implies that 
\begin{equation}
\label{eq: path_homotopy}
g_{i+1}*h_i^+*\overline{g}_{i+1}'\sim h_{i+1}^-.
\end{equation}
where $\sim$ denotes path homotopy within the corresponding vertex space. We have
\begin{align*}
\len(h_i^+) - 2\rho \leqslant \len(h_i^+) - \len(g_i) - \len(g_i') &\leqslant \len(h_{i+1}^-)\\
						&\leqslant \len(h_i^+) + \len(g_i) + \len(g_i') \leqslant \len(h_i^+) + 2\rho. 
\end{align*}
Combining this with \eqref{eq: length bound 1} we see that
\begin{align*}
\len(h\mid\{i\}\times I)&\leqslant K\len(h^{\pm}_i) + KC\\
					&\leqslant K(\len(h^{\mp}_{i\pm1}) + 2\rho) + KC\\
					&\leqslant K(K\len(h\mid\{i\pm1\}\times I) + C + 2\rho) + KC\\
					&\leqslant K^2\len(h\mid\{i\pm1\}\times I) + 2K(C + \rho).
\end{align*}
Thus, by induction on $-m\leqslant n\leqslant m$, we have
\begin{align}
\label{eq: length bound 2}
K^{-2|n|}\cdot \len(h\mid\{0\}\times I) - 2(C+\rho)&\leqslant \len(h\mid\{n\}\times I)\\
\label{eq: length bound 3}				&\leqslant K^{2|n|}\cdot(\len(h\mid\{0\}\times I) + 2(C+\rho)).
\end{align}
Now suppose that for some $-m\leqslant i< m$, the paths $\overline{t}_i$ and $t_{i+1}$ follow the induced orientation on $T$. Then $\partial_{e_i}^+(X_{e_i})$ and $\partial_{e_{i+1}}^-(X_{e_{i+1}})$ both project to $R_{l-1}$ under the cover $c$. Since $R_{l-1}$ is a subgraph of $R_{l}$, we see that either $\partial_{e_i}^+(X_{e_i}) = \partial_{e_{i+1}}^-(X_{e_{i+1}})$ and $e_i = \overline{e}_{i+1}$ or $\partial_{e_i}^+(X_{e_i})\cap\partial_{e_{i+1}}^-(X_{e_{i+1}}) = \emptyset$ and $e_i\neq \overline{e}_{i+1}$. But since $h$ is an essential hallway, we must be in the latter case. The paths $g_{i+1}$ and $g_{i+1}'$ thus connect the two disjoint subtrees $\partial_{e_i}^+(X_{e_i})$ and $\partial_{e_{i+1}}^-(X_{e_{i+1}})$. Hence $g_{i+1}, g_{i+1}'$ exit $\partial_{e_i}^+(X_{e_i})$ at the same point and enter $\partial_{e_{i+1}}^-(X_{e_{i+1}})$ at the same point. This implies that $\len(h_{i+1}^-), \len(h_i^+)\leqslant 2\rho$. Combining with \eqref{eq: length bound 1}, we see that $\len(h\mid \{i\}\times I)\leqslant K(2\rho +C)$. By \eqref{eq: length bound 2} we see that
\[
\len(h\mid \{0\}\times I)\leqslant 2(K^{2m}\rho + C + \rho)
\]

Now assume that no such $i$ exists. In particular, there is at most one $-m\leqslant i< m$ so that $t_i$ and $\overline{t}_{i+1}$ follow the induced orientation on $T$ and for all other $i$, either $t_i, t_{i+1}$ or $\overline{t}_i, \overline{t}_{i+1}$ follow the induced orientation on $T$. In any case, there is an $i$ so that (after possibly flipping $h$), the edges $t_i, t_{i+1}, \ldots, t_{i+m}$ all follow the induced orientation on $T$.

The null-homotopy $h\mid[i, i+m]\times I$ implies that
\[
f^m(\mathfrak{p}\circ h_i^+) \sim g*(\mathfrak{p}\circ h_{i+m}^+)*\overline{g}'
\]
in $R$, where here
\begin{align*}
g &\sim f^m(\mathfrak{p}\circ g_{i})*f^{m-1}(\mathfrak{p}\circ g_{i+1})*\ldots *(\mathfrak{p}\circ g_{i+m})\\
g' &\sim f^m(\mathfrak{p}\circ g_i')*f^{m-1}(\mathfrak{p}\circ g_{i+1}')*\ldots *(\mathfrak{p}\circ g_{i+m}')
\end{align*}
are paths (of shortest length in their homotopy class) in $R$. We have that
\[
\len(g), \len(g') \leqslant mK^m(\rho+C).
\]
Since $\mathfrak{p}\circ h_i^+$ and $\mathfrak{p}\circ h_{i+m}^+$ are immersed paths in $R_{l}$, we may apply \cref{lem:iteration_general} with $\omega = mK^m(\rho+C)$ to conclude that 
\[
\len(h_{i+m}^+)\leqslant 2M + 1.
\]
Here we are using the fact that a geodesic in the coned-off tree of spaces connecting two vertices in a copy of $\widetilde{R}_{A_i}$ (for $1\leqslant i\leqslant n$) has length at most one. By \eqref{eq: length bound 3}, we see that
\[
\len(h\mid \{0\}\times I)\leqslant K^{2m}(2M+1 + 2(C+\rho)). 
\]
This completes the proof.
\end{proof}

\subsection{Proof of \cref{thm:rel_hyp}}

We showed that $\widetilde{M}_{l}$ is a tree of relatively hyperbolic spaces satisfying the qi-embedded condition, the strictly type-preserving condition and the qi-preserving electrocution condition in \cref{lem:conditions}. \cref{prop:hallways} implies that the induced tree of coned-off spaces $\overline{M}_l$ satisfies the hallways flare and the cone-bounded hallways strictly flare condition. By \cref{thm:combination} this implies that $\widetilde{M}_{l}$ is hyperbolic relative to the family of maximal cone-subtrees. Hence, $\pi_1(M_{l}, v)$ is hyperbolic relative to the subgroups $\{\pi_1(M[i]\cup r_i, v_i)\}_{[i]}$ by \cref{thm:group_combination} and so $(M(\psi), \mathcal{P})$ is relatively hyperbolic by \cref{lem:submapping_torus}.

\section{Splittings of subgroups induced by a graph pair and local relative quasi-convexity}
\label{sec:locally_rel_qc}

In this section we prove the final part of our main theorem.

\begin{theorem}
\label{thm:locally_rel_qc}
A finitely generated mapping torus $M(\psi)$ of a free group monomorphism $\psi\colon \F\to \F$ is locally relatively quasi-convex with respect to the relatively hyperbolic structure from \cref{thm:rel_hyp}.
\end{theorem}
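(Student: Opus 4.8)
The plan is to deduce \cref{thm:locally_rel_qc} from the relative quasi-convexity criterion \cref{thm:qc_criterion}, applied to the tree of relatively hyperbolic spaces $X = \widetilde{M}_l$ and the Bass--Serre tree $T$ of the splitting $F *_\phi$ constructed in \cref{sec:rel_hyp_mapping_tori}, with $G = \pi_1(M_l) \cong M(\psi)$. First I would record that $X$ and $T$ satisfy every hypothesis of \cref{thm:qc_criterion}: the strictly type preserving, qi-embedded and qi-preserving electrocution conditions are \cref{lem:conditions}; the cone-bounded hallways strictly flare condition holds because, by the first assertion of \cref{prop:hallways}, there are no cone-bounded essential hallways of length $2m$ at all; and the bounded girth hallways condition is precisely the second assertion of \cref{prop:hallways}. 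Now let $H \leqslant M(\psi)$ be finitely generated and let $T' \subseteq T$ be the minimal $H$-invariant subtree (a fixed vertex, if $H$ fixes one); since $H$ is finitely generated, $H \backslash T'$ is finite. By \cref{thm:qc_criterion} it then suffices to show that $\stab_H(v)$ is relatively quasi-convex in $\stab_G(v)$, with respect to the induced peripheral structure, for every vertex $v$ of $T$.

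Fix a vertex $v$ of $T$. Its stabiliser $\stab_G(v)$ is a conjugate of $F = \pi_1(R_l)$, carrying the corresponding conjugate of the free factor system $\{A_i\}_{i=1}^n$ as induced peripheral structure (\cref{lem:conditions}), and $\stab_H(v) = H \cap \stab_G(v)$. Since conjugation preserves relative quasi-convexity, it is enough to prove: \emph{for every finitely generated subgroup $H' \leqslant M(\psi)$, the intersection $H' \cap F$ is relatively quasi-convex in $F$ with respect to $\{A_i\}$} --- applying this to the various conjugates of $H$ handles all vertices $v$ simultaneously. By \cref{lem:tree_rel_hyp}, this in turn is equivalent to the assertion that the core subgroup graph $\Gamma[H' \cap F] \to R_l$, after deleting the preimage of $\bigcup_{i=1}^n R_{A_i}$, consists of only finitely many cells.

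This finiteness assertion is the technical heart, and I would establish it using the Feighn--Handel machinery of \cref{sec:graph_pairs}. Writing $M(\psi) = \F *_\psi$, with $F$ a free factor of the ambient free group $\F$, and using that $H'$ is finitely generated, $H'$ admits a finite $\psi$-invariant graph pair; tightening it (\cref{lem:FH_tightening}) and passing to a minimal one (\cref{prop:FH}, \cref{FHmain}), the injectivity of the maps $\theta_n$ from \cref{FHmain} identifies the base vertex group $H' \cap \F$ of the induced splitting of $H'$ with an ascending union $X^{\#} * \Asterisk_{i \geqslant 0} \psi^i(C)$, where $Z^{\#} = X^{\#} * C$. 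Intersecting with the free factor $F$ --- equivalently, pulling the corresponding core graph back along the subgraph inclusion $R_l \injects R$ and applying \cref{lem:double_coset} --- the core subgroup graph of $H' \cap F$ is realised as a direct limit, assembled from a fixed finite piece coming from the finite graph $X$ together with the pullbacks $\Gamma[\psi^i(C)] \times_R R_l$, $i \geqslant 0$. The relative acylindricity of \cref{thm:acylindrical}, in the concrete form of \cref{lem:iteration} and \cref{lem:iteration_general}, forces the core of each pullback $\Gamma[\psi^i(C)] \times_R R_l$ with $i \geqslant k$ to map into $\bigsqcup_j R_{A_j}$; hence only the finitely many stages $i < k$, each a finite graph, survive the deletion of the peripheral subgraphs, and $\Gamma[H' \cap F]$ is finite relative to $\bigcup_i R_{A_i}$, as required. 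The main obstacle I anticipate lies entirely in this last step: pinning down the limiting core graph precisely (pullbacks do not commute on the nose with the infinite wedge defining the ascending union), controlling the interaction of the finitely many ``low'' stages with the finite graph $X$ after folding, and verifying the identification of $H' \cap \F$ --- the geometric input has already been packaged into \cref{thm:qc_criterion} and \cref{thm:acylindrical}.
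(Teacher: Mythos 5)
Your reduction to the statement that $H' \cap F$ is relatively quasi-convex in $(F, \{A_i\})$ for every finitely generated $H' \leqslant M(\psi)$ is correct and is essentially the same as the paper's strategy, as is your use of \cref{thm:qc_criterion}, \cref{lem:conditions} and \cref{prop:hallways}. The gap lies in the ``technical heart'' that you flag, and it is more serious than you acknowledge. You claim that the $\theta_n$-injectivity from \cref{FHmain} identifies the vertex stabiliser $H' \cap \F$ of the induced splitting with $L = X^{\#} * \Asterisk_{i\geqslant 0}\psi^i(C)$. This is false. The group $L$ is the subgroup $\F_\infty = \bigcup_i\langle Z^{\#}, t^{-1}Z^{\#}t, \ldots, t^{-i}Z^{\#}t^i\rangle$ appearing in the proof of \cref{FHmain}, and in general $L \subsetneq H' \cap \F$; for instance when $\psi$ is an automorphism, $\F$ is normal, so $H' \cap \F = \ker(H'\to\Z)$ contains all of $t^iZ^{\#}t^{-i}$ for $i\in\Z$, not merely $i\leqslant 0$. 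More to the point, the relevant vertex group for the splitting $G \cong R_p^{\#}*_{\phi_p}$ is $H' \cap R_p^{\#}$, and it is not $L$ (nor even $L\cap R_p^{\#}$) but rather a $Z^{\#}$ obtained from a tight $\psi$-invariant graph pair $(Z, X)$ for $H'$ that additionally satisfies $X^{\#} = Z^{\#}\cap R_{p-1}^{\#}$ and $\psi(X^{\#}) = Z^{\#}\cap \psi(R_{p-1}^{\#})$. This is precisely the content of \cref{prop:intersection}, and producing such a pair is not automatic from tightening alone: it requires the direct-limit construction of \cref{prop:induced_pair} (alternating between tightening and pullback-corrections). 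Without this, you have not identified the vertex stabilisers, and \cref{thm:qc_criterion} cannot be invoked.

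A secondary gap is in the finiteness argument itself. Even granting the correct identification, your plan to realise $\Gamma[H'\cap F]$ as assembled from pullbacks $\Gamma[\psi^i(C)]\times_R R_l$ and appeal to the iteration lemmas does not directly control the graph: the pullback of a direct limit of graphs is not the direct limit of pullbacks, and the folds interleave the ``$C$-stages'' with $X$ in an essential way. The paper's route is to bound preimages under a lift $f_2\colon X\to Z$ of a map $f_1\colon R_{p-1}\to R_p$ realising $\psi$ (\cref{prop:preimage_cut}, via \cref{cor:preimage_bound}) and then iterate this $k$ times in \cref{prop:key_prop}, using \cref{thm:acylindrical} to guarantee that after $k$ iterations the surviving core maps into $\bigsqcup_i R_{A_i}$. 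The intersection conditions from \cref{prop:induced_pair} are what make the injectivity in \cref{lem:preimage_bound1} go through, so the two missing ingredients are in fact intertwined. Finally, one small point: the paper reduces a general finitely generated $H$ (not of the form $\langle$subgroup of $\F, t\rangle$) to the two cases $H\leqslant\F$ and $\langle$subgroup of $\F, t\rangle$ by replacing $H$ with a conjugate and passing to the finite-index overgroup $\langle\F, ht^{-m}\rangle\cong M(\gamma_h\circ\psi^m)$; your minimal-subtree set-up avoids this case split, and that part of your argument is fine, since $H\backslash T'$ is indeed finite for a finitely generated $H$.
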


In order to prove \cref{thm:locally_rel_qc} we shall require several auxiliary results on graph pairs and, in particular, certain direct limits of graph pairs. Thus, for the next sections we shall assume the notation and set-up from \cref{sec:graph_pairs}.

\subsection{Induced splittings from graph pairs}

We begin by relating certain maps of graph pairs to induced splittings of subgroups. First we should explain what exactly we mean by an induced splitting. If $G$ is a group acting on a tree $T$ without edge inversions, then there is a natural graph of groups structure $\mathcal{G}$ that can be put on the quotient graph $G\backslash T$ so that $\pi_1(\mathcal{G}) \isom G$. If $H$ is a subgroup of $G$ and acts on a subtree $S\subset T$, then there is also a quotient graph of groups $\mathcal{H}$, with underlying graph $H\backslash S$, so that $\pi_1(\mathcal{H}) \isom H$ and a natural morphism of graphs of groups $\gamma_H\colon \mathcal{H}\to\mathcal{G}$ so that $(\gamma_H)_*$ induces the inclusion $H\leqslant G$. This is all explained in detail in \cite{Se80,Ba93}. The graph of groups $\mathcal{H}$ along with the morphism $\gamma_H$ is the \emph{induced splitting} of $H$.

\begin{proposition}
\label{prop:intersection}
Let $(Z_1, X_1)$ be a finite tight minimal $\psi$-invariant graph pair for $G$ and let $\rho\colon (Z_2, X_2)\to (Z_1, X_1)$ be a map of graph pairs with $(Z_2, X_2)$ a tight $\psi$-invariant graph pair for $H$ such that
\begin{align*}
X_2^{\#} &= Z_2^{\#}\cap X_1^{\#}\\
\psi(X_2^{\#}) &= Z_2^{\#}\cap \psi(X_1^{\#}).
\end{align*}
Then
\[
H\isom \langle Z_2^{\#}, t \mid t^{-1}xt = \psi(x), \forall x\in X_2^{\#}\rangle
\]
is a HNN-splitting of $H$ induced by the HNN-splitting
\[
G \isom \langle Z_1^{\#}, t \mid t^{-1}xt = \psi(x), \forall x\in X_1^{\#}\rangle
\]
of $G$. In particular, we have
\begin{align*}
Z_2^{\#} &= H\cap Z_1^{\#}\\
X_2^{\#} &= H\cap X_1^{\#}.
\end{align*}
\end{proposition}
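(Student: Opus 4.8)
The plan is to build the HNN-splitting of $H$ directly from the graph pair $(Z_2, X_2)$ using \cref{FHmain}, and then match it up with the induced splitting from the action of $H$ on the Bass--Serre tree $T$ for the splitting of $G$. First I would apply \cref{FHmain} to the pair $(Z_2, X_2)$: since it is a tight $\psi$-invariant graph pair for $H$ with $(f_{Z_2})_*$ injective (it is tight), condition \cref{itm:pres} gives the presentation
\[
H\isom \langle Z_2^{\#}, t \mid t^{-1}xt = \psi(x), \forall x\in X_2^{\#}\rangle,
\]
provided we know one of the equivalent conditions holds. To get this I would observe that $(Z_2, X_2)$ is in fact minimal: the hypotheses $X_2^{\#} = Z_2^{\#}\cap X_1^{\#}$ and $\psi(X_2^{\#}) = Z_2^{\#}\cap\psi(X_1^{\#})$ let one transport the injectivity of the maps $\theta_n$ for $(Z_1, X_1)$ (which hold since that pair is minimal) down to the corresponding maps for $(Z_2, X_2)$ — intersecting the free factor systems $X_1^{\#}\vee\bigvee_{i=0}^n\Gamma(\psi^i(C_1))$ with $Z_2^{\#}$ using \cref{ff_system} identifies the relevant subgroup of $\pi_1$ with an honest subgroup of $\F$. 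So the presentation above is valid, and it visibly exhibits $H$ as an HNN-extension with base $Z_2^{\#}$, edge group $X_2^{\#}$, and stable letter the same $t$ as in the splitting of $G$.

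The core of the argument is then to verify that this HNN-splitting is precisely the induced splitting of $H$ with respect to the Bass--Serre tree $T$ of $G\isom Z_1^{\#}*_\psi$. Let $v_0\in T$ be the vertex stabilised by $Z_1^{\#}$ and $e_0$ the edge stabilised by $X_1^{\#}$ (so $t\cdot e_0$ has stabiliser $\psi(X_1^{\#})$, up to the usual conventions). The subgroup $H$ acts on $T$; I would let $S\subseteq T$ be the minimal $H$-invariant subtree. The key computation is that $\stab_H(v_0) = H\cap Z_1^{\#}$ equals $Z_2^{\#}$ and $\stab_H(e_0) = H\cap X_1^{\#}$ equals $X_2^{\#}$ — the hypothesis gives one containment direction immediately ($X_2^{\#}=Z_2^{\#}\cap X_1^{\#}\subseteq H\cap X_1^{\#}$, and similarly $Z_2^{\#}\subseteq H\cap Z_1^{\#}$), and for the reverse one compares Euler characteristics or uses that the presentation of $H$ already realises $H$ as an HNN-extension over $Z_2^{\#}$ along $X_2^{\#}$: since this presentation-splitting maps to the $G$-splitting compatibly (the inclusion $H\injects G$ carries the stable letter to the stable letter and $Z_2^{\#}$, $X_2^{\#}$ into $Z_1^{\#}$, $X_1^{\#}$), the structure theory of graphs of groups / the Bass--Serre dictionary forces $Z_2^{\#}$ to be the full vertex stabiliser $H\cap Z_1^{\#}$ and $X_2^{\#}$ the full edge stabiliser $H\cap X_1^{\#}$. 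Concretely, I would argue by a standard Bass--Serre normal-form / covering argument: an element of $H\cap Z_1^{\#}$ fixes $v_0$, hence lies in the vertex group of $H$'s action at that vertex, which is exactly $Z_2^{\#}$ by the presentation, and no collapsing can occur because the presentation of $H$ is already reduced (the edge maps $X_2^{\#}\injects Z_2^{\#}$ and $\psi\colon X_2^{\#}\to Z_2^{\#}$ are injective and proper, coming from a tight graph pair).

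I expect the main obstacle to be the second step: rigorously identifying the HNN-splitting coming from \cref{FHmain} with the \emph{induced} splitting from the $H$-action on $T$, i.e.\ proving $H\cap Z_1^{\#} = Z_2^{\#}$ and $H\cap X_1^{\#}=X_2^{\#}$ rather than merely a containment. The subtlety is that $H\cap Z_1^{\#}$ is a vertex stabiliser for the $H$-action and could a priori be strictly larger than $Z_2^{\#}$ if the map $\rho$ were not "efficient''; one must use both displayed hypotheses together — $X_2^{\#}=Z_2^{\#}\cap X_1^{\#}$ controls the edge stabilisers and $\psi(X_2^{\#})=Z_2^{\#}\cap\psi(X_1^{\#})$ controls the stabilisers of the $t$-translated edge, and together with the fact that $(Z_2,X_2)$ is a graph pair \emph{for} $H$ (so $\langle X_2^{\#},t\rangle = H$) these pin down the whole induced splitting. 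I would handle this by showing the natural morphism from the presentation-splitting of $H$ to the $G$-splitting is an immersion of graphs of groups in the sense of Bass, hence identifies the presentation-splitting with the induced splitting on the subtree $S$; the two displayed intersection hypotheses are exactly what is needed for the edge-to-vertex maps to be $\pi_1$-injective with the correct images, which is the immersion condition. Once that is in place, the "in particular'' conclusion $Z_2^{\#}=H\cap Z_1^{\#}$, $X_2^{\#}=H\cap X_1^{\#}$ is just reading off the vertex and edge groups of the induced splitting.
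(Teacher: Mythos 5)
Your second step is precisely the paper's proof: define the obvious homomorphism $Z_2^{\#}*_{\phi_2}\to Z_1^{\#}*_{\phi_1}\cong G$ (sending $Z_2^{\#}\to Z_1^{\#}$ via $\rho_*$ and $t\mapsto t$), observe that the two displayed intersection hypotheses are exactly the condition for this to be an immersion of graphs of groups in the sense of Bass, and then cite Bass's result \cite[Proposition 2.7]{Ba93} that immersions are $\pi_1$-injective and realise the induced splitting. So the key idea you land on at the end is the correct one.

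The genuine gap is in your first step, and it makes the overall structure of your argument problematic. You propose to first prove that $(Z_2,X_2)$ is minimal so as to invoke \cref{FHmain}\,(\ref{itm:pres}) and obtain the presentation of $H$ before any Bass--Serre considerations. Two problems. First, \cref{FHmain} and the notion of minimality are formulated only for \emph{finite} graph pairs, and $(Z_2,X_2)$ is not assumed finite in the statement; in fact in the application (see \cref{prop:key_prop}) $Z_2^{\#}$ is typically not finitely generated, so $(Z_2,X_2)$ is an infinite pair and $\rr(Z_2,X_2)$ need not even make sense. Second, your mechanism for ``transporting'' injectivity of $\theta_n$ does not work as sketched: the free factor system $\{X_1^{\#}, C_1, \psi(C_1),\ldots,\psi^n(C_1)\}$ lives in $L_n = X_1^{\#}*C_1*\cdots*\psi^n(C_1)$, and intersecting it with $Z_2^{\#}$ (which is indeed contained in $Z_1^{\#}\leqslant L_n$) produces, via \cref{ff_system}, a free factor system \emph{of $Z_2^{\#}$}. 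But the pieces you need, namely $\psi^i(C_2)$ for $i\geqslant 1$, are not subgroups of $Z_2^{\#}$ --- they sit inside $\psi^i(Z_2^{\#})$ --- so this intersection does not touch the groups whose free-product configuration $\theta_n$ is supposed to certify.

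The cleaner route (and the paper's) is to skip step 1 entirely. Do not try to establish the presentation of $H$ beforehand; instead, define the \emph{abstract} HNN extension $Z_2^{\#}*_{\phi_2}$ and map it into $G$ as above. The immersion condition holds by hypothesis, Bass gives injectivity, and since the image is $\langle Z_2^{\#},t\rangle$, which contains $H=\langle X_2^{\#},t\rangle$ and is contained in $H$ (because $Z_2^{\#}\leqslant\langle X_2^{\#},\psi(X_2^{\#})\rangle\leqslant H$ by $\psi$-invariance), one concludes that $H\cong Z_2^{\#}*_{\phi_2}$ --- which, as a bonus, also gives you the minimality of $(Z_2,X_2)$ in the finite case, rather than requiring it. In short: your Bass-immersion observation is correct and is the proof; your preliminary minimality step is both unjustified and unnecessary, and trying to place it before the immersion argument is what creates the gap.
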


\begin{proof}
By \cref{FHmain}, we have that $\langle Z_1^{\#}, t\rangle \isom Z_1^{\#}*_{\phi_1}$ where $\phi_1 = \psi\mid X_1^{\#}$. By assumption, we have that $X_2^{\#} = Z_2^{\#}\cap X_1^{\#}$ and $\psi(X_2^{\#}) = Z_2^{\#}\cap \psi(X_1^{\#})$. Let $\phi_2 = \psi\mid X_2^{\#}$ and consider the homomorphism $Z_2^{\#}*_{\phi_2} \to Z_1^{\#}*_{\phi_1}$ given by $\rho_*$ on $\pi_1(Z_2, v_Z)\cong Z_2^{\#}$ and given by the identity on $t$. In terms of graphs of groups, this homomorphism is induced by an immersion of graphs of groups $\lambda_H\colon\mathcal{H}\to \mathcal{G}$ in the sense of Bass \cite{Ba93}, where here $\mathcal{H}, \mathcal{G}$ are the graphs of groups corresponding to the HNN-extensions $Z_2^{\#}*_{\phi_2}, Z_1^{\#}*_{\phi_1}$ respectively. In particular, the homomorphism $(\lambda_H)_*$ is injective and $\mathcal{H}$ is the induced splitting of $(\lambda_H)_*(\pi_1(\mathcal{H}))$ by \cite[Proposition 2.7]{Ba93}. This can also be seen directly by looking at the HNN-extension normal forms. Since $(\lambda_H)_*$ factors surjectively through the inclusion $H\to Z_1^{\#}*_{\phi_1}$, we see that $H$ has the claimed splitting and presentation. The fact that $Z_2^{\#} = H\cap Z_1^{\#}$ and $X_2^{\#} = H\cap X_1^{\#}$ follows from the fact that $\mathcal{H}$ is the induced splitting for $H$ (or by looking at the normal forms).
\end{proof}

\subsection{Direct limits of graph pairs}
\label{sec:direct_limit}

In this section we shall construct the direct limits of graph pairs we need for the proof of \cref{thm:locally_rel_qc} and shall prove they have some useful properties.

Recall that if $\{X_i\}_{i\geqslant 0}$ is a collection of graphs and $\{f_{ij}\colon X_i\to X_{j}\}_{i< j\in \N}$ is a collection of graph maps so that $f_{jk}\circ f_{ij} = f_{ik}$ for all $i< j< k$, then the direct limit can be described explicitly as the graph
\[
\check{X} = \lim_{i\to\infty} X_i
\]
with
\begin{align*}
V(\check{X}) &= \bigsqcup_{i\in\N}V(X_i) /\sim \quad \text{where } V(X_i) \ni v\sim w\in V(X_j) \text{ if } f_{ij}(v) = w\\
E(\check{X}) &= \bigsqcup_{i\in\N}E(X_i) /\sim \quad \text{where } E(X_i) \ni e\sim f\in E(X_j) \text{ if } f_{ij}(e) = f
\end{align*}
together with the collection of maps
\[
\{\check{f}_i\colon X_i\to \check{X}\}_{i\geqslant 0}
\]
given by $\check{f}_i(v) = [v]$ and $\check{f}_i(e) = [e]$ for all $v\in V(X_i), e\in E(X_i)$. The direct limit satisfies the following universal property: if $\{g_i\colon X_i\to Y\}_{i\in\N}$ are graph maps so that $g_j\circ f_{ij} = g_i$ for all $i<j$, then there is a canonical map $\check{g}\colon \check{X}\to Y$ so that $g_i = \check{g}\circ\check{f}_i$ for all $i\geqslant 0$.

\begin{lemma}
\label{lem:direct_limit_pair}
Let $(Z, X)$ be a $\psi$-invariant graph pair for $H$. If
\[
(Z, X) = (Z_0, X_0) \to (Z_1, X_1) \to\ldots \to (Z_k, X_k)\to \ldots
\]
is a sequence of maps of $\psi$-invariant graph pairs for $H$ with
\[
\rr(Z_{i+1}, X_{i+1}) \leqslant \rr(Z_i, X_i)
\]
for each $i\geqslant 0$, then
\[
(\check{Z}, \check{X}) = \left(\lim_{i\to \infty}Z_i, \lim_{i\to \infty}X_i\right),
\]
along with the induced map $f_{\check{Z}}\colon(\check{Z}, \check{X})\to (R, R)$, is a $\psi$-invariant graph pair for $H$ with
\[
\rr(\check{Z}, \check{X}) \leqslant \rr(Z, X).
\]
\end{lemma}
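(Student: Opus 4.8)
The plan is to verify each of the three assertions --- that $(\check Z, \check X)$ is a graph pair over $(R,R)$, that it is $\psi$-invariant for $H$, and that the reduced-rank bound holds --- directly from the explicit description of the direct limit, exploiting the fact that every element of $\pi_1$ of a direct limit of graphs lies in the image of $\pi_1$ of one of the terms. First I would note that $\check Z$ is connected and pointed (with basepoint the class of the $v_{Z_i}$, which are all identified), $\check X \subset \check Z$ is a connected subgraph containing the basepoint, and the compatible maps $f_{Z_i}\colon (Z_i, X_i)\to (R,R)$ induce a canonical map $f_{\check Z}\colon (\check Z, \check X)\to (R,R)$ by the universal property; hence $(\check Z, \check X)$ is a graph pair. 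For the $\pi_1$ computation, the key point is the standard fact that $\pi_1(\check Z, v_{\check Z}) = \varinjlim \pi_1(Z_i, v_{Z_i})$ (a loop in $\check Z$ is compact, hence its image lies in some $\check f_i(Z_i)$, so it is the image of a loop in $Z_i$; likewise for the relations, via $2$-cells, though here the spaces are graphs so only loops matter), and similarly $\pi_1(\check X) = \varinjlim\pi_1(X_i)$. Composing with the $f_{Z_i}$ this gives $\check Z^{\#} = \bigcup_i Z_i^{\#}$ and $\check X^{\#} = \bigcup_i X_i^{\#}$ as subgroups of $\F = \pi_1(R, v_R)$ (the maps in the directed system need not be injective, but the induced maps to $\F$ all factor through the fixed subgroups $Z_i^{\#}, X_i^{\#}$, so the ``limit'' is literally the nested union of these images).

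Given this, $\psi$-invariance of $(\check Z, \check X)$ is immediate: $\check Z^{\#} = \bigcup_i Z_i^{\#} = \bigcup_i \langle X_i^{\#}, \psi(X_i^{\#})\rangle = \langle \bigcup_i X_i^{\#}, \psi(\bigcup_i X_i^{\#})\rangle = \langle \check X^{\#}, \psi(\check X^{\#})\rangle$, using that $\psi$ commutes with directed unions and that the system $\{X_i^{\#}\}$ is a nested chain. Likewise $\langle \check X^{\#}, t\rangle = \langle \bigcup_i X_i^{\#}, t\rangle = \bigcup_i \langle X_i^{\#}, t\rangle = H$, since each $(Z_i, X_i)$ is a $\psi$-invariant graph pair for $H$; so $(\check Z, \check X)$ is a $\psi$-invariant graph pair for $H$ as claimed.

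For the reduced-rank inequality, recall $\rr(Z_i, X_i) = \rk(\pi_1(Z_i, v_{Z_i})) - \rk(\pi_1(X_i, v_{Z_i}))$; I would use that this quantity is non-increasing along the system, so it stabilises to some value $r \le \rr(Z, X)$, say for all $i \ge N$. For $i \ge N$ the maps $\pi_1(X_i)\to\pi_1(X_{i+1})$ and $\pi_1(Z_i)\to\pi_1(Z_{i+1})$ are surjections between free groups (any $\pi_1$-map induced by a graph map onto a connected graph is $\pi_1$-surjective after restricting to the core/relevant component --- more precisely one should pass to cores, or argue that a drop in rank of $\pi_1(Z_i)$ would force a strict drop in $\rr$ unless matched by an equal drop in $\pi_1(X_i)$, and conversely $\pi_1(X_i)$ cannot gain rank), hence by the Hopfian property of finitely generated free groups, for $i$ large the transition maps on $\pi_1$ of the cores are isomorphisms; therefore $\rk \pi_1(\check Z) = \rk\pi_1(\check Z,v)$ and $\rk\pi_1(\check X)$ stabilise and $\rr(\check Z, \check X) = r \le \rr(Z,X)$. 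The one subtlety I anticipate being the main obstacle is that the graphs $Z_i$ and the limit $\check Z$ need not be finite --- so ``$\rk$'' refers to (possibly infinite) rank of a free group, Grushko/Euler-characteristic shortcuts are unavailable, and one must be careful that the directed union of the $Z_i^{\#}$ really has rank equal to $\varinjlim \rk(Z_i^{\#})$ rather than something larger; this is handled by the stabilisation argument above, which shows the union is eventually the increasing union of a chain of subgroups with a common finite generating set (after $i\ge N$), hence has the expected rank. Once these bookkeeping points are settled the lemma follows.
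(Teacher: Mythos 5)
Your verification that $(\check Z, \check X)$ is a connected pointed graph pair over $(R, R)$ and that it is $\psi$-invariant for $H$ is fine and matches the paper in spirit; the nested-union computation $\check X^{\#} = \bigcup_i X_i^{\#}$, $\check Z^{\#} = \bigcup_i Z_i^{\#}$ (using that the maps to $R$ all factor compatibly) is a legitimate way to flesh out what the paper dispatches in one sentence.

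The reduced-rank argument, however, has a genuine gap. You assert that once $\rr(Z_i, X_i)$ stabilises, the transition maps $\pi_1(X_i) \to \pi_1(X_{i+1})$ and $\pi_1(Z_i) \to \pi_1(Z_{i+1})$ must be surjective and, via Hopficity, eventually isomorphisms, so the individual ranks $\rk\pi_1(X_i)$ and $\rk\pi_1(Z_i)$ stabilise. None of this follows. The hypothesis controls only the \emph{difference} of ranks, not the ranks themselves, and a graph map between connected graphs (even between cores) need not be $\pi_1$-surjective. In fact, in the very sequence the paper feeds into this lemma (in the proof of \cref{prop:induced_pair}), the map $(Z_{2i-1}', X_{2i-1}') \to (Z_{2i}', X_{2i}')$ is a wedge inclusion that strictly \emph{increases} the rank of the subgraph $X$, directly contradicting your parenthetical claim that ``$\pi_1(X_i)$ cannot gain rank.'' One can also have both $\rk\pi_1(X_i)$ and $\rk\pi_1(Z_i)$ go to infinity with the difference held constant (e.g.\ $X_i$ a rose with $i$ petals, $Z_i = X_i \vee S^1$, maps the inclusions), so Hopficity and finite generation are unavailable. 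Your own caveat about infinite graphs flags the danger, but the ``stabilisation'' you propose to resolve it is precisely what fails.

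The paper's proof avoids this entirely by arguing by contradiction and compactness: if $\rr(\check Z, \check X) > \rr(Z, X)$ with $\rr(Z, X) < \infty$, there is a \emph{finite} connected subgraph $A \subset \check Z$ with $A \cap \check X$ a tree and $\rr(A, A \cap \check X) > \rr(Z, X)$; being finite, $A$ lifts isomorphically to a subgraph $B \subset Z_i$ for some $i$ with $B \cap X_i$ a tree, and since $B/(B\cap X_i) \hookrightarrow Z_i/X_i$ embeds, this forces $\rr(Z_i, X_i) \geqslant \rr(B, B\cap X_i) > \rr(Z, X)$, contradicting the monotonicity hypothesis. This never touches the individual ranks, only witnesses in finite subgraphs, which is what makes the direct-limit argument work. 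You would need to replace your stabilisation step with a lifting/compactness argument of this kind.
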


\begin{proof}
Since each $X_i, Z_i$ is connected and contains the basepoint, $\check{X}, \check{Z}$ are also connected and contain the basepoint. Thus $(\check{Z}, \check{X})$ is a graph pair. Since each graph pair in the sequence is $\psi$-invariant for $H$, we have that $(\check{Z}, \check{X})$ is also a $\psi$-invariant graph pair for $H$. 

If $\rr(Z, X)<\infty$ and $\rr(\check{Z}, \check{X})> \rr(Z, X)$, then there would be a finite connected subgraph $A\subset \check{Z}$ such that $A\cap \check{X}$ is a tree and such that $\rr(Z, X)<\rr(A, A\cap \check{X})$. But then there would be some $i$ such that $Z_i$ contains a subgraph $B$ such that the graph pair $(B, B\cap X_i)$ maps isomorphically to $(A, A\cap \check{X})$. But this implies that $\rr(Z, X)<\rr(Z_i, X_i)$, a contradiction. Thus, if $\rr(Z, X)<\infty$, then $\rr(\check{Z}, \check{X})\leqslant \rr(Z, X)$. If $\rr(Z, X) = \infty$, then certainly $\rr(\check{Z}, \check{X})\leqslant \rr(Z, X)$.
\end{proof}

\begin{lemma}
\label{lem:tight}
If $(Z, X)$ is a $\psi$-invariant graph pair for $H$, then there exists a tight $\psi$-invariant graph pair $(\check{Z}, \check{X})$ for $H$ such that 
\begin{align*}
\rr(\check{Z}, \check{X}) &\leqslant \rr(Z, X).
\end{align*}
If $f_Z$ factors through a tight $\psi$-invariant graph pair $f_{Z'}\colon (Z', X')\to (R, R)$, then $f_{\check{Z}}$ also factors through $f_{Z'}$.
\end{lemma}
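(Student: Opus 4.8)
The plan is to run the Feighn--Handel tightening moves on $(Z,X)$ \emph{without} requiring the process to terminate, and then pass to a direct limit using \cref{lem:direct_limit_pair}. Concretely, I would build a (finite or infinite) sequence of maps of $\psi$-invariant graph pairs for $H$
\[
(Z,X) = (Z_0, X_0)\to (Z_1, X_1)\to\cdots,
\]
where each arrow is obtained by \emph{folding and adding a loop if necessary}: at stage $i$, if $f_{Z_i}$ is already an immersion we stop (extending the sequence by identity maps), and otherwise we select, by a fixed fair rule, a pair of distinct edges of $Z_i$ sharing an endpoint and mapping to the same edge of $R$, and apply the move of \cref{lem:graph_pair_faalin} to the corresponding fold. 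Note this automatically performs a subgraph fold whenever the chosen pair already lies inside $X_i$, so there is no need to treat $X$-tightening separately, unlike in \cref{lem:FH_tightening}. The fair rule is the only point requiring care: since each move adds at most a single cycle $\Gamma(\delta)$ of new cells, one can enumerate (well-order) the pairs of coincident edges-through-a-vertex as they appear and always treat the least one still available, which guarantees that every such pair that ever occurs is eventually folded.

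By \cref{lem:graph_pair_faalin}, each $(Z_{i+1},X_{i+1})$ is again a $\psi$-invariant graph pair for $H$ with $\rr(Z_{i+1},X_{i+1})\leqslant\rr(Z_i,X_i)$, so \cref{lem:direct_limit_pair} applies to
\[
(\check Z,\check X) = \Big(\lim_{i\to\infty}Z_i,\ \lim_{i\to\infty}X_i\Big)
\]
with its induced map $f_{\check Z}\colon(\check Z,\check X)\to(R,R)$, showing it is a $\psi$-invariant graph pair for $H$ with $\rr(\check Z,\check X)\leqslant\rr(Z,X)$. To see it is tight, suppose $f_{\check Z}$ is not an immersion: then there are distinct edges $\bar e_1\neq\bar e_2$ of $\check Z$ sharing an endpoint with $f_{\check Z}(\bar e_1)=f_{\check Z}(\bar e_2)$. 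Since any coincidence of cells in the limit is already witnessed at a finite stage, and since folds only merge cells and never separate them, one can pull $\bar e_1,\bar e_2$ and their common endpoint back to a genuine foldable pair in some $Z_i$ whose images remain distinct and foldable at all later stages; the fair rule then forces this pair to be folded at some stage $\geqslant i$, contradicting $\bar e_1\neq\bar e_2$. Hence $f_{\check Z}$ is an immersion and $(\check Z,\check X)$ is tight.

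For the factorisation clause, assume $f_Z$ factors through a tight $\psi$-invariant graph pair $f_{Z'}\colon(Z',X')\to(R,R)$. By the last clause of \cref{lem:graph_pair_faalin} and induction, each $f_{Z_i}$ factors through $f_{Z'}$, say $f_{Z_i}=f_{Z'}\circ g_i$ for a map of graph pairs $g_i\colon(Z_i,X_i)\to(Z',X')$; because $f_{Z'}$ is an immersion and everything is pointed, each such $g_i$ is unique, hence the $g_i$ are compatible with the maps $Z_i\to Z_{i+1}$, and the universal property of the direct limit yields $\check g\colon(\check Z,\check X)\to(Z',X')$ with $f_{\check Z}=f_{Z'}\circ\check g$. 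Thus $f_{\check Z}$ factors through $f_{Z'}$ as claimed.

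I expect the main obstacle to be making the fair-scheduling step fully rigorous: since adding loops keeps introducing new edges, the process genuinely need not halt, so one must argue carefully that the schedule can be arranged so that no foldable pair survives into the direct limit, and that the coincidences of edges and endpoints appearing in $(\check Z,\check X)$ are all witnessed at finite stages. Beyond this, the proof is an assembly of \cref{lem:graph_pair_faalin}, \cref{lem:direct_limit_pair}, and the standard uniqueness of lifts against an immersion.
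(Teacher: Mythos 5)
Your proposal is correct and follows essentially the same route as the paper: build an infinite sequence of ``fold and add a loop if necessary'' moves with a fair schedule, pass to the direct limit via \cref{lem:direct_limit_pair}, and argue tightness by witnessing any limit coincidence at a finite stage; the factorisation clause follows from \cref{lem:graph_pair_faalin} plus the universal property. You are in fact slightly more explicit than the paper on the two technical points it leaves implicit — how to arrange the fair schedule, and why the factorisations $g_i$ through the immersion $f_{Z'}$ are compatible with the connecting maps (namely, uniqueness of pointed lifts against an immersion) — and your observation that this move already subsumes subgraph folds is correct.
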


\begin{proof}
We define a sequence of maps of graph pairs
\[
(Z, X) = (Z_0, X_0) \to (Z_1, X_1)\to \ldots \to (Z_k, X_k)\to \ldots
\]
so that $(Z_{i+1}, X_{i+1})$ is obtained from $(Z_i, X_i)$ by folding and adding a loop if necessary, ensuring that for each $i$, each fold that can be performed is eventually performed in the sequence. In this way, the direct limit 
\[
f_{\check{Z}} = \lim_{i\to \infty}f_i\colon \check{Z} = \lim_{i\to \infty}Z_i \to R
\]
is an immersion. Combining \cref{lem:graph_pair_faalin} with \cref{lem:direct_limit_pair}, we see that $(\check{Z}, \check{X})$ is a $\psi$-invariant graph pair for $H$ with $\rr(\check{Z}, \check{X}) \leqslant \rr(Z, X)$. Since each $f_{Z_i}$ factors through $f_{Z'}$ by \cref{lem:graph_pair_faalin}, we also have that $f_{\check{Z}}$ factors through $f_{Z'}$ by the universal property of direct limits.
\end{proof}

Now we may construct maps of graph pairs like those in \cref{prop:intersection} in terms of direct limits of maps of finite graph pairs.

\begin{proposition}
\label{prop:induced_pair}
If $(Z_2, X_2)$ is a $\psi$-invariant graph pair for $H$ and $\rho\colon (Z_2, X_2)\to (Z_1, X_1)$ a map of $\psi$-invariant graph pairs with $(Z_1, X_1)$ tight, then there is a commutative diagram
\[
\begin{tikzcd}
{(Z_2, X_2)} \arrow[d] \arrow[r, "\rho"]                & {(Z_1, X_1)} \\
{(\check{Z}_2, \check{X}_2)} \arrow[ru, "\check{\rho}"'] &             
\end{tikzcd}
\]
where $(\check{Z}_2, \check{X}_2)$ is a tight $\psi$-invariant graph pair for $H$ and such that
\begin{align*}
\check{X}_2^{\#} &= \check{Z}_2^{\#}\cap X_1^{\#}\\
\psi(\check{X}_2^{\#}) &= \check{Z}_2^{\#}\cap \psi(X_1^{\#})\\
\rr(\check{Z}_2, \check{X}_2)&\leqslant \rr(Z_2, X_2).
\end{align*}
\end{proposition}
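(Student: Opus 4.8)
We want to construct, from the map $\rho\colon (Z_2,X_2)\to (Z_1,X_1)$, a tight $\psi$-invariant graph pair $(\check Z_2,\check X_2)$ for $H$ factoring $\rho$ and satisfying the two "intersection" equalities $\check X_2^{\#}=\check Z_2^{\#}\cap X_1^{\#}$ and $\psi(\check X_2^{\#})=\check Z_2^{\#}\cap\psi(X_1^{\#})$, together with the reduced-rank bound. First I would apply \cref{lem:tight} to $(Z_2,X_2)$ together with the observation that $\rho$ exhibits $f_{Z_2}$ as factoring through the tight pair $f_{Z_1}\colon(Z_1,X_1)\to(R,R)$ (precompose $f_{Z_1}$ with $\rho$, noting $f_{Z_2}=f_{Z_1}\circ\rho$ by the definition of a map of graph pairs); this already yields a tight $\psi$-invariant graph pair $(\check Z_2^{(0)},\check X_2^{(0)})$ for $H$ with $\rr(\check Z_2^{(0)},\check X_2^{(0)})\leqslant\rr(Z_2,X_2)$ and a factorisation $\check\rho^{(0)}\colon(\check Z_2^{(0)},\check X_2^{(0)})\to(Z_1,X_1)$. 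However tightness alone does not force the two intersection equalities, so this is only the starting point.

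The substance is an iterative "pullback-and-tighten" procedure, entirely parallel to the Feighn--Handel tightening but performed relative to the fixed target $(Z_1,X_1)$. At each stage we have a tight $\psi$-invariant graph pair $(Y,W)$ for $H$ mapping to $(Z_1,X_1)$. The equality $W^{\#}\subseteq Y^{\#}\cap X_1^{\#}$ always holds since $W\subseteq Y$ and $W$ maps into $X_1$; the issue is the reverse inclusion, and similarly for the $\psi$-twisted pair. If $W^{\#}\subsetneq Y^{\#}\cap X_1^{\#}$, there is an element $g\in (Y^{\#}\cap X_1^{\#})\setminus W^{\#}$, represented by an immersed loop in $Y$ that maps into $X_1$ but is not supported (even after folding) in $W$; we enlarge $W$ by wedging on $\Gamma(g)$ (more precisely, by attaching the pointed subgroup graph of the relevant loop), and likewise enlarge $W$ whenever $\psi(W^{\#})\subsetneq Y^{\#}\cap\psi(X_1^{\#})$, pulling back along $\psi$ using that $(Z_1,X_1)$ and hence the iterated images are controlled. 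Enlarging $W$ inside the fixed $Y$ does not increase $\rr(Y,W)$ — it can only decrease it, since we are only adding cells to the subgraph — so the reduced-rank bound is preserved throughout. After each such enlargement we re-tighten $(Y,W)$ relative to $(Z_1,X_1)$ using \cref{lem:tight} (the hypotheses still apply because the new pair still factors through the tight pair $(Z_1,X_1)$), and we interleave these enlargements with the folding steps so that, in the direct limit, the resulting pair is tight \emph{and} both intersection equalities hold. We then invoke \cref{lem:direct_limit_pair} to conclude that the direct limit $(\check Z_2,\check X_2)$ is a $\psi$-invariant graph pair for $H$ with $\rr(\check Z_2,\check X_2)\leqslant\rr(Z_2,X_2)$, and the universal property of the direct limit produces the factorisation $\check\rho\colon(\check Z_2,\check X_2)\to(Z_1,X_1)$ making the triangle commute.

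The main obstacle, and the point requiring the most care, is verifying that the intersection equalities actually hold \emph{in the limit} rather than merely being approached. Concretely: given $g\in\check Z_2^{\#}\cap X_1^{\#}$, one must show $g\in\check X_2^{\#}$. Because $\check Z_2$ is a direct limit of finite graphs, $g$ is represented by an immersed loop already living in some finite stage $Z_2^{(i)}$ and mapping into $X_1$; the enlargement procedure, if set up to be exhaustive (every element of $Y^{\#}\cap X_1^{\#}$ detected at stage $i$ is added to $W$ by some later stage, and similarly on the $\psi$-side, and this is compatible with exhausting all possible folds), guarantees that $g$ lies in $\check X_2^{\#}$. Making this bookkeeping precise — a standard diagonal/back-and-forth enumeration ensuring all three tasks (fold, enlarge-$W$-for-$X_1$, enlarge-$W$-for-$\psi(X_1)$) are eventually carried out — is the crux; once it is in place, the three displayed conclusions follow formally from \cref{lem:direct_limit_pair}, the construction of $(\check Z_2,\check X_2)$ as a limit of tight pairs, and the universal property. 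I would also remark that, with these equalities in hand, \cref{prop:intersection} applies to $\check\rho\colon(\check Z_2,\check X_2)\to(Z_1,X_1)$ provided $(Z_1,X_1)$ is additionally minimal; this is how the proposition will be used in the sequel, though minimality of $(Z_1,X_1)$ is not needed for the statement of \cref{prop:induced_pair} itself.
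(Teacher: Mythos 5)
Your overall architecture (interleave an enlargement step with the Feighn--Handel tightening, pass to the direct limit via \cref{lem:direct_limit_pair}, get the factorisation from the universal property) matches the paper's proof, and your initial reduction via \cref{lem:tight} and the observation that $f_{Z_2}$ factors through the tight pair $(Z_1,X_1)$ is exactly right. But there is a genuine gap in the enlargement step, and it is precisely the point where the paper has to be careful.

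You assert that one enlarges only $W$ ``inside the fixed $Y$'' and that this can only decrease $\rr(Y,W)$, hence the bound is preserved. This is inconsistent with, and does not survive, the constraint of $\psi$-invariance. If $(Y,W)$ is $\psi$-invariant then $Y^{\#}=\langle W^{\#},\psi(W^{\#})\rangle$, so after replacing $W$ by a strictly larger subgraph $W'$ of $Y$ (adding a loop representing some $g\in Y^{\#}\cap X_1^{\#}\setminus W^{\#}$) one generically has $\psi(g)\notin Y^{\#}$, and then $\langle W'^{\#},\psi(W'^{\#})\rangle\supsetneq Y^{\#}$: the pair $(Y,W')$ is no longer $\psi$-invariant. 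Restoring $\psi$-invariance forces you to enlarge $Y$ as well (to absorb $\psi(g)$), at which point the claim that $\rr$ cannot increase is no longer automatic. Your phrasing also conflates two incompatible moves: ``wedging on $\Gamma(g)$'' produces a graph that is \emph{not} a subgraph of $Y$, so it is not ``enlarging $W$ inside the fixed $Y$.'' The paper's proof resolves this by doing three things at the even stages simultaneously: it enlarges $X$ to the pullback $Z\times_{Z_1}X_1$ (a genuine subgraph of $Z$, realising $Z^{\#}\cap X_1^{\#}$) writing its fundamental group as $X^{\#}*K$, it wedges $\Gamma(\psi(K))$ onto $Z$ to restore $\psi$-invariance, and it wedges $\Gamma(\psi^{-1}(Z^{\#}\cap\psi(X_1^{\#})))$ onto \emph{both} $X$ and $Z$ to handle the twisted inclusion; a short Grushko count shows that, with these compensating wedges, $\rr$ stays exactly constant through the enlargement step and can only decrease at the tightening steps. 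Without that compensating wedge onto $Z$, your $\rr$ bookkeeping does not close. Secondarily, your ``diagonal enumeration'' exhaustion is workable but more delicate than the paper's device of taking the \emph{entire} pullback $Z\times_{Z_1}X_1$ at once, which makes the limit verification (a loop at a finite stage lands in $X'_j$ for all $j$ large) essentially formal.
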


\begin{proof}
Consider the following sequence:
\[
(Z_2, X_2) = (Z_0', X_0') \to (Z_1', X_1')\to \ldots \to (Z_1, X_1)
\]
of graph pairs constructed as follows. 

The graph pair $(Z_{2i+1}', X_{2i+1}')$ is the tight $\psi$-invariant graph pair obtained from $(Z_{2i}', X_{2i}')$ as in \cref{lem:tight}. Note that $\rr(Z'_{2i+1}, X'_{2i+1})\leqslant \rr(Z'_{2i}, X'_{2i})$. 

The graph pair $(Z_{2i}', X_{2i}')$ is the $\psi$-invariant graph pair obtained from the tight $\psi$-invariant graph pair $(Z_{2i-1}', X_{2i-1}')$ as follows. Since $X_1^{\#}$ is a subgraph of $Z_1^{\#}$, we have that $X_{2i-1}'' = Z_{2i-1}'\times_{Z_1}X_1$ is a subgraph of $Z_{2i-1}'$. Moreover, we have $X_{2i-1}''^{\#} = Z_{2i-1}'^{\#}\cap X_1^{\#}$ by \cref{lem:double_coset}. Since $X_{2i-1}'\subset X_{2i-1}''$, we have that $X_{2i-1}''^{\#} = X_{2i-1}'^{\#}*K_{2i}$ for some $K_{2i}$. Now we let:
\begin{align*}
X_{2i}' &= \Gamma(\psi^{-1}(Z_{2i-1}'^{\#}\cap \psi(X_1^{\#})))\vee X_{2i-1}''\\
Z_{2i}' &= \Gamma(\psi^{-1}(Z_{2i-1}'^{\#}\cap \psi(X_1^{\#})))\vee Z_{2i-1}'\vee\Gamma(\psi(K_{2i}))
\end{align*}
By construction, we have that $(Z_{2i}', X_{2i}')$ is a $\psi$-invariant graph pair for $H$ and $\rr(Z_{2i}', X_{2i}') = \rr(Z_{2i-1}', X_{2i-1}')$. 

Applying \cref{lem:direct_limit_pair} to this sequence of maps of graph pairs we obtain a $\psi$-invariant graph pair $(\check{Z}_2, \check{X}_2)$ for $H$, with $\rr(\check{Z}_2, \check{X}_2)\leqslant \rr(Z_2, X_2)$, and a map $\check{\rho}\colon (\check{Z}_2, \check{X}_2)\to (Z_1, X_1)$ of graph pairs. 

The pair $(\check{Z}_2, \check{X}_2)$ is tight since it is also a direct limit of tight graph pairs (consider the subsequence with odd indices). 

Now let $\lambda\colon I\to \check{Z}_2$ be a loop representing an element in $X_1^{\#}$. For sufficiently large $i$, the loop $\lambda$ lifts to a loop in $(Z_{2i-1}', X_{2i-1}')$. By construction, for all $j\geqslant 2i+1$, the image of this loop in $Z_j'$ lies in $X_j'$. Hence $\lambda$ itself lifts to $\check{X}_2$. By a similar argument, any loop representing an element in $\psi(X_1^{\#})$ represents an element in $\psi(\check{X}^{\#}_2)$. Hence we have
\begin{align*}
\check{X}_2^{\#} &= \check{Z}_2^{\#}\cap X_1^{\#}\\
\psi(\check{X}_2^{\#}) &= \check{Z}_2^{\#}\cap \psi(X_1^{\#})
\end{align*}
as required.
\end{proof}

\subsection{Lifting graph pair maps}

The aim of this section is to prove \cref{prop:preimage_cut} below, the proof of which will hinge on a property of lifts to graph pairs.

\begin{proposition}
\label{prop:preimage_cut}
Let $\rho\colon (Z_2, X_2)\to (Z_1, X_1)$ be a map of tight $\psi$-invariant graph pairs so that $Z_1$ is finite and so that $\psi(X^{\#}_1)\cap Z^{\#}_2= \psi(X^{\#}_2)$. There is a constant $\kappa$ such that for any finite collection of points $\mathcal{P}\subset Z_2$, there is a collection of at most $\kappa|\mathcal{P}|$ points $\mathcal{Q}\subset X_2$ with the following property. 

If $(U, v_U)\to (X_2, v_{Z_2})$ is a pointed map restricting to a map $\core(U)\to X_2 - \mathcal{Q}$, then the pointed map $\Gamma(\psi(U^{\#}))\to Z_2$ restricts to a map $\Gamma[\psi(U^{\#})]\to Z_2 - \mathcal{P}$.
\end{proposition}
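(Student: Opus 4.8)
The plan is to understand, via the pullback description of how paths lift to $\Gamma(\psi(U^{\#}))$, exactly which points of $Z_2$ can be ``hit'' by the core of $\Gamma(\psi(U^{\#}))$ in terms of data living in $X_2$, and then invert. First I would fix the immersion $\Gamma = \Gamma[\psi(X_1^{\#})]\times_{Z_1} Z_2 \to Z_2$, or rather its core; by \cref{lem:double_coset} applied to the immersions $\Gamma[\psi(X_1^{\#})]\immerses Z_1$ (which exists because $(Z_1,X_1)$ is tight) and $\rho\colon Z_2\immerses Z_1$, the components of $\core(\Gamma)$ record the $\psi(X_1^{\#})$--$Z_2^{\#}$ double cosets with nontrivial intersection, and since $\psi(X_1^{\#})\cap Z_2^{\#} = \psi(X_2^{\#})$ by hypothesis, the relevant intersection subgroup (up to conjugacy by the double-coset element) is exactly $\psi(X_2^{\#})$. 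This means there is a component $\Theta$ of $\core(\Gamma)$ with $\pi_1(\Theta) $ conjugate to $\psi(X_2^{\#})$, i.e. $\Theta = \Gamma[\psi(X_2^{\#})]$, and the composite $\Gamma[\psi(X_2^{\#})]\to \Gamma\to Z_2$ is the natural map. The key point is that $\Gamma[\psi(X_2^{\#})]\to Z_2$ factors through $\Gamma[\psi(X_2^{\#})]\to \Gamma(\psi(X_2^{\#}))$ composed with... more precisely, since $(X_2,X_2)$ is tight, $X_2$ carries $X_2^{\#}$ and $\psi(X_2)\subset \psi$-image; I would use \cref{lem:factorise} to get that any pointed map $(U,v_U)\to (X_2,v_{Z_2})$ with $\core(U)$ missing a suitable finite set, after applying $\psi$, gives a pointed immersion $\Gamma(\psi(U^{\#}))\to \Gamma(\psi(X_2^{\#}))$ on cores, which then maps to $Z_2$.

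The heart of the argument is a \emph{bounded preimage} statement: for each point $p\in \mathcal{P}\subset Z_2$, the preimage of $p$ under the map $\psi(X_2)\text{-side} \to Z_2$ is controlled. Concretely, consider the graph map $\psi\colon X_2 \to Z_2$ (the restriction of $f_{Z_2}$ followed by the $\psi$ operation, realised as: replace each edge of $X_2$ by the $\psi$-image edge-path in the rose $R$, then fold into $Z_2$ using that $\psi(X_2^{\#})\leqslant Z_2^{\#}$). This is a map from a graph to a \emph{finite} graph $Z_2$ — wait, $Z_2$ need not be finite. Here I would instead use finiteness of $Z_1$: the composite $X_2 \xrightarrow{\psi\text{-subdivision}} (\text{subdivided }R) \to R$ factors through the finite graph $Z_1$ via $X_1 \subset Z_1$ and then $\psi$; more usefully, the points $\mathcal{P}\subset Z_2$ map to at most $|\mathcal{P}|$ points in $Z_1$, and the map $\Gamma[\psi(X_1^{\#})]\to Z_1$ is an immersion of a finite graph, so there is a uniform bound $\kappa_0$ on the number of preimages of any point of $Z_1$ in $\Gamma[\psi(X_1^{\#})]$. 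Pulling back along $\rho$ (an immersion), the fibre of $\core(\Gamma)\to Z_2$ over $p$ injects into the fibre of $\Gamma[\psi(X_1^{\#})]\to Z_1$ over $\rho(p)$, hence has size $\le \kappa_0$. Now each such preimage point in $\Gamma[\psi(X_2^{\#})]=\Theta\subset\core(\Gamma)$ is the $\psi$-subdivision image of an edge-midpoint or vertex of $X_2$; the subdivision is by edge-paths of length $\le L := \max_{e}\len(f_{Z_1}(e'))$ over the finitely many edge-types, so each preimage point in $\Theta$ lies within bounded combinatorial distance of, and is determined by, one of $\le L\cdot\kappa_0$ cells of $X_2$. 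Taking $\mathcal{Q}$ to be the union over $p\in\mathcal{P}$ of these cells (a subdivision/choice of midpoints to make them genuine points of $X_2$) gives $|\mathcal{Q}|\le \kappa|\mathcal{P}|$ with $\kappa = L\kappa_0$.

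Finally I would close the loop: suppose $(U,v_U)\to(X_2,v_{Z_2})$ restricts on $\core(U)$ to a map into $X_2 - \mathcal{Q}$. Then $U^{\#}\leqslant X_2^{\#}$, so $\psi(U^{\#})\leqslant \psi(X_2^{\#})$, and by \cref{lem:factorise} the restriction of $\Gamma(\psi(U^{\#}))\to Z_2$ to $\Gamma[\psi(U^{\#})] = \core(\Gamma(\psi(U^{\#})))$ factors through the immersion $\Gamma[\psi(X_2^{\#})]\immerses Z_2$. A point of $Z_2$ in $\mathcal{P}$ can only be in the image of $\Gamma[\psi(U^{\#})]$ if one of its (at most $\kappa_0$) preimages in $\Gamma[\psi(X_2^{\#})]$ lies in the image of $\core(U)$ under $\psi$; but each such preimage came from a cell of $X_2$ we placed in $\mathcal{Q}$, and $\core(U)\to X_2$ avoids $\mathcal{Q}$, so the $\psi$-subdivided image of $\core(U)$ avoids those preimage points. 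Hence $\Gamma[\psi(U^{\#})]\to Z_2$ misses $\mathcal{P}$, as required. The main obstacle I anticipate is bookkeeping the relationship between ``missing a point of $X_2$'' and ``missing the corresponding subdivision cells after applying $\psi$'' — i.e. making precise that avoiding the finite set $\mathcal{Q}$ of cells of $X_2$ forces the $\psi$-image to avoid the finitely many pulled-back preimage points in $\Theta$ — together with checking that the pullback/double-coset identification $\psi(X_1^{\#})\cap Z_2^{\#}=\psi(X_2^{\#})$ really does single out the component $\Gamma[\psi(X_2^{\#})]$ of $\core(\Gamma)$ through which everything factors.
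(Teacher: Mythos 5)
Your overall strategy mirrors the paper's: realise $\psi\mid X_2^{\#}$ as a concrete cellular map from $X_2$ into $Z_2$ (subdivide each edge by its $\psi$-image path, then fold/immerse), show this map has uniformly bounded point preimages, and take $\mathcal{Q}$ to be the full preimage of $\mathcal{P}$ under it. The paper does this by attaching trees to $Z_2$ to make $\rho_{Z_2}$ a covering, building an explicit lift $f_2\colon X_2\to Z_2$ of a fixed $f_1\colon X_1\to Z_1$, and taking $\mathcal{Q}=f_2^{-1}(\mathcal{P})$; the preimage bound is supplied by Corollary~\ref{cor:preimage_bound}, which in turn rests on Lemma~\ref{lem:preimage_bound1} (fibres of the immersion part inject into fibres on the $Z_1$ side, using the hypothesis $\psi(X_1^{\#})\cap Z_2^{\#}=\psi(X_2^{\#})$) and Lemma~\ref{lem:preimage_bound2} (fibres of the fold part inject into fibres of the corresponding fold on the $X_1$ side, using tightness of $X_2$). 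Your pullback formulation $\Gamma[\psi(X_1^{\#})]\times_{Z_1}Z_2$ correctly reproduces the \emph{immersion-side} bound: the $\kappa_0$ you extract from $\Gamma[\psi(X_1^{\#})]\to Z_1$ being a finite immersion is essentially Lemma~\ref{lem:preimage_bound1}.

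The gap is in the \emph{fold-side} bound, which you flag as the ``main obstacle'' but then assert rather than prove. You write that each preimage point in $\Theta=\Gamma[\psi(X_2^{\#})]$ ``is determined by one of $\leqslant L\kappa_0$ cells of $X_2$,'' where $L$ only measures the subdivision length. But the issue runs in the opposite direction: after subdividing each edge of $X_2$ by its $\psi$-image path, the graph must be \emph{folded} down to $X_2^f\supseteq\Theta$, and a single cell of $\Theta$ can be the image of many distinct cells of the subdivided $X_2$ (hence many distinct cells of $X_2$). For the argument ``$\core(U)$ misses $\mathcal{Q}$ $\Rightarrow$ the $\psi$-image of $\core(U)$ misses the preimage points in $\Theta$'' to close, $\mathcal{Q}$ must contain \emph{every} cell of $X_2$ mapping to those preimage points, and you have given no bound on that fold multiplicity. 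This is precisely what Lemma~\ref{lem:preimage_bound2} supplies: using that $\rho_{X_2}\colon X_2\to X_1$ is an immersion (tightness of $(Z_2,X_2)$) and that $(X_2^f)^{\#}=\psi(X_2^{\#})$, one shows $g_2^{-1}(x)$ injects via $\rho_{X_2}$ into $g_1^{-1}(\rho_{X_2^f}(x))$, which is uniformly finite since $X_1$ is finite. Without this (or an equivalent argument), your claimed constant $\kappa=L\kappa_0$ is not justified and the proof is incomplete.

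Secondarily, note that the paper's hypothesis $\psi(X_1^{\#})\cap Z_2^{\#}=\psi(X_2^{\#})$ is doing more work than just identifying the basepoint component of the pullback as $\Gamma[\psi(X_2^{\#})]$: it is the essential ingredient in the injectivity of $h_2^{-1}(x)\to h_1^{-1}(\rho_{Z_2}(x))$ (Lemma~\ref{lem:preimage_bound1}). Your pullback phrasing does make implicit use of this, so this part is fine in spirit, but it would be worth isolating the statement so the role of the hypothesis is transparent.
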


For the rest of this section we fix a map
\[
\rho = (\rho_{Z_2}, \rho_{X_2}) \colon (Z_2, X_2) \to (Z_1, X_1)
\]
of $\psi$-invariant tight graph pairs and a cellular (not necessarily combinatorial) map
\[
f_1\colon X_1\to Z_1
\]
so that $(f_1)_* = \psi\mid X_1^{\#}$. For simplicity, we shall assume that $\rho_{Z_2}\colon Z_2\to Z_1$ is actually a covering. We shall also always be assuming that our spaces and maps are pointed, but suppress the basepoint from the notation.

As explained in \cref{sec:folds} (we pass to a direct limit of folds when $X_1$ is infinite), we may decompose $f_1$ as a composition of two maps $f_1 = h_1\circ g_1$ where $g_1\colon X_1\to X_1^f$ factors as a sequence of folds (with respect to an appropriate subdivision of the edges in $X_1$) and where $h_1\colon X_1^f\to Z$ is a combinatorial immersion.
\[
\begin{tikzcd}
Z_1                     & X_1 \arrow[r, "g_1"] \arrow[l, phantom, sloped, "\supseteq"]                     & X_1^f \arrow[ll, "h_1"', bend left=49]            
\end{tikzcd}
\]
Now consider the cellular map $g_1\circ\rho_{X_2}\colon X_2\to X_1^f$. Again, this factors as a sequence of folds $g_2\colon X_2\to X_2^f$ followed by an immersion $\rho_{X_2^f}\colon X_2^f\to X_1^f$. 

Since $Z_2\to Z_1$ is a covering, the map $X_2^f\to Z_1$ lifts uniquely to a (pointed) map to $Z_2$. Denote by $h_2\colon X_2^f\to Z_2$ this lift. We may summarise our maps in the following commutative diagram:
\begin{equation}
\label{diagram}
\begin{tikzcd}
X_2 \arrow[r, "g_2"] \arrow[d, "\rho_{X_2}"] \arrow[rr, "f_2", bend left=49] & X_2^f \arrow[d, "\rho_{X_2^f}"] \arrow[r, "h_2"] & Z_2 \arrow[d, "\rho_{Z_2}"] \\
X_1 \arrow[r, "g_1"] \arrow[rr, "f_1", bend right=49]                        & X_1^f \arrow[r, "h_1"]                           & Z_1                        
\end{tikzcd}
\end{equation}
Call $f_2$ the \emph{lift} of $f_1$ to $(Z_2, X_2)$. Note that we have
\begin{equation}
\label{eq:core}
\core\left(X_i^f, v_{X_i^f}\right) = \Gamma(\psi(X^{\#}_i))
\end{equation}
for $i = 1, 2$.

\begin{lemma}
\label{lem:preimage_bound1}
If $\psi(X^{\#}_1)\cap Z^{\#}_2= \psi(X^{\#}_2)$, then for each $x\in Z_2$, the set $h_2^{-1}(x)$ injects into the set $h_1^{-1}\left(\rho_{Z_2}(x)\right)$ via $\rho_{X_2^f}$.
\end{lemma}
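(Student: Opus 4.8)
The plan is to package the two maps $h_2$ and $\rho_{X_2^f}$ into a single map to a pullback and to prove that this map is injective; the Lemma is then immediate. Concretely, I would form the pullback $\wt{Y}:=X_1^f\times_{Z_1}Z_2$ of $h_1\colon X_1^f\to Z_1$ and $\rho_{Z_2}\colon Z_2\to Z_1$. Since $\rho_{Z_2}$ is a covering, so is the projection $\wt{Y}\to X_1^f$. The commutativity $\rho_{Z_2}\circ h_2 = h_1\circ\rho_{X_2^f}$ from \eqref{diagram} produces a pointed graph map $\mu\colon X_2^f\to\wt{Y}$, $y\mapsto(\rho_{X_2^f}(y),h_2(y))$, whose compositions with the two projections of $\wt Y$ are $\rho_{X_2^f}$ and $h_2$. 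Because $X_2^f$ is connected and $\mu$ is pointed, $\mu$ lands in the basepoint component $\wt Y_0\subseteq\wt Y$, which is a covering of $X_1^f$; and since $\rho_{X_2^f}$ is an immersion and $\wt Y_0\to X_1^f$ is a local homeomorphism, $\mu$ is itself an immersion. Note that the Lemma is precisely the statement that $\mu$ is injective: for $y,y'\in h_2^{-1}(x)$ one has $\mu(y)=\mu(y')$ exactly when $\rho_{X_2^f}(y)=\rho_{X_2^f}(y')$, and $h_1\circ\rho_{X_2^f}=\rho_{Z_2}\circ h_2$ gives $\rho_{X_2^f}(h_2^{-1}(x))\subseteq h_1^{-1}(\rho_{Z_2}(x))$.

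The next step is to show that $\mu$ induces an isomorphism on fundamental groups. Using the identifications of $\pi_1$ of the $X_i^f$ and the $Z_i$ with subgroups of $\F$ coming from the immersions to $R$ (together with \eqref{eq:core}, the fact that each fold $g_i$ is $\pi_1$-surjective, and the defining property $(f_1)_*=\psi|_{X_1^\#}$ pushed through \eqref{diagram}), the map $(\rho_{X_2^f})_*$ becomes the inclusion $\psi(X_2^\#)\hookrightarrow\psi(X_1^\#)$, because $(f_{Z_1}\circ h_1\circ\rho_{X_2^f})_*=(f_{Z_2}\circ h_2)_*$. On the other hand, by \cref{lem:double_coset} the covering $\wt Y_0\to X_1^f$ corresponds to the subgroup $\psi(X_1^\#)\cap Z_2^\#$ of $\pi_1(X_1^f)\cong\psi(X_1^\#)$, and by hypothesis this equals $\psi(X_2^\#)$. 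Since $(\rho_{X_2^f})_*$ factors as $\mu_*$ followed by the inclusion $\pi_1(\wt Y_0)\hookrightarrow\pi_1(X_1^f)$, and the image of $(\rho_{X_2^f})_*$ equals $\psi(X_2^\#)$ which is also the image of that inclusion, the map $\mu_*$ must be surjective; it is injective because $\mu$ is an immersion. Hence $\mu_*$ is an isomorphism.

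Finally I would invoke the standard fact that an immersion of connected graphs inducing an isomorphism on $\pi_1$ is injective. Here is the one-line reason I would record: lifting $\mu$ to universal covers gives an immersion of a tree, which is injective (a locally injective map out of a tree sending two distinct points to the same image would carry the reduced arc between them to an immersed loop, impossible in the tree $\wt Y_0$'s universal cover); this lift is equivariant through the isomorphism $\mu_*$, so its image is a $\pi_1(\wt Y_0)$-invariant subtree, hence the full preimage of a subgraph $C\subseteq\wt Y_0$, and $\mu$ is therefore an isomorphism $X_2^f\xrightarrow{\ \cong\ }C$ followed by an inclusion. Applying this to $\mu$ shows $\mu$ is injective, which as noted above is exactly the assertion of the Lemma.

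The step I expect to be the main obstacle is the fundamental-group computation of the middle paragraph, and in particular the surjectivity of $\mu_*$: this is the only place the hypothesis $\psi(X_1^\#)\cap Z_2^\#=\psi(X_2^\#)$ is used, and making it work requires keeping the several identifications of vertex- and edge-space fundamental groups with subgroups of $\F$ compatible with the pullback description of \cref{lem:double_coset}. The injectivity of $\mu$ given $\mu_*$ an isomorphism, and the translation into a statement about the fibres of $h_2$, are then routine.
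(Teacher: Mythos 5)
Your proof is correct, but it takes a genuinely different route from the paper's. The paper argues directly with paths: given $x_1\neq x_2$ in $h_2^{-1}(x)$ with $\rho_{X_2^f}(x_1)=\rho_{X_2^f}(x_2)$, it chooses paths $p_1,p_2$ from the basepoint to $x_1,x_2$, observes that the loop $(\rho_{X_2^f}\circ p_1)*(\rho_{X_2^f}\circ\overline{p}_2)$ in $X_1^f$ is non-trivial (because $\rho_{X_2^f}$ and $h_2$ are immersions), and that via \eqref{eq:core} and commutativity of \eqref{diagram} its image in $\F$ gives a non-trivial element of $\psi(X_1^\#)\cap Z_2^\#$ that cannot lie in $\psi(X_2^\#)$ (this last step is the delicate one, resting on unique path-lifting for immersions); this contradicts the hypothesis. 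Your argument instead packages $(\rho_{X_2^f},h_2)$ into a single map $\mu\colon X_2^f\to\widetilde{Y}_0\subset X_1^f\times_{Z_1}Z_2$, verifies via \cref{lem:double_coset} that the basepoint component of the pullback is the covering of $X_1^f$ corresponding to $\psi(X_1^\#)\cap Z_2^\#$, uses the hypothesis to show $\mu_*$ is a $\pi_1$-isomorphism, and then invokes the general fact that an immersion of connected graphs inducing an isomorphism on $\pi_1$ is injective (your universal-cover justification of that fact is sound). Your approach is more modular and makes the role of the hypothesis $\psi(X_1^\#)\cap Z_2^\#=\psi(X_2^\#)$ transparent --- it enters exactly once, as the surjectivity of $\mu_*$ --- at the cost of some extra scaffolding (the pullback, the basepoint component, the equivariant lift to universal covers); the paper's path argument is shorter but requires a careful moment to see why the element produced cannot already lie in $\psi(X_2^\#)$. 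Both proofs correctly use the standing simplification that $\rho_{Z_2}$ is a covering, and both reduce the lemma to an immersion/uniqueness statement; yours just does so at the level of a single auxiliary map rather than a pair of paths.
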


\begin{proof}
Let $x_1, x_2\subset X_2^f$ be two points such that $h_2(x_1) = h_2(x_2) = x$. Suppose that $\rho_{X_2^f}(x_1) = \rho_{X_2^f}(x_2)$. Let $p_1, p_2\colon I\to X_2^f$ be paths connecting the basepoint with $x_1$ and $x_2$ respectively. Note that if $x_1\neq x_2$, then $h_2\circ p_1$ is not path homotopic to $h_2\circ p_2$ since $h_2$ is an immersion. Then if $x_1\neq x_2$, we have that $(h_2\circ p_1) * (h_2\circ \overline{p}_2)$ is a loop in $Z_2$ that is not null-homotopic. Moreover, if $x_1\neq x_2$, then $(\rho_{X_2^f}\circ p_1)*(\rho_{X_2^f}\circ \overline{p}_2)$ is a loop in $X_1^f$ that is not null-homotopic. By \eqref{eq:core} this implies that if $x_1\neq x_2$ there is a non-trivial element, $(h_1)_*([(\rho_{X_2^f}\circ p_1)*(\rho_{X_2^f}\circ \overline{p}_2)])$, that lies in $\psi(X^{\#}_1)\cap Z^{\#}_2$ that does not lie in $\psi(X^{\#}_2)$. Since this is not possible, this implies that $x_1 = x_2$. Now the result follows from the commutativity of \eqref{diagram}.
\end{proof}

\begin{lemma}
\label{lem:preimage_bound2}
For each $x\in X_2^f$, the set $g_2^{-1}(x)$ injects into the set $g_1^{-1}\left(\rho_{X_2^f}(x)\right)$ via $\rho_{X_2}$.
\end{lemma}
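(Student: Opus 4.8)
The plan is as follows. The inclusion $\rho_{X_2}\big(g_2^{-1}(x)\big)\subseteq g_1^{-1}\big(\rho_{X_2^f}(x)\big)$ is forced by the commutativity of \eqref{diagram}: if $g_2(a)=x$ then $g_1(\rho_{X_2}(a))=\rho_{X_2^f}(g_2(a))=\rho_{X_2^f}(x)$. So the real content is that $\rho_{X_2}$ is injective on each fibre $g_2^{-1}(x)$, and for this I would isolate two facts about the maps in \eqref{diagram}. First, $\rho_{X_2}\colon X_2\to X_1$ is an immersion: since $(Z_1,X_1)$ and $(Z_2,X_2)$ are tight, the structure maps to $R$ are immersions, and since $\rho$ is a map of graph pairs with $\rho_{X_2}=\rho_{Z_2}\mid X_2$, any fold of $\rho_{Z_2}$ would produce a fold of $f_{Z_2}$; hence $\rho_{Z_2}$, and therefore $\rho_{X_2}$, is locally injective (under the running assumption that $\rho_{Z_2}$ is a covering this of course needs no comment). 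Second, and this is the point of the lemma, $(g_1)_*$ is injective: because $\psi$ is a monomorphism, $(f_1)_*=\psi\mid X_1^{\#}$ is injective, and since $f_1=h_1\circ g_1$ with $h_1$ an immersion (so $(h_1)_*$ injective), $(g_1)_*$ must be injective too. In particular $g_1$ kills no non-trivial loop of $X_1$.

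Granting these, the argument is short. Suppose $a_1,a_2\in g_2^{-1}(x)$ with $\rho_{X_2}(a_1)=\rho_{X_2}(a_2)=:c$; I would show $a_1=a_2$. Since $g_2$ is the fold part of $g_1\circ\rho_{X_2}\colon X_2\to X_1^f$, the standard description of Stallings' fold factorisation \cite{St83} gives a path $q\colon I\to X_2$ from $a_1$ to $a_2$ whose image $g_1(\rho_{X_2}(q))$ in $X_1^f$ is null-homotopic. As $\rho_{X_2}(a_1)=\rho_{X_2}(a_2)$, the path $\rho_{X_2}(q)$ is a loop in $X_1$ at $c$ lying in $\ker((g_1)_*)$, hence by injectivity of $(g_1)_*$ it is null-homotopic in $X_1$, i.e. it reduces to the constant path. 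Writing $\overline q$ for the reduction of $q$, the immersion $\rho_{X_2}$ carries $\overline q$ to a reduced path in $X_1$, which must then be the reduction of $\rho_{X_2}(q)$, namely the constant path; since $\rho_{X_2}$ sends edges homeomorphically to edges, $\overline q$ traverses no edge and so is constant, giving $a_1=a_2$. The same reasoning works verbatim when $x$ is interior to an edge of $X_2^f$.

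The one step here that is not pure bookkeeping is the description of the fibres of the fold map $g_2$ used above: that $g_2(a_1)=g_2(a_2)$ forces a path $a_1\to a_2$ in $X_2$ with null-homotopic image in $X_1^f$. This is a standard feature of Stallings' fold factorisation and can be proved by induction on the folds making up $g_2$, lifting a witnessing path one fold at a time and inserting a backtrack along a folded pair of edges whenever the naive lift reaches the ``wrong'' preimage of an identified vertex (each such insertion has null-homotopic image downstairs, since the two folded edges have equal image there). It is instructive to contrast this with \cref{lem:preimage_bound1}, where one needs the hypothesis $\psi(X_1^{\#})\cap Z_2^{\#}=\psi(X_2^{\#})$ and exploits that $h_1,h_2$ are immersions: here $g_1,g_2$ are genuine fold maps, and the statement rests instead on $(g_1)_*$ being injective --- which holds purely because $\psi$ is a monomorphism.
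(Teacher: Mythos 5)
Your proof is correct, and it takes a genuinely different route from the one in the paper. The paper works with the two based loops $\ell_1=(\rho_{X_2}\circ p_1)*(\rho_{X_2}\circ\overline{p}_2)$ and $\ell_2=(g_2\circ p_1)*(g_2\circ\overline{p}_2)$ and then invokes the identity $(X_2^f)^{\#}=\psi(X_2^{\#})$ (i.e.\ \eqref{eq:core}) to deduce that $[\ell_1]$ lies in $\operatorname{Im}(\rho_{X_2})_*$, which, combined with unique lifting along the immersion $\rho_{X_2}$, forces $x_1=x_2$; this argument necessarily passes through the whole commutative square including $Z_2$, $h_2$ and the $\#$-images in $\F$. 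You instead stay entirely inside the square $X_2\to X_2^f$, $X_1\to X_1^f$: you extract a witnessing path $q$ from the Stallings description of the fibres of the fold map $g_2$, observe that its image $g_1(\rho_{X_2}(q))$ is null-homotopic, and then push the null-homotopy up through $(g_1)_*$ (injective precisely because $\psi$ is a monomorphism) and then through the immersion $\rho_{X_2}$. Both arguments bottom out at the same two facts --- injectivity of $\psi$ and unique lifting along the immersion $\rho_{X_2}$ --- but your packaging avoids the auxiliary loop $\ell_2$ and the $\#$-bookkeeping entirely, which makes the contrast with \cref{lem:preimage_bound1} (where the $\#$-hypothesis is genuinely needed) cleaner; the price is that you must appeal to the fibre-characterisation of fold maps, which you correctly flag as the one non-bookkeeping step and which does hold (and is standard) by the induction you sketch. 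One tiny remark: for $(g_1)_*$ to be injective it is enough that $(f_1)_*=(h_1)_*\circ(g_1)_*$ is injective --- the immersivity of $h_1$ is not needed at that point, though of course it holds.
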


\begin{proof}
Let $x_1, x_2\in g_2^{-1}(x)$ and suppose that $\rho_{X_2}(x_1) = \rho_{X_2}(x_2)$. Let $p_1, p_2\colon I\to X_2$ be immersed paths connecting the basepoint with $x_1, x_2$ respectively. If $x_1 \neq x_2$, then since $\rho_{X_2}$ is an immersion, the loop $(\rho_{X_2}\circ p_1)*(\rho_{X_2}\circ\overline{p}_2)$ is not nullhomotopic. By commutativity of \eqref{diagram} we see that also $(g_2\circ p_1)*(g_2\circ\overline{p}_2)$ is not nullhomotopic if $x_1\neq x_2$. But since $(X_2^f)^{\#} = \psi(X_2^{\#})$, it follows that the loop $(\rho_{X_2}\circ p_1)*(\rho_{X_2}\circ\overline{p}_2)$ must lift to a loop in $X_2$. Thus, we conclude that $x_1 = x_2$. This implies the result.
\end{proof}

Combining \cref{lem:preimage_bound1,lem:preimage_bound2} we obtain:

\begin{corollary}
\label{cor:preimage_bound}
If $\psi(X^{\#}_1)\cap Z^{\#}_2= \psi(X^{\#}_2)$, then for each $x\in Z_2$, the set $f_2^{-1}(x)$ injects into $f_1^{-1}\left(\rho_{Z_2}(x)\right)$ via $\rho_{X_2}$. 

In particular, if $Z_1$ is finite, then there is a constant $\kappa$ so that $|f_2^{-1}(x)|\leqslant \kappa$ for all $x\in Z_2$.
\end{corollary}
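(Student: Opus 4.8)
The plan is to obtain this as a formal consequence of \cref{lem:preimage_bound1,lem:preimage_bound2}, by chasing the factorisation $f_2 = h_2\circ g_2$ around the commutative diagram \eqref{diagram}. Fix $x\in Z_2$, so that $f_2^{-1}(x) = g_2^{-1}\!\left(h_2^{-1}(x)\right)$. First I would check that $\rho_{X_2}$ is injective on $f_2^{-1}(x)$: suppose $x_1, x_2\in f_2^{-1}(x)$ with $\rho_{X_2}(x_1) = \rho_{X_2}(x_2)$. Applying $g_2$ and using the commutativity of \eqref{diagram} gives $\rho_{X_2^f}(g_2(x_1)) = g_1(\rho_{X_2}(x_1)) = g_1(\rho_{X_2}(x_2)) = \rho_{X_2^f}(g_2(x_2))$, while $g_2(x_1), g_2(x_2)\in h_2^{-1}(x)$. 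Since $\psi(X_1^{\#})\cap Z_2^{\#} = \psi(X_2^{\#})$ by hypothesis, \cref{lem:preimage_bound1} tells us that $\rho_{X_2^f}$ is injective on $h_2^{-1}(x)$, hence $g_2(x_1) = g_2(x_2) =: y$. Then $x_1, x_2\in g_2^{-1}(y)$ with $\rho_{X_2}(x_1) = \rho_{X_2}(x_2)$, so \cref{lem:preimage_bound2} forces $x_1 = x_2$. That $\rho_{X_2}$ maps $f_2^{-1}(x)$ into $f_1^{-1}(\rho_{Z_2}(x))$ is immediate from the commutativity of the outer square of \eqref{diagram}, since $f_1(\rho_{X_2}(x_i)) = \rho_{Z_2}(f_2(x_i)) = \rho_{Z_2}(x)$.

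For the final clause, I would observe that if $Z_1$ is finite then its subgraph $X_1$ is finite as well, so in the factorisation $f_1 = h_1\circ g_1$ the graphs $X_1$, $X_1^f$ and $Z_1$ are all finite; a finite composition of folds followed by a combinatorial immersion between finite graphs has uniformly bounded point-preimages, so $\kappa := \sup_{z\in Z_1}\lvert f_1^{-1}(z)\rvert < \infty$. Combining this with the injection just established yields $\lvert f_2^{-1}(x)\rvert \leqslant \lvert f_1^{-1}(\rho_{Z_2}(x))\rvert \leqslant \kappa$ for every $x\in Z_2$.

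I do not anticipate a genuine obstacle here: the corollary is a bookkeeping consequence of the two preceding lemmas. The only point that requires a little care is confirming that the two separate injectivity statements --- that of $\rho_{X_2^f}$ on the fibres of $h_2$ and that of $\rho_{X_2}$ on the fibres of $g_2$ --- compose to give injectivity of $\rho_{X_2}$ on the fibres of $f_2 = h_2\circ g_2$, and this is exactly what the commutativity of diagram \eqref{diagram} provides.
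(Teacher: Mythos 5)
Your proof is correct and follows exactly the route the paper intends: the paper merely states that the corollary is obtained by "combining" \cref{lem:preimage_bound1,lem:preimage_bound2}, and your argument is a careful spelling-out of that combination via the factorisation $f_2 = h_2\circ g_2$ and the commutativity of diagram~\eqref{diagram}. The observation for the final clause --- that finiteness of $Z_1$ forces finiteness of $X_1$ and hence of all fibres of $f_1$, since $f_1$ factors as finitely many folds followed by an immersion of finite graphs --- is also the expected justification for the existence of $\kappa$.
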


\begin{remark}
\label{rem:preimage_remark}
When $f_1\colon X_1\to Z_1$ is a homotopy equivalence onto its image in $Z_1$, then $h_i\colon X_i^f\to Z_i$ is an inclusion for $i = 1, 2$. In this case the assumption that $\psi(X^{\#}_1)\cap Z^{\#}_2= \psi(X^{\#}_2)$ in \cref{cor:preimage_bound} can be dropped.
\end{remark}

\begin{proof}[Proof of \cref{prop:preimage_cut}]
Let $f_1\colon X_1\to Z_1$ be any cellular map so that $(f_1)_* = \psi\mid X_1^{\#}$. After possibly attaching some trees to $Z_2$ (so that $\rho_{Z_2}$ becomes a cover), a lift $f_2\colon X_2\to Z_2$ of $f_1$ exists. Let $\kappa$ be the constant from \cref{cor:preimage_bound}.

Now take 
\[
\mathcal{Q} = f_2^{-1}(\mathcal{P})\subset X_2
\]
and note that we have $|\mathcal{Q}|\leqslant \kappa|\mathcal{P}|$ by \cref{cor:preimage_bound}. By our choice of points $\mathcal{Q}$, we see that $f_2(X_2 - \mathcal{Q})\subset Z_2 - \mathcal{P}$ which implies the result.
\end{proof}

\subsection{Proof of \cref{thm:locally_rel_qc}}

We are going to recycle the set-up from \cref{sec:mapping_torus}. Recall that we have a graph $R$, an identification $\pi_1(R, v)\cong \F$ and a cellular map $f\colon R\to R$ so that $f_* = \psi$ and so that $G = M(\psi)\isom \pi_1(M(f))$. Recall also that for each $l\geqslant \mu$, we have that the compact partial mapping torus $M_l\subset M(f)$ (on the subgraph $R_{l}\subset R$) has
\[
G\cong \pi_1(R_{l}, v)*_{\phi_l} \cong \pi_1(M_l, v)
\]
where $\phi_l$ is the restriction of $\psi$ to $\pi_1(R_{l}, v)$.

By \cref{lem:conditions,prop:hallways}, the hypotheses of \cref{cor:bounded_hallways} are met for each HNN-extension decomposition $G \cong \pi_1(R_{l}, v)*_{\phi_l}$. Hence, by \cref{cor:bounded_hallways}, for all $l\geqslant \mu$ we have that $R_{l}^{\#}$ is relatively quasi-convex in $G$. Since $(R_{l}^{\#}, \{R_{A_i}^{\#}\}_{i=1}^n)$ (this is the induced relatively hyperbolic structure on $R_{l}^{\#}$) is locally relatively quasi-convex by \cref{lem:tree_rel_hyp} and since $\F = \bigcup_{i\geqslant\mu}R_{i}^{\#}$, we see that any finitely generated subgroup of $\F$ is relatively quasi-convex in $G$.

We now consider subgroups of $G$ that contain the element $t\in G$. The idea will be to apply \cref{prop:intersection} to the graph pair constructed in the following proposition, and then conclude relative quasi-convexity using \cref{thm:qc_criterion}.

\begin{proposition}
\label{prop:key_prop}
If $H\leqslant\F*_{\psi}$ is a finitely generated subgroup generated by a subgroup of $\F$ and $t$, then there is a tight $\psi$-invariant graph pair $(Z, X)\to (R, R)$ for $H$ with the following properties:
\begin{enumerate}
\item There is some $p\geqslant \mu$ so that $(Z, X)\to (R, R)$ factors as 
\[
(Z, X) \xrightarrow{\rho}(R_{p}, R_{p-1}) \injects (R, R)
\]
\item $\rr(Z, X)<\infty$.
\item We have
\begin{align*}
X_2^{\#} &= Z_2^{\#}\cap X_1^{\#}\\
\psi(X_2^{\#}) &= Z_2^{\#}\cap \psi(X_1^{\#}).
\end{align*}
\item $Z - \core(f_Z^{-1}(\bigsqcup_{i=1}^n(R_{A_i})))$ consists of finitely many 1-cells and 0-cells.
\end{enumerate}
\end{proposition}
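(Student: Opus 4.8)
The plan is to feed a simple-minded initial pair into the direct-limit machinery of \cref{sec:direct_limit} and to read off properties (1)--(4) from its output. First I would reduce to the case $H = \langle K, t\rangle$ with $K\leqslant \F$ finitely generated: writing a finite generating set of $H$ as words in the given subgroup of $\F$ together with $t^{\pm 1}$, the $\F$-letters that appear generate such a $K$ (and if $K = 1$ then $H = \langle t\rangle$, in which case one takes $Z = X$ a point). Take the initial pair $(Z_0, X_0) = \bigl(\Gamma(K)\vee\Gamma(\psi(K)),\ \Gamma(K)\bigr)$ with its evident map to $R$: it is a finite, pointed core, $\psi$-invariant graph pair for $H$ with $\rr(Z_0, X_0) = \rk(K)$. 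Since $\F = \bigcup_j R_j^\#$ and the $R_j^\#$ are nested, choose $p\geqslant \mu + 1$ with $K\subseteq R_{p-1}^\#$; then $\psi(K)\subseteq \psi(R_{p-1}^\#)\subseteq R_p^\#$, so $Z_0^\# = \langle K,\psi(K)\rangle\subseteq R_p^\#$. The pair $(R_p, R_{p-1})$ is tight (it is a subgraph inclusion into $R$) and $\psi$-invariant, since $R_p^\# = R_{p-1}^\#*C_p$ with $C_p = \psi(C_{p-1})$, so that $R_p^\# = \langle R_{p-1}^\#,\psi(R_{p-1}^\#)\rangle$; and applying \cref{lem:factorise} to the immersions $\Gamma(K)$ and $\Gamma(\psi(K))$ yields a map of graph pairs $\rho_0\colon (Z_0, X_0)\to (R_p, R_{p-1})$.

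Now apply \cref{prop:induced_pair} to $\rho_0$. This produces a tight $\psi$-invariant graph pair $(Z, X) := (\check Z_2, \check X_2)$ for $H$ and a map $\rho := \check\rho\colon (Z, X)\to (R_p, R_{p-1})$ satisfying
\[
X^\# = Z^\#\cap R_{p-1}^\#, \qquad \psi(X^\#) = Z^\#\cap\psi(R_{p-1}^\#), \qquad \rr(Z, X)\leqslant \rr(Z_0, X_0) = \rk(K).
\]
Taking $f_Z$ to be $\rho$ followed by the inclusion $R_p\hookrightarrow R$, we obtain property (1) (note $p\geqslant \mu$), property (2), and property (3) at once.

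It remains to prove property (4), which is the main obstacle, and the argument splits into two halves. The softer half is that $Z$ is \emph{pointed core}: \cref{prop:induced_pair} constructs it as a direct limit of folds and of wedges of (pointed core) subgroup graphs onto the pointed core graph $Z_0$, and both operations preserve pointed-coreness. A connected, locally finite (via $f_Z\colon Z\to R$ with $R$ finite), pointed core graph has no hanging trees, and together with $\rr(Z, X) < \infty$ this forces $Z\setminus X$ to consist of finitely many cells: a component of $Z\setminus X$ meeting $X$ in $\ell$ points contributes at least $\ell - 1$ to the relative rank, and at least one more if it carries a loop, so only finitely many components of bounded complexity can occur, each of which pointed-coreness forces to be finite. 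The harder half is to show that the part of $X$ that does not map into $\bigsqcup_{i=1}^n R_{A_i}$ is finite --- equivalently, that $X^\#$ meets only finitely many conjugacy classes of the $A_i$ nontrivially and has a finitely generated complement in the induced free factor system of \cref{ff_system}. This is where \cref{thm:acylindrical} enters: via the iteration lemmas \cref{lem:iteration_general,lem:iteration}, the acylindricity constant $k$ of the action on the Bass--Serre tree of $F*_\phi$ guarantees that any path iterated $k$ times lands inside $\bigsqcup_{i=1}^n R_{A_i}$ up to boundedly much, while $\rr(Z, X) < \infty$ --- together with property (3), which pins $X^\#$ and $\psi(X^\#)$ down inside $Z^\#$ --- bounds the total amount of ``escaping'' material, so that only finitely many $A_i$-pieces and a finitely generated remainder survive. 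Combining the two halves shows $Z - \core\bigl(f_Z^{-1}(\bigsqcup_{i=1}^n R_{A_i})\bigr)$ has finitely many cells. I expect this second half to be the genuine difficulty: the quantitative interplay between the ``budget'' from finiteness of relative rank and the ``funnelling'' from acylindricity has to be organised carefully, presumably by a further iteration/direct-limit argument that tracks how much of $X$ still fails to map into the $R_{A_i}$ after successive applications of $f$.
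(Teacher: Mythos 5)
Your construction of $(Z,X)$ and the verification of properties (1)--(3) match the paper's proof closely: take a finite $\psi$-invariant pair for $H$, observe that it factors through some $(R_p, R_{p-1})$, and apply \cref{prop:induced_pair} to tighten and enforce the two intersection conditions. Your explicit initial pair $(\Gamma(K)\vee\Gamma(\psi(K)),\Gamma(K))$ is a perfectly acceptable variant of the paper's unspecified finite $\psi$-invariant pair, and the relative rank bound carries through. Your ``softer half'' argument that $Z$ is pointed core and $Z - X$ has finitely many cells is also essentially what the paper uses (implicitly, via the finite set $\mathcal{P}\subset Z - X$ such that $Z - \mathcal{P}$ deformation retracts to $X$).

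However, the ``harder half'' --- showing that $Z - \core\bigl(f_Z^{-1}(\bigsqcup_i R_{A_i})\bigr)$ has finitely many cells --- is left as a heuristic in your proposal, and you point to the wrong tool: \cref{lem:iteration_general,lem:iteration} are path-homotopy statements designed for the hallways-flare estimates in \cref{sec:rel_hyp_mapping_tori}, and they do not by themselves give the quantitative control needed in the graph-pair setting. What the paper actually uses here is \cref{prop:preimage_cut}, a lifting lemma tailored precisely to this step: it lifts a cellular map $f_1\colon X_1\to Z_1$ realising $\psi\mid X_1^{\#}$ along a tight map of graph pairs $\rho\colon (Z_2,X_2)\to(Z_1,X_1)$ (using the hypothesis $\psi(X_1^{\#})\cap Z_2^{\#}=\psi(X_2^{\#})$ from property (3)), and shows that point-preimages under the lift $f_2\colon X_2\to Z_2$ inject into point-preimages under $f_1$, giving a uniform bound $\kappa$. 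Starting from the finite set $\mathcal{P}\subset Z-X$, one iterates this $k$ times (with $k$ the acylindricity constant from \cref{thm:acylindrical}) to obtain finite $\mathcal{Q}_0,\dots,\mathcal{Q}_k$ such that any core connected $U\subset Z-\bigcup_i\mathcal{Q}_i$ has $\Gamma[\psi^k(U^{\#})]$ factoring through $X$, hence $f_Z(U)\subset R_{A_i}$ for some $i$; finiteness of $\bigcup_i\mathcal{Q}_i$, together with $Z$ being pointed core, then gives property (4). Your proposal identifies the right ingredients (acylindricity, the relative rank budget, pointed-coreness, property (3) as a hypothesis), and even anticipates ``a further iteration argument,'' but it does not supply the lifting mechanism of \cref{prop:preimage_cut}, which is the genuinely new ingredient required to make that iteration work; without it the argument is incomplete.
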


\begin{proof}
Let $(Z', X')$ be a $\psi$-invariant graph pair for $H$. Since $H$ is finitely generated, $(Z', X')$ may be taken to be finite and so $\rr(Z', X')<\infty$. Let $p\geqslant \mu$ be the smallest integer so that $f_{Z'}(X')\subset R_{p-1}$. Then $f_{Z'}$ factors as a pair of maps of graph pairs:
\[
(Z', X') \xrightarrow{\rho'}(R_{p}, R_{p-1}) \injects (R, R).
\]
Using \cref{prop:induced_pair} we may obtain a tight $\psi$-invariant graph pair $(Z, X)$ for $H$ such that the map $f_Z$ factors as
\[
(Z, X) \xrightarrow{\rho}(R_{p}, R_{p-1}) \injects (R, R)
\]
We may also assume that $(Z, v_Z) = \core(Z, v_Z)$. Moreover, by \cref{prop:induced_pair}, we have that $\rr(Z, X)\leqslant \rr(Z', X')<\infty$ and that
\begin{align*}
X_2^{\#} &= Z_2^{\#}\cap X_1^{\#}\\
\psi(X_2^{\#}) &= Z_2^{\#}\cap \psi(X_1^{\#}).
\end{align*}

Now let $k$ be the constant from \cref{thm:acylindrical}. Since $\rr(Z, X)<\infty$, there is a finite set of points $\mathcal{P}\subset Z - X$ so that $Z - \mathcal{P}$ deformation retracts to $X$. Let $\mathcal{Q}\subset X$ be the points from \cref{prop:preimage_cut}. Applying \cref{prop:preimage_cut} repeatedly, we obtain a sequence of sets of points $\mathcal{P} = \mathcal{Q}_0, \mathcal{Q} = \mathcal{Q}_1, \mathcal{Q}_2, \ldots, \mathcal{Q}_k\subset Z$ so that if $U\subset Z - \bigcup_{i=0}^k\mathcal{Q}_i$ is a core connected subgraph, then $\Gamma[\psi^k(U^{\#})]\to R$ factors through $X$. By letting $k$ be the constant from \cref{thm:acylindrical}, we see that $f_Z(U)\subset R_{A_i}$ for some $i$. By \cref{prop:preimage_cut}, $|\bigcup_{i=0}^k\mathcal{Q}_i|<\infty$. In particular, since $(Z, v_Z) = \core(Z, v_Z)$, this implies that $Z - \core(Z -  \bigcup_{i=0}^k\mathcal{Q}_i))$ consists of finitely many 1-cells and 0-cells. Since $\core(Z - \bigcup_{i=0}^k\mathcal{Q}_i)\subset  \core(f_Z^{-1}(\bigsqcup_{i=1}^n(R_{A_i})))$, the same holds for $Z - \core(f_Z^{-1}(\bigsqcup_{i=1}^n(R_{A_i})))$ as required.
\end{proof}

Now let $H\leqslant G$ be a finitely generated subgroup, generated by a subgroup of $\F$ and by $t$. Then let $(Z, X)$ be the graph pair from \cref{prop:key_prop}. Since $Z - \core(f_Z^{-1}(\bigsqcup_{i=1}^nR_{A_i}))$ consists of finitely many 1-cells and 0-cells, we may use \cref{lem:tree_rel_hyp} to conclude that $Z^{\#}$ is relatively quasi-convex in $(R_{p}^{\#}, \{A_i^{\#}\}_{i=1}^n)$ (note that it may not be finitely generated). By \cref{prop:intersection}, we see that
\[
Z^{\#} = H\cap R_{p}^{\#}.
\]
Hence, we may apply \cref{thm:qc_criterion} to conclude that $H$ is relatively quasi-convex in $G$.

Finally, we now use arguments from \cite{FH99} to reduce the general case to the two cases we just handled. Let $H\leqslant G$ be any finitely generated subgroup and let $\phi\colon G\to \Z$ be the homomorphism given by quotienting $G$ by $\normal{\F}$. Each element of $G$ can be written as $t^i f t^{-j}$ for some $i, j\geqslant 0$ and $f\in \F$. By replacing $H$ with $H^{t^j}$ for $j\geqslant 0$ large enough, we may assume that $H$ is generated by a finite set of elements of the form $ft^{-j}$ for $j\geqslant 0$. We see that if $\phi(H) = 0$, then $H\leqslant \F$ and so is relatively quasi-convex in $G$ by the first part of this proof. If $\phi(H) = m\Z$ for some $m\geqslant 0$, then $H$ is generated by a finite subset of $\F$ and an element $ht^{-m}$. Since $G' = \langle \F, ht^{-m}\rangle$ is isomorphic to the mapping torus of $\gamma_h\circ\psi^m\colon\F\to\F$ (here $\gamma_h$ denotes conjugation by $h$), we see that $H$ is relatively quasi-convex in $G'$ by the second part of this proof. Since $G'$ has index $m$ in $G$, it is relatively quasi-convex in $G$ and so $H$ itself is relatively quasi-convex in $G$.

\subsection{Proof of \cref{main}}

Feighn--Handel's main theorem in \cite{FH99} states that every non-free finitely generated subgroup of the mapping torus $M(\psi)$ is isomorphic to a HNN-extension of a finitely generated free group with one of the associated subgroups a free factor. Combined with \cite[Proposition 2.1]{FH99}, this means that every finitely generated non-free subgroup of $M(\psi)$ is itself isomorphic to the mapping torus of a free group. The main theorem now follows by applying \cref{thm:acylindrical}, \cref{thm:rel_hyp} and \cref{thm:locally_rel_qc} to this mapping torus.

\subsection{Proof of \cref{thm:one-relator}}

Let $F$ be a free group, let $w\in F$ be an element and let $G = F/\normal{w}$ be the quotient one-relator group. If $\pi(w)\neq 2$, then $G$ is virtually free-by-cyclic by \cite{KL24b}. By a result of Louder--Wilton \cite[Lemma 6.10]{LW22}, $G$ does not contain any non-cyclic subgroups $H$ with $\chi(H) = 0$. Hence, $G$ contains no mapping tori of finitely generated non-trivial free groups by \cref{FHmain}. Now by \cref{cor:lqh} it follows that $G$ is locally quasi-convex hyperbolic.

Now suppose that every finitely generated subgroup of $G$ is quasi-convex. By \cite{Li25}, there is a sequence of finitely generated one-relator groups $G_N\leqslant \ldots\leqslant G_1\leqslant G_0 = G$ such that $G_N$ is finite cyclic (or trivial) and $G_i$ splits as a HNN-extension over $G_{i+1}$ (with finitely generated associated subgroups). Since each $G_i$ and each edge group for each HNN-extension is quasi-convex in $G$, this hierarchy is a quasi-convex hierarchy in the sense of Wise. Thus, by \cite{Wi21}, $G$ is virtually compact special. Thus, by \cite{KL24b} it is virtually free-by-cyclic. If $\pi(w) = 2$, then $G$ contains a torsion-free non-cyclic subgroup $H$ with $\chi(H) = 0$ by \cite{LW22}. Since $\chi$ is multiplicative with index, this would imply that a finite index subgroup of $H$ is free-by-cyclic with $\chi = 0$. Hence, a finite index subgroup of $H$ is \{finitely generated free\}-by-cyclic by \cref{FHmain}. Since finitely generated infinite index normal subgroups of hyperbolic groups are not quasi-convex, we see that $H$ is not locally quasi-convex. We reach a contradiction and conclude that $\pi(w)\neq 2$.

\section{Promoting properties from the maximal sub-mapping tori}
\label{sec:applications}

We now turn to further results on mapping tori of free groups which follow from known results for mapping tori of finitely generated free groups combined with \cref{main}. For this section, fix a free group $\F$ and a monomorphism $\psi\colon \F\to\F$ so that its mapping torus $G = M(\psi)$ is finitely generated.

\subsection{The Dehn function}

When $\F$ is finitely generated and $\psi$ is surjective, Bridson--Groves \cite{BG10} showed that $G$ has either linear or quadratic Dehn function. Mutanguha generalised this and showed that when $\F$ is finitely generated, $G$ has either linear, quadratic or exponential Dehn function \cite[Corollary 4.8]{Mu24}. When $\F$ is not finitely generated, by \cref{thm:rel_hyp} $G$ is hyperbolic relative to a finite collection of mapping tori of finitely generated free groups. When $\psi$ is surjective, then $G$ is hyperbolic relative to a finite collection of \{finitely generated free\}-by-$\Z$ subgroups by \cref{fg_kernel}. Combining the results of Mutanguha and Bridson--Groves with a result of Osin \cite[Corollary 2.41]{Os06}, we obtain the following corollary.

\begin{corollary}
\label{dehn_function}
A finitely generated mapping torus of a free group $M(\psi)$ has linear, quadratic or exponential Dehn function. If $\psi$ is surjective, then $M(\psi)$ has either linear or quadratic Dehn function.
\end{corollary}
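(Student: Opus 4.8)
The plan is to deduce both statements from \cref{thm:rel_hyp}, the known results on Dehn functions of mapping tori of \emph{finitely generated} free groups, and Osin's theorem comparing the Dehn function of a relatively hyperbolic group with those of its peripheral subgroups.

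First I would record what happens at the peripheral level. By \cref{thm:rel_hyp}, $M(\psi)$ is hyperbolic relative to the finite collection $\mathcal{P}$, and every $P\in\mathcal{P}$ is isomorphic to a mapping torus of a finitely generated free group; hence each $P$ has linear, quadratic or exponential Dehn function by Mutanguha~\cite[Corollary~4.8]{Mu24}. If moreover $\psi$ is surjective --- equivalently, $\psi$ is an automorphism of $\F$ --- then the remark following \cref{thm:rel_hyp} shows that each $P\in\mathcal{P}$ is a \{finitely generated free\}-by-$\Z$ group, i.e.\ is $M(\phi_P)$ for an automorphism $\phi_P$ of a finitely generated free group, so each $P$ has linear or quadratic Dehn function by Bridson--Groves~\cite{BG10}. (If $\mathcal{P}=\emptyset$, the assertion that $(M(\psi),\mathcal{P})$ is relatively hyperbolic just says that $M(\psi)$ is hyperbolic, so its Dehn function is linear and there is nothing to prove.)

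Next I would pass from the peripherals to the whole group using Osin~\cite[Corollary~2.41]{Os06}. Since $M(\psi)$ is finitely generated it is finitely presented (mapping tori of free groups are coherent, by Feighn--Handel~\cite{FH99}), and each $P\in\mathcal{P}$ is finitely presented, so Osin's corollary applies: it shows that the Dehn function of $M(\psi)$ is equivalent to the largest of the (finitely many) peripheral Dehn functions whenever that function is at least quadratic, and that $M(\psi)$ is itself hyperbolic --- hence has linear Dehn function --- when every $P\in\mathcal{P}$ is hyperbolic. Since the functions $n$, $n^2$ and $2^n$ are superadditive, the maximum of finitely many functions among these three types is again one of them --- and is of one of the first two types when all the $P\in\mathcal{P}$ have linear or quadratic Dehn function, which is precisely the surjective case. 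This gives both assertions. There is no circularity here: Mutanguha's and Bridson--Groves' theorems are applied only to the $P\in\mathcal{P}$, each of which is by construction a mapping torus of a \emph{finitely generated} free group (and in the degenerate situation where $M(\psi)$ itself lies in $\mathcal{P}$, these theorems simply apply to $M(\psi)$ directly).

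The main point that needs care is matching the hypotheses of Osin's corollary and the precise form of its conclusion --- in particular, using superadditivity of $n$, $n^2$ and $2^n$ so that the Dehn function of $M(\psi)$ is genuinely equivalent to the maximum of the peripheral Dehn functions, rather than merely bounded by some bounded iterate of it --- together with the routine but necessary observation, already contained in \cref{thm:rel_hyp} and its remark, that each peripheral is again a mapping torus of a finitely generated free group (respectively a \{finitely generated free\}-by-$\Z$ group when $\psi$ is surjective), so that the cited external results really do apply.
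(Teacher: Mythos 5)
Your proposal is correct and follows essentially the same route as the paper: combine \cref{thm:rel_hyp} with the Dehn function results of Mutanguha \cite[Corollary~4.8]{Mu24} and Bridson--Groves \cite{BG10} for mapping tori of finitely generated free groups, invoke \cref{fg_kernel} (via the remark after \cref{thm:rel_hyp}) to identify the peripherals as \{fg free\}-by-$\Z$ in the surjective case, and pass to the whole group via Osin \cite[Corollary~2.41]{Os06}. Your handling of the degenerate case (when $M(\psi)$ itself is a mapping torus of a finitely generated free group and hence lies in $\mathcal{P}$) and your explicit use of superadditivity of $n$, $n^2$, $2^n$ are exactly the points the paper leaves implicit.
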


\subsection{The conjugacy problem}

Bogopolski--Martino--Maslakova--Ventura \cite{BMMV06} showed that the conjugacy problem for \{finitely generated free\}-by-$\Z$ groups is decidable. Alan Logan then showed that mapping tori of finitely generated free groups have decidable conjugacy problem in \cite{Lo23}. Since Bumagin showed in \cite{Bu04} that a relatively hyperbolic group with peripheral subgroups with decidable conjugacy problem has decidable conjugacy problem, we obtain the following by \cref{main}.

\begin{corollary}
\label{conjugacy_problem}
A finitely generated mapping torus of a free group $M(\psi)$ has decidable conjugacy problem.
\end{corollary}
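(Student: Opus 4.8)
The plan is to transfer decidability of the conjugacy problem from the peripheral subgroups of the relatively hyperbolic structure of \cref{main} to $G = M(\psi)$ itself, using Bumagin's combination theorem \cite{Bu04}. First I would observe that $G$ is finitely presented --- by \cref{cor:free_prod_decomp1} it is an HNN extension $F*_{\phi}$ over a finitely generated free group --- so that the conjugacy problem for $G$ is a meaningful, presentation-independent assertion.

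The main steps are then short. By \cref{thm:rel_hyp} the pair $(G,\mathcal{P})$ is relatively hyperbolic, with $\mathcal{P}$ the finite collection of \cref{cor:canonical_collection}; and by \cref{cor:canonical_collection} (together with \cref{thm:acylindrical}) each $P\in\mathcal{P}$ is isomorphic to a mapping torus of a finitely generated free group. Hence Logan's theorem \cite{Lo23} provides, for each of these finitely many $P$, an algorithm deciding conjugacy in $P$, and Bumagin's theorem \cite{Bu04} assembles these together with the relatively hyperbolic structure into an algorithm deciding conjugacy in $G$. (If $\mathcal{P}=\emptyset$ then $G$ is hyperbolic and one appeals instead to the classical solution of the conjugacy problem for hyperbolic groups.)

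The one subtlety --- which I expect to be the main obstacle to a clean write-up rather than a deep difficulty --- is to avoid a circularity in the degenerate case where $\mathcal{P}$ consists of (the conjugacy class of) $G$ itself. This happens precisely when $G$ is already isomorphic to a mapping torus of a finitely generated free group, equivalently, by \cref{FHmain}, when $\chi(G)=0$. In that case one does not invoke Bumagin's theorem at all: Logan's theorem applies to $G$ directly. When $\chi(G)<0$, every $P\in\mathcal{P}$ is a proper subgroup of $G$ (a mapping torus of a finitely generated free group has vanishing Euler characteristic, whereas $\chi(G)\neq 0$), so the reduction to the peripherals is genuine. Finally I would stress that the argument makes no effectivity demand on the construction of $\mathcal{P}$: decidability of the conjugacy problem of a fixed group is an existence statement, and the conjugacy algorithms for the finitely many peripherals are furnished by Logan's theorem regardless of whether $\mathcal{P}$ can be computed from $\psi$.
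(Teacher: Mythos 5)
Your argument is essentially the paper's: appeal to \cref{thm:rel_hyp} for the relatively hyperbolic structure with peripherals in $\mathcal{P}$, to Logan \cite{Lo23} for the conjugacy problem in each peripheral (a mapping torus of a finitely generated free group), and to Bumagin \cite{Bu04} to pass from the peripherals to $G$. The one thing you add that the paper leaves implicit is the check that the reduction is not circular when $G$ itself is a mapping torus of a finitely generated free group (so that, up to conjugacy, $\mathcal{P}=\{G\}$): you correctly observe that this happens exactly when $\chi(G)=0$ (by \cref{FHmain} and the remark after \cref{thm:acylindrical}), in which case Logan's theorem applies to $G$ directly, while if $\chi(G)<0$ every $P\in\mathcal{P}$ is a proper subgroup and Bumagin's theorem gives a genuine reduction. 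This is a modest but worthwhile clarification; the underlying approach is the same.
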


When the Dehn function of $M(\psi)$ is quadratic, the decidability of the conjugacy problem for $M(\psi)$ follows from a result of Ol'shanskii--Sapir \cite{OS06'}.

We remark that in general the conjugacy problem being decidable is not a property that passes to finite index subgroups or overgroups, see work of Collins--Miller \cite{CM77}. However, \cref{conjugacy_problem} shows that the decidability of the conjugacy problem passes to arbitrary finitely generated subgroups of mapping tori of free groups.

\subsection{The finitely generated intersection property}

A group $G$ has the \emph{finitely generated intersection property} (or \emph{f.g.i.p.}) if for any pair of finitely generated subgroups $H, K\leqslant G$, the intersection $H\cap K$ is also finitely generated. Bamberger--Wise characterised when a mapping torus of a finitely generated free group has the f.g.i.p. property in \cite{BW22}. Using this, we may also characterisation amongst all mapping tori of free groups.

\begin{theorem}
\label{fgip}
The following are equivalent for a finitely generated mapping torus of a free group $M(\psi)$:
\begin{enumerate}
\item\label{itm:fgip1} $M(\psi)$ has the f.g.i.p.
\item\label{itm:fgip2} $M(\psi)$ contains no subgroup isomorphic to a mapping torus of a finitely generated free group of rank $2$ or more.
\item\label{itm:fgip3} $\F$ contains no finitely generated free factor $H\leqslant \F$ of rank at least two so that $\psi^m(H)$ is conjugate into $H$ for some $m\geqslant 1$.
\end{enumerate}
\end{theorem}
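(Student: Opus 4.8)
The plan is to first prove that \cref{itm:fgip2} and \cref{itm:fgip3} are equivalent, directly from the structure theory of \cref{sec:peripherals}, and then to deduce the equivalence of \cref{itm:fgip1} and \cref{itm:fgip2} by combining \cref{main} with the Bamberger--Wise characterisation of the finitely generated intersection property for mapping tori of finitely generated free groups \cite{BW22}.

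For the equivalence of \cref{itm:fgip2} and \cref{itm:fgip3}, I would argue as follows. If $\F$ has a finitely generated free factor $H$ of rank at least two with $\psi^m(H)$ conjugate into $H$, then, after conjugating, \cref{cor:Euler} realises $\langle H, t^m f\rangle$ as a subgroup of $M(\psi)$ isomorphic to a mapping torus of the rank-$\geqslant 2$ free group $H$, so the negation of \cref{itm:fgip3} implies the negation of \cref{itm:fgip2}. Conversely, if $H\leqslant M(\psi)$ is isomorphic to a mapping torus of a free group of rank $r\geqslant 2$, then $H$ is non-cyclic with $\chi(H)=0$, so by \cref{cor:canonical_collection} a conjugate of $H$ is contained in a unique $P\in\mathcal{P}$, and by \cref{thm:acylindrical} we may write $P=\langle A_i, h_i\rangle$ with $A_i$ a finitely generated free factor of $\F$ such that $\psi^{\ell_i}(A_i)$ is conjugate into $A_i$. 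Restricting the homomorphism $M(\psi)\to\Z$ with kernel $\bignormal{\F}$ to $H$ and to $P$ identifies the base free group of the mapping torus $H$ with $H\cap\bignormal{\F}$ and that of $P$ with $P\cap\bignormal{\F}$; since $H\leqslant P$, the first embeds into the second, the first contains a non-abelian free subgroup because $r\geqslant 2$, and the second is abelian (isomorphic to a subgroup of $\Q$) whenever $\rk(A_i)=1$. Hence $\rk(A_i)\geqslant 2$, so the negation of \cref{itm:fgip2} implies the negation of \cref{itm:fgip3}.

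To see that \cref{itm:fgip1} implies \cref{itm:fgip2}, suppose \cref{itm:fgip2} fails, so $M(\psi)$ contains a subgroup $H$ isomorphic to a mapping torus $M(\phi)$ of a finitely generated free group $F$ of rank at least two. As $F$ is a free factor of itself, of rank at least two, with $\phi(F)\leqslant F$, the Bamberger--Wise theorem \cite{BW22} shows that $H$ fails to have the finitely generated intersection property; since this property is inherited by subgroups, so does $M(\psi)$. For the converse, assume \cref{itm:fgip2}. By the equivalence already established and the description of $\mathcal{P}$ in \cref{thm:acylindrical}, every $P\in\mathcal{P}$ has cyclic base group, hence is a solvable Baumslag--Solitar group $\mathrm{BS}(1,d)$ (including $\Z^2$ and the Klein bottle group when $|d|=1$); each such group has the finitely generated intersection property, because its finitely generated subgroups are cyclic or of finite index and intersections of such subgroups are again of this type. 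By \cref{main}, $(M(\psi),\mathcal{P})$ is relatively hyperbolic and locally relatively quasi-convex, so the transfer of the finitely generated intersection property from the peripheral subgroups to the whole group --- the argument of Dahmani \cite{Da03}, in the generality of Bigdely--Wise \cite{BW13} --- applies and shows that $M(\psi)$ has the finitely generated intersection property. (When $\mathcal{P}=\emptyset$, so that $M(\psi)$ is hyperbolic and locally quasi-convex by \cref{cor:lqh}, this is the classical fact that intersections of quasi-convex subgroups of a hyperbolic group are quasi-convex, hence finitely generated.)

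The most delicate step is the second half of the equivalence of \cref{itm:fgip2} and \cref{itm:fgip3}: one must verify that the correspondence between conjugate-periodic finitely generated free factors of $\F$ and base groups of peripherals is faithful, in particular that each peripheral base $A_i$ really is a free factor of $\F$ (and not merely of the $\psi$-invariant free subgroup of $\F$ furnished by \cref{cor:free_prod_decomp1}), together with the rank comparison carried out via the map $M(\psi)\to\Z$ above. The other ingredients --- the Bamberger--Wise characterisation in the finitely generated case \cite{BW22}, the elementary subgroup structure of $\mathrm{BS}(1,d)$, and the Dahmani--Bigdely--Wise transfer of the finitely generated intersection property under local relative quasi-convexity --- are available off the shelf, so I do not anticipate further obstacles.
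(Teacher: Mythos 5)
Your proof is correct and follows essentially the same route as the paper: Bamberger--Wise for $(1)\Rightarrow(2)$, local relative quasi-convexity together with the f.g.i.p.\ of $\bs(1,d)$ for $(2)\Rightarrow(1)$, and the structure theorem \cref{thm:acylindrical} for $(2)\Leftrightarrow(3)$ (the paper simply states that this last equivalence "follows from \cref{thm:acylindrical}," so you are usefully spelling out a step the paper leaves implicit, and for $(2)\Rightarrow(1)$ the paper invokes \cite[Theorem 1.2]{Hr10} and Moldavanskii directly where you cite the Dahmani/Bigdely--Wise transfer, which is the same underlying argument). One small imprecision: the phrase "identifies the base free group of the mapping torus $H$ with $H\cap\normal{\F}$" is not literally accurate, since the fibration of $H$ as a mapping torus need not coincide with the restriction of $M(\psi)\to\Z$ to $H$; the intended conclusion still holds, but via the observation that $H\cap\normal{\F}$ is a normal subgroup of $H$ with cyclic quotient, so if $H$ is a mapping torus of a rank-$\geqslant 2$ free group (hence non-solvable), $H\cap\normal{\F}$ is non-solvable and in particular not abelian, which contradicts $\rk(A_i)=1$ since then $P\cap\normal{\F}$ is a subgroup of $\Q$.
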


\begin{proof}
Bamberger--Wise's result in \cite{BW22} states that a mapping torus of a finitely generated free group of rank at least two does not have the f.g.i.p. property. So now suppose that $G = M(\psi)$ contains no such subgroups. By \cref{thm:rel_hyp}, $G$ is hyperbolic relative to a finite (possibly empty) collection of subgroups isomorphic to $\bs(1, n)$ for various values of $n$. Here, $\bs(1, n)$ is the mapping torus of $\Z$ given by the homomorphism $i\mapsto ni$, known as the Baumslag--Solitar group. By \cref{thm:locally_rel_qc}, $G$ is locally relatively quasi-convex. By \cite[Theorem 1.2]{Hr10}, if $H, K\leqslant G$ are finitely generated subgroups, then $H\cap K$ is relatively quasi-convex in $G$ and hence is relatively hyperbolic with respect to the induced peripherals. Since the peripherals of $H\cap K$ are intersections of conjugates of peripherals for $H$ and for $K$ (which are all finitely generated) and since $\bs(1, n)$ has the f.g.i.p. by a result of Moldavanskii \cite{Mo68}, we see that $H\cap K$ is finitely generated. Hence, $G$ has the f.g.i.p. and we have established the equivalence between \eqref{itm:fgip1} and \eqref{itm:fgip2}. The equivalence between \eqref{itm:fgip2} and \eqref{itm:fgip3} follows from \cref{thm:acylindrical}.
\end{proof}

\subsection{The locally undistorted property}

If $G$ is a group with finite generating set $S$ and if $H\leqslant G$ is a subgroup with finite generating set $T\subset H$, then the \emph{distortion function} for $H$ in $G$ is defined as
\[
\delta_{H, T}^{G, S}(n) = \max\{ |h|_T \mid h\in H, |h|_S\leqslant n\}.  
\]
Up to a natural equivalence relation $\sim$, the distortion function does not depend on the chosen generating sets $S, T$. Denote by $\delta_{H}^G$ the $\sim$-equivalence class of distortion functions for $H\leqslant G$. A subgroup $H$ is \emph{undistorted} if $\delta_H^G(n)\sim n$, \emph{distorted} otherwise. The reader is directed to \cite{Fa94} for more information on distortion of subgroups. In this section we characterise which mapping tori of free groups have all their finitely generated subgroups undistorted. Although this has not been stated explicitly in the literature for mapping tori of finitely generated free groups, we show how this case actually follows from some known results.

\begin{theorem}
\label{thm:locally_undistorted}
The following are equivalent for a finitely generated mapping torus of a free group $M(\psi)$:
\begin{enumerate}
\item Every finitely generated subgroup of $M(\psi)$ is undistorted.
\item Every subgroup of $M(\psi)$ that is isomorphic to a mapping torus of a finitely generated free group, is virtually $F\times \Z$ for some free group $F$.
\item If $F\leqslant \F$ is a free factor, $f\in \F$ and $m\geqslant 1$ such that $f^{-1}\psi^m(F)f\leqslant F$, then the induced endomorphism $\gamma_f\circ\psi^m\colon F\to F$ is an isomorphism and has finite order in $\Out(F)$.
\end{enumerate}
\end{theorem}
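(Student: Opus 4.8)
The plan is to reduce the statement to the peripheral subgroups via \cref{main} and then invoke known facts about mapping tori of \emph{finitely generated} free groups. Write $\mathcal P$ for the canonical collection from \cref{thm:rel_hyp}. The first step is to show that (1) is equivalent to the assertion that every $P\in\mathcal P$ is locally undistorted. Since each $P\in\mathcal P$ is a peripheral subgroup of the relatively hyperbolic pair $(M(\psi),\mathcal P)$, it is undistorted in $M(\psi)$; hence if $M(\psi)$ is locally undistorted then for each finitely generated $Q\leqslant P$ we have $\delta_Q^P\simeq\delta_Q^{M(\psi)}\simeq n$, so $P$ is locally undistorted. Conversely, assume every $P\in\mathcal P$ is locally undistorted and let $H\leqslant M(\psi)$ be finitely generated. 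By \cref{thm:locally_rel_qc}, $H$ is relatively quasi-convex, so by Hruska's analysis of distortion of relatively quasi-convex subgroups \cite{Hr10} the distortion of $H$ in $M(\psi)$ is coarsely bounded above by the maximum of a linear function and the distortions in $M(\psi)$ of the peripherals of the induced relatively hyperbolic structure on $H$; each such peripheral lies in a conjugate of some $P\in\mathcal P$, and since $P$ is undistorted in $M(\psi)$ and locally undistorted, these contribute only linear distortion. Hence $H$ is undistorted.

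The second step treats a single peripheral. By \cref{cor:canonical_collection} and \cref{thm:acylindrical}, each $P\in\mathcal P$ is isomorphic to a mapping torus $M(\phi)$ of a monomorphism $\phi\colon F_P\to F_P$ of a finitely generated free group, and I would use the following characterisation in the finitely generated case: $M(\phi)$ is locally undistorted $\iff$ $M(\phi)$ is virtually $F_P\times\Z$ $\iff$ $\phi$ is an automorphism of finite order in $\Out(F_P)$. The chain ``$\phi$ finite order $\Rightarrow$ virtually $F_P\times\Z$ $\Rightarrow$ locally undistorted'' is immediate: if $\phi^k$ is inner then $M(\phi)$ contains a finite-index copy of $F_P\times\Z$, and every finitely generated subgroup of $F_P\times\Z$ is readily checked to be undistorted. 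For the reverse implications: if $\phi$ is a proper monomorphism then $M(\phi)$ is a strictly ascending HNN extension of $F_P$ and the base $F_P$ is super-linearly distorted (the subgroups $\phi^n(F_P)$ form a strictly descending chain of subgroups of $F_P$ of constant rank, forcing $|\phi^n(w)|_{F_P}$ to grow faster than linearly in $n$ for some $w$, whereas $|\phi^n(w)|_{M(\phi)}\leqslant |w|+2n$); and if $\phi$ is an automorphism of infinite order in $\Out(F_P)$ then $\phi$ has unbounded growth and $F_P$ is again distorted in $F_P\rtimes_\phi\Z$, by the standard growth dichotomy for $\Out(F_n)$. Combining this with the first step, and using that being virtually $F\times\Z$ passes to finitely generated subgroups that are themselves mapping tori of free groups (so that it suffices to test (2) on the maximal members $\mathcal P$, via \cref{main}), we obtain (1) $\Leftrightarrow$ (2).

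Finally, (2) $\Leftrightarrow$ (3) is a translation through \cref{thm:acylindrical}. By parts (3) and (4) of that theorem together with \cref{cor:canonical_collection}, the members of $\mathcal P$ are exactly (conjugacy representatives of) the sub-mapping tori $H_i=\langle A_i,h_i\rangle$ indexed by the $\sigma$-periodic orbits, whose monodromy is a restriction of a conjugate of a power of $\psi$; and, up to conjugacy, any free factor $F\leqslant\F$ with $\psi^m(F)^f\leqslant F$ for some $m\geqslant1$, $f\in\F$ generates a sub-mapping torus conjugate into some $H_i$ and appears as a free factor of $A_i$ on which $\gamma_f\circ\psi^m$ restricts to a conjugate of a power of the monodromy of $H_i$ (using \cref{ff_system}). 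Hence $P=H_i$ is virtually $A_i\times\Z$ precisely when its monodromy is an automorphism of finite order in $\Out$, which holds for all such periodic free factors simultaneously if and only if it holds for the $A_i$; and this is exactly condition (3).

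I expect the main obstacle to be the finitely-generated-base characterisation in the second step — specifically the claim that a mapping torus $M(\phi)$ of a monomorphism of a finitely generated free group which is not virtually $F\times\Z$ must contain a super-linearly distorted finitely generated subgroup. The automorphism case is the growth dichotomy for $\Out(F_n)$. The proper-monomorphism case requires showing the base is genuinely distorted in the ascending HNN extension, which I would deduce from the descending chain $\{\phi^n(F_P)\}$ (all of full rank, each of infinite index in the previous since a finite-index inclusion between free groups of equal rank is an equality), forcing the Stallings core-graph complexity of $\phi^n(F_P)$ to grow strictly and hence word lengths of suitable elements to grow super-linearly; should this argument need reinforcing, I would instead appeal to a relative train track representative of $\phi$.
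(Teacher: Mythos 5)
Your overall architecture matches the paper's: both reduce via local relative quasi-convexity (\cref{thm:locally_rel_qc}) and Hruska's theorem to the question of whether each peripheral $P\in\mathcal P$ is locally undistorted, and both then argue that a mapping torus $M(\phi)$ of a finitely generated free group is locally undistorted iff $\phi$ is an automorphism of finite outer order iff $M(\phi)$ is virtually $F\times\Z$. Where you diverge is in establishing the hard half of that chain — that $M(\phi)$ contains a distorted finitely generated subgroup whenever $\phi$ is a proper monomorphism or an infinite-order automorphism — and here your argument has genuine gaps.

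For the proper-monomorphism case the paper does not argue growth directly: it observes that either $M(\phi)$ contains $\bs(1,n)$ with $|n|\geqslant 2$ (whence an exponentially distorted $\Z$ inside $F_P$) or, by Mutanguha's relative hyperbolicity of $M(\phi)$, the base $F_P$ has infinite height, is therefore not relatively quasi-convex by Hruska--Wise, and is therefore distorted by Hruska. Your substitute — a strictly descending chain $\phi^n(F_P)$ of full rank "forcing $|\phi^n(w)|$ to grow faster than linearly" — does not close: the chain forces the Stallings core graphs to grow, but a priori only without bound, not super-linearly, and linear growth of $|\phi^n(w)|$ is exactly matched by the linear bound $|\phi^n(w)|_{M(\phi)}\leqslant |w|+2n$, so yields no distortion. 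You would need either exponential growth (true, but not a consequence of the chain argument as stated) or the trick of considering $\phi^n(w^n)=\phi^n(w)^n$, which picks up a factor of $n$ from the \emph{cyclic} length and needs a separate argument that the cyclic length of $\phi^n(w)$ is unbounded.

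For the infinite-order automorphism case, the paper again avoids a direct growth computation: it splits into the polynomially growing case (embedding Kudlinska's line-RAAG and invoking Tran's quadratic distortion of its Bestvina--Brady kernel) and the exponentially growing case (Dahmani--Li relative hyperbolicity plus Tran's exponential distortion of finitely generated normal subgroups). Your appeal to ``the standard growth dichotomy'' is not, on its own, an argument that $F_P$ is distorted: for a linearly growing $\phi$ (e.g.\ a Dehn twist), $|\phi^n(w)|$ and $|\phi^n(w)|_{M(\phi)}$ are \emph{both} linear in $n$, so again one must pass to $\phi^n(w^n)$ and argue about cyclic growth to detect the quadratic distortion. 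That argument is doable, but it's a nontrivial step you are eliding; the paper sidesteps it entirely by embedding a RAAG. You do flag both gaps and propose falling back to relative train tracks, which is the right instinct, but as written the proof is not complete at precisely the place where the paper's proof does something substantively different.
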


We shall use the following facts about distortion without mention:
\begin{enumerate}
\item If $H\leqslant K\leqslant G$ are finitely generated groups and $H$ has finite index in $K$, then $\delta_H^G\sim\delta_K^G$.
\item If $H\leqslant K\leqslant G$ are finitely generated groups and $H$ is distorted in $K$, then either $H$ or $K$ is distorted in $G$.
\end{enumerate}

We begin by handling the $\F = F_n$ case.

\begin{lemma}
\label{lem:non_surjective_distortion}
If $\F = F_n$ and $\psi\colon F_n\to F_n$ is a non-surjective monomorphism, then the group $F_n$ is distorted in the mapping torus $M(\psi)$.
\end{lemma}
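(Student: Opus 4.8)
The key point is that a non-surjective monomorphism $\psi\colon F_n\to F_n$ forces exponential distortion of the base. I would argue via the action of $M(\psi)$ on its Bass--Serre tree, or more elementarily, via the normal form and a word-length count. The cleanest approach: pick a word $w\in F_n$ whose image is not a proper power and which witnesses non-surjectivity in the sense that $w\notin \psi(F_n)$ (equivalently, use that $[F_n:\psi(F_n)]=\infty$). Since $\psi$ is a monomorphism, for each $m\geqslant 0$ the element $\psi^m(w)$ has word length growing like $\lambda^m$ for some $\lambda>1$, unless $\psi$ is "eventually periodic" on $\langle w\rangle$; but if $\psi^m$ were to have bounded growth on all of $F_n$ then a result of Takahasi (used already in \cref{fg_kernel}) or a direct argument would force $\psi$ to be an automorphism, contradicting non-surjectivity. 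So first I would establish: there is $g\in F_n$ and $\lambda > 1$ with $\len_{F_n}(\psi^m(g)) \succeq \lambda^m$.

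Next, I would estimate word length in $M(\psi)$. In $M(\psi)$ we have the relation $t^{-1}ft = \psi(f)$, equivalently $tft^{-1}$... careful: $t^{-1}ft=\psi(f)$ gives $t^m g t^{-m}$ does \emph{not} simplify since $\psi$ goes the wrong way, but $t^{-m}\psi^m(g)t^m = $ (iterating) ... actually $t^{-1}\psi^{m-1}(g)t = \psi^m(g)$, so $\psi^m(g) = t^{-m} g t^{m}$ is false; rather $t^{-m} g t^m = \psi^m(g)$ is what the relation gives after $m$ applications. Hence $\len_{M(\psi)}(\psi^m(g)) \leqslant \len_{M(\psi)}(g) + 2m$, so $\psi^m(g)$ has $M(\psi)$-length at most linear in $m$, while its $F_n$-length is at least $\lambda^m$. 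This exhibits $\delta_{F_n}^{M(\psi)}(n) \succeq \lambda^{n}$, so $F_n$ is (exponentially) distorted. The only subtlety is checking the lower bound $\len_{F_n}(\psi^m(g))\succeq \lambda^m$ genuinely holds; here I would invoke the dichotomy for injective endomorphisms of free groups (e.g. via train tracks, cf. the discussion of Brinkmann/Mutanguha in the introduction, or directly): a monomorphism of $F_n$ is either an automorphism or has an element of exponential growth — indeed if every conjugacy class grew polynomially then the topological entropy would be zero, forcing surjectivity by a counting/coherence argument. Since we are assuming non-surjectivity, such $g$ exists.

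The main obstacle is pinning down the exponential-growth claim cleanly without invoking heavy train-track machinery. I expect the slickest route is: if $\psi$ is not surjective then $[F_n:\psi(F_n)]=\infty$, so the ascending HNN-extension $M(\psi)$ contains $F_n \lneqq \psi^{-1}(F_n)\cap F_n$-type towers; more to the point, a theorem of Bestvina--Handel / standard train track theory gives that a non-surjective $\psi$ has a positive exponential growth rate $\lambda(\psi)>1$, and one takes $g$ realizing the growth. Alternatively one can cite Mutanguha's result (quoted in the introduction) that $M(\psi)$ is hyperbolic when $\psi$ has no periodic conjugacy classes, combined with the fact that non-surjective $\psi$ on $F_n$ with a periodic conjugacy class still has an exponentially growing stratum — but I would prefer to keep the argument self-contained by the entropy/counting observation. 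Once the exponential lower bound on $\len_{F_n}(\psi^m(g))$ is in hand, the distortion estimate above is immediate and finishes the lemma.
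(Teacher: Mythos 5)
Your route is genuinely different from the paper's, and it is more elementary in spirit, but it has a real gap at the step you yourself flag.

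The paper does not argue by exhibiting an exponentially growing orbit. Instead it splits into two cases. If $M(\psi)$ contains $\bs(1,k)$ for some $|k|\geqslant 2$, then (using \cref{cor:Euler}) some cyclic subgroup of $F_n$ is exponentially distorted, hence so is $F_n$. Otherwise, it invokes Mutanguha's theorem (\cite{Mu24}) that $M(\psi)$ is hyperbolic relative to a collection of infinite-index \{fg free\}-by-$\Z$ subgroups, observes that the distinct cosets $t^iF_n$ all have $t^iF_nt^{-i}\supseteq F_n$ so $F_n$ has infinite height, concludes via Hruska--Wise \cite{HW09} that $F_n$ is not relatively quasi-convex, and finishes with Hruska's theorem \cite{Hr10} relating relative quasi-convexity to (un)distortion. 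So the heavy lifting is done by citing relative hyperbolicity and height machinery, not by a growth estimate.

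Your proposal instead wants to produce $g\in F_n$ and $\lambda>1$ with $\len_{F_n}(\psi^m(g))\succeq\lambda^m$, then uses $\psi^m(g)=t^{-m}gt^m$ to get $\len_{M(\psi)}(\psi^m(g))\leqslant 2m+\len(g)$, giving exponential distortion. The length estimate in $M(\psi)$ is correct. The gap is the existence of such a $g$: the statement that every injective non-surjective endomorphism of $F_n$ has an exponentially growing element is a genuine theorem (it uses finite generation essentially --- the shift on $F_\infty$ shows it fails otherwise), and your justifications for it are not proofs. ``Topological entropy zero forces surjectivity by a counting/coherence argument'' is not a known slick argument; the transition matrix of a graph representative can have spectral radius 1 without $\psi$ being an automorphism once cancellation is allowed, so you would really need (relative) train track theory for non-surjective endomorphisms in the style of Reynolds or Mutanguha, and you would also need to upgrade a $\limsup$ growth rate $>1$ to an honest exponential \emph{lower} bound on $\len_{F_n}(\psi^m(g))$. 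If you want to take this route, you should cite a precise statement giving the exponentially growing element (rather than gesturing at Bestvina--Handel, whose original train tracks are for outer automorphisms) and verify the lower-bound form of the growth estimate; otherwise the paper's argument via Mutanguha's relative hyperbolicity plus the infinite-height/quasi-convexity/distortion chain is the safer path.
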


\begin{proof}
If $M(\psi)$ contains a subgroup isomorphic to $\bs(1, n)$ for $|n|\geqslant 2$, then $F_n$ contains an exponentially distorted infinite cyclic subgroup and so is itself distorted. Thus we may assume that it contains no such subgroup. By a result of Mutanguha \cite[Theorem 4.7]{Mu24}, $G = M(\psi)$ is hyperbolic relative to a (possibly empty) collection of (infinite index) \{finitely generated free\}-by-$\Z$ subgroups. If $t\in G$ is the element so that $t^{-1}ft = \psi(f)$ for $f\in F_n$, then we see that $\bigcap_{i=0}^{\infty}t^iF_nt^{-i} = F_n$. Since $\{t^iF_n\}_{i=0}^{\infty}$ is a collection of distinct cosets of $F_n$, it follows that the subgroup $F_n$ has infinite height in the sense of Hruska--Wise \cite{HW09}. Then by \cite[Theorem 1.4]{HW09}, $F_n$ is not relatively quasi-convex in $G$. Finally, by a result of Hruska \cite[Theorem 1.4]{Hr10}, $F_n$ is distorted in $G$.
\end{proof}

\begin{lemma}
\label{undistorted_F_times_Z}
Every finitely generated subgroup of $F_n\times\Z$ is undistorted.
\end{lemma}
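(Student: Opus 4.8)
The plan is to exploit the two coordinate projections $q\colon F_n\times\Z\to F_n$ and $p\colon F_n\times\Z\to\Z$ together with the elementary fact that, for the natural finite generating set of $F_n\times\Z$, both projections are $1$-Lipschitz and $|g|_{F_n\times\Z}\asymp |q(g)|_{F_n}+|p(g)|$ for every $g$. Since the distortion function is independent of the choice of finite generating sets, to prove that a finitely generated $H\leqslant F_n\times\Z$ is undistorted it then suffices to exhibit a convenient finite generating set of $H$ and bound word length with respect to it by $|q(h)|_{F_n}+|p(h)|$ up to a multiplicative constant.

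First I would record the case of a finitely generated subgroup $L\leqslant F_n$, namely that $L$ is undistorted in $F_n$. This is classical and can be read off the Stallings core graph: fixing a spanning tree of $\core(\Gamma(L))$, the remaining edges form a free basis $Y$ of $L$, and an element $h\in L$ is read off the reduced loop in $\core(\Gamma(L))$ based at the basepoint, whose combinatorial length is $|h|_{F_n}$ and which crosses non-tree edges at most $|h|_{F_n}$ times; hence $|h|_Y\leqslant |h|_{F_n}$.

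For a general finitely generated $H\leqslant F_n\times\Z$, set $L=q(H)$, which is finitely generated and hence free, and $K=H\cap\ker q$, which is either trivial (a harmless special case) or of the form $\{1\}\times c\Z$ for some $c\geqslant 1$. Choosing a free basis $x_1,\dots,x_r$ of $L$, lifts $\tilde x_i=(x_i,b_i)\in H$, and the generator $z=(1,c)$ of $K$, a normal form argument shows $H=\langle\tilde x_1,\dots,\tilde x_r\rangle\times\langle z\rangle$: since $q$ sends $\tilde x_i$ to the free basis $x_i$, the restriction $q\colon\langle\tilde x_1,\dots,\tilde x_r\rangle\to L$ is an isomorphism; the element $z$ is central in $F_n\times\Z$; and $\langle\tilde x_i\rangle\cap\langle z\rangle=1$. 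Consequently every $h\in H$ has a unique expression $h=W(\tilde x_1,\dots,\tilde x_r)\,z^{s}$ where $W$ is a reduced word of length $\ell=|q(h)|_{L}$ in the chosen basis.

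Finally, taking $T=\{\tilde x_1,\dots,\tilde x_r,z\}$ as generating set of $H$, one has $|h|_T\leqslant \ell+|s|$; writing $W=x_{i_1}^{\epsilon_1}\cdots x_{i_\ell}^{\epsilon_\ell}$ and comparing $\Z$-coordinates via $p(h)=\sum_{j=1}^{\ell}\epsilon_j b_{i_j}+cs$ gives $|s|\leqslant |p(h)|+B\ell$ with $B=\max_i|b_i|$, and combining this with the previous paragraph, which yields $\ell=|q(h)|_{L}\lesssim |q(h)|_{F_n}$, one obtains $|h|_T\lesssim |q(h)|_{F_n}+|p(h)|\asymp |h|_{F_n\times\Z}$. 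Hence $\delta_H^{F_n\times\Z}$ is linear and $H$ is undistorted. The only step requiring genuine, though entirely routine, care is the identification $H\cong\langle\tilde x_i\rangle\times\langle z\rangle$ together with the bookkeeping showing that the reduced length of $q(h)$ in the chosen basis controls the $\Z$-exponent $s$; everything else is immediate from the product structure of the word metric on $F_n\times\Z$.
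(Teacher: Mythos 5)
Your proposal is correct and follows essentially the same route as the paper's proof: both decompose $H$ as the direct product of $H\cap\Z$ with a complementary free factor isomorphic to the projection $q(H)\leqslant F_n$, and then reduce to the fact that finitely generated subgroups of $F_n$ are undistorted. You have merely spelled out the splitting, the choice of generating set, and the word-length bookkeeping that the paper leaves implicit.
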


\begin{proof}
Let $H\leqslant F_n\times\Z$ be a finitely generated subgroup. Then 
\[
H\cong (H/H\cap \Z)\times(H\cap \Z).
\]
Since $H/H\cap \Z$ is a finitely generated subgroup of $F_n$, it is undistorted in $F_n$. Thus, $H$ is undistorted in $F_n\times\Z$.
\end{proof}

\begin{proposition}
\label{prop:distortion_fg_case}
If $\F = F_n$ and $\psi\colon F_n\to F_n$ is an isomorphism so that every finitely generated subgroup of $M(\psi)$ is undistorted, then $\psi$ has finite order in $\Out(F_n)$.
\end{proposition}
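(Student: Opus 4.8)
The plan is to argue by contraposition: if $\psi$ has infinite order in $\Out(F_n)$, then $M(\psi)$ contains a distorted finitely generated subgroup. Since $\psi$ is an isomorphism of a finitely generated free group, the mapping torus $G = M(\psi) = F_n\rtimes_\psi\Z$ is \{fg free\}-by-cyclic, so we have the full strength of train track theory and the trichotomy for Dehn functions available. First I would invoke the classification of outer automorphisms via growth: an element of $\Out(F_n)$ of infinite order either has a conjugacy class whose $\psi$-iterates grow exponentially, or all conjugacy classes grow polynomially (but unboundedly, by the infinite order assumption). I would treat the two cases separately.

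In the exponentially growing case, pick $g\in F_n$ with $\|\psi^k(g)\|$ growing exponentially in $k$ (word length in a fixed basis). Then $t^{-k}gt^k = \psi^k(g)$ shows that $|\psi^k(g)|_G \leqslant 2k + C$ while $|\psi^k(g)|_{F_n}$ grows exponentially; hence the subgroup $F_n\leqslant G$ is exponentially distorted, so $G$ has a distorted finitely generated subgroup. (Alternatively, such a $\psi$ is not atoroidal forces nothing, but exponential growth of some $g$ means $\langle g\rangle$ is exponentially distorted inside $\langle g,t\rangle\cong\bs(1,n)$ or more generally inside $F_n$, and one can point to Brinkmann or Bridson--Groves.) In the polynomially-growing-but-not-finite-order case, by a theorem of Bestvina--Feighn--Handel / Levitt on polynomially growing automorphisms, a positive power $\psi^N$ is \emph{unipotent polynomially growing} of polynomial degree $d\geqslant 1$, and then there is a conjugacy class $[g]$ with $\|\psi^{Nk}(g)\|\asymp k^d$. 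The same computation $t^{-Nk}gt^{Nk}=\psi^{Nk}(g)$ gives $|\psi^{Nk}(g)|_G\leqslant 2Nk+C$, so again $F_n$ is (at least polynomially) distorted in $G$, and distortion of $F_n$ in $G$ implies the existence of a distorted finitely generated subgroup, contradicting the hypothesis. In fact in this polynomial case one can do better and point to the work of Bridson--Groves on the Dehn function, or simply note that $G$ is not hyperbolic and has distorted $F_n$.

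The cleanest packaging is probably to show directly: \emph{if $\psi$ has infinite order in $\Out(F_n)$ then $F_n$ is distorted in $M(\psi)$}, which suffices since $F_n$ is finitely generated. This follows because $\psi$ having infinite order in $\Out(F_n)$ means there is no $m\geqslant 1$ and $f\in F_n$ with $\gamma_f\circ\psi^m = \mathrm{id}$, so no power of $\psi$ is an inner automorphism; by the train track machinery (Bestvina--Handel, or the growth dichotomy of \cite{BH92} together with Levitt's work on polynomial growth) there is then a $g\in F_n$ whose $\psi$-iterates have word length tending to infinity, and as above $|\psi^k(g)|_{F_n}\to\infty$ while $|\psi^k(g)|_G = O(k)$, so $F_n$ is distorted.

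The main obstacle I anticipate is the book-keeping needed to guarantee existence of an \emph{unboundedly growing} conjugacy class when $\psi$ has infinite order in $\Out(F_n)$: one must rule out the degenerate possibility that every conjugacy class has bounded $\psi$-orbit, which would force $\psi$ to be finite order in $\Out(F_n)$ (this is essentially the statement that $\mathrm{Out}(F_n)$ has no infinite-order elements all of whose conjugacy-class-orbits are bounded, a standard but nontrivial consequence of the existence of train track representatives and the Nielsen realization / relative train track structure). This is the step where I would cite \cite{BH92} and the polynomial-growth analysis rather than prove it from scratch; everything else is the routine distortion estimate using the relation $t^{-k}gt^k=\psi^k(g)$ together with the two standard facts about distortion listed in the paragraph preceding \cref{thm:locally_undistorted}.
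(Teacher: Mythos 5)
Your argument has a genuine gap in the polynomially growing case of degree $d=1$, which can certainly occur for infinite-order $\psi$: the automorphism of $F_2$ given by $a\mapsto a$, $b\mapsto ba$ has infinite order in $\Out(F_2)$ and grows linearly, so $|\psi^k(g)|_{F_2}\asymp k$ for every conjugacy class $[g]$. In that situation your estimate yields only $\delta_{F_n}^{M(\psi)}(2k+C)\geqslant |\psi^k(g)|_{F_n}\asymp k$, i.e.\ a merely linear lower bound on the distortion function, which is consistent with $F_n$ being undistorted. Iterating a single conjugacy class simply cannot detect distortion when the growth of that class is linear; your closing remark that one can ``simply note that $G$ is not hyperbolic and has distorted $F_n$'' begs the very question at issue in this case. (In the $F_2$ example above one \emph{can} show $F_2$ is quadratically distorted, e.g.\ by concatenating the iterates $\psi(b)\psi^2(b)\cdots\psi^k(b)=ba^1ba^2\cdots ba^k$, an element of $F_2$-length $\Theta(k^2)$ whose $G$-length is $O(k)$, but this requires a more careful choice of elements than iterating $g\mapsto\psi^k(g)$.) Your exponential case and your polynomial case of degree $d\geqslant 2$ are fine, since there $|\psi^k(g)|_{F_n}$ genuinely grows superlinearly against $|\psi^k(g)|_G = O(k)$.

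The paper avoids this issue entirely with a different argument for the polynomial case: it invokes Kudlinska's theorem that the right-angled Artin group $\langle a,b,c,d \mid [a,b],[b,c],[c,d]\rangle$ embeds in $M(\psi)$ whenever $\psi$ is polynomially growing of any degree $d\geqslant 1$, and then Tran's theorem that the Bestvina--Brady kernel of that RAAG is quadratically distorted; this exhibits a distorted finitely generated subgroup without ever needing to analyse the distortion of $F_n$ itself and thereby treats degree $1$ and higher degrees uniformly. In the exponentially growing case the paper also proceeds differently from you (Dahmani--Li's relative hyperbolicity together with Tran's theorem on finitely generated normal subgroups of relatively hyperbolic groups), though there your direct conjugation estimate is correct. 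So your overall contrapositive strategy is sound, but the degree-$1$ subcase needs a substantively different idea from the one you sketch.
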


\begin{proof}
Suppose first that $\psi$ has finite order. Then $G = M(\psi)$ has a finite index subgroup isomorphic to $F\times\Z$ for some free group $F$. Since every finitely generated subgroup of $F\times\Z$ is undistorted by \cref{undistorted_F_times_Z}, so is every finitely generated subgroup of $G$.

Now suppose that $\psi$ is polynomially growing of degree $d\geqslant 1$. Kudlinska proved in \cite[Theorem 3.4]{Ku24} that the group 
\[
H = \langle a, b, c, d \mid [a, b], [b, c], [c, d]\rangle
\]
is a subgroup of $G$. This is a right-angled Artin group on the line graph with four vertices and three edges. In particular, a result of Tran \cite[Theorem 1.1]{Tr17} shows that the kernel of the map to $\Z$ given by sending each generator to $1$ is quadratically distorted in $H$. Thus, $G$ contains a distorted subgroup.

Finally, suppose that $\psi$ is not polynomially growing. Then by work of Dahmani--Li \cite[Theorem 4]{DL22} (see also work of Gautero--Lustig \cite{GL08} and Ghosh \cite{Gh23}), it is hyperbolic relative to a finite collection of (infinite index) polynomially growing \{fg free\}-by-cyclic subgroups. Since $F_n$ is a finitely generated normal subgroup of a relatively hyperbolic group, it is exponentially distorted by a result of Tran \cite[Corollary 1.2]{Tr21}.
\end{proof}

\begin{proof}[Proof of \cref{thm:locally_undistorted}]
By \cref{thm:locally_rel_qc}, $G$ is locally relatively quasi-convex. By \cite[Theorem 1.4]{Hr10}, the distortion of finitely generated subgroups of $G$ is bounded above by the superadditive closure of the distortion of finitely generated subgroups of the peripheral subgroups. Since the peripherals are all mapping tori of finitely generated free groups, the result now follows by combining \cref{lem:non_surjective_distortion} with \cref{prop:distortion_fg_case}.
\end{proof}

\bibliographystyle{amsalpha}
\bibliography{bibliography}

\end{document}